\newcommand{\be}{\begin{equation} }
\newcommand{\ee}{\end{equation}}
\newcommand{\bee}{\begin{equation*} }
\newcommand{\eee}{\end{equation*}}
\newcommand{\bse}{\begin{subequations}}
\newcommand{\ese}{\end{subequations}}
\newcommand{\p}{\partial}
\newcommand{\R}{\mathbb{R}}
\DeclareMathOperator*{\res}{Res}
\newtheorem{theorem}{Theorem}[section]
\newtheorem{lemma}{Lemma}[section]
\newtheorem{Proposition}{Proposition}[section]
\newtheorem{RHP}{The RH problem}
\theoremstyle{remark}
\newtheorem{remark}{Remark}[section]
\theoremstyle{definition}
\numberwithin{equation}{section}
\title[Global  solution for the  mCH equation]
{Existence of global solutions  for the  modified Camassa-Holm equation with  a  nonzero background}
\author[]{Yiling YANG}
\address[Yiling Yang]{School of Mathematical Sciences  and Key Laboratory   for Nonlinear Science, Fudan   University, Shanghai 200433, P. R. China.}\email{19110180006@fudan.edu.cn}
\author[]{Engui Fan}
\address[Engui Fan]{School of Mathematical Sciences  and Key Laboratory   for Nonlinear Science, Fudan   University, Shanghai 200433, P. R. China.}\email{faneg@fudan.edu.cn}
\author[]{Yue Liu}
\address[Yue Liu]{Department of Mathematics, University of Texas at Arlington, TX 76019}\email{yliu@uta.edu}
\date{}                                           
\begin{document}
\thispagestyle{empty}

\begin{abstract}

Consideration in the present paper is the   existence of global solutions  for  the modified Camassa-Holm (mCH) equation with  a  nonzero background initial value.
The mCH  equation is completely integrable and can be considered as  a model for the unidirectional propagation of
shallow-water waves. By applying  the inverse scattering  transform with an application of the
 Cauchy projection operator,  the existence  of a unique global solution to the mCH equation in the line with  a  nonzero background initial value is established in the weighted Sobolev space $  H^{2, 1} (\mathbb{R})\cap H^{1, 2} (\mathbb{R})$ based on the representation of a  Riemann-Hilbert (RH) problem associated with
the Cauchy problem to the mCH equation. A crucial technique used is to  derive the boundedness of the solution in the Sobolev space $  W^{1,\infty}(\mathbb{R}),$
then reconstruct a new RH problem for the   Cauchy  projection operator of reflection coefficients. The regularity of the global solution  is achieved by the refined estimate arguments on those solutions of the corresponding RH problem. 

\end{abstract}

\maketitle

\noindent {\sl Keywords\/}:    Modified Camassa-Holm   equation;  global solution;  Riemann-Hilbert problem;     Cauchy projection operator.

\vskip 0.2cm

\noindent {\bf AMS Subject Classification 2020:} 35P25; 35Q15; 35Q35; 35A01.

\tableofcontents
\section{Introduction}
\quad
The present paper is concerned with   the existence  of global solutions  for   the modified Camassa-Holm (mCH) equation \cite {BF1996, PP1996} , namely,
\begin{align}
	&m_{t}+\left(m\left(u^{2}-u_{x}^{2}\right)\right)_{x}=0, \quad m=u-u_{xx}, \quad (t,x)\in\mathbb{R}^+\times\mathbb{R}, \label{mcho2}
\end{align} with  nonzero background $ u(t, x) \to 1 $ as $ |x| \to \infty. $
It is known that the mCH equation \eqref{mcho2}
is an integrable modification to   the well-known  Camassa-Holm (CH) equation
\begin{align} \label{ch}
	&m_t+ (um )_x+  u_x m=0, \quad m=u-u_{x x},
\end{align}	
which   was appeared in \cite{Fuchssteiner1} as a integrable equation proposed by   Fuchssteiner
and Fokas and first introduced by Camassa and Holm    as a
model for the unidirectional propagation of  shallow-water waves \cite{Holm1} (see also \cite{cola}  for a rigorous justification in shallow-water approximation).
The CH equation  \eqref{ch}  has  been  attracted considerable interest and been studied extensively  due to its  rich mathematical structures and
remarkable properties, such as  Painleve-type asymptotics, peakon and multi-peakon solutions,  bi-Hamiltonian structure,   algebro-geometric solutions,   wave-breaking phenomena   \cite{Monvel22, Holm1, Constantin1, C-Ep, C-E, Constantin2, Eckhardt0,
Fritz2, M,  mis}.

It is observed that all the nonlinear terms in the CH equation  \eqref{ch} are quadratic. 
Different from the CH equation \eqref{ch}, the mCH equation \eqref{mcho2} contains the cubic nonlinearity  which was introduced by Fuchssteiner \cite{BF1996}, Olver and
Rosenau \cite{PP1996} by applying the general method of tri-Hamiltonian duality to the bi-Hamiltonian
representation of the modified Korteweg-de Vries equation (see also \cite{Qiao}).  Applying the scaling transformation and taking parameter limit,  the  mCH  equation \eqref{mcho2} can be reduced  a   short pulse equation \cite{sp1}
\begin{align}
	&u_{xt}=u+\frac{1}{6}(u^3)_{xx}.\nonumber
\end{align}	
More recently, the mCH equation \eqref{mcho2} was considered as a model for the unidirectional propagation for
shallow-water waves of mild amplitude over a flat bottom \cite{CHL}, where the solution $ u $ is related to  the horizontal velocity in certain level of water.

It was shown that the mCH  equation \eqref{mcho2}
 admits a Lax pair \cite{Qiao, sch}
\begin{equation}
	\Phi_x = U \Phi,\hspace{0.5cm}\Phi_t =V \Phi, \label{lax0}
\end{equation}
where
\begin{align}
	&U=\frac{1}{2}\left(\begin{array}{cc}
		-1 &  {\lambda} m \\
		-\lambda  m & 1
	\end{array}\right), \ \ \lambda=- \frac{1}{2}(z+z^{-1}),\nonumber\\
	&V=\left(\begin{array}{cc}
		\lambda ^{-2}+\frac{1}{2}(u^2-u_x^2) & -\lambda ^{-1} (u-u_x)-\frac{ 1}{2}\lambda (u^2-u_x^2)m \\[5pt]
		\lambda ^{-1} (u+u_x)+\frac{ 1}{2}\lambda (u^2-u_x^2)m  & -\lambda ^{-2}-\frac{1}{2}(u^2-u_x^2)
	\end{array}\right),\nonumber
\end{align}
$\Phi $ is a complex matrix valued function of $x, t$ and $\lambda$  with the  spectral parameter  $\lambda\in \mathbb{C}$ independent of $x$ and $t$,  and the momentum potential $m$ is the
solution of the mCH  equation \eqref{mcho2}.


It is noted that the global smooth one-soliton solution of the mCH  equation \eqref{mcho2} with nonzero background data  was obtained by using the RH method \cite{Mon}.
On the other hand,  the soliton of the mCH equation \eqref{mcho2} with zero background data  is a weak solution in the form of peaked wave, that is, $ \varphi_c(x-ct) = \sqrt{\frac{3c}{2}} e^{-|x-ct|},  \; c > 0. $ In addition, the  quasi-periodic solutions with periodic background data were constructed by using algebro-geometric method \cite{THFan}.

The  wave-breaking and those peakons for the mCH equation \eqref{mcho2} with zero background data were also  investigated in  \cite{qu7}.
With the aid of reciprocal transformation, the B\"acklund transformation  and
nonlinear superposition formula  for the mCH equation \eqref{mcho2} were given \cite{WLM}.
The existence of the global  peakon solutions and  the large time asymptotic behaviour of these kind of non-smooth solitons  were investigated in \cite {CS2}.
Recently     the     long time   asymptotic behavior of the mCH equation  \eqref{mcho2}      was established  by using $\bar\partial$-steepest decadent  analysis in \cite{YYLmch}.

It is known that  the Cauchy problem associated with the mCH equation \eqref{mcho2} is the locally well-posed in the Sobolev space $ H^{s}(\mathbb{R}), s > 5/2 $ \cite{qu7}.
However, to the best of our knowledge, the global well-posedness to the    mCH equation \eqref{mcho2} on the line   seems  still  unknown.

The main purpose in the present paper is to   establish   the global existence of solutions to the  mCH equation \eqref{mcho2}  with a nonzero background
by applying inverse scattering theory.   It is found,  however,   that  directly using this approach to the mCH  equation \eqref{mcho2}
will  confront some substantial difficulties, which need to be overcome and are much  different from the NLS, AKNS, derivative NLS  equations   \cite{p3,Deift2011,RN10,LiuDNLS,pelinovskey1,zhou1}.
For example,
\begin{itemize}
	\item[1)] In the RH problem associated with the  mCH  equation \eqref{mcho2}, there are two  singularities  $z=\pm1$, which give rise to difficulty
	in directly  using Zhou vanishing lemma.  However, to remove the singularity at $z=\pm1$  by   change of variable,
	the   function $\eta=1-1/m$ will appear in the condition of the RH problem.
	The    boundness and space  estimate  of  $\eta$ must  be obtained to show that $m-1\in H^{2,1}(\mathbb{R})\cap H^{1, 2} (\mathbb{R})$.

	\item[2)]    In the estimates of the Volterra integral and the Cauchy projector operators,  there is an oscillating term $e^{i(z-1/z)x}$,  which
	is  difficult to directly apply the  Fourier transform in real axis $ \mathbb{R} $. This  technical  difficulty could be conquered by splitting
	the original  spectral problem   into two new spectral problems  by  two transformations.
	\item[3)]   In the jump matrix,  the reflection coefficient $|r(z)|=1, \ z=\pm1$, which
	 prevents from establishing the  Lipschitz continuous for solutions.  To deal with this issue,  we  can take an advantage of the
	nice properties of the time evolution for scattering data  to directly
	obtain estimates of the global solutions from  the time dependent RH problems without a prior estimate on the potential function $ m.$
\end{itemize}


In what follows,  we  address our  main result on   the existence  of global solutions for  the Cauchy problem  associated with  the mCH equation, namely,
\begin{align}\label{mch1}
\left\{ \begin{array}{ll} m_{t}+\left(m\left(u^{2}-u_{x}^{2}\right)\right)_{x}=0, \quad m=u-u_{xx},\\[4pt] m(0,x)=m_0(x),\\\end{array}\right.
\end{align}
where $ m_0 = (1- \partial_x^2) u_0 $  with $ u_0=(1-\partial_x^2)^{-1}m_0 = p\ast m_0,  $  $ p(x)=\frac {1}{2}\,e^{-|x|}, $
and the symbol `$\ast$' denotes the convolution product on $\R$, defined by
\begin{equation*}
(f\ast g)(x)=\int_{\mathbb R}f(y)g(x-y)dy.
\end{equation*}
The principal result   of the present paper  is now    stated  as follows.
\begin{theorem}\label{last}
  Assume that $ m_0(x)  > 0, \forall  x\in \mathbb{R} $ and $m_0 -1\in  H^{2,1}(\mathbb{R})\cap H^{1, 2} (\mathbb{R})$. Then
 there exists a unique global solution $ m$ of  the Cauchy  problem   \eqref{mch1}  such that $m -1\in C([0, +\infty);  H^{2,1}(\mathbb{R})\cap H^{1, 2} (\mathbb{R})). $
\end{theorem}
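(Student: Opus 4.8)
The plan is to realize the solution through the inverse scattering transform associated with the Lax pair \eqref{lax0}, so that the nonlinear Cauchy problem \eqref{mch1} is linearized into the simple evolution of scattering data and then solved through a Riemann--Hilbert (RH) problem. The first step is the \emph{direct scattering transform}: from the datum $m_0$ with $m_0-1\in H^{2,1}(\mathbb{R})\cap H^{1,2}(\mathbb{R})$ and $m_0>0$ I would build the Jost eigenfunctions of the $x$-equation $\Phi_x=U\Phi$, establish their analyticity and decay in the spectral variable $z$, and define the reflection coefficient $r(z)$. The hypothesis $m_0>0$ plays two roles here: it guarantees that the entry $\lambda m$ of $U$ never degenerates, so the scattering problem is nonsingular, and it ensures that the quantity $\eta=1-1/m$ is well defined and bounded, which will be indispensable for removing the singularities at $z=\pm1$ later on. Because the integral representations carry the oscillating factor $e^{i(z-1/z)x}$, a direct Fourier analysis on $\mathbb{R}$ is unavailable; following the scheme announced in the introduction I would split the spectral problem into two auxiliary problems by two changes of variable, restoring the applicability of Volterra and Fourier estimates and yielding that $m_0-1\mapsto r$ maps $H^{2,1}\cap H^{1,2}$ into a suitable class of reflection coefficients.

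The \emph{time evolution} is the explicit part: from $\Phi_t=V\Phi$ one shows that the reflection coefficient evolves as $r(z,t)=r(z)\,e^{it\theta(z)}$ for the phase $\theta(z)$ dictated by $V$, so the regularity class of $r$ is preserved for all $t\ge 0$. The core is then the \emph{inverse problem}. I would pose the RH problem on the real axis with jump matrix built from $r(z,t)$ and solve it; the two singularities at $z=\pm1$ obstruct a direct use of Zhou's vanishing lemma, and to remove them I would perform the change of variable that introduces $\eta$ into the data of the RH problem, reducing to a regular problem whose unique solvability follows from the Schwarz symmetry of the jump together with the vanishing lemma. The large-$z$ (and $z\to 0$) asymptotic expansion of its solution then furnishes a reconstruction formula for $m(t,x)$, and uniqueness of the solution of \eqref{mch1} follows from the uniqueness of this RH solution together with the one-to-one correspondence between the potential and its scattering data.

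The decisive step is the \emph{regularity recovery}: proving that the reconstructed $m-1$ stays in $H^{2,1}(\mathbb{R})\cap H^{1,2}(\mathbb{R})$ for every $t$ and depends continuously on $t$. Here the obstruction is that $|r(z)|=1$ at $z=\pm1$, which destroys the Lipschitz estimates normally extracted from the Cauchy projection operators. I would circumvent this exactly as signalled in the introduction: first derive the boundedness of the reconstructed quantities in $W^{1,\infty}(\mathbb{R})$, then reformulate a \emph{new} RH problem whose jump is expressed through the Cauchy projections $P^{\pm}r$ of the reflection coefficient, on which the endpoint behaviour at $z=\pm1$ is absorbed and the needed mapping estimates become available. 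Because $r(z,t)$ carries the explicit favourable time dependence $e^{it\theta}$, these estimates can be read off directly from the time-dependent RH problem \emph{without} any a priori bound on $m$, giving the weighted Sobolev estimates uniformly on compact time intervals and hence $m-1\in C([0,+\infty);H^{2,1}\cap H^{1,2})$. I expect this endpoint analysis at $z=\pm1$ to be the main obstacle: controlling the Cauchy projections and the $\eta$-quantities near these singular points with enough uniformity to close the weighted estimates is where the bulk of the technical effort will lie.
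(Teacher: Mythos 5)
Your proposal follows essentially the same route as the paper: direct scattering of $m_0-1\in H^{2,1}\cap H^{1,2}$ into scattering data, explicit unimodular time evolution of $r$, solvability of the desingularized RH problem (with $\eta=1-1/m$ entering to remove the $z=\pm1$ singularities and a vanishing/Fredholm argument exploiting $1-|r(z)|>0$ for $z\neq\pm1$), first a $W^{1,\infty}$ bound on the reconstructed solution, then a new RH problem yielding the weighted estimates without any a priori bound on $m$, and finally continuity in $t$. The only cosmetic deviation is your description of the new RH problem's jump as built from Cauchy projections of $r$; the paper realizes it concretely via two transformed spectral problems in the variable $k=z-z^{-1}$ with modified coefficients $r_1,r_2$ of improved weighted regularity, which is exactly the mechanism you anticipate for absorbing the endpoint behaviour at $z=\pm1$.
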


A crucial  approach for proving this  theorem    is   to  establish a   $L^2$-bijection map
between  solution $m$  and scattering data  by applying  the inverse scattering transform theory to the Cauchy problem \eqref{mch1},
see   Figure \ref{Figure1}.   Starting from the given  initial value $m_0 -1\in  H^{2,1}(\mathbb{R})\cap H^{1, 2} (\mathbb{R})$,
   the direct scattering transform
 gives rise to the scattering data and  a RH problem  associated with the Cauchy problem    \eqref{mch1}.
Then the inverse scattering transform  goes  back  to
  the global  solution $m(t) $  satisfying  $m -1 \in C ([0, +\infty);  H^{2,1}(\mathbb{R} )\cap H^{1, 2} (\mathbb{R})  )$
   to the   Cauchy problem   \eqref{mch1} via the  RH problem
with associated with time-dependent scattering data.


\begin{remark}  Note that if  $m_0(x)>0$,  $\forall x\in\mathbb{R}$, then  the solution $ m $ of \eqref{mch1} satisfies $\inf_{x\in\mathbb{R}} m(t, x)>0$, for $t \in [0, T)$  with any given $ T> 0 $ \cite{qu7}.
Using the blow-up criterion in Theorem 4.2 \cite{qu7} with the energy estimates, it follows from Theorem \ref{last} that for any $ s \ge 2, $  there exists a unique global solution $ m $ of the Cauchy problem  \eqref{mch1} satisfying  $ m -1 \in  C([0, +\infty);  H^{s,1}(\mathbb{R})\cap H^{s-1, 2} (\mathbb{R})), $ with the initial data $  m_0(x)  > 0, \forall  x\in \mathbb{R} $ and $m_0 -1\in  H^{s,1}(\mathbb{R})\cap H^{s-1, 2} (\mathbb{R}),  \, \forall s \ge 2. $
\end{remark}

\begin{figure}
\begin{center}
\begin{tikzpicture}
\node at (-4.0, 1.0 ){\fontsize{8pt}{8pt} $m_0 -1\in  H^{2,1}(\mathbb{R})\cap H^{1,2} (\mathbb{R}) $};
\draw [->] (-1.9,1.0)--(1.4,1.0);
\node at (3.8, 1.0 ){\fontsize{8pt}{8pt} $(r (0;z), \{z_j,c_j(0)\}_{j=1}^{2N_0})\in Y $};

\node at (-4.0, -1.0 ){\fontsize{8pt}{8pt} $m  -1 \in C([0,\infty),H^{2,1}(\mathbb{R})\cap H^{1,2} (\mathbb{R}) ) $};
\draw [<-] (-1.3,-1.0)--(1.4,-1.0);
\node at (3.8, -1.0 ){\fontsize{8pt}{8pt} $(r (t;z),\{z_j,c_j(t)\}_{j=1}^{2N_0})\in  Y $};

\draw [-] (2.9,0.8)--(2.9,-0.6);\draw [->] (2.9,-0.6)--(2.9,-0.8);
\draw [dashed] (-4,0.8)--(-4,-0.6);
 \draw [->] (-4,-0.6)--(-4,-0.8);
\node at (4.1, 0 ){\footnotesize Time \ evolution};
 \node at (-0.4, 1.3 ){\footnotesize Direct scattering};
 \node at (0, -0.7){\footnotesize Inverse scattering};
\end{tikzpicture}
 \caption{\footnotesize   Bijective relation between potential $m$ and reflection coefficients }
 \label{Figure1}
\end{center}
 \end{figure}
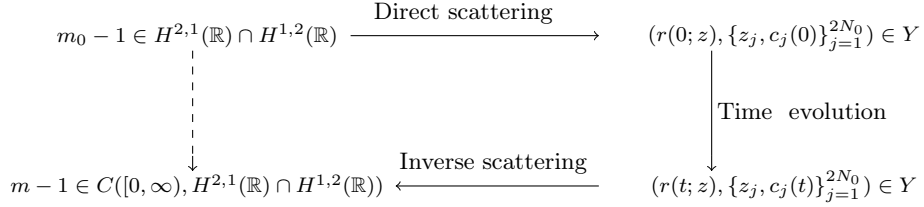

The rest of the paper is organized as follows.   In Section \ref{sec2}, we  give construction of the basic RH problems associated with the Cauchy problem \eqref{mch1} and establish
a direct inverse transform from the   initial value $m_0$ to the scattering data at $t=0$.
In Section \ref{sec3},  we carry out the inverse scattering transform from  time-dependent scattering data and solvability of the regular RH problem.
In Section \ref{sec6},
we show the existence  of   a unique  global  solution  $ m -1 \in C ([0, +\infty);  W^{1,\infty}(\mathbb{R} )) $  to the  Cauchy problem   \eqref{mch1}.
In the last section,  Section \ref{sec7},   we reconstruct   a new RH problem   to arrive at the regularity of the solution $m-1$.
Finally,  we demonstrate the existence of a unique  global  solution  satisfying   $m  -1 \in C ([0, +\infty);  H^{2,1}(\mathbb{R} )\cap H^{1, 2} (\mathbb{R})  )$
  via  the new RH problem  and refined   estimates on the solution  $m$.
\vskip 0.1cm
\noindent {\bf Notations.}
Throughout, the norm in the classical space $ L^p (\mathbb{R})$ for any $ 1 \le p \le \infty $  will be written as $ \|\cdot \|_{L^p}. $
We use the normed spaces defined in the following.
 A weighted $L^p(\mathbb{R})$ space for any $ 1 \le p \le \infty $ is specified by
	$$L^{p,s}(\mathbb{R})  =  \left\lbrace f \in L^p(\mathbb{R})\hspace{0.2cm} |  \  \hspace{0.1cm} \langle \cdot\rangle^sf\in L^p(\mathbb{R}) \right\rbrace, \quad s \ge 0 $$ with $ \langle x \rangle = (1 + x^2)^{\frac{1}{2}} $ equipped with the norm
$$
\|f\|_{p, s} =  \| \langle \cdot\rangle^s f \|_{L^p}. \quad
$$
The Sobolev space $ W^{k,p} $ is defined by
	$$W^{k,p}(\mathbb{R})  =  \left\lbrace f\in L^p(\mathbb{R}) \hspace{0.2cm}| \  \hspace{0.1cm} \partial_x^jf \in L^p(\mathbb{R}), \; j=1,2,...,k \right\rbrace, $$ equipped with the norm
$$
\|f\|_{W^{k, p}} = \left(\sum_{j =0}^{k} \| \partial_x^jf \|^p_{L^p}\right)^{1/p},
$$
and
a weighted Sobolev space $H^{k,s}$ is defined by
	$$H^{k,s}(\mathbb{R})   =   \left\lbrace f\in L^2(\mathbb{R})  \hspace{0.2cm}| \  \hspace{0.1cm} \langle \cdot\rangle^{s}\partial_x^jf  \in L^2(\mathbb{R}),  \; j=0,1,...,k \right\rbrace,  \; s \ge 0$$ equipped with the norm
$$
\|f\|_{H^{k, s}}  =   \sum_{j =0}^{k} \| \langle \cdot\rangle^{s} \partial_x^j f \|_{L^2},  \; s \ge 0.
$$
And  the Fourier transform is normalized  by
\begin{align*}
	\hat{f}(z)=(\mathcal{F}f)(z)=\int_{\mathbb{R}}f(s)e^{-2\pi isz}ds,\ 	\check{g}(z)=(\mathcal{F}^{-1}g)(z)=\int_{\mathbb{R}}g(s)e^{2\pi isz}ds.
\end{align*}
Let $I$ be an interval on the real line $\mathbb{R}$ and $X$ be a  Banach space. We denote  $C_B(I;X)$  the space of bounded continuous functions on $I$ taking values in $X$  equipped with the norm
\begin{equation*}
	\|f\|_{C_B(I; X)}=\sup _{x \in I}\|f(\cdot, x)\|_{X}.
\end{equation*}
Define a Banach space
\begin{align}
Y =\left\{\left( r,\{z_j,c_j\}_{j=1}^{2{N_0}}\right) \in \left( H^{1,2}(\mathbb{R})\cap H^{2,1}(\mathbb{R})\right) \otimes \mathbb{C}^{2{N_0}}\otimes \mathbb{C}^{2{N_0}}  \right\}, \label{banch}
\end{align}
  equipped with the norm
\begin{align}
	\Big\| \left( r,\{z_j,c_j\}_{j=1}^{2{N_0}}\right)\Big\|_{Y}=\|r\|_{H^{1,2}\cap H^{2,1}}+\sum_{j=1}^{2{N_0}}|z_j|+\sum_{j=1}^{2{N_0}}|c_j|.\label{normD}
\end{align}
In addition, for any vector or matrix $A$, the superscript
$A^T$ denotes the transpose of $A$.  Denote  $f^*(z)=\overline{f(\bar{z})}$.
Finally,  the letter $C$ will be used to denote universal positive constants which may vary from line to line. We also use the
notation $A\lesssim B$ to denote the bound of the form $A \leqslant CB$. To emphasize the implied constant
to depend  on some parameter $\alpha$, we shall indicate this by $C(\alpha)$.


\section {Direct scattering transform  }\label{sec2}

Our attention in this section is  to consider the  direct scattering transform for  the spatial spectral problem in the Lax pair  \eqref{lax0} as $t=0$ with the initial value $ m_0 - 1 \in H^{1,2} \cap H^{2,1} $.
The variable $t$ is omitted as usual, for example, $\Phi(z;0,x)$ is just written as $\Phi(z;x)$.
\subsection{The Jost function}\label{subs2.1}

Following the approach on  the scheme of construction of  a basic  RH problem
\cite { p2,    Mon, CL, CL2,pelinovskey1,  YYLmch, ZHOU},
we define  a   transformation for $m_0-1\in L^1(\mathbb{R})$
\begin{align}
		\mu(z;x)=D(z)\Phi(z;x)e^{  p(z;x)\sigma_3}
,\label{transmu}
\end{align}
where
\begin{align}
	&D(z)=\left(\begin{array}{cc}
		1 & -\frac{2\lambda}{2+  ik}\\
		-\frac{2\lambda}{2+ ik} & 1
	\end{array}\right), \ \ k= z-z^{-1},\ \lambda=- \frac{1}{2}(z+z^{-1}),\label{D}\\
&	p(z;x)= \frac{ik }{4}( x+ \int_{+\infty}^{x}(m_0(s)-1)ds). \nonumber
\end{align}
Then the Jost function $\mu(z;x) $ satisfies the following spectral problem
\begin{align}
	& \mu_{x}(z;x) = - \frac{ ik}{4} m_0 [\sigma_3,\mu(z;x)]+P\mu(z;x),\label{lax1.1} \ 	P=\dfrac{m_0-1}{ ik}\left(\begin{array}{cc}
		1 & \lambda \\
		-\lambda & -1
	\end{array}\right),
\end{align}
where the Lie bracket $ [A, B] $ is defined by $ [A, B] = AB -BA. $
We now denote the matrix in column by
$\mu^\pm(z;x)=\left(  \mu^{\pm }_1(z;x),  \mu^{\pm }_2(z;x)\right), $
where  the subscripts  $1$ and $ 2$ denote
the first and second columns of $\mu^\pm(z;x)$, respectively. Then \eqref{lax1.1} leads to   two  Volterra type integrals
\begin{align}
	&\mu^\pm_1(z;x)=e_1+\int^{x}_{\pm \infty}\text{diag} ( 1,
	e^{  2(p(z;s)-p(z;x)) }) P(z;s)\mu^\pm_1(z;s) ds,\label{intmu}\\
	&\mu^\pm_2(z;x)=e_2+\int^{x}_{\pm \infty}\text{diag} (
		e^{  -2(p(z;s)-p(z;x)) }, 1) P(z;s)\mu_2^\pm(z;s)  ds\label{intmu2},
\end{align}
where $e_1=(1,0)^T$, $e_2=(0,1)^T$. Then the existence of $\mu^\pm$ and their analytic properties as functions
of $z$ are the subject of the following proposition.
\begin{Proposition}\label{pro1.1}
	Let $m_0-1\in L^{1}(\mathbb{R})$. Then there  exists unique solution $\mu^\pm(z;\cdot)\in L^\infty(\mathbb{R})$ for every  $z\in\mathbb{R}\setminus\{ \pm1\} $. Moreover,
	for  every $x\in\mathbb{R}$,  $z\in\mathbb{C}^+$, $\mu^-_1 (z;x)$ and $\mu^+_2(z;x)$ extend analytically into solutions of \eqref{intmu} and \eqref{intmu2}  respectively, while 	for  every $x\in\mathbb{R}$,   $z\in\mathbb{C}^-$, $\mu^+_1(z;x)$ and $\mu^-_2(z;x)$ extend analytically into solutions of \eqref{intmu} and \eqref{intmu2}  respectively.
\end{Proposition}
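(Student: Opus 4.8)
The plan is to read \eqref{intmu} and \eqref{intmu2} as Volterra integral equations and to solve them by successive approximations, exploiting two structural features of the kernel: that the potential factors as $P(z;s)=(m_0(s)-1)\,A(z)$ with $A(z)$ a matrix bounded uniformly away from the singular points $z=\pm1$, and that on the real axis the diagonal exponential factors are unimodular. Writing \eqref{intmu} as $\mu_1^\pm=e_1+\mathcal{K}_z\mu_1^\pm$ with
\[
(\mathcal{K}_z f)(x)=\int_{\pm\infty}^x \mathrm{diag}\big(1,\,e^{2(p(z;s)-p(z;x))}\big)\,P(z;s)f(s)\,ds,
\]
one has the pointwise bound $\|P(z;s)\|\le C(z)\,|m_0(s)-1|$ with $C(z)=C\big(1+|\lambda(z)|\big)/|k(z)|$, which is finite precisely for $z\in\mathbb{R}\setminus\{\pm1\}$, while $|e^{2(p(z;s)-p(z;x))}|=1$ for real $z$ since then $k$ is real and $p$ is purely imaginary.

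First I would establish existence, uniqueness and boundedness for real $z\neq\pm1$. Because the integration in $\mathcal{K}_z$ is one-sided, the iterated kernels obey the standard Volterra estimate, so the $n$-th iterate of $\mathcal{K}_z$ is controlled by $\tfrac{1}{n!}\big(C(z)\,\|m_0-1\|_{L^1}\big)^n$; hence the Neumann series $\sum_{n\ge0}\mathcal{K}_z^{\,n}e_1$ converges absolutely in $L^\infty(\mathbb{R})$ and produces the unique bounded solution $\mu_1^\pm(z;\cdot)$. Applying the same reasoning to \eqref{intmu2} gives $\mu_2^\pm$. This is the routine part, requiring only $m_0-1\in L^1(\mathbb{R})$ and $z\neq\pm1$.

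For the analytic continuation I would fix one column, say $\mu_1^-$, and let $z$ leave the real axis. The exponential factor extends analytically in $z$ (it is assembled from $k=z-z^{-1}$ and the $L^1$ antiderivative of $m_0-1$, both analytic for $z\neq0$), and the decisive step is to control its modulus. Since
\[
\mathrm{Re}\,2\big(p(z;s)-p(z;x)\big)=-\tfrac12\,\mathrm{Im}(k)\Big[(s-x)+\int_x^s\big(m_0(\tau)-1\big)\,d\tau\Big],
\]
with $\mathrm{Im}(k)=\mathrm{Im}(z)\,(1+|z|^{-2})$, the sign of this real part over the integration range (where $s-x$ keeps a fixed sign) is governed by the sign of $\mathrm{Im}(z)$, the correction $\big|\int_x^s(m_0-1)\big|\le\|m_0-1\|_{L^1}$ acting only as a bounded shift. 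This is exactly what selects the half-plane asserted in the proposition: on compact subsets of that half-plane the exponential kernel is uniformly bounded, so the Volterra estimate of the previous step persists with locally uniform $z$-dependent constants, each Neumann iterate is analytic in $z$ (differentiate under the integral sign, or invoke Morera together with Fubini), and the locally uniform convergence transfers analyticity to the sum $\mu_1^-(z;\cdot)$. The symmetric choices place $\mu_2^+$ in the same half-plane and $\mu_1^+,\mu_2^-$ in the opposite one.

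The main obstacle is the behaviour at $z=\pm1$, where $k=z-z^{-1}$ vanishes and $P\propto(ik)^{-1}$ is unbounded; this forces the exclusion of these points and precludes any uniform control across the real axis. A secondary point demanding care is confirming that the bounded correction $\int_x^s(m_0-1)\,d\tau$ never reverses the sign dictated by the leading term $(s-x)$ — this is guaranteed precisely because $m_0-1\in L^1(\mathbb{R})$, so that the linear growth of $(s-x)$ dominates as $|s|\to\infty$ and the exponential kernel remains bounded on the relevant half-plane.
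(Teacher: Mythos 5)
Your proposal is correct and follows essentially the same route as the paper: both solve \eqref{intmu}--\eqref{intmu2} by Neumann iteration of the Volterra operator, using the factorial bound $\|\mathcal{T}^n\|\lesssim C(z)\|m_0-1\|_{L^1}^n/n!$ for existence and uniqueness in $L^\infty$, and obtaining analyticity in the half-plane selected by the sign of $\mathrm{Im}(z-z^{-1})$ from the locally uniform convergence of the series of analytic iterates. One small wording caveat: the correction $\int_x^s(m_0-1)\,d\tau$ \emph{can} reverse the sign of the exponent's real part for $s$ close to $x$; what matters (and what your argument in fact uses) is only that it is a shift bounded by $\|m_0-1\|_{L^1}$, so the exponential kernel stays uniformly bounded on the relevant closed half-plane.
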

\begin{proof}
It suffices to prove the statement by taking $\mu^-_2$ as example. Rewrite \eqref{intmu2} as
	\begin{align*}
	\mu^\pm_2(z;x)=e_2+\mathcal{T}\mu^\pm_2(z;x),
	\end{align*}
where $\mathcal{T}$ is  a linear integral  operator defined by
\begin{align}
	\mathcal{T} f (z;x)&= \int^{x}_{+\infty}K(z;x,s)f(z;s)ds, \label{wef}
\end{align}
for any $f(z;\cdot)\in L^\infty(\mathbb{R})$ with  integral kernel $K(z;x,s)={\rm diag} (
e^{\frac{i}{2}(z-1/z)\int^{s}_{x}m_0(l)dl}, 1) P(z;s).$ Then $\mathcal{T}$ is a bounded integral  operator on $L^\infty$ for every $z\in\overline{\mathbb{C}^-}\setminus\{\pm1\}$ with
\begin{align*}
	\|\mathcal{T}\|\leq C(z)\|m_0-1\|_{L^1}.
\end{align*}
For every $z\in\overline{\mathbb{C}^-}\setminus\{\pm1\}$,  any $n=2,3,...$, $f(z;\cdot)\in L^\infty(\mathbb{R})$
	\begin{align}
	\mathcal{T}^n f(z;x)&=\int^{x}_{+\infty}\int^{y_1}_{+\infty}\dots \int^{y_{n-1}}_{+\infty}K(z;x,y_1)\dots K(z;x,y_n)f(z;y_n)dy_n\dots dy_1\nonumber\\
	&:=\int^{x}_{+\infty}K_n(z;x,y_n)f(z;y_n)dy_n,\label{Tn}
\end{align}
with
\begin{align*}
	|K(z;x,y_n)|\lesssim\frac{ \left( \int_{y_n}^{+\infty} |m_0(\zeta)-1|d\zeta\right) ^{n-1}}{(n-1)!} |m_0({y_n})-1|.
\end{align*}
Then $\mathcal{T}^n$ admits
\begin{align*}
		\|\mathcal{T}^n\|\leq C(z)\frac{\|m_0-1\|_{L^1}^n}{n!}.
\end{align*}
Therefore, the series $\sum_{n =1}^\infty\mathcal{T}^n$  is absolute convergence such that $	\mu^\pm_2(z;\cdot)$ exists in $L^\infty$ with
\begin{align*}
		\mu^\pm_2(z;x)=\sum_{n =0}^\infty\mathcal{T}^ne_2(z;x),
\end{align*}
for every $z\in\overline{\mathbb{C}^-}\setminus\{\pm1\}$. Analyticity of  $	\mu^\pm_2(x; \cdot)$ in $\mathbb{C}^-$ for every $x \in\mathbb{R} $ follows from the absolute and uniform convergence of the Neumann series of analytic functions in $z$. This completes the proof of Proposition \ref{pro1.1}.
\end{proof}
\begin{remark}
	Similar to Lemma 1 in \cite{pelinovskey1}, although the estimate in \eqref{normT} requires  $\parallel m_0-1\parallel_{L^1}<1$, real axis $\mathbb{R}$ can be split  into a finite number of subintervals $I_n$ such that $\int_{I_n}|m_0(s)-1|ds< 1$ on each subinterval. Thus, through that $\mathcal{T}$ is  a linear integral  operator, Proposition \ref{lemma1} can be accomplished without any restriction on  $\|m_0-1\|_{L^1}<1$.
\end{remark}
Applying \eqref{intmu} and \eqref{intmu2} also implies that as $|z|\to\infty$,
$$	\mu^\pm(z;x)=I+\mathcal{O}(1/z),$$
with 	\begin{align}
	\lim_{z\to \infty}z \mu^\pm_{12}(z)=\eta,\ 	\eta=1-1/m_0. \label{asymuinf}
\end{align}
On the other hand, the Jost functions exhibit singular behavior for $z$ near $\pm1$.  The singular behavior of these solutions at $z = \pm1$ plays a non-trivial
and unavoidable role in our analysis.
\begin{Proposition}\label{promu1}
	Let $m_0-1\in L^{1,1}(\mathbb{R})$. Then  $	\mu^\pm$  have singularities on 	$z=\pm1$ with a special structure
	\begin{align*}
		&\mu^\pm(z;x)=\frac{i\alpha_\pm(x)}{2(z-1)}\left(\begin{array}{cc}
			-1 & 1 \\
			-1 & 1
		\end{array}\right)+\mathcal{O}(1),  \quad \text{as } z\to 1,\\
		&\mu^\pm(z;x)=\frac{i\alpha_\pm(x)}{2(z+1)}\left(\begin{array}{cc}
			-1 & -1 \\
			1 & 1
		\end{array}\right)+\mathcal{O}(1),  \quad \text{as }z\to -1,
	\end{align*}
	where $\alpha_\pm(x)\in\mathbb{R}$.
\end{Proposition}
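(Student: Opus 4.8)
The plan is to read off the singular behaviour from the Laurent expansion of $\mu^\pm$ about $z=1$, after which the case $z=-1$ comes from a discrete symmetry. Everything is forced by the coefficients of \eqref{lax1.1} at $z=1$: writing $P=\frac{m_0-1}{ik}M$ with $M=\begin{pmatrix}1&\lambda\\-\lambda&-1\end{pmatrix}$, the scalar $\frac{1}{ik}$ has a simple pole there because $k=z-z^{-1}=2(z-1)+\mathcal O((z-1)^2)$, while $\lambda\to-1$, so $M\to N_+:=\begin{pmatrix}1&-1\\1&-1\end{pmatrix}$, which is nilpotent, $N_+^2=0$, with $\ker N_+=\operatorname{span}\{(1,1)^T\}$. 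Since $m_0-1\in L^{1,1}(\mathbb R)$, I would posit $\mu^\pm(z;x)=\frac{A(x)}{z-1}+B(x)+\mathcal O(z-1)$ and determine $A$.

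First I would confirm the singularity is no worse than a simple pole, using the Neumann series $\mu^\pm=\sum_n\mathcal T^n(\cdot)$ from Proposition \ref{pro1.1}: each kernel carries one factor $\frac{1}{ik}M$, so the most singular contribution to $\mathcal T^n$ is proportional to $(z-1)^{-n}N_+^{\,n}=0$ for $n\ge2$; the nilpotency kills all poles above first order and, at the same time, bounds the $\mathcal O(1)$ remainder by the estimates already used in Proposition \ref{pro1.1}. Substituting the ansatz into \eqref{lax1.1} and noting $\frac{ik}{4}m_0[\sigma_3,\,\cdot\,]=\mathcal O(z-1)$, the coefficient of $(z-1)^{-2}$ forces $N_+A=0$; hence each column of $A$ lies in $\ker N_+$, so $A=\begin{pmatrix}s_1&s_2\\s_1&s_2\end{pmatrix}$ for scalars $s_1(x),s_2(x)$.

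The residue is then pinned down by two discrete symmetries of the normalised Jost solution, each obtained from an invariance of \eqref{lax1.1} plus uniqueness. With $\sigma_1=\begin{pmatrix}0&1\\1&0\end{pmatrix}$, the facts $\sigma_1\sigma_3\sigma_1=-\sigma_3$, $\sigma_1 M\sigma_1=-M$, $M(1/z)=M(z)$ and $k(1/z)=-k$ give $\mu^\pm(1/z)=\sigma_1\mu^\pm(z)\sigma_1$; evaluating at the pole yields $\sigma_1 A\sigma_1=-A$, i.e. $s_2=-s_1$. On the real axis one has $\overline{P}=-P$ and $\overline M=M$, so $\mu^\pm(z)=\sigma_1\overline{\mu^\pm(\bar z)}\sigma_1$, giving $A=\sigma_1\overline A\sigma_1$, i.e. $s_1=\overline{s_2}$. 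Together $s_1=-\overline{s_1}$ is purely imaginary; setting $s_1=-\frac{i\alpha_+(x)}{2}$ and $s_2=\frac{i\alpha_+(x)}{2}$ with $\alpha_+\in\mathbb R$ gives $A=\frac{i\alpha_+}{2}\begin{pmatrix}-1&1\\-1&1\end{pmatrix}$, the asserted form at $z=1$.

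For $z=-1$ I would use the reflection symmetry: with $\sigma_2=\begin{pmatrix}0&-i\\i&0\end{pmatrix}$ one has $\sigma_2\sigma_3\sigma_2=-\sigma_3$ and $\sigma_2 M(z)\sigma_2=-\sigma_3 M(z)\sigma_3=-M(-z)$, whence $\mu^\pm(-z)=\sigma_2\mu^\pm(z)\sigma_2$; transporting the residue at $+1$ by $X\mapsto-\sigma_2 X\sigma_2$ produces exactly $\frac{i\alpha_+}{2}\begin{pmatrix}-1&-1\\1&1\end{pmatrix}$, the claimed structure at $z=-1$ (with the same real coefficient). I expect the crux to be not the order-counting—which only delivers the kernel direction $(1,1)^T$—but the precise rank-one residue with a single real coefficient shared, with opposite signs, by the two columns, which are analytic in opposite half-planes; this hinges entirely on the two symmetries above, and the delicate points are their rigorous derivation from uniqueness of the normalised Jost solutions together with the algebraic identities $d(z)d(1/z)=1$ and $4\lambda^2=4+k^2$ that make the conjugating matrices cancel. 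Controlling the $\mathcal O(1)$ remainder is then routine given the simple-pole bound and the contraction estimates of Proposition \ref{pro1.1}.
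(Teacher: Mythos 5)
Your second and third steps are sound: the three reductions you derive are exactly the paper's \eqref{symPhi1}, and your evaluation of them at the pole ($\sigma_1 A\sigma_1=-A$ from $z\mapsto 1/z$, $A=\sigma_1\overline{A}\sigma_1$ from conjugation on $\mathbb{R}$, then transport to $z=-1$ by $X\mapsto-\sigma_2X\sigma_2$) correctly forces $A=\frac{i\alpha_\pm}{2}\left(\begin{smallmatrix}-1&1\\-1&1\end{smallmatrix}\right)$ with $\alpha_\pm\in\mathbb{R}$ and reproduces both claimed residues. This is the same role the symmetries play in the paper's proof.

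The genuine gap is the analytic first step: your claim that ``each kernel carries one factor $\frac{1}{ik}M$, so the most singular contribution to $\mathcal{T}^n$ is proportional to $(z-1)^{-n}N_+^n=0$,'' with the $\mathcal{O}(1)$ remainder ``bounded by the estimates already used in Proposition \ref{pro1.1}.'' The kernel is $\mathrm{diag}\big(e^{2(p(z;s)-p(z;x))},1\big)P(z;s)$, and the diagonal exponential factors do \emph{not} commute with $M$; expanding $e^{\frac{ik}{2}\int_x^s m_0}=1+\frac{ik}{2}\int_x^s m_0+\cdots$, a term in the $n$-fold product with $c$ exponential-correction insertions is of order $k^{c-n}$ and survives nilpotency already for $c\approx(n-1)/2$ insertions (note $N_+\,\mathrm{diag}(1,0)\,N_+=N_+\neq 0$), so individual cross terms are far more singular than a simple pole. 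What actually saves the day is not $N_+^2=0$ alone but the rank-one identity $N_+XN_+=\big[(1,-1)X(1,1)^T\big]N_+$ combined with the cancellation $(1,-1)(1,1)^T=0$, which makes each consecutive pair contribute a \emph{scalar} of size $\mathcal{O}(k\,|s-x|)$; the factor $|s-x|$ is precisely why the hypothesis $m_0-1\in L^{1,1}$ appears in the statement, and it is used nowhere in your argument. The paper implements exactly this mechanism by regularizing first: $\upsilon^\pm=(z^2-1)D^{-1}\mu^\pm$ solves a Volterra equation \eqref{v1}--\eqref{v2} whose kernel $\Gamma$ is bounded near $z=\pm 1$ thanks to $\big|\frac{1-\gamma(z,x,s)}{z^2-1}\big|\lesssim|s-x|$ and $\|(m_0-1)\Gamma\|_{L^1}\lesssim\|m_0-1\|_{L^1}+\|m_0-1\|_{L^1}^2+\|m_0-1\|_{L^{1,1}}$; continuity of $\upsilon^\pm$ at $z=1$ together with the rank-one matrix $D(1)=\left(\begin{smallmatrix}1&1\\1&1\end{smallmatrix}\right)$ then yields \emph{simultaneously} the simple pole, the direction $(1,1)^T$, and the uniform $\mathcal{O}(1)$ remainder. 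Your fallback to Proposition \ref{pro1.1} cannot substitute for this: its operator bounds carry a constant $C(z)$ that blows up like $|z^2-1|^{-1}$ at $z=\pm1$, so they give nothing uniform near the singular points; likewise your ODE coefficient-matching presupposes the Laurent-type expansion whose existence is the very point at issue, whereas in the paper the kernel direction of the residue comes for free from $D(1)$.
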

\begin{proof}
	 We take $z$ near $1$ as an example.  The property of $z$ near $-1$ can be obtained from the symmetry immediately.
	Denote
	\begin{align}
		\upsilon^\pm=(z^2-1)D^{-1}\mu^\pm\label{dev}
	\end{align}
	with $D$ defined in \eqref{D}. Then  the integral representations of $\mu^\pm$ in \eqref{intmu} and \eqref{intmu2} give that the two columns of $\upsilon^\pm$ admit the following integral equations,
	\begin{align}
		&\upsilon^\pm_1(z;x)=\left(\begin{array}{cc}
			(z-i)^2/2	   \\[4pt]
			i(z^2+1)/2
		\end{array}\right)+\int^{x}_{\pm \infty} \frac{m_0(s)-1}{2}\Gamma (z,x,s)\upsilon^\pm_1(z;s)ds,\label{v1}\\
		&\upsilon^\pm_2(z;x)=\left(\begin{array}{cc}
			i(z^2+1)/2  \\[4pt]
			(z-i)^2/2	
		\end{array}\right)+\int^{x}_{\pm \infty} \frac{(m_0(s)-1)\gamma^2(z,x,s)}{2}\Gamma (z,x,s)\upsilon^\pm_2(z;s)ds\label{v2},
	\end{align}
	where  $\Gamma$ is a bounded  analytic function with respect to $z$ in a neighbourhood of $z=\pm1$,
	\begin{align}
		\Gamma (z,x,s)&=D^{-1}(z)\left(\begin{array}{cc}
			1 & 0\\
			0 & \gamma(z,x,s)
		\end{array}\right)D(z)\sigma_3\\
		&=\frac{(z-i)^2}{2(z^2-1)}\left(\begin{array}{cc}
			1-\gamma(z,x,s)d(z)^2 & -d(z)(1-\gamma(z,x,s))\\
			-d(z)(1-\gamma(z,x,s)) & d(z)^2 -\gamma(z,x,s)
		\end{array}\right),
	\end{align}
	and
	\begin{align}
		\gamma(z,x,s)=e^{\frac{1}{2}(z-1/z)\int^s_xm_0(\eta)d\eta},\ d(z)=\frac{z^2+1}{i(z-i)^2}.
	\end{align}
	
	The proof also  resembles  the analysis of the Jost function in \cite{YYLmch} by taking $\upsilon^+_1$ as an example. For any matrix function $f(z;x)\in L^\infty((1/2,2)\times\mathbb{R}^+)$, define a integral operator $\tilde{\mathcal{T}}$
	\begin{align*}
		\tilde{\mathcal{T}}	f(z;x)=\int^{x}_{+ \infty} \frac{m_0(s)-1}{2}\Gamma (z,x,s)\sigma_3f(z;s) ds.
	\end{align*}
	Noting that
	\begin{align*}
		\Big\vert	\frac{1}{z^2-1}\left( 1-\gamma(z,x,s)\right)\Big\vert =\Big\vert\int_{x}^{s}\frac{e^{\frac{1}{2}(z-1/z)\int^\xi_xm_0(\eta)d\eta}}{m_0(\xi)}d\xi\Big\vert\lesssim|s-x|,
	\end{align*}
	it is ascertained that
	$$\|(m_0(\cdot)-1)\Gamma(z;\cdot)\|_{L^1}\leq \|m_0-1\|_{L^1}+2\|m_0-1\|_{L^1}^2+4\|m_0-1\|_{L^{1,1}}.$$
	Then the similar process as one in  Proposition \ref{lemma1} implies that  $(I-\tilde{\mathcal{T}})^{-1}$ exists with $\|(I-\tilde{\mathcal{T}})^{-1}\|\leq \exp\{\|m_0-1\|_{L^1}+2\|m_0-1\|_{L^1}^2+4\|m_0-1\|_{L^{1,1}}\}$. This in turn implies that
	\begin{align*}
		\upsilon^+_1=(I-\tilde{\mathcal{T}})^{-1}(e_1(z-i)^2/2+e_2i(z^2+1)/2).
	\end{align*}
	
	Furthermore, when  $z=1$,
	combining \eqref{dev}, $D(1)=\left(\begin{array}{cc}
		1 & 1\\
		1 & 1
	\end{array}\right)$, and $\det D(z)=-\frac{z^2-2z-1}{2(z^2-1)}$, it then follows  that
	\begin{align*}
		\mu^\pm_1(z)=\frac{(\upsilon^\pm )_{11}(1)+(\upsilon^\pm)_{21}(1)}{2(z-1)}\left(\begin{array}{cc}
			1 \\
			1
		\end{array}\right)+\mathcal{O}(1).
	\end{align*}
	Together with the symmetry \eqref{symPhi1}, it is concluded that through denoting $\alpha_\pm=i(\upsilon^\pm )_{11}(1)+i(\upsilon^\pm)_{21}(1)$, we obtain the result of Proposition \ref{promu1}.
\end{proof}
In view of tr$U=0$ in the spectral problem \eqref {lax0}, it follows from  Abel's formula that the determinant  $\det(\Phi^\pm(z;x))=\det(\mu^\pm(z;x)) =1$.
And from the symmetry of the function $p$ and the matrix function $P$, the Jost functions 	$ \mu^\pm (z;x)$ admit  three kinds of  reduction conditions
		\begin{equation}
			\mu^\pm(z;x)=\sigma_1\overline{\mu^\pm(\bar{z};x)}\sigma_1=\sigma_2\mu^\pm(-z;x)\sigma_2=\sigma_1\mu^\pm(z^{-1};x)\sigma_1.\label{symPhi1}
		\end{equation}		
On the other hand, $	\mu^\pm(z;x)$ satisfy
a same special asymptotic as $z\to i$.
Based on the approach in \cite{Mon}, if $m_0-1\in L^1(\mathbb{R})$, then
 $	\mu^\pm(z;x)$ have the following expansions as $z\to i$,
\begin{align}
	&\mu^\pm(z;x)=e^{\frac{1}{2}\int_x^{\pm\infty}m_0(s)-1ds\sigma_3}\nonumber\\
	&+(z-i)e^{-\frac{1}{2}\int_x^{\pm\infty}m_0(s)-1ds\sigma_3}\left(\begin{array}{cc}
	0 & -\frac{u_0(x)+\partial_xu_0(x)}{2} \\
	\frac{\partial_xu_0(x)-u_0(x)}{2} & 0
\end{array}\right)+\mathcal{O}\left( (z-i)^2\right). \label{asyM}
\end{align}

\subsection{Scattering map} \label{map}
In this  subsection,   we will derive  basic properties on the scattering data
of the Lax pair  for the  general  initial data $ m_0 $  in the weighted Sobolev spaces with  $m_0 -1\in  H^{2,1}(\mathbb{R})\cap H^{1, 2} (\mathbb{R})$.

The Jost functions $\mu^\pm(z;x)$  satisfy a scattering  relation on   $\mathbb{R}$ by
\begin{equation}
	\mu^+(z;x)=\mu^-(z;x)e^{  p(z;x)\widehat{\sigma}_3}S(z), \label{scattering}
\end{equation}
where $S(z)$ is the   scattering matrix independent of $x$ and
\begin{equation}
	S(z) =\left(\begin{array}{cc}
		a(z) &b(z)   \\[4pt]
		\overline{b(z)} & \overline{a(z)}
	\end{array}\right),\hspace{0.5cm}\det (S(z)) =|a(z)|^2-|b(z)|^2=1. \nonumber
\end{equation}
In addition,   the relation formula \eqref{scattering} also gives that
\begin{align}
	a(z)=\det(\mu_1^+,\mu_2^-),\ b(z)=e^{-2p}\det(\mu_2^+,\mu_2^-).\label{deta}
\end{align}
This thus implies that  $a(z)$ is  analytical on $\mathbb{C}^-$, and continuous up to $ \mathbb{R} $  except at $z=\pm1$. And $|z|\to\infty$,  $a(z)\to 1$.
And $b(z)$ is continuous on $\mathbb{R}$ except at $z=\pm1$, where $z=\pm1$ are the only poles of $a(z)$ and $b(z)$ on $\mathbb{R}$. Specially, at $z=\pm 1$, in the generic case, $a(z)$ and $b(z)$ have the same order singularities. While in the non-generic case, since $a(z)$ and $b(z)$ are both continuous at $z=\pm 1$, which can be dealt with in a standard approach \cite {pelinovskey1}, we only consider the generic case  in the present  paper.
\vskip 0.1cm
\noindent \textbf{The discrete spectrum.}\quad
The function $a(z)$ may have zeros on $\mathbb{C}^-$. A point  $z_0\in\mathbb{C}^-$ satisfies  $a(z_0)=0$ if and only if  $z_0$ is an eigenvalue of the spatial part of the Lax pair  \eqref{lax0}. This $z_0$ is called the discrete spectrum. In addition, the zeros of $a(z)$ are important component of scattering data.  The following proposition demonstrates
that the  zeros of $a(z)$ take
a very restricted form.
\begin{Proposition} \label {prop2.2}
	The zeros of $a(z)$ in $\mathbb{C}^-$ are simple and finite, and they are restricted to	the unit circle.
\end{Proposition}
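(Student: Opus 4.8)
The plan is to split Proposition~\ref{prop2.2} into three claims --- that $a$ has no zeros on the real axis, that its zeros in $\mathbb{C}^-$ lie on the unit circle, and that they are simple and finite in number --- and to drive all three by combining the symmetry reductions \eqref{symPhi1} with the positivity hypothesis $m_0>0$. First I would dispose of the real axis: on $\mathbb{R}$ the relation $\det S=|a|^2-|b|^2=1$ gives $|a(z)|^2=1+|b(z)|^2\ge 1$, so $a$ cannot vanish on $\mathbb{R}\setminus\{\pm1\}$. Since $a$ is analytic in $\mathbb{C}^-$ with $a(z)\to1$ as $|z|\to\infty$, its zeros are isolated and confined to a bounded subset of the open lower half-plane, and the only candidate accumulation points are the singular points $z=\pm1$. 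Ruling these out is where I expect the first real work: I would use the local structure of the Jost functions near $z=\pm1$ from Proposition~\ref{promu1} together with the decay $m_0-1\in L^{1,1}(\mathbb{R})$ to show no sequence of zeros accumulates at $\pm1$, which yields finiteness.

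For the confinement to $|z|=1$, I would first recast a zero of $a$ as a genuine bound state. Writing $\Phi^\pm=D^{-1}\mu^\pm e^{-p\sigma_3}$ as in \eqref{transmu}, the columns solve $\Phi_x=U\Phi$, and $a(z)=\det D(z)\,\det\!\big(\Phi^+_{(1)},\Phi^-_{(2)}\big)$. For $z_0\in\mathbb{C}^-$ one checks $\operatorname{Re}(ik)>0$, so $\Phi^+_{(1)}$ decays at $+\infty$ and $\Phi^-_{(2)}$ decays at $-\infty$; hence $a(z_0)=0$ forces these two vector solutions to be proportional, producing a nonzero $\psi\in L^2(\mathbb{R})$ that decays in both directions. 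I would then rewrite $\Phi_x=U\Phi$ as the generalized eigenvalue problem $\mathcal L\psi=\lambda_0 m_0\psi$, where $\mathcal L\psi=-2i\sigma_2\psi_x+\sigma_1\psi$ and $\lambda_0=-\tfrac12(z_0+z_0^{-1})$. An integration by parts (the boundary terms vanish since $\psi$ decays exponentially) shows that $\mathcal L$ is formally self-adjoint on $L^2(\mathbb{R};\mathbb{C}^2)$. Because $m_0>0$, the weight $\langle\psi,m_0\psi\rangle=\int_{\mathbb R}m_0|\psi|^2\,dx>0$, so $\lambda_0=\langle\psi,\mathcal L\psi\rangle/\langle\psi,m_0\psi\rangle$ is real. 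Finally, real $\lambda_0$ makes the coefficients of $z^2+2\lambda_0 z+1=0$ real; its non-real root $z_0$ comes paired with $\bar z_0$, and since the product of the roots equals $1$ one gets $z_0\bar z_0=|z_0|^2=1$, i.e.\ $z_0$ lies on the unit circle.

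For simplicity I would differentiate the Wronskian representation of $a$. Using $\lambda'(z_0)\neq0$ (valid because $z_0\neq\pm1$), it suffices to show $\tfrac{da}{d\lambda}(\lambda_0)\neq0$, and the standard variation-of-constants computation gives $\tfrac{da}{d\lambda}(\lambda_0)$ proportional to $\int_{\mathbb R} m_0(\psi_1^2+\psi_2^2)\,dx$, where $\psi=(\psi_1,\psi_2)^T$ is the bound state. The point is to make this integral sign-definite: on the unit circle $\bar z_0=z_0^{-1}$, so combining the conjugation reduction and the inversion reduction in \eqref{symPhi1} forces $\mu_1^+(z_0;\cdot)$ to be real, and since $D(z_0)$ and $e^{-p(z_0;\cdot)}$ are then also real, $\psi$ can be normalized to be real-valued. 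Consequently $\psi_1^2+\psi_2^2=|\psi|^2$ and the integral reduces to $\int_{\mathbb R}m_0|\psi|^2\,dx>0$, so the zero is simple.

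The main obstacles are therefore twofold, and both are resolved by the hypotheses already in force. The analytic difficulty is the behaviour at the singular points $z=\pm1$: the recasting $\Phi=D^{-1}\mu e^{-p\sigma_3}$ degenerates there (since $\det D$ vanishes), so the bound-state argument must be carried out away from $\pm1$, and ruling out accumulation of zeros at $\pm1$ needs the refined singularity structure of Proposition~\ref{promu1} and the weight $m_0-1\in L^{1,1}$. The structural difficulty is turning the two integrals $\int m_0|\psi|^2$ and $\int m_0(\psi_1^2+\psi_2^2)$ into manifestly positive quantities; this is precisely the step where the positivity $m_0>0$ (for self-adjointness and the denominator) and the reductions \eqref{symPhi1} (for the reality of $\psi$ on $|z|=1$) are indispensable.
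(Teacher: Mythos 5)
Your proposal is correct and follows essentially the same route as the paper: recasting a zero of $a$ as a bound state of the self-adjoint weighted spectral problem $-\frac{i}{m_0}\sigma_2(2\partial_x+\sigma_3)\Phi=\lambda\Phi$ with $m_0>0$ to force $\lambda(z_0)\in\mathbb{R}$, excluding real zeros via $|a(z)|\geq 1$ on $\mathbb{R}$ so that $|z_0|=1$, and proving simplicity by differentiating the Wronskian representation of $a$ to obtain $\partial_z a(z_0)$ proportional to $\int_{\mathbb{R}} m_0\left(\psi_1^2+\psi_2^2\right)dx$, which is made sign-definite by the reality of the eigenfunction deduced from the symmetries \eqref{symPhi1}. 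The only differences are minor: you are more explicit than the paper about ruling out accumulation of zeros at $z=\pm1$ for finiteness, while the paper additionally records $a(-i)=\exp\{\frac12\int_{\mathbb{R}}(m_0-1)ds\}\neq 0$ to exclude $z_0=-i$, a check your formulation does not actually need.
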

\begin{proof}
	
	Since $a(z)$ is analytical  and $a(z)\to 1$ as $|z|\to\infty$,  then
 $a(z)$  has only a finite number of zeros on $\mathbb{C}^-$. Rewrite   the spatial part in  \eqref{lax0} as
	\begin{align*}
		-\frac{i}{m_0}\sigma_2( 2\partial_x+\sigma_3) \Phi=\lambda\Phi,
	\end{align*}
	which is  self-adjoint. So if $z_0$ is a zero of $a(z)$, the eigenvalue $\lambda(z_0)$ must be real. Together with $|a(z)|\geq1$ for $z\in\mathbb{R}$, the zero $z_0$ must locate on the unit circle in $\mathcal{C}^-$. Moreover, it is observed from  \eqref{asyM} and \eqref{scattering} that
	\begin{align}
		a(-i)=\exp\{\frac{1}{2}\int_{\mathbb{R}}(m_0-1)ds\}\neq0.\label{ai}
	\end{align}
  Thus $z_0\neq -i$.
	It is given by
	\eqref{scattering}  that
	\begin{align}
		a(z)=\det (D) \det(\Phi^+_{1},\Phi^-_{2}).\label{a1}
	\end{align}
	So there exists a constant $b_0$ such that
	\begin{align}
		\Phi^+_1(z_0)=b_0\Phi^-_2(z_0).\label{b0}
	\end{align}
	Differentiating \eqref{a1} with respect to $z$ at $z_0$, it appears that
	\begin{align*}
		\partial_za(z_0)=\det(D(z_0))( \det(\partial_z\Phi^+_{1},\Phi^-_{2})(z_0)+\det(\Phi^+_{1},\partial_z\Phi^-_{2})(z_0))\triangleq A_1+A_2.
	\end{align*}
	It is inferred from \eqref{lax0} that
	\begin{align*}
		\partial_xA_1=\det (D(z_0)) \det(\partial_zU\cdot\Phi^+_{1},\Phi^-_{2})(z_0),\hspace*{0.3cm}\partial_xA_2=\det (D(z_0)) \det(\Phi^+_{1},\partial_zU\cdot\Phi^-_{2})(z_0),
	\end{align*}
	where $\partial_zU=-\frac{im_0}{4}(1-\frac{1}{z^2})\sigma_2.$
	Together with \eqref{b0}, it is deduced that
	\begin{align*}
		&A_1=\det (D(z_0))(1-z_0^{-2})b_0\int_x^{+\infty}\frac{m_0}{4}((\Phi^-_{22})^2+(\Phi^-_{12})^2)dy,\\&A_2=\det (D(z_0)) (1-z_0^{-2})b_0\int^x_{-\infty}\frac{m_0}{4}((\Phi^-_{22})^2+(\Phi^-_{12})^2)dy.
	\end{align*}
	In the above expression, $D(z_0)$ is a real matrix. Via the symmetry of $\Psi$, it is ascertained that $\Psi(z_0),\ b_0\in\mathbb{R}$. It is thereby inferred that $\Phi(z_0)\in\mathbb{R}$. Hence,
	\begin{align*}
		\partial_za(z_0)=\det(D(z_0))(1-\frac{1}{z_0^2})b_0\int_{\mathbb{R}}\frac{m_0}{4}((\Phi^-_{22})^2+(\Phi^-_{12})^2)dy\neq0.
	\end{align*}
	Simplicity of the zeros of $a$ follows immediately. This completes the proof of Proposition \ref{prop2.2}.
\end{proof}
The zeros of $a(z)$ in the fourth quadrant  are denoted as $z_1,...,z_{N_0}$.
Thus we define
$Z_0=\{z_j,-\bar{z}_j\}_{j=1}^{N_0}$ as the set of ${2N_0}$ zeros of $a(z)$ on $\mathbb{C}^-$. For convenience, we denote $z_j=-\bar{z}_{j-N_0}$, $j={N_0}+1,...,2{N_0}$. Then $Z_0=\{z_j\}_{j=1}^{2N_0}$.

\vskip 0.1cm
\noindent \textbf{The reflection coefficient.}\quad
We now define the reflection coefficient  by
\begin{equation}
	r(z)=\dfrac{b(z)}{a(z)},\ z\in\mathbb{R}.\label{symr}
\end{equation}
From  its definition, $r(z)$ is continuous on $\mathbb{R}$ with $r(1)=-r(-1)=-1$. And $r(z)$ admits symmetries
\begin{align}
	r(z)=\overline{r(z^{-1})}=-\overline{r(-z)}.\label{syr}
\end{align}
This in turn implies from det$(S(z))=1$ that
\begin{align*}
	|a(z)|^{-2}+|r(z)|^2=1,\ z\in\mathbb{R}.
\end{align*}
So $|r(z)|^2,\  |a(z)|^{-1}\leqslant 1$. Furthermore, $|a(z)|^{-2} =1-|r(z)|^2\neq 0 $  for $z\neq \pm1$.

We
analyze  the  boundedness and integrability of the reflection coefficient $r(z)$ when $t=0$, which is one of determining factors for  the RH problem.
The next  lemma gives a property of  the direct scattering map: $m_0\to r$.
\begin{lemma}\label{r1}
If  initial data  $m_0-1\in  H^{2,1}(\mathbb{R} )\cap H^{1, 2} (\mathbb{R}) $,   then the reflection coefficient $r\in  H^{2,1}(\mathbb{R} )\cap H^{1, 2} (\mathbb{R}) $.
\end{lemma}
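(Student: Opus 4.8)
The plan is to exploit the fact that the scattering map $m_0-1\mapsto r$ is, after suitable transformations, essentially a Fourier transform, together with the observation that the space $H^{2,1}(\mathbb{R})\cap H^{1,2}(\mathbb{R})$ is invariant under the Fourier transform because $\mathcal{F}$ interchanges the weight index and the derivative index. First I would reduce the claim on $r=b/a$ to separate statements about $a$ and $b$ obtained from \eqref{deta}. Since $|a(z)|\ge 1$ on $\mathbb{R}$ and $a$ is analytic and nonvanishing on $\mathbb{R}\setminus\{\pm1\}$ with $a\to1$ as $|z|\to\infty$, the factor $1/a$ is at least as regular as $a$ away from $z=\pm1$, so that $r$ is a product of two functions of the same regularity; moreover, in the generic case the poles of $a$ and $b$ at $z=\pm1$ have the same order and cancel in the quotient, leaving $r$ continuous there with $r(\pm1)=\mp1$. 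Because the weight $\langle z\rangle$ only measures behaviour at $z=\infty$ and, through the symmetry \eqref{syr}, at $z=0$, the points $z=\pm1$ contribute only a harmless bounded local piece, and it suffices to establish weighted-$L^2$ decay together with two $z$-derivatives of $r$ as $|z|\to\infty$, plus the corresponding estimate near $z=0$.

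Second, I would obtain the required control on the Jost functions by differentiating the Volterra equations \eqref{intmu}--\eqref{intmu2} in $z$. Each $z$-derivative either falls on the oscillatory factor $e^{\pm2(p(z;s)-p(z;x))}$, producing a factor $\partial_z p$ that grows like the spatial variable, or on the $z$-dependence of $P$ and $D$, which is bounded and analytic away from $z=\pm1$. Running the same Neumann-series argument as in Proposition \ref{pro1.1}, I would show that $\mu^\pm-I$, $\partial_z\mu^\pm$ and $\partial_z^2\mu^\pm$ are bounded in $x$ and controlled by weighted $L^1$ norms of $m_0-1$. Crucially, taking $j$ derivatives in $z$ costs $j$ powers of the spatial weight, while each power of the weight $\langle z\rangle$ on the $r$ side is paid for by a spatial derivative of $m_0-1$; this crosswise trade between weights and derivatives is exactly why the Fourier-invariant intersection $H^{2,1}\cap H^{1,2}$ is the natural space in which the map closes.

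The core of the argument is to recast $b(z)=e^{-2p}\det(\mu_2^+,\mu_2^-)$ as a genuine Fourier transform. Inserting the Jost representation and using the two transformations described in difficulty 2) of the introduction, I would convert the oscillatory kernel $e^{i(z-1/z)x}$ into a standard exponential $e^{\pm ikx}$ with $k=z-z^{-1}$, splitting $\mathbb{R}$ into the large-$z$ and small-$z$ regions and treating the latter by the symmetry $r(z)=\overline{r(z^{-1})}$ of \eqref{syr}. This exhibits $r$ as $\mathcal{F}g$ for a function $g=g[m_0]$ built bilinearly from $m_0-1$ and the Jost functions. An algebra/product estimate in the weighted Sobolev spaces, combined with the Jost bounds of the previous step, gives $g\in H^{2,1}(\mathbb{R})\cap H^{1,2}(\mathbb{R})$; applying the mapping property $\mathcal{F}\colon H^{2,1}\cap H^{1,2}\to H^{1,2}\cap H^{2,1}$ and the Plancherel theorem then yields $r\in H^{2,1}(\mathbb{R})\cap H^{1,2}(\mathbb{R})$.

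I expect the main obstacle to be precisely the handling of the oscillatory factor $e^{i(z-1/z)x}$, which is not a clean Fourier kernel in $z$: turning the scattering map into a bona fide Fourier transform via the two transformations, while simultaneously (i) tracking the second $z$-derivative of $r$, which demands the full two spatial weights carried by $m_0-1\in H^{1,2}$, and (ii) controlling the cancellation of the singularities of $a$ and $b$ at $z=\pm1$ so that the weighted estimates survive through these points, is the delicate technical heart of the proof.
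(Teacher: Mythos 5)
Your proposal follows essentially the same route as the paper's proof: the symmetry $r(z)=\overline{r(z^{-1})}$ reducing everything to $[1,\infty)$, split into a compact piece near $z=1$ (where the matched singularities of $a$ and $b$ from Propositions \ref{promu1} and \ref{boundr} cancel in $r$, $\partial_z r$, $\partial_z^2 r$) and the tail $(2,\infty)$ (where the Neumann-series bounds on $\mu^\pm$, $\partial_z\mu^\pm$, $\partial_z^2\mu^\pm$ of Propositions \ref{lemma1}--\ref{prop1}, the Fourier-duality estimate of Proposition \ref{proest} under the change of variable $k=\tfrac14(z-z^{-1})$ with the monotone phase $\int_x^s m_0$, and integration by parts trading $\langle z\rangle$-weights against $x$-derivatives of $m_0-1$ give $b\in H^{1,2}\cap H^{2,1}$ and $a\in W^{2,\infty}$ with $|a|\ge 1$, as in Propositions \ref{prob}--\ref{proa}). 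The one cosmetic mislabel is your appeal to the ``two transformations'' of difficulty 2), which the paper uses only for the regularity argument of Section \ref{sec7}, not for this lemma; here the monotone-phase change of variable already supplies the Fourier structure you need.
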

The proof of Lemma \ref{r1} is approached via a series of  propositions  and follows a similar method to the Section 3 in \cite{YYLmch}.
By the symmetry of $r$ in \eqref{syr}, it is adduced that
\begin{align*}
	&	\int_{\mathbb{R}}|r(z)|^2dz=\int_{\mathbb{R}_-}|r(z)|^2dz+\int_{\mathbb{R}_+}|r(z)|^2dz=2\int_{\mathbb{R}_+}|r(z)|^2dz\\
	&=2\int_{0}^1|r(z)|^2dz+2\int_1^{+\infty}|r(z)|^2dz=2\int_1^{+\infty}|r(z)|^2(1+\frac{1}{z^2} ) dz\leqslant 4\int_1^{+\infty}|r(z)|^2dz.
\end{align*}
Thus  it suffices to consider the property of $r(z)$ at the interval $[1,+\infty)$.
We divide the interval $[1,+\infty)$ into two parts  $[1,C]$ and $(C,\infty)$ for any constant $C>1$. And we first
 analyze the property of $r(z)$, namely, the function $\mu^\pm(z;x)$ at the interval $z\in(C,\infty)$ for any constant $C>1$. To simplify notation,  without loss of generality, we take $C=2$. In the following demonstration, our attention is now focused on three function spaces $C _B\left(\mathbb{R}^\pm; L^{2}(2,\infty)  \right)  $,  $ L^{\infty}\left( (2,\infty)  \times\mathbb{R}^\pm \right) $ and $ L^{2}\left( (2,\infty)  \times\mathbb{R}^\pm \right) $ with the norms $
 \parallel \cdot\parallel_{C_B}$,$ \parallel \cdot\parallel_{L^\infty}$  and $ \parallel \cdot\parallel_{L^2}$ respectively. We start with estimates obtained with the Fourier theory.
 \begin{Proposition}\label{proest}
 	For any $f\in L^{2,1/2}(\mathbb{R})$,
 	\begin{align}
 	&\big\|\int_x^{+\infty}f(s)e^{2ik\int^s_xm_0(l)dl}ds	\big\|_{C_B}\lesssim \|f\|_{L^{2}}\label{estf1}\\
 	&\big\|\int_x^{+\infty}f(s)e^{2ik\int^s_xm_0(l)dl}ds	\big\|_{L^2\left( (C,\infty)  \times\mathbb{R}^\pm \right)}\lesssim \|f\|_{L^{2,1/2}}\label{estf2}.
 	\end{align}
 \end{Proposition}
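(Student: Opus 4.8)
The plan is to recast each oscillatory integral as a genuine Fourier transform and then apply Plancherel's theorem. Two features obstruct a direct use of Fourier analysis: the frequency $k=z-z^{-1}$ is a nonlinear function of the spectral variable $z$, and the oscillation is weighted by $\int_x^s m_0(l)\,dl$, which couples the variables $x$ and $s$. Both are removed by changes of variable. I will treat the configuration relevant to $\mu^+$, namely $\int_x^{+\infty}$ paired with $x\in\mathbb{R}^+$; the $\mu^-$ configuration ($\int_{-\infty}^x$ with $x\in\mathbb{R}^-$) is identical after reflecting $x\mapsto -x$.

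First I linearize the phase. Put $q(x)=x+\int_{+\infty}^x(m_0(l)-1)\,dl$, so that $q'=m_0$ and $\int_x^s m_0(l)\,dl=q(s)-q(x)$; under the standing hypothesis $m_0>0$, and since $m_0-1\in L^1(\mathbb{R})$, the map $q$ is an increasing diffeomorphism of $\mathbb{R}$ with $q(s)=s+\mathcal{O}(1)$. Substituting $y=q(s)$, $ds=dy/m_0$, converts the integral into
\begin{align*}
\int_x^{+\infty}f(s)\,e^{2ik\int_x^s m_0(l)\,dl}\,ds
=e^{-2ikq(x)}\int_{\mathbb{R}}H_x(y)\,e^{2iky}\,dy,\qquad
H_x(y)=\frac{f(q^{-1}(y))}{m_0(q^{-1}(y))}\,\mathbf{1}_{\{y>q(x)\}}.
\end{align*}
The prefactor has modulus one, so the modulus of the left-hand side equals that of the Fourier integral of $H_x$ at a frequency proportional to $k$.

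Next I change the spectral variable from $z$ to $k=z-z^{-1}$. On $(2,\infty)$ this is a diffeomorphism onto $(3/2,\infty)$ with $dk/dz=1+z^{-2}\in(1,5/4)$, so $\int_2^\infty|\,\cdot\,|^2\,dz\le\int_{3/2}^\infty|\,\cdot\,|^2\,dk$, and the latter, being the squared $L^2(dk)$-norm of a Fourier integral in $y$, is bounded via Plancherel by $\|H_x\|_{L^2}^2=\int_x^{+\infty}|f(s)|^2/m_0(s)\,ds$ (having undone $y=q(s)$). Because $m_0-1\in H^{2,1}\hookrightarrow C_0(\mathbb{R})$ and $m_0>0$, the potential $m_0$ is bounded below by a positive constant, so $1/m_0$ is absorbed into the implied constant. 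Taking the supremum over $x\in\mathbb{R}^+$ bounds the left-hand side of \eqref{estf1} by $\|f\|_{L^2}^2$.

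For \eqref{estf2} I integrate the pointwise-in-$x$ estimate $\int_2^\infty|\,\cdot\,|^2\,dz\lesssim\int_x^{+\infty}|f(s)|^2\,ds$ over $x\in\mathbb{R}^+$ and apply Fubini: the inner integration in $x$ over $\{0<x<s\}$ produces a factor $s$, so that
\begin{align*}
\int_{\mathbb{R}^+}\int_2^\infty|\,\cdot\,|^2\,dz\,dx
\lesssim\int_0^\infty|f(s)|^2\,s\,ds
\le\int_{\mathbb{R}}|f(s)|^2\langle s\rangle\,ds=\|f\|_{L^{2,1/2}}^2,
\end{align*}
using $s\le\langle s\rangle$; this is exactly why the weight $\tfrac12$ (equivalently $\langle s\rangle^{2\cdot\frac12}=\langle s\rangle$) is the natural one. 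I expect the crux of the argument to be the linearization step together with the verification that the Jacobian of $z\mapsto k$ is bounded above and below on $(2,\infty)$: this is what legitimizes replacing the $dz$-integral by the full Fourier integral in $k$ and lets Plancherel close both estimates.
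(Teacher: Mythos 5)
Your proof is correct and takes essentially the same route as the paper's: both rest on Plancherel's theorem together with the monotone change of variable that straightens the phase $\int_x^s m_0(l)\,dl$ (with Jacobian $m_0$ bounded above and below, hence the dependence on $\inf m_0>0$, which the paper absorbs silently), and both obtain \eqref{estf2} from the pointwise-in-$x$ bound by Fubini, producing the weight $s\le\langle s\rangle$. The only cosmetic difference is that you linearize the phase via $y=q(s)$ and apply Plancherel directly to $H_x$, whereas the paper runs the identical estimate in dual form, pairing against an arbitrary $g\in L^2$ and using Cauchy--Schwarz after recognizing $(X_{(C,+\infty)}g)^{\vee}$ evaluated at the warped point.
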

\begin{proof}
	For any $g\in L^2$, it transpires that
	\begin{align*}
		&\big|\int_\mathbb{R}\int_x^{+\infty}X_{(C,+\infty)}(k)g(k)f(s)e^{2ik\int^s_xm_0(l)dl}ds	dk\big|=\big|\int_x^{+\infty}\int_\mathbb{R}X_{(C,+\infty)}(k)g(k)f(s)e^{2ik\int^s_xm_0(l)dl}	dkds\big|\\
		&=\big|\int_x^{+\infty}f(s)(X_{(C,+\infty)}g)^{\vee}(\frac{\int^s_xm_0(l)dl}{\pi})ds\big|\leqslant ( \int_x^{+\infty}|f(s)|^2ds) ^{1/2}\|X_{(C,+\infty)}g\|_{L^2},
	\end{align*}
where $C$ is a real constant.
where $X_{(C,+\infty)}(k)$ is the truncation function on interval $(C,+\infty)$. Then \eqref{estf1} follows directly. It also gives that
\begin{align*}
	\int_{\mathbb{R}^+} \int_{C}^{+\infty}\big|\int_\mathbb{R}\int_x^{+\infty}f(s)e^{2ik\int^s_xm_0(l)dl}ds	\big|dkdx\leq	\int_{\mathbb{R}^+}\int_x^{+\infty}|f(s)|^2dsdx\lesssim\|f\|_{L^{2,1/2}}^2.
\end{align*}
Thus we complete the proof of Proposition \ref{proest}.
\end{proof}
\begin{Proposition}\label{lemma1}
	If  initial data  $m_0-1\in  H^{2,1}(\mathbb{R} )\cap H^{1, 2} (\mathbb{R}) $, then there exists a unique the Jost function
	$\mu^\pm(z;x)$ with  $\mu^\pm(z;x)-I\in C_B \left(\mathbb{R}^\pm; L^{2}(2,\infty) \right) \cap L^{\infty}\left( (2,\infty) \times\mathbb{R}^\pm\right)\cap L^{2}\left( (2,\infty) \times\mathbb{R}^\pm\right) $ satisfying
	\begin{align*}
		&\parallel \mu^\pm(z;x)-I \parallel_{L^\infty\cap C_B }\leqslant \frac{C(\|m_0-1\|_{H^{2,1}\cap H^{1,2}})}{\underset{x\in\mathbb{R}}{\inf}m_0} ,\\
		&	\parallel \mu^\pm(z;x)-I \parallel_{L^2}\leqslant C(\|m_0-1\|_{H^{2,1}\cap H^{1,2}}) .
	\end{align*}
\end{Proposition}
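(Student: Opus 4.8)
The plan is to realize $\mu^\pm-I$ as the solution of the Volterra equations \eqref{intmu}--\eqref{intmu2} restricted to the spectral region $z\in(2,\infty)$, and to solve these equations by a Neumann series whose convergence and norm bounds are read off from the oscillatory estimates of Proposition \ref{proest}. By the reductions \eqref{symPhi1} it suffices to treat one column, say $\mu^-_2$, and to write $\mu^-_2=e_2+N$ with $N$ solving $(I-\mathcal{K})N=\mathcal{K}e_2$, where $\mathcal{K}$ is the integral operator with kernel $\mathrm{diag}(e^{-2(p(z;s)-p(z;x))},1)P(z;s)$. First I would record that on $z\in(2,\infty)$ the scalar $k=z-z^{-1}$ stays bounded away from $0$ and the coefficients $1/(ik)$ and $\lambda/(ik)$ are bounded and smooth, so every entry of $P$ is pointwise dominated by $C|m_0-1|$, while $1/k\sim 1/z$ is square integrable on $(2,\infty)$. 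Since $m_0-1\in H^{2,1}\cap H^{1,2}$ embeds into $L^1\cap L^{2,1/2}$ and is continuous with $m_0\to1$ at infinity, the hypothesis $m_0>0$ also guarantees $\inf_{x}m_0>0$.

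The crucial first step is the source term $\mathcal{K}e_2$. Its second component is $-\tfrac{1}{ik}\int^x_{+\infty}(m_0-1)\,ds$, whose $z$-decay $1/k\sim 1/z$ places it in $L^2(2,\infty)$ uniformly in $x$, with norm $\lesssim\|m_0-1\|_{L^1}$. Its first component carries the oscillation $e^{-2(p(z;s)-p(z;x))}=e^{-\frac{ik}{2}\int_x^s m_0(l)\,dl}$, and here I would invoke Proposition \ref{proest}: the change of variables $w=\int_x^s m_0(l)\,dl$, legitimate because $m_0>0$, converts the phase into a genuine Fourier exponential in $w$, and the diffeomorphism $z\mapsto k$ on $(2,\infty)$ makes the $L^2_z$ and $L^2_k$ norms equivalent, so that $\mathcal{K}e_2$ lands in $C_B(\mathbb{R}^\pm;L^2(2,\infty))\cap L^2((2,\infty)\times\mathbb{R}^\pm)$ with the weighted bounds \eqref{estf1}--\eqref{estf2}; the Jacobian $1/m_0$ of this substitution is exactly the origin of the factor $1/\inf_x m_0$. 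The $L^\infty$ bound of $\mathcal{K}e_2$ is the cheapest, following from $|e^{-\frac{ik}{2}\int_x^s m_0(l)\,dl}|=1$ on the real axis together with $\|m_0-1\|_{L^1}$.

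Next I would establish that $\mathcal{K}$ is bounded on each of the three spaces, with operator norm controlled by $\|m_0-1\|_{L^1}$. Once the argument of $\mathcal{K}$ already lies in $L^2_z$, no oscillatory gain is needed: Minkowski's integral inequality together with $\|P(\cdot;s)\|_{L^\infty_z(2,\infty)}\lesssim|m_0-1|(s)$ and the unimodularity of the oscillatory factor for real $z$ yields $\|\mathcal{K}h(\cdot;x)\|_{L^2_z}\le\int_x^{\infty}C|m_0-1|(s)\,\|h(\cdot;s)\|_{L^2_z}\,ds$, and similarly in $L^\infty$ and $L^2$. Iterating over the Volterra simplex gives $\|\mathcal{K}^n\|\lesssim (C\|m_0-1\|_{L^1})^n/n!$, so $\sum_n\mathcal{K}^n$ converges absolutely and $(I-\mathcal{K})^{-1}$ exists with norm $\lesssim e^{C\|m_0-1\|_{L^1}}$; this simultaneously delivers existence, uniqueness, and the representation $N=(I-\mathcal{K})^{-1}\mathcal{K}e_2$. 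Combining the operator bound with the source estimates of the previous step produces the asserted inequalities, the $1/\inf_x m_0$ entering through $\mathcal{K}e_2$ and the remaining dependence being packaged into $C(\|m_0-1\|_{H^{2,1}\cap H^{1,2}})$.

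The main obstacle is precisely the oscillatory kernel $e^{-\frac{ik}{2}\int_x^s m_0(l)\,dl}$, for two linked reasons. First, the constant seed $e_2$ does not belong to $L^2(2,\infty)$ in the spectral variable, so the scheme cannot start from a crude Volterra bound; it is the first application of $\mathcal{K}$ that must manufacture $L^2_z$ decay, and this is possible only because each kernel entry either oscillates (gaining decay through Proposition \ref{proest}) or carries the factor $1/k\sim 1/z$. Second, the phase couples the spectral parameter $z$ to the spatial integration variable, which obstructs a direct Fourier transform on $\mathbb{R}$; the remedy is the monotone substitution $w=\int_x^s m_0(l)\,dl$, and keeping track of its Jacobian is what forces the explicit $1/\inf_x m_0$ in the $L^\infty\cap C_B$ estimate. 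Threading this oscillatory gain through the nested integrals while retaining the factorial decay of the iterates is the technical heart of the argument.
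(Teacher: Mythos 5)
Your proposal is correct and follows essentially the same route as the paper: both reduce to one column of the Volterra equation \eqref{intmu2}, split off the source term $\mathcal{T}e_2$ (whose oscillatory first component must manufacture the $L^2_z$ decay, since $e_2\notin L^2(2,\infty)$), prove boundedness of the Volterra operator on the same three spaces $C_B\cap L^\infty\cap L^2$ with factorial iterate bounds $\lesssim \|m_0-1\|_{L^1}^n/n!$, and conclude via $(I-\mathcal{T})^{-1}$ with exponential norm bound. The only local difference is in the $C_B$/$L^\infty$ estimate of the oscillatory source component: you apply Proposition \ref{proest} directly (after factoring out the bounded coefficient $\tfrac{z^2+1}{2(z^2-1)}$), locating the $1/\inf_x m_0$ in the Jacobian of the substitution $w=\int_x^s m_0\,dl$, whereas the paper first integrates by parts in $s$ to gain the $1/z$ decay, which is where its $\|m_0-1\|_{H^{2,1}}$ norm and the factor $1/\inf_x m_0$ enter; both are valid, and your variant even gets by with $\|m_0-1\|_{L^2}$ at that step.
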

\begin{proof}
	The proof basically follows the approach in Proposition \ref{pro1.1}. We prove this proposition only  by taking the second column  $\mu_{ 2}^+(z;x)$ as an example.
	Denote
	$$\mu_{ 2}^+(z;x)-e_2 \triangleq h(z;x)=(h_1(z;x),h_2(z;x))^T.$$
	The second column of the Volterra  integral equation \eqref{intmu2}   can be written as   an   operator equation
	\begin{align}
		h(z;x)=h^{(0)}(z;x)+ \mathcal{T} h (z;x),\label{dn}
	\end{align}
	where  $\mathcal{T}$ is defined in \eqref{wef}.
	And
	\begin{align*}
		h^{(0)}(z;x)&= Te_2  =\int^{x}_{+\infty}i\dfrac{m_0(s)-1}{z^2-1}\left(\begin{array}{cc}
			\frac{z^2+1}{2}e^{\frac{i}{2}(z-1/z)\int^{s}_{x}m_0(l)dl}	   \\[4pt]
			z
		\end{array}\right)ds\triangleq (h_1^{(0)}(z;x),h_2^{(0)}(z;x))^T.
	\end{align*}
	It is obvious that $h^{(0)} \in L^{\infty} ( (2,\infty)\times\mathbb{R}^+)$.
	Considering  the first component  of $h^{(0)}(z;x)$ and integrating  by parts yields
	\begin{align}
		h^{(0)}_1(z;x)=&\dfrac{(z^2+1)z}{(z^2-1)^2}\frac{m_0-1}{m_0}  - \dfrac{(z^2+1)z}{(z^2-1)^2}\int^{x}_{+\infty}	e^{\frac{i}{2}(z-1/z)\int^{s}_{x}m_0(l)dl}	\partial_s\left( \frac{m_0-1}{m_0}\right) ds,
	\end{align}
	which implies that
	\begin{align*}
		\sup_{x\in\mathbb{R}^+}\int_{\mathbb{R}}|h^{(0)}_1(z;x)|^2dz\leqslant  \frac{\|m_0-1\|_{H^{2,1}}}{\underset{x\in\mathbb{R}}{\inf}m_0}.
	\end{align*}
	Obviously,  $h^{(0)}_2$ has the same estimate.
	Thus,
	$$h^{(0)}(z;x) \in  C_B\left(\mathbb{R}^+;  L^{2}(2,\infty) \right) \cap L^{\infty}\left( (2,\infty)\times\mathbb{R}^+\right),$$
	with $\|h^{(0)}\|_{L^\infty\cap C_B }\leqslant \frac{2\|m_0-1\|_{H^{2,1}}}{\underset{x\in\mathbb{R}}{\inf}m_0} $. Recall the change of variable
	$k=\frac{1}{4}\left(z-\frac{1}{z} \right) $. When $z\in(2,+\infty)$, we have
	$k\in(3/8,+\infty)$. Note that $\frac{1}{4} |dz|\leqslant |dk|\leqslant \frac{5}{16} |dz|$. Then for any function  $f(z)$ well-defined in $f(z(k))$, it follows that
	$$ f(z)\in L^2(2,+\infty) \Longleftrightarrow  f(z(k))\in L^2(3/8,+\infty). $$
	Under this change of variable, $h^{(0)}\in  L^{2}\left( (2,\infty)\times\mathbb{R}^+\right)$ follows from Proposition \ref{proest} with
 $\|h^{(0)}\|_{L^2}\leq C(\|m_0-1\|_{H^{2,1}\cap H^{1,2}})$.
	
	The next step is to   prove that $\mathcal{T}$ is a bounded linear operator on $C_B\left(\mathbb{R}^+; L^{2}(2,\infty)\right) \cap L^{\infty}\left( (2,\infty)\times\mathbb{R}^+\right)\cap L^{2}\left( (2,\infty)\times\mathbb{R}^+\right) $.
	In fact, for any $f=(f_1,f_2)^T \in  L^{\infty}\left( (2,\infty)\times\mathbb{R}^+\right) $, it appears that
	\begin{align}
		|\mathcal{T}f (z;x)|&=\Big|\int^{x}_{+\infty}\frac{m_0(s)-1}{z^2-1}\left(\begin{array}{c}
			e^{\frac{i}{2}(z-1/z)\int^{s}_{x}m_0(l)dl}	(-zf_1(z;s)+\frac{z^2+1}{2}f_2(z;s) )   \\[4pt]
			zf_2(z;s)-\frac{z^2+1}{2}f_1(z;s)
		\end{array}\right)ds\Big|\nonumber\\
		&\leqslant \parallel f\parallel_{L^{\infty}} \int^{+\infty}_{x}|m_0-1|ds\leqslant  \parallel m_0-1\parallel_{L^1}\parallel f\parallel_{L^{\infty}}.
	\end{align}
	For $f \in C_B\left(\mathbb{R}^+; L^{2}(2,\infty)\right)$, it is inferred that
	\begin{align}
		&\left( \int_{2}^{+\infty}|\mathcal{T} f (z;x)|^2dz\right) ^{1/2}\nonumber\\
		&\leqslant\int^{+\infty}_{x}\left(  \int_{2}^{+\infty}\Big|\frac{m_0(s)-1}{z^2-1}\left(\begin{array}{c}
			e^{\frac{i}{2}(z-1/z)\int^{s}_{x}m_0(l)dl}	(-zf_1(z;s)+\frac{z^2+1}{2}f_2(z;s) )   \\[4pt]
			zf_2(z;s)-\frac{z^2+1}{2}f_1(z;s)
		\end{array}\right)\Big|^2dz\right)   ^{1/2} ds\nonumber\\
		&\leqslant \int^{+\infty}_{x}|m_0(s)-1|\left(   \int_{2}^{+\infty}(|f_1(z;s)|+|f_2(z;s)|)^2dz\right)  ^{1/2} ds\leqslant   \parallel m_0-1\parallel_{L^1}\parallel f\parallel_{C_B}.
	\end{align}
	For $f \in L^{2}\left( (2,\infty)\times\mathbb{R}^+\right) $, it follows that
		\begin{align}
		&\left(\int_{\mathbb{R}^+} \int_{2}^{+\infty}|\mathcal{T} f (z;x)|^2dzdx\right) ^{1/2}\nonumber\\
		&\lesssim\int_{\mathbb{R}^+}\left(  \int^{+\infty}_{x}|m_0(s)-1| \int_{2}^{+\infty}(|f_1(z;s)|+|f_2(z;s)|)^2dz) ^{1/2}ds\right) dx\nonumber\\
		&\leqslant \left(\int_{\mathbb{R}^+}(\int^{s}_{0}|m_0(s)-1| \int_{2}^{+\infty}(|f_1(z;s)|+|f_2(z;s)|)^2dzdx) ^{1/2}ds\right) ^2\nonumber\\
		&
		\leqslant   \parallel m_0-1\parallel_{L^{2,1/2}}\parallel f\parallel_{L^{2}}.
	\end{align}
	In general,	for $\mathcal{T}^n $ defined in \eqref{Tn}, $n=2,3,...$, it admits
	\begin{align}
		&|\mathcal{T}^n f (z;x)|\leqslant\parallel f\parallel_{L^{\infty}}\int^{+\infty}_{x}\frac{ \left( \int_s^{+\infty} |m_0(\zeta)-1|d\zeta\right) ^{n-1}}{(n-1)!} |m_0(s)-1|ds\leqslant\frac{ \parallel m_0-1\parallel_{L^1}^{n }}{n!}\parallel f\parallel_{L^{\infty}},\nonumber
	\end{align}
	and
	\begin{align}
		&\left(  \int_{2}^{+\infty}|\mathcal{T}^n f (z;x)|^2dz\right) ^{1/2}\leqslant \int^{+\infty}_{x}\frac{  \left( \int_s^{+\infty} |m_0(\zeta)-1|d\zeta\right) ^{n-1}}{(n-1)!}|m_0(s)-1|ds\parallel f\parallel_{C_B}\leqslant\frac{ \parallel m_0-1\parallel_{L^1}^{n }}{n!}\parallel f\parallel_{C_B}.\nonumber
	\end{align}
	So  the operator $(I-\mathcal{T})^{-1}$ exists on $C_B \left(\mathbb{R}^\pm; L^{2}(2,\infty) \right) \cap L^{\infty}\left( (2,\infty) \times\mathbb{R}^\pm\right)$ and admits
	\begin{align}
		\parallel(I-\mathcal{T})^{-1}\parallel\leqslant e^{\parallel m_0-1\parallel_{L^1}}.\label{normT}
	\end{align}
And for the space $L^{2}\left( (2,\infty) \times\mathbb{R}^\pm\right)$, a similar estimate gives that
\begin{align}
	\|\mathcal{T}^n\|\leqslant\frac{ \parallel m_0-1\parallel_{L^1}^{n-1 }}{(n-1)!}\|m_0-1\|{L^{2,1/2}},
\end{align}
which leads to
\begin{align*}
		\parallel(I-\mathcal{T})^{-1}\parallel\leqslant 1+e^{\parallel m_0-1\parallel_{L^1}}\|m_0-1\|{L^{2,1/2}}.
\end{align*}
	Finally, it follows from   the operator equation in \eqref{dn}  that
	$$h(z;x)=(I-\mathcal{T})^{-1}h^{(0)} \  (z;x)\in C_B\left(\mathbb{R}^+; L^{2}(2,\infty)\right) \cap L^{\infty}\left( (2,\infty)\times\mathbb{R}^+\right)\cap L^{2}\left( (2,\infty)\times\mathbb{R}^+\right). $$
	This completes the proof of Proposition \ref{lemma1}.
\end{proof}

Next, in order to obtain the property of $z$-derivative, we establish the following estimate  of the Jost function  $\mu^\pm(z;x)$ with initial data $m_0(x)-1$.
\begin{Proposition} \label{prop1}
	Let   $m_0-1\in  H^{2,1}(\mathbb{R} )\cap H^{1, 2} (\mathbb{R}) $. Then the  $z$-derivatives of $\mu^\pm(z;x)$  admit the  estimates
	\begin{align}
		\big\Vert  \partial_z \mu^\pm(z;x)  \big\Vert _{  C_B},\ \big\Vert  \partial_z^2 \mu^\pm(z;x)  \big\Vert _{ C_B}\leqslant C(\|m_0-1\|_{H^{2,1}\cap H^{1,2}},\underset{x\in\mathbb{R}}{\inf}m_0).
	\end{align}
\end{Proposition}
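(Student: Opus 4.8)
The plan is to differentiate the operator equation
\begin{align*}
h(z;x)=(I-\mathcal{T})^{-1}h^{(0)}(z;x)
\end{align*}
from Proposition \ref{lemma1} with respect to $z$, one and then two times, and to bound each resulting term in the $C_B$ norm. Writing $\mu^\pm-I=(I-\mathcal{T})^{-1}h^{(0)}$, differentiation gives $\partial_z\mu^\pm=(I-\mathcal{T})^{-1}\left(\partial_z h^{(0)}+(\partial_z\mathcal{T})\mu^\pm\right)$, since $\partial_z(I-\mathcal{T})^{-1}=(I-\mathcal{T})^{-1}(\partial_z\mathcal{T})(I-\mathcal{T})^{-1}$. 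The bound $\|(I-\mathcal{T})^{-1}\|\le e^{\|m_0-1\|_{L^1}}$ in \eqref{normT} is already available, so the whole estimate reduces to controlling $\partial_z h^{(0)}$ and the operator $\partial_z\mathcal{T}$ (and for the second derivative, $\partial_z^2 h^{(0)}$ and $\partial_z^2\mathcal{T}$) in the $C_B\bigl(\mathbb{R}^\pm;L^2(2,\infty)\bigr)$ setting. First I would carry out the termwise differentiation and record that each $z$-derivative produces at worst one extra factor that is polynomially bounded in $z$ on $(2,\infty)$ together with one extra factor of $\int_x^s m_0(l)\,dl$ coming from the oscillating exponential $e^{\frac{i}{2}(z-1/z)\int_x^s m_0}$.

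The core issue is that differentiating the kernel $K(z;x,s)$ in \eqref{wef} brings down, through the phase, a factor $\partial_z\!\left(\tfrac{i}{2}(z-1/z)\right)\int_x^s m_0(l)\,dl=\tfrac{i}{2}(1+1/z^2)\int_x^s m_0(l)\,dl$, which grows linearly in $|s-x|$. Thus $\partial_z\mathcal{T}$ maps a function $f$ to an integral whose kernel carries an extra weight $\langle s\rangle$ relative to $\mathcal{T}$; iterating, $\partial_z^2\mathcal{T}$ carries a weight $\langle s\rangle^2$. The strategy to absorb these weights is exactly the reason the hypothesis is $m_0-1\in H^{2,1}\cap H^{1,2}$ rather than merely $L^{1,1}$: the weighted norms $\|m_0-1\|_{L^{1,1}}$ and $\|m_0-1\|_{L^{1,2}}$ (controlled by $\|m_0-1\|_{H^{1,2}}$) make the integrals $\int_x^{+\infty}\langle s\rangle^j|m_0(s)-1|\,ds$ finite for $j=1,2$, so the differentiated operators remain bounded on $C_B$. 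I would therefore prove, by the same Neumann-series argument as in Proposition \ref{lemma1}, that $\partial_z\mathcal{T}$ and $\partial_z^2\mathcal{T}$ are bounded operators on $C_B\bigl(\mathbb{R}^\pm;L^2(2,\infty)\bigr)$ with norms controlled by the weighted $L^1$ norms of $m_0-1$, and that $\partial_z h^{(0)},\partial_z^2 h^{(0)}$ lie in the same space, using integration by parts in $s$ exactly as was done for $h^{(0)}_1$ to trade the oscillation against a derivative $\partial_s\bigl(\frac{m_0-1}{m_0}\bigr)$ and thereby invoke $m_0-1\in H^{2,1}$.

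Concretely, the order of steps is: (i) compute $\partial_z h^{(0)}$ and $\partial_z^2 h^{(0)}$ explicitly, integrate by parts in $s$ to remove the $(z^2-1)^{-1}$ type singularity and to expose the needed $s$-derivatives of $(m_0-1)/m_0$, and bound them in $C_B$ by $C(\|m_0-1\|_{H^{2,1}\cap H^{1,2}},\inf_x m_0)$; (ii) establish the operator bounds $\|\partial_z^j\mathcal{T}\|_{C_B\to C_B}\le C(\|m_0-1\|_{H^{2,1}\cap H^{1,2}})$ for $j=1,2$ via the weighted kernel estimate above and the factorial decay $\|(\partial_z^j\mathcal{T})\mathcal{T}^{n-1}\|\lesssim \|m_0-1\|^{\,n-1}_{L^1}/(n-1)!$; (iii) assemble via the resolvent-differentiation identity to obtain $\partial_z\mu^\pm$ and then $\partial_z^2\mu^\pm=(I-\mathcal{T})^{-1}\bigl(\partial_z^2 h^{(0)}+2(\partial_z\mathcal{T})\partial_z\mu^\pm+(\partial_z^2\mathcal{T})\mu^\pm\bigr)$, each factor already bounded by the previous steps. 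The main obstacle, and the place demanding genuine care, is step (ii): the linear and quadratic growth in $|s-x|$ generated by differentiating the oscillatory phase must be controlled uniformly in $x\in\mathbb{R}^\pm$, and this is precisely where the weights $1$ and $2$ in $H^{2,1}$ and $H^{1,2}$ enter. Once those weighted kernel bounds are in hand, the convergence of the Neumann series and the final assembly are routine, and the stated $C_B$ estimate for $\partial_z\mu^\pm$ and $\partial_z^2\mu^\pm$ follows.
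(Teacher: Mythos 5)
Your overall architecture — differentiate the operator equation, use the resolvent identity with the bound \eqref{normT}, treat the source terms $\partial_z^j h^{(0)}$ by integration by parts, and bound the differentiated operators $\partial_z^j\mathcal{T}$ — is the same as the paper's. The first-derivative half of your argument is sound: the kernel of $\partial_z\mathcal{T}$ carries the weight $\big|\int_x^s m_0\big|\lesssim \langle s\rangle$ for $0\le x\le s$, and since $L^{2,2}(\mathbb{R})\subset L^{1,1}(\mathbb{R})$ (Cauchy--Schwarz against $\langle s\rangle^{-1}$), the quantity $\sup_{x}\int_x^{+\infty}\langle s\rangle|m_0(s)-1|\,ds$ is indeed finite under $m_0-1\in H^{1,2}$, so your $C_B\to C_B$ bound for $\partial_z\mathcal{T}$ works, as does the term $2(\partial_z\mathcal{T})\partial_z\mu^\pm$ in the second-derivative equation.

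The genuine gap is in your step (ii) at the level of $\partial_z^2\mathcal{T}$: you assert that $\|m_0-1\|_{L^{1,2}}$ is controlled by $\|m_0-1\|_{H^{1,2}}$, and this embedding is false. The hypothesis gives $\langle\cdot\rangle^2(m_0-1)\in L^2$, which does \emph{not} imply $\langle\cdot\rangle^2(m_0-1)\in L^1$: a smooth function decaying like $\langle x\rangle^{-3}$ lies in $H^{2,1}(\mathbb{R})\cap H^{1,2}(\mathbb{R})$ but has $\int\langle s\rangle^2\langle s\rangle^{-3}\,ds=\infty$. Consequently $\sup_x\int_x^{+\infty}\langle s\rangle^2|m_0(s)-1|\,ds$ need not be finite, and $\partial_z^2\mathcal{T}$ need not be bounded as an operator on $C_B\left(\mathbb{R}^+;L^2(2,\infty)\right)$; the Neumann/resolvent assembly for $\partial_z^2\mu^\pm$ then breaks at the term $(\partial_z^2\mathcal{T})h$. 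The paper's proof avoids exactly this by never asking the quadratic weight to be absorbed into a weighted $L^1$ norm: it uses the additional conclusion of Proposition \ref{lemma1} that $\mu^\pm-I\in L^{2}\left((2,\infty)\times\mathbb{R}^\pm\right)$, and bounds $\partial_z^j\mathcal{T}$ as operators from $L^{2}\left((2,\infty)\times\mathbb{R}^+\right)$ \emph{into} $C_B$, pairing the kernel weight with the $L^2$-in-$x$ decay of $h$ by Cauchy--Schwarz in $s$; this costs only $\|m_0-1\|_{L^{2,1}}$ for one derivative and $\|m_0-1\|_{L^{2,2}}$ for two, both finite under the stated hypothesis. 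Your proposal never invokes the $L^2$-in-$x$ membership of $h$ at all, and without it (or some substitute, e.g.\ a further integration by parts in $s$ inside the kernel, which then generates uncontrolled $\partial_s h$ terms) the second-derivative estimate does not close.
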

\begin{proof}
	By  taking derivative
	in \eqref{dn} with respect to $z$,  it is found that
	\begin{align}
		\partial_z h(z;x)=h^{(1)}(z;x)+\mathcal{T}(\partial_z h )(z;x),\ 	\partial_z^2 h(z;x)=h^{(2)}(z;x)+\mathcal{T}(\partial_z^2 h )(z;x)
	\end{align}
	where
	\begin{align*}
		&	h^{(1)}(z;x)=\partial_z h^{(0)}(z;x)+(\partial_z \mathcal{T})h(z;x),\\
		& h^{(2)}(z;x)=\partial_z^2 h^{(0)}(z;x)+(\partial_z \mathcal{T}) (\partial_zh)(z;x)+(\partial_z^2 \mathcal{T}) h(z;x).
	\end{align*}
For $h^{(1)}$, $\partial_z\mathcal{T}$ is  a linear integral  operator from $L^{2}\left( (2,\infty)\times\mathbb{R}^+\right)$ to  $ C_B\left(\mathbb{R}^+; L^{2}(2,\infty)\right) $ defined by
\begin{align}
	\partial_z\mathcal{T} f (z;x)&= \int^{x}_{+\infty}\partial_zK(z;x,s)f(z;s)ds,
\end{align}
where the concrete expression of $\partial_zK$ is
\begin{align*}
	\partial_zK(z;x,s)=&-\frac{1}{2}(1+1/z^2)(\int^{s}_{x}m_0(l)dl)e^{\frac{i}{2}(z-1/z)\int^{s}_{x}m_0(l)dl}(m_0(s)-1)\left(\begin{array}{cc}
		-\frac{z}{z^2-1} & \frac{z^2+1}{2(z^2-1)}\\
		0 & 0
	\end{array}\right)\\
	&+i(m_0(s)-1){\rm diag} (
	e^{\frac{i}{2}(z-1/z)\int^{s}_{x}m_0(l)dl}, 1) (i\partial_z(\frac{z^2+1}{2(z^2-1)})\sigma_2-\partial_z(\frac{z}{z^2-1})\sigma_3).
\end{align*}
For $z\in(2,\infty)$, direct calculation gives that
\begin{align*}
	|\partial_zK(z;x,s)|\lesssim |\int^{s}_{x}m_0(l)dl||m_0(s)-1|+|m_0(s)-1|.
\end{align*}
Thus for $f\in L^{2}\left( (2,\infty)\times\mathbb{R}^+\right)$, it appears that
\begin{align*}
	&\|	\partial_z\mathcal{T} f\|_{C_B}=\sup_{x\in\mathbb{R}^+}(\int_2^{+\infty}\big|\int^{x}_{+\infty}\partial_zK(z;x,s)f(z;s)ds\big|^2dz)^{1/2}\\
	&\leqslant\sup_{x\in\mathbb{R}^+}\int^{x}_{+\infty}(\int_2^{+\infty}|\partial_zK(z;x,s)f(z;s)|^2dz)^{1/2}ds\\
	&\lesssim \sup_{x\in\mathbb{R}^+}\int^{x}_{+\infty}(|\int^{s}_{x}m_0(l)dl||m_0(s)-1|+|m_0(s)-1|)(\int_2^{+\infty}|f(z;s)|^2dz)^{1/2}ds\\
	&\leq (\|m_0(s)-1\|_{L^{2,1}}+\|m_0(s)-1\|_{L^{2}})\|f\|_{L^2},
\end{align*}
which gives the boundedness of $	\partial_z\mathcal{T} $. Thus it follows that $h^{(1)}\in C_B\left(\mathbb{R}^+; L^{2}(2,\infty)\right) $. By following the similar proof of  Proposition \ref{lemma1}, the estimate of $\partial_z\mu^\pm$ in Proposition \ref{prop1}
is obtained. And the proof of the estimate of  $\partial_z^2\mu^\pm$ can be completed in the same way.
\end{proof}
By virtue of \eqref{intmu} and \eqref{scattering},   the  scattering data  $b(z)$ is expressed with
\begin{align}
	b(z)=&\int_{\mathbb{R}}e^{-\frac{i}{2}(z-1/z)(\int_{s}^{+\infty}(m_0(l)-1)dl-s)}\frac{i(m_0(s)-1)}{2(z^2-1)} \left( 2z\mu^+_{12}(z;s)-(z^2+1)\mu^+_{22}(z;s)\right)  ds,\label{pb1}
\end{align}
which reveals the following  result.
\begin{Proposition}\label{prob}
	Let     $m_0(x)-1  \in H^{2,1}(\mathbb{R} )\cap H^{1, 2} (\mathbb{R}) $.
	Then  $b(z) \in  H^{1,2} (2,\infty)\cap H^{2,1} (2,\infty)$ and
	$$\| b \| _{H^{1,2} (2,\infty)\cap H^{2,1} (2,\infty)} \leqslant C(\|m_0-1\|_{H^{2,1}\cap H^{1,2}},\underset{x\in\mathbb{R}}{\inf}m_0).$$
\end{Proposition}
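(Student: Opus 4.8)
The plan is to regard the representation \eqref{pb1} as an oscillatory integral and to convert it, after a change of the spatial variable, into a Fourier transform, so that the weighted Sobolev regularity of $b$ can be read off from that of $m_0-1$ together with the known bounds on the Jost function. Concretely, recall $k=\tfrac14(z-1/z)$ as in the proof of Proposition \ref{lemma1}, and introduce the variable $y=y(s)=s-\int_s^{+\infty}(m_0(l)-1)\,dl$, which satisfies $y'(s)=m_0(s)>0$ and is therefore a bi-Lipschitz change of variable with $dy=m_0\,ds$ (here the positivity $\inf_{x}m_0>0$ is used). Under this substitution the phase in \eqref{pb1} becomes exactly $e^{2iky(s)}$, and one checks that $y(s)-y(x)=\int_x^s m_0(l)\,dl$, so that $y$ is precisely the Fourier-conjugate variable appearing in Proposition \ref{proest}. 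Up to the prefactors $\tfrac{2z}{z^2-1}$ and $\tfrac{z^2+1}{2(z^2-1)}$ (both of which, with all their $z$-derivatives, are bounded on $(2,\infty)$, the first even decaying), $b$ is thus the Fourier transform in $k$ of a density in $y$ built from $m_0-1$ and $\mu^+$. The guiding principle is the index-swapping rule $\mathcal{F}\colon H^{a,b}\to H^{b,a}$, which predicts exactly $m_0-1\in H^{2,1}\cap H^{1,2}\Rightarrow b\in H^{1,2}\cap H^{2,1}$.

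First I would split $\mu^+=I+(\mu^+-I)$ in \eqref{pb1}. The leading part, obtained by replacing $\mu^+_{12}$ by $0$ and $\mu^+_{22}$ by $1$, reduces $b$ to $-\tfrac{i}{2}\tfrac{z^2+1}{z^2-1}\int_{\mathbb R}e^{2iky(s)}(m_0(s)-1)\,ds$; since multiplication by a smooth bounded function of $z$ with bounded derivatives preserves $H^{1,2}\cap H^{2,1}$ on $(2,\infty)$, this term inherits the regularity of $m_0-1$ directly through Plancherel and the index swap above. The remaining part carries the factor $\mu^+-I$, whose $C_B$, $L^\infty$ and $L^2$ bounds over $(2,\infty)\times\mathbb R^+$ are supplied by Proposition \ref{lemma1}; combined with the $L^1$ and weighted-$L^1$ decay of $m_0-1$ in $s$, these corrections are controlled in the same norms.

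To control the derivatives, I would differentiate \eqref{pb1} once and twice in $z$ and apply Leibniz, distributing each $\partial_z$ among the exponential, the prefactors, and $\mu^+$. A derivative falling on $e^{2iky(s)}$ produces the factor $2i(\partial_z k)\,y$, that is, a spatial weight, which is the mechanism exchanging $z$-weights for spatial moments; a derivative on $\mu^+$ is absorbed using the $\partial_z\mu^+$ and $\partial_z^2\mu^+$ estimates of Proposition \ref{prop1}; and a derivative on the prefactors only improves decay in $z$. Each resulting oscillatory integral is then estimated by Proposition \ref{proest}, which converts an $L^2$ (resp. $L^{2,1/2}$) spatial density into the $C_B$ (resp. $L^2$) norm in the $k$-variable. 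Bookkeeping the five quantities $\langle z\rangle^2 b$, $\langle z\rangle^2\partial_z b$ (for $H^{1,2}$) and $\langle z\rangle b$, $\langle z\rangle\partial_z b$, $\langle z\rangle\partial_z^2 b$ (for $H^{2,1}$), one sees that each requires at most two spatial derivatives of $m_0-1$ carrying one weight, or one spatial derivative carrying two weights — exactly the information encoded in $m_0-1\in H^{2,1}\cap H^{1,2}$. Finally, since on $(2,\infty)$ one has $\tfrac14|dz|\le|dk|\le\tfrac{5}{16}|dz|$, $\langle z\rangle\sim\langle k\rangle$, and $\partial_z=\tfrac14(1+z^{-2})\partial_k$ with bounded smooth coefficient, all norms transfer back from the $k$-variable to the $z$-variable without loss.

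The main obstacle is that $b$ is not a genuine Fourier transform: the $z$-dependence sitting in the prefactors and, more seriously, inside $\mu^+(z;s)$ is entangled with the oscillatory phase. The delicate contributions are the top-order terms $\langle z\rangle^2\partial_z b$ and $\langle z\rangle\partial_z^2 b$, in which two $z$-derivatives (or one $z$-derivative together with the weight $\langle z\rangle^2$) must be shared between the phase and $\mu^+$. These are precisely the terms that consume the full strength of the second-derivative bound for $\mu^+$ in Proposition \ref{prop1} and of the second spatial moment of $m_0-1$; guaranteeing that every such piece lands in $L^2$ with the correct weight, rather than merely in $L^\infty$, is where the argument is tightest and where the hypotheses $m_0-1\in H^{2,1}$ and $m_0-1\in H^{1,2}$ are both indispensable.
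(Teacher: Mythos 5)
Your overall strategy --- splitting $\mu^+=I+(\mu^+-I)$ in \eqref{pb1}, passing to $k=\tfrac14(z-1/z)$ and the bi-Lipschitz spatial variable $y(s)=s-\int_s^{+\infty}(m_0(l)-1)\,dl$ (where $\inf_x m_0>0$ enters), and trading $z$-weights for $s$-derivatives and $z$-derivatives for spatial moments via Proposition \ref{proest} --- is essentially the paper's route, and your treatment of the leading Fourier term and of the unweighted $L^2$ bound on the correction terms is sound.

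The genuine gap is in the weighted estimates, precisely at the point you flag as ``tightest'' but leave unresolved. To produce the weight $\langle z\rangle^2$ on $b$ (and the mixed quantities $\langle z\rangle^2\partial_z b$, $\langle z\rangle\partial_z^2 b$) you must integrate by parts in $s$, and then $\partial_s$ falls on $\mu^+(z;s)$ --- a case your Leibniz bookkeeping never addresses. For the diagonal entry this is harmless: by \eqref{lax1.1}, $\partial_s\mu^+_{22}=(P\mu^+)_{22}=O(|m_0-1|)$ uniformly on $(2,\infty)$, since the commutator $[\sigma_3,\cdot]$ annihilates diagonal entries. But for the off-diagonal entry the same equation gives $\partial_s\mu^+_{12}=-\tfrac{ik}{2}m_0\,\mu^+_{12}+O(|m_0-1|)$, i.e.\ an extra factor $k\sim z$ multiplying $\mu^+_{12}$. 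No estimate you quote controls this term: Proposition \ref{prop1} bounds $\partial_z\mu^\pm$ and $\partial_z^2\mu^\pm$, not $z\mu^+_{12}$, and by \eqref{asymuinf} one has $z\mu^+_{12}(z;s)\to\eta(s)\neq 0$ as $z\to\infty$, so $z\mu^+_{12}(\cdot;s)\notin L^2(2,\infty)$ and a Minkowski-type bound is impossible. The paper closes this loop with an algebraic reabsorption: after substituting the Lax equation into \eqref{pb}, the offending term turns out to be an explicit bounded multiple of the very $\mu^+_{12}$-item of $zb(z)$ being estimated (the identity with prefactor $-\tfrac{3(z^2-1)^2}{2z^2}$ in the paper's proof), so that identity can be solved for the item, whose right-hand side then lies in $L^2(2,\infty)$ by Proposition \ref{lemma1}. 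Without this step (or a substitute, e.g.\ peeling off $\mu^+_{12}=\eta/z+O(z^{-2})$ with quantitative control of the remainder), your claim that after integration by parts every term ``lands in $L^2$ with the correct weight'' fails for exactly this term, and the same difficulty recurs at each of the top-order quantities.
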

\begin{proof}
	\eqref{pb1} can be rewritten in the form
	\begin{align}
		b(z)=&\int_{\mathbb{R}}e^{-\frac{i}{2}(z-1/z)(\int_{s}^{+\infty}(m_0(l)-1)dl-s)}\frac{i(m_0(s)-1)}{2(z^2-1)}\left(  2z\mu^+_{12}(z;s)-(z^2+1)(\mu^+_{22}(z;s)-1)\right)   ds\nonumber\\
		&-\int_{\mathbb{R}}e^{-\frac{i}{2}(z-1/z)(\int_{s}^{+\infty}(m_0(l)-1)dl-s)}\frac{z^2+1}{2(z^2-1)}i(m_0(s)-1) ds.
	\end{align}
	Applying Proposition \ref{lemma1} thus yields the $L^2$-property of the first integral. As for the second integral, it  suffices to  give estimate of the integral $\int_{\mathbb{R}}e^{-\frac{i}{2}(z-1/z)(\int_{s}^{+\infty}(m_0(l)-1)dl-s)}(m_0-1)ds$.
	
	Recall the change of variable
	$k=\frac{1}{4}\left(z-\frac{1}{z} \right) $.
	Consequently,   it suffices  to show
	$$\int_{\mathbb{R}}e^{-2i\cdot(\int_{s}^{+\infty}(m_0(l)-1)dl-s)}(m_0-1)ds\in L^2(3/8,+\infty).$$ The proof resemble the one in Proposition \ref{proest}.
	For any $f(k) \in L^2(3/8,+\infty)$,  it transpires that
	\begin{align*}
		&\Big|\int_{3/8}^{+\infty}f(k)\int_{\mathbb{R}}e^{-2ik(\int_{s}^{+\infty}(m_0(l)-1)dl-s)}(m_0(s)-1)dsdk	\Big|\nonumber\\
		&=	\Big|\int_{\mathbb{R}}(m_0(s)-1)\widehat{f}(\frac{1}{\pi} \int_{s}^{+\infty}(m_0(l)-1)dl-\frac{s}{\pi}) ds\Big|\\
		&\leqslant \parallel m_0-1\parallel_{L^2} \Big\Vert   \widehat{f}( \frac{1}{\pi}\int_{s}^{+\infty}(m_0(l)-1)dl-\frac{s}{\pi}) \Big\Vert  _{L^2},
\lesssim \parallel m_0-1\parallel_{L^2} \parallel \widehat{f}\parallel_{L^2}^2.
	\end{align*}
	This in turn implies that $b\in L^2(2,+\infty)$.
	
	Our attention is now
	turned  to the estimate of   weighted integral property of the function $b$. Applying  integration by parts, $zb(z)$ is rewritten as
	\begin{align}
		zb(z)=&-\frac{z^3}{(z^2-1)^2}\int_{\mathbb{R}}e^{-\frac{i}{2}(z-1/z)(\int_{s}^{+\infty}(m_0(l)-1)dl-s)}\partial_s( \frac{2(m_0-1)}{m_0}\mu^+_{12})ds\nonumber\\
		&+\frac{(z^2+1)z^2}{(z^2-1)^2}\int_{\mathbb{R}}e^{-\frac{i}{2}(z-1/z)(\int_{s}^{+\infty}(m_0(l)-1)dl-s)}\partial_s(\frac{m_0-1}{m_0}(\mu^+_{22}-1))ds\nonumber\\
		&+\frac{(z^2+1)z^2}{(z^2-1)^2}\int_{\mathbb{R}}e^{-\frac{i}{2}(z-1/z)(\int_{s}^{+\infty}(m_0(l)-1)dl-s)}\partial_s( \frac{m_0-1}{m_0})ds.\label{pb}
	\end{align}
	By substituting \eqref{lax1.1} into the partial derivative in above expression, it is deduced that
	\begin{align*}
		&\partial_s( \frac{2(m_0-1)}{m_0}\mu^+_{12})=-2\partial_s( \frac{1}{m_0})\mu^+_{12}+\frac{i(m_0-1)}{2}(z-\frac{1}{z})\mu^+_{12}+\frac{2i(m_0-1)^2}{m(z^2-1)}( z\mu^+_{12}-\frac{z^2+1}{2}\mu^+_{22}) ,\nonumber\\
		&\partial_s(\frac{m_0-1}{m_0}(\mu^+_{22}-1))=\partial_s(\frac{m_0-1}{m_0})(\mu^+_{22}-1)+\frac{m_0-1}{m_0}(P\mu^+)_{22},
	\end{align*}
	which follows from Proposition \ref{lemma1} that  the  both expressions in the above belong to $L^2$, where  $(P\mu^+)_{22}$ is the $(2,2)$-entry element of matrix $P\mu^+$. Specially, 
	equations above give that the $\mu^+_{12}$ item in the integral representation  \eqref{pb} of $ zb(z)$ has
	\begin{align*}
		&-\frac{3(z^2-1)^2}{2z^2}\int_{\mathbb{R}}e^{-\frac{i}{2}(z-1/z)(\int_{s}^{+\infty}(m_0(l)-1)dl-s)}\frac{i(m_0-1)z\mu^+_{12}}{z^2-1} ds\nonumber\\
		=&\int_{\mathbb{R}}e^{-\frac{i}{2}(z-1/z)(\int_{s}^{+\infty}(m_0(l)-1)dl-s)}(\frac{2i(m_0-1)^2}{m_0(z^2-1)}( z\mu^+_{12}-\frac{z^2+1}{2}\mu^+_{22})-2\partial_s( \frac{1}{m})\mu^+_{12})ds,
	\end{align*}
	which belongs to $L^2(2, \infty)$. Hence in view of \eqref{pb}, it  is accomplished that $\langle \cdot\rangle b\in L^2(2, \infty)$. Similarly, continuously using  integration by parts and the weight transform to $x$-derivative
	of $m_0$ again  leads to $\langle \cdot\rangle^2b\in L^2(\mathbb{R})$. For the $z$-derivative of $b(z)$, the estimate is obtained in the same way. Here, the essential tool is also that the norm of $z$-derivative of $b(z)$ can be controlled by the weighted integral norm of $m_0-1$ from integration by parts and Proposition \ref{proest}. In consequence,  we obtain  the desired result as indicated in Proposition \ref{prob}.
\end{proof}

Similarly,  from  \eqref{intmu} and \eqref{scattering},  the scattering data  $a(z)$ could be   expressed  by
\begin{align}
	a(z)=&1-\int_{\mathbb{R}}\frac{i(m_0(s)-1)}{2(z^2-1)} (2z\mu^+_{11}(z;s)-(z^2+1)\mu^+_{21}(z;s))ds.\label{a}
\end{align}
According to  Propositions \ref{lemma1} and  \ref{prop1}, we have  the following result.
\begin{Proposition}\label{proa}
	Let $m_0-1\in  H^{2,1}(\mathbb{R} )\cap H^{1, 2} (\mathbb{R}) $.
	Then   $a(z)$ is continuous up to the real line $ \mathbb{R}$, except at $z=\pm1$.
	Moreover, $a\in W^{2,\infty}(2,+\infty)$ with $|a(z)|\geq1$ on $\mathbb{R}$.
\end{Proposition}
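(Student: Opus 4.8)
The plan is to read off all three assertions directly from the integral representation \eqref{a}, which I rewrite by absorbing $\tfrac1{z^2-1}$ into the coefficients,
\begin{align*}
a(z)=1-\int_{\mathbb{R}}\frac{i(m_0(s)-1)}{2}\Big(\tfrac{2z}{z^2-1}\,\mu^+_{11}(z;s)-\tfrac{z^2+1}{z^2-1}\,\mu^+_{21}(z;s)\Big)\,ds,
\end{align*}
together with the bounds on the Jost function secured in Propositions \ref{lemma1} and \ref{prop1}. The continuity of $a$ up to $\mathbb{R}\setminus\{\pm1\}$ is the easiest point: for $z$ bounded away from $\pm1$ the rational prefactors are continuous and bounded, $\mu^+_{11},\mu^+_{21}$ are continuous in $z$ and uniformly bounded by Proposition \ref{lemma1}, and the integrand is dominated by $C\,|m_0(s)-1|\in L^1(\mathbb{R})$; dominated convergence gives continuity, while the poles of $\tfrac1{z^2-1}$ account for the exclusion of $z=\pm1$. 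The lower bound $|a(z)|\ge1$ on $\mathbb{R}$ is immediate from the unitarity relation $|a(z)|^{-2}+|r(z)|^2=1$ recorded after \eqref{symr}, which forces $|a(z)|^{-1}\le1$.

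The substantive claim is $a\in W^{2,\infty}(2,+\infty)$, i.e. the uniform boundedness of $a,\partial_z a,\partial_z^2 a$ on $(2,\infty)$. On this interval $z^2-1\ge3$, so the three rational coefficients and all their $z$-derivatives up to second order are bounded, with $\tfrac{2z}{z^2-1}=O(1/z)$ and $\tfrac{z^2+1}{z^2-1}=1+O(1/z^2)$ as $z\to\infty$. I would therefore differentiate the representation above under the integral sign up to twice and estimate each term in $L^\infty(2,\infty)$. The terms in which $\partial_z$ falls only on the coefficients, or once on a coefficient and once on $\mu^+$, are controlled by
\begin{align*}
\sup_{z>2}\big|\partial_z^{\,j}a(z)\big|\lesssim \|m_0-1\|_{L^1}\big(\|\mu^+\|_{L^\infty}+\|\partial_z\mu^+\|_{L^\infty}\big),\qquad j=0,1,
\end{align*}
where the pointwise bound $\partial_z\mu^+\in L^\infty\big((2,\infty)\times\mathbb{R}^+\big)$ comes from the Neumann-series argument of Proposition \ref{lemma1} applied to the differentiated Volterra equation: the operator $\partial_z\mathcal{T}$ is bounded on $L^\infty$ because $|\partial_zK(z;x,s)|\lesssim|\int_x^s m_0(l)\,dl|\,|m_0(s)-1|+|m_0(s)-1|$ and $\int_x^{+\infty}(s-x)|m_0(s)-1|\,ds\le\|m_0-1\|_{L^{1,1}}$ uniformly in $x\ge0$, with $m_0-1\in H^{1,2}(\mathbb{R})\subset L^{1,1}(\mathbb{R})$.

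The \textbf{main obstacle} is the last pair of terms in $\partial_z^2 a$, namely $\int_{\mathbb{R}}\tfrac{i(m_0-1)}{2}\big(\tfrac{2z}{z^2-1}\partial_z^2\mu^+_{11}-\tfrac{z^2+1}{z^2-1}\partial_z^2\mu^+_{21}\big)\,ds$, which asks for uniform-in-$z$ control of $\partial_z^2\mu^+$; note that the coefficient $\tfrac{z^2+1}{z^2-1}\to1$, so no decay in $z$ can be exploited here. Proposition \ref{prop1} supplies only the $C_B$ bound, i.e. control of $\partial_z^2\mu^+$ in $L^2$ with respect to $z$, and a naive Cauchy--Schwarz in $s$ with this bound yields $\partial_z^2a\in L^2(2,\infty)$ rather than the needed $L^\infty$. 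The cheap $L^\infty$-upgrade used for $\partial_z\mu^+$ is unavailable because the kernel of $\partial_z^2\mathcal{T}$ carries the weight $(s-x)^2|m_0-1|$, which would require $m_0-1\in L^{1,2}$, a space just beyond the reach of $H^{2,1}\cap H^{1,2}$. I would resolve this exactly as in the proof of Proposition \ref{prob} for $b(z)$: before differentiating, integrate by parts in $s$, using the oscillatory factor $e^{\frac{i}{2}(z-1/z)\int_x^s m_0}$ hidden in $\mu^+$ to gain a factor $(z-1/z)^{-1}=O(1/z)$ at each step and re-expressing the bulk terms through $\partial_s\big(\tfrac{m_0-1}{m_0}\big)$ (here $\inf_x m_0>0$ is used, which is why it enters the final constant). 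Each integration trades one power of the $z$-growth produced by $\partial_z$ against one $s$-derivative of $m_0-1$, so that two integrations by parts reduce the offending weight to one controlled by $\|m_0-1\|_{H^{2,1}\cap H^{1,2}}$ through Proposition \ref{proest}. This yields $\sup_{z>2}|\partial_z^2a(z)|\le C(\|m_0-1\|_{H^{2,1}\cap H^{1,2}},\inf_x m_0)$ and completes the proof that $a\in W^{2,\infty}(2,+\infty)$.
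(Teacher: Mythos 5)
Your handling of the continuity statement and of $\lvert a(z)\rvert\ge 1$ is fine, and your $L^\infty$ upgrade of $\partial_z\mu^+$ (differentiating the Volterra equation and using the kernel weight $(s-x)\lvert m_0-1\rvert$ together with $m_0-1\in L^{1,1}$) is correct, modulo one small point: \eqref{a} integrates over all $s\in\mathbb{R}$, and the weighted kernel bound is uniform only on half-lines $s\ge -M$; for $s\to-\infty$ one instead has $\lvert\partial_z\mu^+(z;s)\rvert\lesssim 1+\lvert s\rvert$, which the factor $m_0-1\in L^{1,1}$ in \eqref{a} absorbs. In this respect your write-up is actually more detailed than the paper, which gives no proof at all for this proposition: it derives \eqref{a} and asserts the result ``according to Propositions \ref{lemma1} and \ref{prop1}'' — and, as you correctly observe, those propositions control $\partial_z\mu^\pm$ and $\partial_z^2\mu^\pm$ only in the $C_B$ norm, i.e.\ in $L^2$ with respect to $z$.

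The genuine gap is in your resolution of the obstacle you yourself identified, namely $\sup_{z>2}\lvert\partial_z^2 a(z)\rvert$. Integration by parts in $s$ writes $e^{\frac i2(z-1/z)\int_x^s m_0}=\frac{2}{i(z-1/z)\,m_0(s)}\,\partial_s e^{\frac i2(z-1/z)\int_x^s m_0}$, so each step gains a factor $(z-1/z)^{-1}$ — harmless but useless on $(2,\infty)$, where no $z$-decay is needed — while transferring one $\partial_s$ to the amplitude and leaving the top $s$-weight intact. Concretely, with $\psi(s)=\int_x^s m_0$, the dangerous term $\int \phi\,\psi^2 e^{ik\psi/2}\,ds$ produced by $\partial_z^2$ hitting the phase becomes, after one integration by parts, a boundary term plus $\frac{2}{ik}\int\bigl[\partial_s(\phi/m_0)\,\psi^2+2\phi\,\psi\bigr]e^{ik\psi/2}\,ds$: the weight-two term reappears with $\phi$ replaced by $\partial_s(\phi/m_0)$. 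Iterating therefore demands $\partial_s m_0\in L^{1,2}$, then $\partial_s^2 m_0\in L^{1,2}$, each exactly as far out of reach of $H^{2,1}\cap H^{1,2}$ as $m_0-1\in L^{1,2}$ is; the chain never closes within the two derivatives available, so the ``trade one power against one $s$-derivative'' mechanism (which does work for the $z$-weights in Proposition \ref{prob}) does not reduce the quadratic $s$-weight here. Moreover, Proposition \ref{proest}, which you invoke to conclude, is a Plancherel-type estimate whose outputs are the $C_B$ and $L^2\bigl((C,\infty)\times\mathbb{R}^\pm\bigr)$ norms — $L^2$ in the spectral variable — precisely because there the quadratic weight may land in $L^{2,2}\ni m_0-1$; it cannot yield a $\sup_{z>2}$ bound. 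What your argument actually proves is $\partial_z^2 a\in L^2(2,\infty)$, which is the very shortfall you flagged. (To be fair, the paper supplies no argument for the $L^\infty$ bound on $\partial_z^2 a$ either; and the weaker package $a,\partial_z a\in L^\infty(2,\infty)$, $\partial_z^2 a\in L^2(2,\infty)$, $\lvert a\rvert\ge1$ would already suffice for the only downstream use, Lemma \ref{r1}, since in $\partial_z^2 r$ the term $b\,\partial_z^2 a/a^2$ is controlled using $\langle\cdot\rangle^2 b\in H^1(2,\infty)\subset L^\infty$.)
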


The following proposition gives the  property of $r(z)$ near $z=\pm1$.
\begin{Proposition}\label{boundr}
	Let     $m_0 -1 \in H^{2,1}(\mathbb{R} )\cap H^{1, 2} (\mathbb{R}) $. Then  $r$, $\partial_zr$ and  $\partial_z^2r$ are bounded near $z=\pm1$.
\end{Proposition}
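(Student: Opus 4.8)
The plan is to reduce the whole statement to the behaviour at $z=1$ and then transport it to $z=-1$ using the symmetry \eqref{syr}, which gives $r(z)=-\overline{r(-z)}$. Near $z=1$ I would exploit the singular structure of the Jost functions from Proposition \ref{promu1}: the columns of $\mu^\pm$ read $\mu_1^\pm(z;x)=\tfrac{i\alpha_\pm(x)}{2(z-1)}(-1,-1)^T+\mathcal{O}(1)$ and $\mu_2^\pm(z;x)=\tfrac{i\alpha_\pm(x)}{2(z-1)}(1,1)^T+\mathcal{O}(1)$, so the two residue vectors are each proportional to $(1,1)^T$. Substituting these Laurent expansions into the determinant formulas \eqref{deta} for $a$ and $b$, the coefficient of $(z-1)^{-2}$ in both $a=\det(\mu_1^+,\mu_2^-)$ and $b=e^{-2p}\det(\mu_2^+,\mu_2^-)$ becomes a determinant of two parallel vectors, hence vanishes. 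Thus $a$ and $b$ have at most simple poles at $z=1$; equivalently $(z^2-1)a(z)$ and $(z^2-1)b(z)$ stay bounded as $z\to1$ (the factor $e^{-2p}$ is analytic and nonvanishing there, since $k=z-z^{-1}=0$ at $z=1$).

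The technical core is to upgrade the $\mathcal{O}(1)$ remainders to genuine $C^2$ control in $z$, so that $(z^2-1)a$ and $(z^2-1)b$ are twice continuously differentiable near $z=1$. For this I would work with the regularized functions $\upsilon^\pm=(z^2-1)D^{-1}\mu^\pm$ from \eqref{dev}, which solve the Volterra equations \eqref{v1}--\eqref{v2} whose kernels $\Gamma$ are analytic in $z$ in a neighbourhood of $z=1$. Differentiating these equations once and twice in $z$ produces, from the oscillatory factor $\gamma$, additional weights $\int_x^s m_0$; these are absorbed by the norm $\|m_0-1\|_{H^{2,1}\cap H^{1,2}}$ exactly as in the proofs of Propositions \ref{lemma1} and \ref{prop1}. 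Running the bounded-inverse/Neumann-series argument for $(I-\tilde{\mathcal{T}})^{-1}$ then yields that $\upsilon^\pm$, $\partial_z\upsilon^\pm$ and $\partial_z^2\upsilon^\pm$ are all bounded near $z=1$, whence the regular parts of $\mu^\pm$, and with them $(z^2-1)a$ and $(z^2-1)b$, are $C^2$ with bounded derivatives there.

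Since we treat only the generic case, $z=1$ is a genuine simple pole of $a$, so $\mathrm{Res}_{z=1}a\neq0$, i.e. $(z^2-1)a(z)$ does not vanish at $z=1$. Writing
$$r(z)=\frac{(z^2-1)b(z)}{(z^2-1)a(z)}$$
then exhibits $r$ near $z=1$ as a quotient of two $C^2$ functions with nonvanishing denominator at $z=1$; by the quotient rule $r$, $\partial_z r$ and $\partial_z^2 r$ are bounded as $z\to1$, with limiting value $r(1)=-1$ consistent with \eqref{symr}. The symmetry \eqref{syr} carries the identical conclusion to $z=-1$, which finishes the argument.

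The hardest part will be the $C^2$ estimate in the technical core, namely obtaining bounded first and second $z$-derivatives of $\upsilon^\pm$ uniformly in a neighbourhood of $z=\pm1$. Each differentiation brings down a factor $\int_x^s m_0$, and it is precisely the weighted regularity $m_0-1\in H^{2,1}(\mathbb{R})\cap H^{1,2}(\mathbb{R})$ that controls these terms, mirroring Propositions \ref{lemma1} and \ref{prop1} but now localized at the singular points, where one must work with $\upsilon^\pm$ rather than $\mu^\pm$ to avoid the pole itself.
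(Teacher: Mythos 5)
Your symmetry reduction to $z=1$ via \eqref{syr}, and your observation that the residue vectors of $\mu^\pm_1,\mu^\pm_2$ at $z=1$ (Proposition \ref{promu1}) are parallel to $(1,1)^T$, so that the $(z-1)^{-2}$ coefficients in $a=\det(\mu^+_1,\mu^-_2)$ and in $b$ vanish and both functions have at most simple poles, all agree with the paper. The gap is in your ``technical core'': the claim that the Neumann-series argument gives $\partial_z\upsilon^\pm$ and $\partial_z^2\upsilon^\pm$ bounded near $z=1$, hence $(z^2-1)a,(z^2-1)b\in C^2$ there. This is not available in this function class, and the paper's own computation asserts the opposite: differentiating the Volterra equation \eqref{v1} in $z$ splits the kernel derivative as $\partial_z\Gamma=\frac{1}{z^2-1}\Gamma^{(1)}+\Gamma^{(0)}$, and the resulting forcing term carries a genuine $(z^2-1)^{-1}$ singularity, so that $(z-1)\partial_z\upsilon^\pm_1=\frac{1}{2}\upsilon^{\pm,(-1)}_{z,1}+\mathcal{O}(z-1)$, i.e.\ $\partial_z\upsilon^\pm$ has a simple pole at $z=1$. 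The obstruction you overlooked is quantitative: each $z$-differentiation of the oscillatory factor $\gamma$ costs a weight $\int_x^s m_0\sim\langle s-x\rangle$ (two derivatives cost $\langle s-x\rangle^2$), while $m_0-1\in H^{2,1}\cap H^{1,2}$ controls only one power of the weight in $L^1$ against $|m_0-1|$; trading the excess weight for the explicit factor $(z^2-1)^{-1}$ is exactly what produces the extra pole. Consequently $\partial_z\mu^\pm$ has, besides the double pole $\frac{i\alpha^{(1)}_\pm(x)}{4(z-1)^2}$, an independent simple-pole term $\mu_z^{\pm,(1)}(x)/(z-1)$, and $\partial_z a=\frac{ic^{(1)}}{4(z-1)^2}+\mathcal{O}\left((z-1)^{-1}\right)$ with a constant $c^{(1)}$ that is \emph{not} linked to $c^{(0)}$ by the Laurent relation $c^{(1)}=-2c^{(0)}$; but that relation is precisely what $C^1$-regularity of $(z^2-1)a$ with nonvanishing value at $z=1$ would force. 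So your quotient-rule step rests on a regularity statement that fails to be established (it would be automatic if $m_0-1$ decayed exponentially, since then $a$ would be genuinely meromorphic near $z=1$, but not under the stated weighted-Sobolev hypotheses).

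The repair, which is the paper's actual mechanism, is a cancellation between $a$ and $b$ rather than regularity of each separately. The symmetry \eqref{symPhi1} forces the regular parts to satisfy $\mu^{\pm,(0)}_{11}=\mu^{\pm,(0)}_{22}$ and $\mu^{\pm,(0)}_{12}=\mu^{\pm,(0)}_{21}$ (with analogous reality constraints on the coefficients of $\partial_z\mu^\pm$), and feeding these into \eqref{deta} shows the singular parts of $b$ and $\partial_z b$ are the \emph{exact negatives} of those of $a$ and $\partial_z a$: $a=\frac{ic^{(0)}}{2(z-1)}+\mathcal{O}(1)$, $b=-\frac{ic^{(0)}}{2(z-1)}+\mathcal{O}(1)$, $\partial_z a=\frac{ic^{(1)}}{4(z-1)^2}+\mathcal{O}\left((z-1)^{-1}\right)$, $\partial_z b=-\frac{ic^{(1)}}{4(z-1)^2}+\mathcal{O}\left((z-1)^{-1}\right)$. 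Then $r=b/a$ is bounded with $r(1)=-1$, and in $\partial_z r=\left(a\partial_z b-b\partial_z a\right)/a^2$ the $(z-1)^{-3}$ contributions cancel identically, leaving a numerator of size $\mathcal{O}\left((z-1)^{-2}\right)$ against $a^2\sim-\frac{(c^{(0)})^2}{4}(z-1)^{-2}$, nonzero in the generic case; $\partial_z^2 r$ is handled the same way. Your proposal never invokes these symmetry identities, and without them — or without the unproved $C^2$ upgrade — the boundedness of $\partial_z r$ and $\partial_z^2 r$ does not follow.
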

\begin{proof}
	The proof of this proposition resemble as the one in the case $(2,+\infty)$.
	In view of the definition of $r$ in \eqref{symr},  it is necessary to analyze the integral property of the function $b$.
	Since $b$ can be represented by the Jost function $\mu^\pm$, the strategy is to first estimate $\mu^\pm$ in this proof. 	It is sufficient to prove this proposition  near $z=1$.
	As shown in Proposition \ref{promu1}, $\mu^\pm$  have singularities on $z=\pm1$. Furthermore, by the symmtry of $\mu^\pm$ in \eqref{symPhi1},  as $z\to1$,
	\begin{align}
		\mu^\pm(z;x)=\frac{i\alpha_\pm(x)}{2(z-1)}\left(\begin{array}{cc}
			-1 & 1 \\
			-1 & 1
		\end{array}\right)+\mu^{\pm,(0)}+\mathcal{O}(z-1),\label{z1mu1}
	\end{align}
	with $\mu^{\pm,(0)}_{11}=\mu^{\pm,(0)}_{22}\in\mathbb{R}$, $\mu^{\pm,(0)}_{21}=\mu^{\pm,(0)}_{12}\in\mathbb{R}$.
	From \eqref{deta}, it appears that
	\begin{align*}
		a=\mu^+_{11}\mu^-_{22}-\mu^+_{21}\mu^-_{12},\ \  b=e^{-2p}(\mu^+_{12}\mu^-_{22}-\mu^+_{22}\mu^-_{12}).
	\end{align*} Then in view of  \eqref{z1mu1}, as $z\to1$,
	\begin{align*}
		&	a(z)=\frac{ic^{(0)}}{2(z-1)}+\mathcal{O}(1),\ 	b(z)=\frac{-ic^{(0)}}{2(z-1)}+\mathcal{O}(1),
	\end{align*}
	where $c^{(0)}=\alpha_-(\mu^{+,(0)}_{11}-\mu^{+,(0)}_{21})+\alpha_+(\mu^{-,(0)}_{12}-\mu^{-,(0)}_{22})$. Therefore, the definition of $r$ gives that $r$ is bounded at $z=1$.
	To analyze the $z$-derivative property of $\mu^\pm$ near $z=\pm1$, the  $z$-derivative of $\upsilon^\pm$ needs to be considered. To see this, it is found from \eqref{v1} that
	\begin{align}
		\partial_z\upsilon_1^\pm(z;x)=&\left(\begin{array}{cc}
			z-i	   \\[4pt]
			iz
		\end{array}\right)+\int^{x}_{\pm \infty} \frac{m_0(s)-1}{2}\partial_z\Gamma (z,x,s)\sigma_3\upsilon^\pm(z;s) ds+\tilde{\mathcal{T}}(\partial_z\upsilon^\pm)(z;x),
	\end{align}
	where the  calculation gives that $\partial_z\Gamma $ can be divided into two parts by
	\begin{align*}
		\partial_z\Gamma (z,x,s)=\frac{1}{z^2-1}\Gamma^{(1)} (z,x,s)+\Gamma^{(0)} (z,x,s).
	\end{align*}
	$\Gamma^{(1)}$ and $\Gamma^{(0)}$  are both analytic and bounded matrix functions with respect to $z$ near $z=\pm1$ whose  expressions are omitted for brevity. Apply a similar approach  as $z\to1$ to get
	\begin{align}
		(z-1)\partial_z\upsilon_1^\pm=\frac{\upsilon_{z,1}^{\pm,(-1)}(x)}{2}+\mathcal{O}(z-1).
	\end{align}
	Therefore, it thereby follows  again from  the symmetry \eqref{symPhi1} and
	\begin{align*}
		\partial_z\mu^\pm(z;x)=-\frac{2z}{(z^2-1)^2}D(z)\upsilon^\pm(z;x)+	\frac{1}{z^2-1}D(z)\partial_z \upsilon^\pm(z;x)+\frac{1}{z^2-1}\partial_z D(z)\upsilon^\pm(z;x),
	\end{align*}
	 that as $z\to1$
	\begin{align}
		\partial_z\mu^\pm(z;x)=\frac{i\alpha_\pm^{(1)}(x)}{4(z-1)^2}\left(\begin{array}{cc}
			1 & -1 \\
			1 & -1
		\end{array}\right)+\frac{\mu_z^{\pm,(1)}(x)}{z-1}+\mathcal{O}(1),\label{z1mu2}
	\end{align}
	with
	\begin{align}
	&\alpha_\pm^{(1)}=-i(2\upsilon^\pm_{11}+2\upsilon^\pm_{21}+\upsilon_{z,11}^{\pm,(-1)}+\upsilon_{z,21}^{\pm,(-1)})\in \mathbb{R}, \nonumber\\
&	\mu_{z,11}^{\pm,(-1)}=\mu_{z,22}^{\pm,(-1)}\in i\mathbb{R},  \ \ \mu_{z,12}^{\pm,(-1)}=\mu_{z,21}^{\pm,(-1)}\in i\mathbb{R}. \nonumber
	\end{align}
	
	Taking $z$-derivative  of $a$ and $b$ in \eqref{deta}, it is deduced that
	\begin{align*}
		&\partial_za=\partial_z\mu^+_{11}\mu^-_{22}+\mu^+_{11}\partial_z\mu^-_{22}-\partial_z\mu^+_{21}\mu^-_{12}-\mu^+_{21}\partial_z\mu^-_{12},\\  &\partial_zb=\partial_ze^{-2p}(\mu^+_{12}\mu^-_{22}-\mu^+_{22}\mu^-_{12})+e^{-2p}(\partial_z\mu^+_{12}\mu^-_{22}+\mu^+_{12}\partial_z\mu^-_{22}-\partial_z\mu^+_{22}\mu^-_{12}-\mu^+_{22}\partial_z\mu^-_{12}).
	\end{align*}
	Thus by \eqref{z1mu1} and \eqref{z1mu2}, as $z\to1$, we have
	\begin{align}
		&\partial_za=\frac{ic^{(1)}}{4(z-1)^2}+\mathcal{O}\left( (z-1)^{-1}\right) ,\ \partial_zb=\frac{-ic^{(1)}}{4(z-1)^2}+\mathcal{O}\left( (z-1)^{-1}\right),
	\end{align}
	where $c^{(1)}=\alpha_-^{(1)}(\mu^{+,(0)}_{11}-\mu^{+,(0)}_{21})+\alpha_+^{(1)}(\mu^{-,(0)}_{12}-\mu^{-,(0)}_{22})+2\alpha_-(\mu^{+,(-1)}_{11}-\mu^{+,(-1)}_{21})+2\alpha_+(\mu^{-,(-1)}_{12}-\mu^{-,(-1)}_{22})$.  Then  $a\partial_zb-b\partial_z a=\mathcal{O}(z-1)^{-2}$ as $z\to1$.
	This  also gives the boundedness of \begin{align*}
		\partial_z r=\frac{1}{a^2}(a\partial_zb-b\partial_z a).
	\end{align*}
The analysis of $\partial_z^2r$ resembles the methods shown above, which is omitted here. This completes the proof of Proposition \ref{boundr}.
	\end{proof}
\begin {proof} [Proof of Lemma \ref{r1}]
In view of the definition of $r$ in (\ref{symr}), the result of Lemma \ref{r1} is obtained directly from Proposition \ref{proa} and Proposition \ref{prob}.
\end{proof}
\vskip 0.1cm
\noindent \textbf{The norming constant.}\quad
We denote the norming constant $c_j$ by
\begin{align}
	c_j=\frac{1}{b_ja'(z_j)},\ j=1,...,{2N_0},
\end{align}
where $b_j$ is  the ratio of $\mu^+_1$ and $\mu^-_2$ such that
\begin{align}
	\mu^+_1(z_j)=b_je^{-2p(z_j)}\mu^-_2(z_j).\label{bj}
\end{align}
The symmetry of $\mu^\pm$ implies that $b_j\in\mathbb{R}$ and $\mu^+_1(-\bar{z}_j)=b_je^{-2p(-\bar{z}_j)}\mu^-_2(-\bar{z}_j)$. Thus $c_j=c_{j+{N_0}}$.

\subsection{The RH problems }\label{secRH}
By the Jost functions $\mu^\pm(z;x)$ and the function $a(z)$,  two  piecewise  analytical  matrices are defined as follows,
\begin{align}
	M_l(z;x)=\left\{ \begin{array}{ll}
		\left( \frac{ \mu^-_1  (z;x) } {a^*(z)},  \mu^+_2  (z;x)\right),   &\text{as } z\in \mathbb{C}^+,\\[12pt]
		\left(  \mu^+_1(z;x),\frac{ \mu^-_2(z;x)}{a(z)}\right)  , &\text{as }z\in \mathbb{C}^-,\\
	\end{array}\right.\label{ml}\\
	M_r(z;x)=\left\{ \begin{array}{ll}
		\left( \mu^-_1  (z;x), \frac{\mu^+_2  (z;x)  } {a^*(z)} \right),   &\text{as } z\in \mathbb{C}^+,\\[12pt]
		\left( \frac{\mu^+_1(z;x) }{a(z)} ,\mu^-_2(z;x)\right)  , &\text{as }z\in \mathbb{C}^-.\\
	\end{array}\right.\label{mr}
\end{align}
In order to construct the RH problem only depending explicitly on the scattering data, we introduce the space variable
\begin{equation}
	y(x)=x+  \int_{+\infty}^{x}(m(s)-1)ds.
\end{equation}
Under this new scaling, when $t=0$, the functions $M_l$ and $M_r$ denoting in \eqref{ml} and \eqref{mr} respectively become
\begin{align}
	M_l(z) := M_l(z;y(x)),\ \
	M_r(z):=  M_r(z;y(x))
\end{align}
Obviously,
$$M_l(z;y)\to I, \ \ y\to\infty, \ \ {\rm and} \ \  M_r(z;y)\to I, \ \ y\to-\infty, $$
which  inspires us to  consider  $M_l(z;y)$ on $y\in[0,+\infty)$ and $M_r(z;y)$ on $y\in(-\infty,0]$. And from the property of $\mu^\pm(z;x)$, \eqref{scattering} and \eqref{bj},  it then is inferred  that
$M_l(z)$ solves the following RH problem,
\begin{RHP}\label{RHP1}
	Find a matrix-valued function $M_l(z)=M_l(z;y)$ which satisfies
	
	$\bullet$ Analyticity: $M_l(z)$ is meromorphic  in $\mathbb{C}\setminus \mathbb{R}$;
	
	$\bullet$ Symmetry: $M_l(z)=\sigma_1\overline{M_l(\bar{z})}\sigma_1=\sigma_2M_l(-z)\sigma_2=\sigma_1M_l(z^{-1})\sigma_1$;
	
	$\bullet$ Jump condition: $M_l(z)$ has continuous boundary values $[M_l]_\pm(z)$ on $\mathbb{R}$ and
	\begin{equation}
		[M_l]_+(z)=[M_l]_-(z)V_l(z),\hspace{0.5cm}z \in \mathbb{R},
	\end{equation}
	where
	\begin{equation}
		V_l(z)=\left(\begin{array}{cc}
			1-|r(z)|^2 & r(z)e^{2\theta(z)}\\
			-\bar{r}(z)e^{-2\theta(z)} & 1
		\end{array}\right),   \label{V}
	\end{equation}
	with
	\begin{align}
		\theta(z )=p(z;x(y))=-\frac{1}{4}i \left(z-z^{-1} \right) y;
	\end{align}
	
	$\bullet$ Asymptotic behavior:
	\begin{align}
		&M_l(z) = I+\mathcal{O}(z^{-1}),\hspace{0.5cm}|z| \rightarrow \infty;
	\end{align}
	
	$\bullet$ Singularity: $M_l(z)$ has singularities at $z=\pm1$ with
	\begin{align}
		&M_l(z)= \frac{i\alpha_+(x)}{2(z-1)}\left(\begin{array}{cc}
			-c & 1 \\
			-c & 1
		\end{array}\right)+\mathcal{O}(1),\ z\to  1\ \text{ in }\ \mathbb{C}^+,\\
		&M_l(z)= \frac{i\alpha_+(x)}{2(z+1)}\left(\begin{array}{cc}
			-c & -1 \\
			c & 1
		\end{array}\right)+\mathcal{O}(1),\ z\to  -1\ \text{ in }\ \mathbb{C}^+,
	\end{align}
	where  $c=0$ in generic case, while $c\neq0$ in non-generic case;
	
	$\bullet$
	Residue condition: $M_l(z)$ has simple poles at each point in $ Z_0\cup \bar{Z_0}$ with
	\begin{align}
		&\res_{z=z_j}M_l(z)=\lim_{z\to z_j}M_l(z)\left(\begin{array}{cc}
			0 & c_je^{2\theta(z_j)}\\
			0 & 0
		\end{array}\right),\\
		&\res_{z=\bar{z}_j}M_l(z)=\lim_{z\to \bar{z}_j}M_l(z)\left(\begin{array}{cc}
			0 & 0\\
			\bar{c}_je^{-2\theta(\bar{z}_j)} & 0
		\end{array}\right),
	\end{align}
	where $2\theta(z_j)=2\theta(\bar{z}_j)=y$Im$z_j$.
\end{RHP}
The set $( r,\{z_j,c_j\}_{j=1}^{2N_0}) $ as the scattering data   is  crucial for construction of the RH problem \ref{RHP1}.
Analogously, it is also found that
$M_r(z)$ solves the following RH problem,
\begin{RHP}\label{RHPr1}
	Find a matrix-valued function $M_r(z)=M_r(z;y)$ which satisfies the same analyticity, symmetry and asymptotic behavior as the RH problem \ref{RHP1}, and

	$\bullet$ Jump condition: $M_r(z)$ has continuous boundary values $[M_r]_\pm(z)$ on $\mathbb{R}$ and
	\begin{equation}
		[M_r]_+(z)=[M_r]_-(z)V_r(z),\hspace{0.5cm}z \in \mathbb{R},
	\end{equation}
	where
	\begin{equation}
		V_r(z)=\left(\begin{array}{cc}
			1 & \tilde{r} (z)e^{2\theta(z)}\\
			-\bar {\tilde{r}}(z)e^{-2\theta(z)} & 1-|\tilde{r}(z)|^2
		\end{array}\right), \ \  \tilde{r} (z)=\dfrac{b(z)}{a^*(z)}.   \label{V}
	\end{equation}

	$\bullet$ Singularity: $M_r(z)$ has singularities at $z=\pm1$ with
	\begin{align}
		&M_r(z)= \frac{i\alpha_+(x)}{2(z-1)}\left(\begin{array}{cc}
			-1 & c \\
			-1 & c
		\end{array}\right)+\mathcal{O}(1),\ z\to  1\ \text{ in }\ \mathbb{C}^+,\\
		&M_r(z)= \frac{i\alpha_+(x)}{2(z+1)}\left(\begin{array}{cc}
			-1 & -c \\
			1 & c
		\end{array}\right)+\mathcal{O}(1),\ z\to  -1\ \text{ in }\ \mathbb{C}^+,
	\end{align}
	where $c=0$ in the generic case, while $c\neq0$ in the non-generic case;
	
	$\bullet$
	Residue condition: $M_r(z)$ has simple poles at each point in $ Z_0\cup \bar{Z_0}$ with
	\begin{align}
		&\res_{z=z_j}M_r(z)=\lim_{z\to z_j}M_r(z)\left(\begin{array}{cc}
			0 & 	0\\
			\tilde{c}_je^{-2\theta(z_j)} & 0
		\end{array}\right),\\
		&\res_{z=\bar{z}_j}M_r(z)=\lim_{z\to \bar{z}_j}M_r(z)\left(\begin{array}{cc}
			0 & 	\bar{\tilde{c}}_je^{2\theta(\bar{z}_j)} \\
			0	& 0
		\end{array}\right).
	\end{align}
\end{RHP}
 In the residue condition,
 \begin{align}
 \tilde{c}_j=\tilde{c}_{j+{N_0}}=\frac{b_j}{a'(z_j)},\ j=1,...,{N_0},
 \end{align}
 where $b_j$ is defined in \eqref{bj}. Obviously, $\tilde {r}(z)$ has the same properties as $r(z)$.

If the RH problems \ref{RHP1} and \ref{RHPr1} have solutions, it must be unique. In fact,
If $M_l$ and $\tilde{M}_l$ both are solution of  the RH problems \ref{RHP1}, then it is verifies that $\tilde{M}_l^{-1}M_l$ is a  analytic function on $\mathbb{C}$ and tends to $I$ as $|z|\to\infty$. So by Liouvill's   theorem, it  follows that  $M_l=\tilde{M}_l$.

To overcome difficulties from the  singularity conditions of the RH problems \ref{RHP1} and \ref{RHPr1} on $z=\pm1$, we consider  a matrix function $M_l^{(1)}(z)$ which is the solution of the following RH problem without singularity on $z=\pm1$.
\begin{RHP}\label{RHP2}
	Find a matrix-valued function $M_l^{(1)}(z)=M_l^{(1)}(z;y)$ having no  singularity on $z=\pm1$ and satisfying same
	analyticity,  jump condition, Asymptotic behavior and residue condition as the RH problem \ref{RHP1}, and

	$\bullet$ Symmetry: $M_l^{(1)}(z)=\sigma_1\overline{M_l^{(1)}(\bar{z})}\sigma_1=\sigma_2M_l^{(1)}(-z)\sigma_2$.
\end{RHP}
And we will show in Section \ref{sec4} that
this RH problem exists a unique solution.

Then it can easily verify that
$M_l(z;y)$ defined in the following transformation is the solution of the original RH problem \ref{RHP1}:
\begin{align}
	\left( I-\frac{\sigma_1}{z}\right) M_l(z;y)=\left( I-\frac{\sigma_1M_l^{(1)}(0;y)^{-1}}{z} \right) M_l^{(1)}(z;y).\label{tran1}
\end{align}

Similarly, for $y\in(-\infty,0]$,  by means of the following transformation
\begin{align}
\left( I-\frac{\sigma_1}{z}\right) M_r(z;y)=\left( I-\frac{\sigma_1M_r^{(1)}(0;y)^{-1}}{z} \right) M_r^{(1)}(z;y),\label{tran2}
\end{align}
 it is easy to check that $M_r (z)$   is the  solution of the RH problem  \ref{RHPr1}, where  the  matrix function
$M_r^{(1)}(z;y)$ satisfies the  above  similar type  RH problem \ref{RHP2} under $\left( \tilde {r},\{z_j,\tilde {c}_j\}_{j=1}^{2N_0}\right) $ by replacing
  $M_l^{(1)}(z;y)$ with $M_r^{(1)}(z;y)$.

\subsection{Reconstruction  of the  potential $m$}\label{secrec}
The aim  in  this subsection is to give the reconstruction formula
of $m$.
By  the construction of $M_l(z;y)$ on $y\in[0,+\infty)$ and $M_r(z;y)$ on $y\in(-\infty,0]$  in (\ref{ml}) and  (\ref{mr}) respectively, $m_0$ can be reconstructed from the  behavior of the analytic continuations at $z=\infty$ and $z=i$ of $M_l(z;y)$ and  $M_r(z;y)$ from \eqref{asymuinf} and \eqref{asyM}.
\begin{Proposition}\label{lemmaM}
	If $M(z;y)$ is the solution of the RH problem \ref{RHP1} for $y\in[0,+\infty)$ or the RH problem \ref{RHPr1} for $y\in(-\infty,0]$, then the potential function $m$ can be recovered  from $M(z;y)$ with
	\begin{align}
		m=\left( 1-\eta\right) ^{-1},\label{rescm}\ \eta=\lim_{z\to \infty}z M_{12}(z).
	\end{align}
	Moreover,  as $z\to i$, $M(z;y)$ has the following expansion:
	\begin{align}
		M(z)=\left(\begin{array}{cc}
			a_1 & 0\\
			0 & a_1^{-1}
		\end{array}\right)+\left(\begin{array}{cc}
			0 & a_2\\
			a_3 & 0
		\end{array}\right)(z-i)+\mathcal{O}\left( (z-i)^2\right),
	\end{align}
	where $a_j$ is a real function independent of $z$.
	In addition,  $u=(1-\partial_x^2)^{-1}m$  is  obtained from $M$ directly, that is
	\begin{align}
		&	u(y)=	1-M_{11}(i)\lim_{z\to i}\left(\frac{M_{12}(z)-M_{12}(i)}{z-i} \right) -M_{22}(i)\lim_{z\to i}\left(\frac{M_{21}(z)-M_{21}(i)}{z-i} \right),\nonumber\\
		&	x=y+2\log \left( M_{11}(i)\right). \label{u}
	\end{align}
\end{Proposition}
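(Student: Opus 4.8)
The plan is to extract $m$ and $u$ from the behaviour of the normalized matrix $M$ (either $M_l$ or $M_r$) at the two distinguished points $z=\infty$ and $z=i$, using that $M$ is assembled from the columns of $\mu^\pm$ via \eqref{ml}--\eqref{mr} and that $a(z)\to1$ as $|z|\to\infty$. Throughout, the $x$-dependent quantities are evaluated at the point $x=x(y)$ fixed by the change of variables $y(x)=x-\int_x^{+\infty}(m-1)\,ds$, so that the two formulas \eqref{rescm} and \eqref{u} together furnish a parametric description of the potential. For \eqref{rescm} itself I would argue that in each half-plane the $(1,2)$ entry of $M$ is either $\mu^+_{12}$ or $\mu^-_{12}/a$, and since $a\to1$ at infinity the large-$z$ expansion of $M_{12}$ coincides with that of $\mu^\pm_{12}$; hence \eqref{asymuinf} gives $\lim_{z\to\infty}zM_{12}(z)=\eta=1-1/m$, and solving for $m$ yields $m=(1-\eta)^{-1}$.

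For the $z\to i$ expansion I would first note that $z=i$ is a regular point of $M$: it lies off the jump contour and, since $a(\mp i)\neq0$ (cf.\ \eqref{ai}), it is neither a pole nor a zero of the relevant scalar factor, so $M$ is analytic there and admits a Taylor expansion. The diagonal/off-diagonal structure then follows purely from the symmetries of the RH problem \ref{RHP1}: composing $M(z)=\sigma_2M(-z)\sigma_2$ with $M(z)=\sigma_1M(z^{-1})\sigma_1$ gives $M(z)=\sigma_3M(-1/z)\sigma_3$, and since $-1/z=z$ at $z=i$ this forces $M(i)=\sigma_3M(i)\sigma_3$, i.e.\ $M(i)$ is diagonal, while differentiating the same identity (with $\tfrac{d}{dz}(-1/z)=-1$ at $z=i$) gives $M'(i)=-\sigma_3M'(i)\sigma_3$, i.e.\ the first-order coefficient is off-diagonal. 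Reality of the coefficients follows from $M(z)=\sigma_1\overline{M(\bar z)}\sigma_1$ combined with $M(z)=\sigma_2M(-z)\sigma_2$ applied near $z=\pm i$, and the relation $\det M\equiv1$ forces the diagonal part to be $\operatorname{diag}(a_1,a_1^{-1})$. This establishes the stated expansion with $a_1,a_2,a_3\in\mathbb{R}$.

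Finally, to pin down the explicit reconstruction of $u$ and the map $x=y+2\log M_{11}(i)$, I would insert the expansion \eqref{asyM} of $\mu^\pm$ into the column definitions in \eqref{ml} and retain terms through order $(z-i)$. The second column of $M_l$ is $\mu^+_2$, so its leading coefficient gives $M_{22}(i)=e^{-\frac12\int_x^{+\infty}(m-1)\,ds}$ outright; combined with $\det M(i)=1$ this yields $M_{11}(i)=e^{\frac12\int_x^{+\infty}(m-1)\,ds}=e^{(x-y)/2}$, which is exactly the asserted change of variables $x=y+2\log M_{11}(i)$. Reading off the order-$(z-i)$ coefficients of $M_{12}$ and $M_{21}$ from \eqref{asyM}, multiplying by $M_{11}(i)$ and $M_{22}(i)$ respectively, and substituting into the right-hand side of \eqref{u}, the exponential prefactors cancel and the two contributions recombine into $u_0\pm\partial_xu_0$ and collapse to $u$. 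The case $y\le0$ is handled identically with $M_r$ in place of $M_l$.

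I expect the main obstacle to be this last step. One must carefully propagate the scalar factor $a^*(z)$ through the first-order term of the first column --- recalling that $i\in\mathbb{C}^+$, where $a$ itself is not analytic, so that one works with $a^*$ and fixes $a^*(i)$ from $\det M\equiv1$ --- and then verify that all the $\int_x^{\pm\infty}(m-1)\,ds$ exponentials cancel precisely, so that the combination in \eqref{u} produces $u$ rather than a background-shifted or rescaled version. Keeping the bookkeeping of columns, half-planes, and signs consistent between the $M_l$ and $M_r$ constructions is the part most prone to error.
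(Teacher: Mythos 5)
Your proposal is correct and follows essentially the same route as the paper, which in fact states Proposition \ref{lemmaM} without a written proof, asserting it directly from the construction \eqref{ml}--\eqref{mr} together with the asymptotics \eqref{asymuinf} at $z=\infty$ and \eqref{asyM} at $z=i$ (citing \cite{Mon}). Your added details --- the composed symmetry $M(z)=\sigma_3 M(-1/z)\sigma_3$ forcing the diagonal/off-diagonal structure at $z=i$, and the use of $\det M\equiv 1$ to fix $M_{11}(i)$ (equivalently $a^*(i)$) rather than evaluating it directly --- are exactly the bookkeeping the paper leaves implicit, and your closing caution about tracking the background constant through \eqref{asyM} is well placed.
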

On the other hand, the following proposition reveals that  the solutions reconstructed from two RH problems in Proposition \ref{lemmaM}  are actually same at $ y = 0. $
\begin{Proposition}\label{Proeq}
	For $y\in[0,+\infty)$, denote $u_l$ and  $m_l$ as those  recovered  from   $M_l(z;y)$, and for  $y\in(-\infty,0]$  denote $u_r$, $m_r$ as those  recovered  from   $M_r(z;y)$. Then  $m_l(0)=m_r(0)$ and  $u_l(0)=u_r(0)$.
\end{Proposition}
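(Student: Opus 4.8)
The plan is to reduce the identity at $y=0$ to a statement about the single spatial point $x_0$ determined by $y(x_0)=0$, and then to exploit the fact that $M_l$ and $M_r$ are two different column normalizations of the \emph{same} Jost matrix. Since $m>0$ (hence $\inf_x m>0$, as recorded in the Remark), the map $x\mapsto y(x)=x+\int_{+\infty}^x(m(s)-1)\,ds$ has derivative $y'(x)=m(x)>0$, so it is a strictly increasing bijection of $\mathbb{R}$; thus $y=0$ has a unique preimage $x_0$, and both $M_l(z;y=0)$ and $M_r(z;y=0)$ are built from the same Jost functions $\mu^\pm(z;x_0)$ through \eqref{ml} and \eqref{mr}. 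Comparing those two formulas columnwise yields the purely algebraic relation
\[ M_l(z)=M_r(z)\,\mathrm{diag}\big(a^*(z)^{-1},a^*(z)\big)\ \text{in}\ \mathbb{C}^+,\qquad M_l(z)=M_r(z)\,\mathrm{diag}\big(a(z),a(z)^{-1}\big)\ \text{in}\ \mathbb{C}^-, \]
which will be the engine of the proof.

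For the equality $m_l(0)=m_r(0)$ I would use the reconstruction \eqref{rescm}: since $a(z),a^*(z)\to1$ as $|z|\to\infty$, the $\mathcal{O}(1/z)$ coefficient of the $(1,2)$ entry is unaffected by the diagonal factor above, so $\eta_l=\lim_{z\to\infty}zM_{l,12}(z)=\lim_{z\to\infty}zM_{r,12}(z)=\eta_r$; equivalently both equal the intrinsic Jost coefficient $\eta=1-1/m_0(x_0)$ from \eqref{asymuinf}. Hence $m_l(0)=(1-\eta)^{-1}=m_r(0)$.

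For $u_l(0)=u_r(0)$ I would work at $z=i\in\mathbb{C}^+$. By Proposition \ref{lemmaM} the solution has an expansion whose leading term is diagonal, so $M_{12}(i)=M_{21}(i)=0$ and \eqref{u} collapses to $u=1-M_{11}(i)\,[\partial_zM_{12}](i)-M_{22}(i)\,[\partial_zM_{21}](i)$. Writing $d:=a^*(i)=a(-i)=\exp\{\tfrac12\int_{\mathbb R}(m_0-1)\}$, which is real and nonzero by \eqref{ai}, the $\mathbb{C}^+$ relation gives $M_{l,11}(i)=d^{-1}M_{r,11}(i)$ and $M_{l,22}(i)=d\,M_{r,22}(i)$, while differentiating $M_{l,12}=a^*M_{r,12}$ and $M_{l,21}=(a^*)^{-1}M_{r,21}$ and using $M_{r,12}(i)=M_{r,21}(i)=0$ kills the term carrying $\partial_za^*$, leaving $[\partial_zM_{l,12}](i)=d\,[\partial_zM_{r,12}](i)$ and $[\partial_zM_{l,21}](i)=d^{-1}[\partial_zM_{r,21}](i)$. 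The factors of $d$ cancel inside each product, so $u_l(0)=u_r(0)$.

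The main obstacle is the $u$-step: one must justify that the normalized matrices $M_l,M_r$ genuinely admit the $z\to i$ expansion of Proposition \ref{lemmaM} — in particular that their off-diagonal entries vanish at $z=i$, which is exactly what makes the stray $\partial_za^*$ contributions drop out and the factors $d^{\pm1}$ cancel — and to confirm that $z=i$ is a regular point of both matrices so that the algebraic relation is non-singular there; this rests on $a^*(i)=a(-i)\neq0$ from \eqref{ai} together with the expansion \eqref{asyM}. By contrast, the reconstruction of $m$ is comparatively immediate.
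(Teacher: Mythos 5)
Your proposal is correct and is essentially the paper's own argument: the paper's proof consists precisely of the relation $M_l(z;y)=M_r(z;y)\,\tilde{a}(z)^{-\sigma_3}$ with $\tilde{a}=a^*$ in $\mathbb{C}^+$ and $\tilde{a}=a^{-1}$ in $\mathbb{C}^-$ (your diagonal factors), the observation that $\tilde a\to 1$ at infinity forces $\lim_{z\to\infty}zM_{l,12}=\lim_{z\to\infty}zM_{r,12}$, and the deduction of $u_l(0)=u_r(0)$ from \eqref{u}. Your $z=i$ computation — vanishing of the off-diagonal entries at $z=i$ via Proposition \ref{lemmaM}, which kills the $\partial_z a^*$ terms and cancels the factors $d^{\pm1}=a^*(i)^{\pm1}$ — is a correct filling-in of the detail the paper leaves implicit in its final sentence.
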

\begin{proof}
	Denote
	\begin{align}
		\tilde{a}(z)=\left\{ \begin{array}{ll}
			a^*(z),   &\text{as } z\in \mathbb{C}^+,\\[12pt]
			a^{-1}(z) , &\text{as }z\in \mathbb{C}^-,\\
		\end{array}\right. \hspace{0.3cm}	\tilde{a}\to I,\ |z|\to\infty.
	\end{align}
	Then $M_l(z;y)=M_r(z;y)	\tilde{a}(z)^{-\sigma_3}$ with $\lim_{z\to \infty}\left(z M_{r,12}(z;y)\right)=\lim_{z\to \infty}\left(z M_{l,12}(z;y)\right)$ for $y=0$. And  it is deduced from  \eqref{u} that  $u_l(y)=u_r(y)$.
\end{proof}
Therefore, we define
\begin{align}
	M^{(1)}(z;y)=\left\{ \begin{array}{ll}
		M^{(1)}_l(z;y),   &\text{as } y\in [0,+\infty),\\[12pt]
			M^{(1)}_r(z;y) , &\text{as }y\in (-\infty,0],\\
	\end{array}\right.\label{M1}
\end{align}
which is used  to recover the solution $m_0(x)$ through the following lemma, which is a directly result from Proposition \ref{lemmaM} and \eqref{transM}.
\begin{lemma} \label{lemm3.2}
	Suppose that $M^{(1)}(z;y)$ is the  solution of the RH problem \ref{RHP2}. Then
	\begin{align*}
		M^{(1)}(0 ;y)=\left(\begin{array}{cc}
			\alpha(y) & i\beta(y) \\
			-i\beta(y) & \alpha(y)
		\end{array}\right),\ \alpha^2-\beta^2=1,
	\end{align*}
	where $\alpha(y)$ and $\beta(y)$ are two real functions.
	And when $\beta\neq0$, as $z\to i$, $M^{(1)}(z;y)$ has the expansion
	\begin{align*}
		M^{(1)}(z;y)=&\left(\begin{array}{cc}
			f_1(y) &\frac{ i\beta}{\alpha+1}f_2(y) \\
			-\frac{ i\beta}{\alpha+1}f_1(y) & f_2(y)
		\end{array}\right)\nonumber\\
		&+\left(\begin{array}{cc}
			\frac{ i\beta}{\alpha+1}g_1(y) & g_2(y) \\
			g_1(y) & -\frac{ i\beta}{\alpha+1}g_2(y)
		\end{array}\right)(z-i)+\mathcal{O}((z-i)^2),
	\end{align*}
	where $g_1(y)$, $g_2(y)$, $f_1(y)$ and  $f_2(y)$ are real functions.
	Then the following  formula  gives  $m_0(x)=m(x(y))$ with
	\begin{align}
		u_0(y)=	&1-\alpha_2(y)\alpha_1(y)-\alpha_3(y)\alpha_1(y) ^{-1},\label{res1}\\
		x(y)=&y+2\log \left( \alpha_1(y)\right) ,\label{res2}\\
		m_0(y)=&u_0(y)-u_{0, xx}(y),
	\end{align}
	where
	\begin{align}
		&\alpha_1(y)=\left( 1-\frac{\beta}{\alpha+1}\right) f_1,\ \ 	\alpha_2(y)=\frac{\beta}{\alpha+1}f_2+\left( 1-\frac{\beta}{\alpha+1}\right) g_2,\label{al1}\\
		&\alpha_3(y)=\frac{-\beta}{\alpha+1}f_1+\left( 1-\frac{\beta}{\alpha+1}\right) g_1.\label{al2}
	\end{align}
\end{lemma}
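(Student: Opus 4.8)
The plan is to read everything off from the two ingredients the lemma cites: the symmetry reductions of the RH problem \ref{RHP2} and the reconstruction formulas of Proposition \ref{lemmaM}, transported through the gauge transformation \eqref{transM} (equivalently \eqref{tran1}--\eqref{tran2}).

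First I would pin down $M^{(1)}(0;y)$. The point $z=0$ is a regular point of $M^{(1)}$: by the symmetry \eqref{syr} one has $r(0)=\overline{r(\infty)}=0$, so the jump $V_l$ degenerates to $I$ at the origin (the oscillatory factor $e^{2\theta}$ has modulus $1$ on $\mathbb{R}$), and $z=0$ is not in the discrete spectrum, so $M^{(1)}$ is analytic across $z=0$ and $M^{(1)}(0)$ is unambiguous. I may therefore evaluate the two symmetries of the RH problem \ref{RHP2} at $z=0$. The parity symmetry $M^{(1)}(0)=\sigma_2 M^{(1)}(0)\sigma_2$ forces the diagonal entries to coincide and the off-diagonal entries to be negatives of one another, while the reality symmetry $M^{(1)}(0)=\sigma_1\overline{M^{(1)}(0)}\sigma_1$ then forces the diagonal to be real and the off-diagonal purely imaginary; this yields the claimed form with real $\alpha,\beta$. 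To obtain $\alpha^2-\beta^2=1$ I would note that $\det M^{(1)}$ has no jump across $\mathbb{R}$ (since $\det V_l\equiv1$), no poles (the rank-one residue conditions make $\det M^{(1)}$ analytic at the discrete spectrum), and tends to $1$ at infinity; by Liouville $\det M^{(1)}\equiv1$, and evaluating at $z=0$ gives $\alpha^2-\beta^2=\det M^{(1)}(0)=1$. Along the way I would record that the normalization $M^{(1)}\to I$ as $y\to\pm\infty$, together with continuity, forces $\alpha\ge1$, so $\alpha+1>0$ and the quotient $\beta/(\alpha+1)$ appearing below is well defined.

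Next I would expand at $z=i$. The key inputs are the expansion of the \emph{original} $M$ at $z=i$ from Proposition \ref{lemmaM} (diagonal leading term $\mathrm{diag}(a_1,a_1^{-1})$ plus a purely off-diagonal first-order term) and the transformation \eqref{transM}, whose prefactor I would evaluate and Taylor-expand about $z=i$ using the form of $M^{(1)}(0)$ just found. Inserting these and repeatedly simplifying with $\alpha^2-\beta^2=1$ collapses the two columns of $M^{(1)}(i)$ onto the fixed directions $(1,-i\beta/(\alpha+1))^{T}$ and $(i\beta/(\alpha+1),1)^{T}$, which is exactly the rank-one column structure recorded in the statement; the first-order coefficients $g_1,g_2$ follow by carrying the same computation to order $z-i$. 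As an independent check on the reality pattern I would use the composed symmetry $M^{(1)}(z)=\sigma_3\overline{M^{(1)}(-\bar z)}\sigma_3$, which fixes the imaginary axis: at $z=i$ it forces the leading matrix to have real diagonal and imaginary off-diagonal and its $z$-derivative to have imaginary diagonal and real off-diagonal, matching the declared reality of $f_j$ and $g_j$.

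Finally I would substitute this expansion into the reconstruction formulas \eqref{rescm} and \eqref{u} of Proposition \ref{lemmaM}. Writing $a_1,a_2,a_3$ for the $z=i$ expansion coefficients of $M=M_l$ (or $M_r$) in terms of $f_1,f_2,g_1,g_2$ via the transformation, the algebra (once more using $\alpha^2-\beta^2=1$) produces precisely $a_1=\alpha_1$, $a_2=\alpha_2$, $a_3=\alpha_3$ as in \eqref{al1}--\eqref{al2}; formulas \eqref{res1}--\eqref{res2} are then \eqref{u} rewritten, and $m_0=u_0-u_{0,xx}$ closes the reconstruction. I expect the main obstacle to be purely computational: propagating the gauge factor to first order at $z=i$ and verifying that the diagonal and off-diagonal entries in each column share the single factor $\beta/(\alpha+1)$. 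This is not forced by the symmetries alone—those give only the reality pattern—and it hinges on the determinant identity $\alpha^2-\beta^2=1$ being invoked at each simplification, so the care lies in keeping that relation available throughout rather than in any conceptual difficulty.
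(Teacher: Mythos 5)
Your proposal is correct and follows exactly the route the paper intends: the paper offers no detailed argument, stating only that the lemma ``is a directly result from Proposition \ref{lemmaM} and \eqref{transM}'', and you flesh out precisely that derivation --- the form of $M^{(1)}(0;y)$ from the $\sigma_1,\sigma_2$ symmetries plus $\det M^{(1)}\equiv 1$ by Liouville, then the Taylor expansion of the gauge factor at $z=i$ combined with the expansion of $M$ from Proposition \ref{lemmaM}. I verified the algebra in your second and third steps (e.g.\ the prefactor $(I-\sigma_1 M^{(1)}(0)^{-1}/z)^{-1}(I-\sigma_1/z)$ at $z=i$ has determinant ratio $2/2$, the off-diagonal entries of $M(i)$ vanish by $\beta^2=\alpha^2-1$, and $a_1=\alpha_1$, $a_2=\alpha_2$, $a_3=\alpha_3$ drop out as you claim), and note only that your side remark $\alpha\ge 1$ is unnecessary when $\beta\neq 0$, since then $\alpha^2=1+\beta^2>1$ already gives $\alpha+1\neq 0$.
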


\begin{remark}
	In the case $\beta=0$, then $\alpha=\pm 1$. When $\alpha=1$, it only needs to take $\frac{\beta}{\alpha+1}=0$ in the above  formula. But when  $\alpha=-1$, by the symmetry of $M^{(1)}(z;y)$, it appears  that  as $z\to i$, $	 M^{(1)}(z;y)$ has the expansion
	\begin{align*}
		M^{(1)}(z;y)=&\left(\begin{array}{cc}
			0 &f_1(y)i \\
			f_1(y)^{-1}i  & 0
		\end{array}\right)+\left(\begin{array}{cc}
			g_1(y)i  & 0\\
			0	& g_2(y)i
		\end{array}\right)(z-i)+\mathcal{O}\left( (z-i)^2\right) .
	\end{align*}
	Under this case, the function $\alpha_j$ in the reconstruction formula become
	\begin{align*}
		&\alpha_1(y)= f_1^{-1}(y),\ 	\alpha_2(y)=f_1(y)+ g_2(y),\ \alpha_3(y)=f_1^{-1}(y)+g_1(y).
	\end{align*}
	On the other hand, we declare that $\alpha_1(y)\neq0$, which is from {\rm det}$(M^{(1)}(z;y))\equiv1$.
\end{remark}

\section{Inverse scattering transform}\label{sec3}
In this section, we shall provide a rigorous framework to study
the inverse scattering transform based on the RH problem of complex
analysis.
\subsection{The  Cauchy projection operator}\label{sec4}
For  a given  function $f(z) \in L^p(\mathbb{R})$, $1\leqslant p<\infty$, the Cauchy operator $\mathcal{C}$ is defined  by
\begin{align}
	\mathcal{C}f(z)=\frac{1}{2\pi i}\int_{\mathbb{R}}\frac{f(s)}{s-z}ds, \ z\in\mathbb{C}\setminus\mathbb{R}.
\end{align}
The function $\mathcal{C}f$ is analytic off the real line such that $\mathcal{C}f(\cdot+i\gamma)$ is in $L^p(\mathbb{R})$ for each $0\neq\gamma\in\mathbb{R}$.
When $z$ approaches to a point on the real line transversely from the upper and lower
half planes, that is, if $\gamma\to \pm0$ in the following expression, the Cauchy operator $\mathcal{C}$ becomes the  Cauchy projection operator $\mathcal{C}_\pm$:
\begin{align}
	&\mathcal{C}_\pm f(z)=\lim_{\gamma\to 0}\frac{1}{2\pi i}\int_{\mathbb{R}}\frac{f(s)}{s-(z\pm i\gamma )}ds, \ z\in\mathbb{R},\\
	&(\mathcal{C}_\pm f)^\wedge (z)= X_\pm(z)	 \hat{f}(z),
\end{align}
where  $ X_\pm$ denotes the characteristic function on $\mathbb{R}^\pm$.
The following proposition summarizes the basic properties of the Cauchy  projection
operators \cite{book}.
\begin{Proposition} \cite{book}
	For any $ f\in L^p(\mathbb{R})$, $1\leqslant p<\infty$, the Cauchy integral  $\mathcal{C} f$ is analytical  off
	the real line, decays to zero as $|z|\to\infty$, and approaches to $\mathcal{C}_\pm f $ almost everywhere
	when a point $z \in\mathbb{C}^\pm$ approaches to a point on the real axis by any non-tangential contour
	from $\mathbb{C}^\pm$. If $1< p<\infty$, then there exists a positive constant $C_p$   such that
	\begin{align}
		\parallel \mathcal{C}_\pm f\parallel_{L^p}\leqslant C_p\parallel f \parallel_{L^p}.
	\end{align}
	When $f\in L^1(\mathbb{R})$, as $z\to\infty$, $\mathcal{C} f(z)=\mathcal{O}(z^{-1})$.
\end{Proposition}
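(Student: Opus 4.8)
The plan is to verify the four assertions—analyticity together with decay, the $L^p$ bound, the almost-everywhere non-tangential boundary values, and the $\mathcal{O}(z^{-1})$ behaviour—more or less independently, reserving the genuinely deep $L^p$ estimate for last. For analyticity I would fix $z_0\in\mathbb{C}\setminus\mathbb{R}$ and observe that on a small disk about $z_0$ both the kernel $(s-z)^{-1}$ and its $z$-derivative are dominated uniformly in $s$ by a fixed $L^{p'}(\mathbb{R})$ function, where $p'$ is the conjugate exponent; Hölder's inequality together with dominated convergence then justifies differentiating $\mathcal{C}f$ under the integral sign (equivalently, Fubini plus Morera's theorem applies). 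The very same Hölder estimate $|\mathcal{C}f(z)|\le\tfrac{1}{2\pi}\|f\|_{L^p}\,\|(s-z)^{-1}\|_{L^{p'}_s(\mathbb{R})}$ shows the Cauchy integral is finite, and since $\|(s-z)^{-1}\|_{L^{p'}}\to 0$ as $|z|\to\infty$ with $\mathrm{Im}\,z$ bounded away from $0$, it yields the decay at infinity for $1<p<\infty$.

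Next I would treat the boundary-value and boundedness statements. On $L^2$ the Fourier identity $(\mathcal{C}_\pm f)^\wedge = X_\pm\hat f$ exhibits $\mathcal{C}_\pm$ as an orthogonal projection, so $\|\mathcal{C}_\pm f\|_{L^2}\le\|f\|_{L^2}$ is immediate from Plancherel. To reach a general exponent $1<p<\infty$ I would exploit the two algebraic relations $\mathcal{C}_+-\mathcal{C}_-=\mathrm{id}$ and $\mathcal{C}_++\mathcal{C}_-=i\mathcal{H}$, where $\mathcal{H}$ denotes the Hilbert transform; the desired $L^p$ bound for $\mathcal{C}_\pm$ is then equivalent to the $L^p$ boundedness of $\mathcal{H}$, which is the classical M.\ Riesz theorem, obtained by realizing $\mathcal{H}$ as a Calderón–Zygmund operator with odd kernel $1/s$, establishing the weak $(1,1)$ bound, and combining Marcinkiewicz interpolation with duality. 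For the a.e.\ non-tangential limit I would represent $\mathcal{C}f(x+i\gamma)$ via the Poisson and conjugate-Poisson kernels, control its non-tangential maximal function by the Hardy–Littlewood maximal function, and conclude convergence almost everywhere—first on a dense class of smooth compactly supported $f$ and then in general by the maximal bound—to the Plemelj–Sokhotski values $\mathcal{C}_\pm f(z)=\pm\tfrac12 f(z)+\tfrac{1}{2\pi i}\,\mathrm{p.v.}\!\int_{\mathbb{R}}\frac{f(s)}{s-z}\,ds$.

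Finally, for $f\in L^1$ I would write $z\,\mathcal{C}f(z)=-\tfrac{1}{2\pi i}\int_{\mathbb{R}}\frac{f(s)}{1-s/z}\,ds$; as $z\to\infty$ with $\mathrm{Im}\,z$ bounded away from $0$ the integrand is dominated by $C|f|$ and converges pointwise to $f$, so dominated convergence gives $z\,\mathcal{C}f(z)\to-\tfrac{1}{2\pi i}\int_{\mathbb{R}}f$, which is exactly $\mathcal{C}f(z)=\mathcal{O}(z^{-1})$ and simultaneously supplies the $p=1$ case of the decay claim. The main obstacle is plainly the $L^p$ bound for $p\neq 2$: the analyticity, decay, and $L^2$ assertions are elementary real and complex analysis, whereas the $L^p$ boundedness rests on the full Calderón–Zygmund / M.\ Riesz machinery and the maximal-function control behind Fatou's theorem, which is precisely why the statement is quoted from \cite{book} rather than reproved here.
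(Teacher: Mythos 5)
The paper offers no proof of this proposition at all---it is quoted verbatim from Duren's book \cite{book}---so there is nothing in-paper to compare against; your sketch is the standard textbook argument (H\"older/Morera for analyticity, Plancherel on $L^2$, the Plemelj relations reducing $\mathcal{C}_\pm$ to the Hilbert transform and then the M.~Riesz/Calder\'on--Zygmund theorem for $1<p<\infty$, Poisson-kernel representation with non-tangential maximal control for the a.e.\ boundary values), and that architecture is sound. One small caveat: with the paper's normalization the multiplier identities are $(\mathcal{C}_+f)^{\wedge}=X_+\hat f$ and $(\mathcal{C}_-f)^{\wedge}=-X_-\hat f$ (otherwise your relation $\mathcal{C}_+-\mathcal{C}_-=\mathrm{id}$ would be inconsistent with the multipliers), but this does not affect the $L^2$ bound or the reduction to $\mathcal{H}$.

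Two of your steps fail as literally written, both concerning the regime in which $z\to\infty$. First, $\|(s-z)^{-1}\|_{L^{p'}_s}=c_{p'}\,|\mathrm{Im}\,z|^{1/p'-1}$ depends only on $|\mathrm{Im}\,z|$, so it does \emph{not} tend to $0$ as $|z|\to\infty$ with $\mathrm{Im}\,z$ merely bounded away from zero; decay along horizontal lines must instead come from the fact that the convolution of $f\in L^p$ with an $L^{p'}$ kernel lies in $C_0$ (approximate both factors by compactly supported functions), or from splitting $f$ into a compactly supported piece plus an $L^p$-small remainder. Second, in the $L^1$ step the factor $|1-s/z|^{-1}=|z|/|z-s|$ is not uniformly dominated when $z\to\infty$ along a horizontal line: for $s$ near $\mathrm{Re}\,z$ it is of size $|z|/|\mathrm{Im}\,z|$, so dominated convergence does not apply in the regime you state. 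It does apply in any sector $|\mathrm{Im}\,z|\geq\epsilon|z|$, where $|z|/|z-s|\leq 1/\epsilon$. This restriction is not cosmetic: the conclusion itself can fail along horizontal approach, e.g.\ for $f=\sum_k k^{-2}\chi_{[k^3,\,k^3+1]}\in L^1$ one has $|\mathcal{C}f(k^3+\tfrac12+i)|\gtrsim k^{-2}$ while $|z|\sim k^3$, so $z\,\mathcal{C}f(z)$ is unbounded there; hence the $\mathcal{O}(z^{-1})$ statement should be read---and proved---non-tangentially, which is how it is used in the Riemann--Hilbert context. With these two repairs your proposal is a complete and correct proof of the quoted result.
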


\subsection{Solvability of the RH problem}
In this subsection, we prove  solvability of  the RH problem \ref{RHP2} about $M^{(1)}_l$ for $y\in[0,+\infty)$  under
the given scattering data: $$R=\left( r(\cdot),\{z_j,c_j\}_{j=1}^{2{N_0}}\right)\in  Y,$$
where $Y$ is a Banach space    defined in \eqref{banch},   and     $r(z)$ admits the symmetries
\begin{align}
	&	 	r(z)=\overline{r(z^{-1})}=-\overline{r(-z)},\ r(1)=-r(-1)=-1.\label{spr}
\end{align}
For $z\in\mathbb{R}$, denote $w=w_++w_-$ with
\begin{align}
	&   w_+(z;y)=\left(\begin{array}{cc}
		0  & 0\\
		-\bar{r}(z)e^{-2\theta(z;y)}	& 0
	\end{array}\right),\ w_-(z;y)=\left(\begin{array}{cc}
		0  & r(z)e^{2\theta(z;y)}\\
		0	& 0
	\end{array}\right).
\end{align}
Then in view of  the Beals-Coifman  theorem in \cite{p2}, the RH problem \ref{RHP2} has a solution  given by the Cauchy operator
\begin{align}
	M^{(1)}_l(z;y)=I+\mathcal{C}(\mu w)(z;y)+\sum_{j=1}^{2{N_0}} \left(\frac{\bar{c}_je^{y\text{Im}z_j}}{z-\bar{z}_j}\mu_2(\bar{z}_j;y),\frac{c_je^{y\text{Im}z_j}}{z-z_j}\mu_1(z_j;y)  \right), \label{intM}
\end{align}
if and only if there is  a solution $\mu(z;y)$ of the Fredholm integral equation
\begin{align}
	\mu(z;y)=I+\mathcal{C}_w	\mu(z;y) +\sum_{j=1}^{2{N_0}} \left(\frac{\bar{c}_je^{y\text{Im}z_j}}{z-\bar{z}_j}\mu_2(\bar{z}_j;y),\frac{c_je^{y\text{Im}z_j}}{z-z_j}\mu_1(z_j;y)  \right).\label{mu}
\end{align}
And $\mu_1(z;y)$ and $\mu_2(z;y)$ denote the first and second columns of $\mu(z;y)$ respectively,
and $$\mathcal{C}_wf(z;y)=\mathcal{C}_+(fw_-)(z;y)+\mathcal{C}_-(fw_+)(z;y).$$
Thus the attention turns to  the estimate and  solvability of the Fredholm integral equation \eqref{mu}. The proof basically follows the idea in
\cite{ZHOU,zhou1}, which was later used in \cite{Deift2011,liu} and so on.  To this end, 
 we  take the first row of $\mu(z)$   as an instance. A  simple calculation gives that $\|\mathcal{C}_wI\|_{L^2}\leqslant2 \|R\|_{Y}$ for $\|R\|_{Y}$ in \eqref{normD}.
We denote a new  column vector function by
\begin{align}
	N(z;y):=\left( N_{11}(z;y), N_{12}(z;y) \right)^T,\hspace*{0.3cm} N_{11}(z;y)= \mu_{11}(z;y)-1,\hspace*{0.3cm} N_{12}(z;y)=\mu_{12}(z;y)  .\label{dN}
\end{align}
Via  \eqref{mu}, it admits that for $z\in\mathbb{R}$:
\begin{align}
&	N_{11}(z;y)=\mathcal{C}_-(-\bar{r}e^{-2\theta(\cdot;y)}N_{12}(\cdot;y))(z;y)+ \sum_{j=1}^{2{N_0}}\frac{N_{12}(\bar{z}_j;y)\bar{c}_je^{y\text{Im}z_j}}{z-\bar{z}_j}, \\
		&N_{12}(z;y)=N_{0,2}(z;y)+\mathcal{C}_+(re^{2\theta(\cdot;y)}	N_{11}(\cdot;y))(z;y)+\sum_{j=1}^{2{N_0}}\frac{N_{11}(z_j;y)c_je^{\text{Im}z_jy}}{z-z_j},
\end{align}
and for $j=1,...,2N_0$,
\begin{align}
	&	N_{11}(z_j;y)=\mathcal{C}_-(-\bar{r}e^{-2\theta(\cdot;y)}N_{12}(\cdot;y))(z_j;y)+ \sum_{j=1}^{2{N_0}}\frac{N_{12}(\bar{z}_j;y)\bar{c}_je^{y\text{Im}z_j}}{z_j-\bar{z}_j}, \\
	&N_{12}(\bar{z}_j;y)=N_{0,2}(\bar{z}_j;y)+\mathcal{C}_+(re^{2\theta(\cdot;y)}	N_{11}(\cdot;y))(\bar{z}_j;y)+\sum_{j=1}^{2{N_0}}\frac{N_{11}(z_j;y)c_je^{\text{Im}z_jy}}{\bar{z}_j-z_j},
\end{align}
where
\begin{align}
	N_{0,2}(z;y)=\mathcal{C}_+(re^{2\theta})(z;y)+\sum_{j=1}^{2{N_0}}\frac{c_je^{\text{Im}z_jy}}{z-z_j}.
\end{align}
Combining above equations, it immediately follows that
\begin{align}
	N_{11}(z_j;y)=\mathcal{C}(	N_{11})(z_j;y),\hspace*{0.3cm} N_{12}(\bar{z}_j;y)=\mathcal{C}(	N_{12})(\bar{z}_j;y),\label{Nzj}
\end{align}
which inspires us to only consider $N_{11},\ N_{12}\in  L^2(\mathbb{R}).$
The integral equation  \eqref{mu} can be rewritten as
\begin{align}
	&N(z;y)= N_0(z;y)+\mathcal{K}N(z;y), \label{N}\\
	&N_0(z;y)=\left(0, N_{0,2}(z;y)\right)^T,\hspace*{0.3cm}\mathcal{K}=\left(\begin{array}{cc}
		0  & \mathcal{K}^{(1)}\\
		\mathcal{K}^{(2)}	& 0
	\end{array}\right),\label{K}
\end{align}
where  $\mathcal{K}^{(1)}$, $\mathcal{K}^{(2)}:L^2(\mathbb{R})\to L^2(\mathbb{R})$ are two operators  acting on scalar functions defined as follows. Denote  $h$ as a generic function in $L^2(\mathbb{R})$. Then
\begin{align}
	&\mathcal{K}^{(1)}=\mathcal{K}^{(1)}_{1}+\mathcal{K}^{(1)}_{2},\hspace*{0.3cm} \mathcal{K}^{(2)}=\mathcal{K}^{(2)}_{1}+\mathcal{K}^{(2)}_{2},\\
	&\mathcal{K}^{(1)}_{1}h(z;y)=\mathcal{C}_-(-\bar{r}e^{-2\theta(\cdot;y)}h)(z;y),\ \mathcal{K}^{(1)}_{2}h(z;y)=\sum_{j=1}^{2{N_0}}\frac{\mathcal{C}(	h)(\bar{z}_j;y)\bar{c}_je^{y\text{Im}z_j}}{z-\bar{z}_j},\label{A11}\\
	&\mathcal{K}^{(2)}_{1}h(z;y)=\mathcal{C}_+(re^{2\theta(\cdot;y)}h)(z;y),\ \mathcal{K}^{(2)}_{2}h(z)=\sum_{j=1}^{2{N_0}}\frac{\mathcal{C}(	h)(z_j;y)c_je^{y\text{Im}z_j}}{z-z_j}.\label{B22}
\end{align}
 It is adduced that $N_0\in L^2(\mathbb{R})$. On the other hand, on account of $\|r\|_{L^\infty}=1$, there exists a positive constant $C(\|R\|_{Y})$ such that $\|\mathcal{K}\|=\|\mathcal{K}^{(1)}\|+\|\mathcal{K}^{(2)}\|\leqslant C(\|R\|_{Y})$. Our expectation  is to demonstrate $N\in L^2(\mathbb{R})$ through \eqref{N}. Owing to the special structure of $\mathcal{K}$, we consider the operator
\begin{align*}
	\mathcal{K}^2=\left(\begin{array}{cc}
		\mathcal{K}^{(1)}\mathcal{K}^{(2)} & 0\\
		0	& \mathcal{K}^{(2)}\mathcal{K}^{(1)}
	\end{array}\right).
\end{align*}
For simplicity of notations, we drop the dependence of the operator $\mathcal{K}$, the functions $\mu$, $M^{(1)}_l$ and $N$ from the variable of $y$.

The following proposition is given to show the invertibility of $I-\mathcal{K}$.
\begin{Proposition}\label{lemmaG}
	If   $R=\left( r,\{z_j,c_j\}_{j=1}^{2{N_0}}\right)  \in Y$,  then  $I-\mathcal{K}$ is a bounded Fredholm operator: $ L^2(\mathbb{R})\to  L^2(\mathbb{R})$ for $y\in[0,+\infty)$ with index zero. Moreover, $(I-\mathcal{K})^{-1}$ exists and is also a bounded linear operator: $ L^2(\mathbb{R})\to  L^2(\mathbb{R})$. And   $\|\mathcal{K}^2\|\to 0$, as $y\to\infty$.
\end{Proposition}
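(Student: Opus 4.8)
The plan is to realize $I-\mathcal{K}$ as a finite-rank perturbation of the Beals--Coifman singular integral operator of the pure-reflection problem, read off its index from the factorization of $V_l$, and then force invertibility through Zhou's vanishing lemma; the decisive point will be that the Hermitian part of $V_l$ is nonnegative because its oscillatory off-diagonal entries cancel. First I would record boundedness and a splitting $\mathcal{K}=\mathcal{K}_c+\mathcal{K}_d$. The continuous part $\mathcal{K}_c$ gathers $\mathcal{K}^{(1)}_1,\mathcal{K}^{(2)}_1$; these are bounded on $L^2(\mathbb{R})$ since $\mathcal{C}_\pm$ are bounded and the multipliers $re^{2\theta},\bar re^{-2\theta}$ have $L^\infty$-norm $\|r\|_{L^\infty}=1$ (recall $\theta$ is purely imaginary on $\mathbb{R}$, so $|e^{\pm2\theta}|=1$). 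The discrete part $\mathcal{K}_d$ gathers $\mathcal{K}^{(1)}_2,\mathcal{K}^{(2)}_2$: each summand maps $h$ to the scalar $\mathcal{C}(h)(\bar z_j;y)$ (a bounded functional, as $(\cdot-\bar z_j)^{-1}\in L^2$) times the fixed $L^2$-function $(z-\bar z_j)^{-1}$, so $\mathcal{K}_d$ is finite rank, hence compact. This yields $\|\mathcal{K}\|\le C(\|R\|_Y)$ and shows that $I-\mathcal{K}$ is bounded.

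Next I would prove the Fredholm property with index zero. Since $\mathcal{K}_d$ is compact, by stability of the index under compact perturbation it suffices to treat $I-\mathcal{K}_c=I-\mathcal{C}_w$, the operator attached to the jump $V_l=(I-w_-)^{-1}(I+w_+)$ with triangular factors $b_\pm=I\pm w_\pm$ carrying $1$'s on the diagonal. By the theory in \cite{ZHOU,p2} this operator is Fredholm and its index is minus the winding number of $\det V_l$; here $\det b_+=\det b_-=1$ gives $\det V_l\equiv1$, so the winding number vanishes and $\mathrm{ind}(I-\mathcal{C}_w)=0$. Reinstating the compact $\mathcal{K}_d$ leaves the index at zero.

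For an index-zero Fredholm operator, invertibility is equivalent to triviality of the kernel, which I would extract from the vanishing lemma. A nonzero $N\in\ker(I-\mathcal{K})$ produces, via \eqref{intM}, a nontrivial solution $M$ of the homogeneous version of RH problem \ref{RHP2}. The key computation is the Hermitian part of the jump: on $\mathbb{R}$, using $\overline{e^{2\theta}}=e^{-2\theta}$, the off-diagonal entries of $V_l$ and $V_l^\dagger$ cancel and
\[ V_l+V_l^\dagger = 2\begin{pmatrix} 1-|r|^2 & 0\\ 0 & 1\end{pmatrix}\ge 0 . \]
Forming $M(z)\overline{M(\bar z)}^{T}$, which is analytic off $\mathbb{R}$ and decays at infinity, and integrating its boundary jump $[M]_-\,(V_l+V_l^\dagger)\,[M]_-^{\dagger}$ over $\mathbb{R}$ (the pole contributions at $\{z_j,\bar z_j\}$ combining nonnegatively thanks to the reality and the symmetry $c_j=c_{j+N_0}$ of the norming data), one finds that the nonnegative integrand must vanish almost everywhere, whence $M\equiv0$ and $N=0$. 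I expect this vanishing lemma to be the main obstacle, precisely because $V_l+V_l^\dagger$ degenerates at $z=\pm1$, where $|r|=1$ collapses the $(1,1)$ entry $1-|r|^2$ to $0$; this is the singularity flagged in the introduction that blocks a direct use of Zhou's lemma. The resolution is that $\{\pm1\}$ has measure zero, so the almost-everywhere vanishing is unaffected, while the construction of RH problem \ref{RHP2} guarantees that $M^{(1)}$ has singularity-free, continuous boundary values at $\pm1$, so no concentrated mass can hide there.

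Finally, for $\|\mathcal{K}^2\|\to0$ as $y\to\infty$ I would use the block-diagonal form $\mathcal{K}^2=\mathrm{diag}(\mathcal{K}^{(1)}\mathcal{K}^{(2)},\mathcal{K}^{(2)}\mathcal{K}^{(1)})$ and estimate each entry. All discrete summands carry a factor $e^{y\,\mathrm{Im}\,z_j}$ with $\mathrm{Im}\,z_j<0$, hence decay exponentially in $y$. In the continuous--continuous compositions the Cauchy projection is sandwiched between the conjugate oscillatory multipliers $e^{2\theta}$ and $e^{-2\theta}$; approximating $r$ in $H^{1,2}\cap H^{2,1}$ by a Schwartz function and integrating by parts in the resulting oscillatory integral produces a bound tending to $0$ as $y\to+\infty$, with the approximation error controlled by $\|r\|_{H^{1,2}\cap H^{2,1}}$. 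Summing the blocks gives the claimed decay.
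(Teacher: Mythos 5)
Your proposal is correct in outline and shares the paper's overall architecture --- finite-rank discrete part plus bounded continuous part, a compactness argument, a Zhou-type vanishing lemma whose positivity degenerates only at $z=\pm1$, and an approximation-plus-oscillation argument for the large-$y$ decay --- but it differs in implementation at three points. For the Fredholm property the paper never invokes the index/winding-number theorem: it shows directly that $\mathcal{K}^{(2)}_{1}\mathcal{K}^{(1)}_{1}$ is Hilbert--Schmidt, its kernel $K(z,\lambda;y)=X_+(z)\int_{\mathbb{R}}X_-(s)(\bar re^{-2\theta})^\wedge(s-\lambda)(re^{2\theta})^\wedge(z-s)\,ds$ being bounded in $L^2(\mathbb{R}\otimes\mathbb{R})$ by $C(y)\|r\|_{W^{1,2}}^2$, so $\mathcal{K}^2$ is compact and $(I-\mathcal{K})(I+\mathcal{K})=I-\mathcal{K}^2$ yields Fredholmness with index zero; your route via $\det V_l\equiv1$ and the winding number is legitimate but leans on the general Beals--Coifman/Zhou theory where the paper's computation is self-contained. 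For injectivity, your matrix form of the vanishing lemma with $M(z)\overline{M(\bar z)}^{T}$ and $V_l+V_l^\dagger=2\,\mathrm{diag}(1-|r|^2,\,1)$ is exactly what the paper runs scalarly: from a kernel element $(f_1,f_2)$ it builds sectionally meromorphic $g_1,g_2$, applies Cauchy--Goursat, and gets $0=\int_{\mathbb{R}}\bigl(|f_1|^2+|f_2|^2+2\operatorname{Re}(f_1f_2^*re^{2\theta})\bigr)dz\ \ge\ \int_{\mathbb{R}}(|f_1|^2+|f_2|^2)(1-|r|)\,dz$, concluding $f=0$ a.e.\ by the same measure-zero observation you make at $\pm1$; note, however, that your parenthetical claim that the pole contributions ``combine nonnegatively'' is asserted rather than proved --- handling the residue terms is the genuinely delicate part of the vanishing lemma with discrete data, and the paper is equally terse there, so this is a shared gloss rather than a divergence. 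The real difference is the decay step: you propose Schwartz approximation plus integration by parts, but the phase $\theta=-\tfrac{i}{4}(z-z^{-1})y$ is not a pure modulation --- the $z^{-1}$ term (an essential singularity at $z=0$) destroys the clean Fourier-support shift, which is precisely difficulty 2) flagged in the paper's introduction. The paper instead follows Zhou and approximates $\bar r$ in $L^\infty$ by rational functions with poles $\xi^{(n)}_j\in\mathbb{C}^+$, for which $\bigl(-(\cdot-\xi^{(n)}_j)^{-1}e^{-2\theta}\bigr)^\wedge(z)=X_-(z)e^{-2\pi iz\xi^{(n)}_j+i(\xi^{(n)}_j-1/\xi^{(n)}_j)y/2}$ is explicit and carries the exponentially decaying factor $e^{-\operatorname{Im}(\xi^{(n)}_j-1/\xi^{(n)}_j)y/2}$, the $L^\infty$ approximation error being uniform in $y$. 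Your sketch could be rescued by a genuine non-stationary-phase argument (on the relevant half-line the phase derivative is bounded below by $y/2$, with care needed near $z=0$), but as written ``integrate by parts'' is the least complete step, and the rational approximation is the cleaner tool for this phase.
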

\begin{proof}
	Note that  operators $\mathcal{K}^{(1)}_{2}$, $\mathcal{K}^{(2)}_{2}$ are both  finite-rank. So it suffices  to analyze the property of $\mathcal{K}^{(1)}_{1}\mathcal{K}^{(2)}_{1}$ and  $\mathcal{K}^{(2)}_{1}\mathcal{K}^{(1)}_{1}$.  Take $\mathcal{K}^{(2)}_{1}\mathcal{K}^{(1)}_{1}$ as an example. A simple calculation gives that $\|\mathcal{K}^{(2)}_{1}\mathcal{K}^{(1)}_{1}\|\leqslant1$  as $|r|\leqslant1$.
	Recall the definition of $X_\pm, $  that is,  $X_\pm$ is the characteristic function
	on $\mathbb{R}^\pm$. For any $f\in L^2(\mathbb{R})$,
	\begin{align*}
	{(\mathcal{K}^{(2)}_{1}\mathcal{K}^{(1)}_{1}\check{f})}^\wedge (z) =\int_{\mathbb{R}}K(z,\lambda;y)f(\lambda)d\lambda,
	\end{align*}
	where the integral kernel is
	\begin{align*}
		K(z,\lambda;y)=X_+(z)\int_{\mathbb{R}}X_-(s)	{(\bar{r}e^{-2\theta(\cdot;y)})}^\wedge(s-\lambda)(re^{2\theta(\cdot;y)})^\wedge(z-s)ds.
	\end{align*}
	Since   $$\|K(\cdot,\cdot; y)\|_{L^2(\mathbb{R}\otimes\mathbb{R})}\leqslant \|(\bar{r}e^{-2\theta(\cdot;y)})^\wedge\|_{L^{2,1}}\|(re^{2\theta(\cdot;y)})^\wedge\|_{L^{2,1}}\leqslant C(y)\|r\|_{W^{1,2}}^2$$ for a positive constant $C(y)$ which is finite for every $y\in[0,+\infty)$, it follows that $\mathcal{K}^{(2)}_{1}\mathcal{K}^{(1)}_{1}$ is the Hilbert-Schmidt operator. It in turn implies that $\mathcal{K}^2$ is compact. Therefore, it is inferred in  \cite{simon} that $I-\mathcal{K}^2$ is a Fredholm operator with index zero, and so is $I-\mathcal{K}$.
We now only need to prove that the operator $I-\mathcal{K}:  L^2\to  L^2$ is  injective. For convenience, take ${N_0}=1$. Suppose that  $\vec{f} =\left(f_1,f_2\right) ^T\in  L^2$ is the solution of the homogeneous equation: $(I-\mathcal{K})\vec{f}=0$.  Denote $\vec{g}(z)=(g_1^*(z),g_2(z))$ with
	\begin{align*}
		&g_1(z)=\mathcal{C}(-\bar{r}e^{-2\theta(\cdot;y)}f_2)(z)+\mathcal{K}^{(1)}_{2}(f_2)(z),\ g_2(z)=\mathcal{C}(re^{2\theta(\cdot;y)}f_1)(z)+\mathcal{K}^{(2)}_{2}((f_1)^T)(z),
	\end{align*}
	where $g_1(z)$ and $g_2(z)$ are analytic in $\mathbb{C}^-$ and  $\mathbb{C}^+$ respectively, and meromorphic in $\mathbb{C}\setminus\mathbb{R}$.  Thus, $[g_1]_-=f_1$, $[g_2]_+=f_2$. Using the result in the Cauchy-Goursat theorem gives that
	\begin{align*}
		0=\int_{\mathbb{R}^+}\vec{g}(z)[\vec{g}^*(z)]^Tdz.
	\end{align*}
	On the other hand
	\begin{align*}
		0=\int_{\mathbb{R}^+}\vec{g}(z)[\vec{g}^*(z)]^Tdz&=\int_{\mathbb{R}}|f_1|^2+|f_2|^2+2\text{Re}\left( f_1(z)f_2^*(z)r(z)e^{2\theta(z;y)}\right)dz\\
		&\geq \int_{\mathbb{R}}\left( |f_1(z)|^2+|f_2(z)|^2\right) (1-|r|)dz.
	\end{align*}
	Note that $1-|r(z)|> 0, z\not=\pm 1$, and $1-|r(z)|=0$ if and only if $z=\pm1$. So the above  inequality implies that   $f_1(z)=f_2(z)=0$ for $z\neq\pm1$, namely,  $f(z)=0, \ a. e. $
	So  $\|f\|_{L^2}=0$. The final step is to prove $\|\mathcal{K}^2\|\to 0$ as $y\to\infty$.
	Because $e^{y\text{Im}z_j}$ decays to zero  exponentially
	 as $y\to\infty$, it is sufficient to demonstrate
	  $\|\mathcal{K}^{(1)}_{1}\mathcal{K}^{(2)}_{1}\|$,  $\|\mathcal{K}^{(2)}_{1}\mathcal{K}^{(1)}_{1}\|\to0$ as $y\to\infty$.
	  Taking $\mathcal{K}^{(1)}_{1}\mathcal{K}^{(2)}_{1}$ to give details, like in \cite{ZHOU}, $\bar{r}$ can be approximated in $L^\infty$ norm by a series of functions $\{r_n\}_1^\infty$. Here,  $\bar{r}_n\in L^2$ is a retinal function with simple poles $\{\xi^{(n)}_j\}$ on $\mathbb{C}^+$.Then  $\mathcal{K}^{(1)}_{1}\mathcal{K}^{(2)}_{1}f$ is approximated in $L^2$  norm by $\mathcal{C}_-(-\bar{r}_ne^{-2\theta(\cdot;y)}\mathcal{C}_+(re^{2\theta(\cdot;y)}f))$ uniformly for $\|f\|_{L^2}\leqslant1$. We denote the residue of $r_n$ at $\xi^{(n)}_j$ by $r_{n,j}$.
Note that
\begin{align*}
		(-(\cdot-\xi^{(n)}_j)^{-1}e^{-2\theta(\cdot;y)})^\wedge(z)=\int_{\mathbb{R}}\frac{e^{-2\pi izs}e^{i(s-1/s)y/2}}{s-\xi^{(n)}_j}ds=X_-(z)e^{-2\pi iz\xi^{(n)}_j+i(\xi^{(n)}_j-1/\xi^{(n)}_j)y/2}.
\end{align*}
There appears that
\begin{align*}
	(-\bar{r}_ne^{-2\theta(\cdot;y)})^\wedge(z)=\sum_{\xi^{(n)}_j\in\mathbb{C}^+}X_-(z)\bar{r}_{n,j}e^{-2\pi iz\xi^{(n)}_j+i(\xi^{(n)}_j-1/\xi^{(n)}_j)y/2}.
\end{align*}
Therefore,
\begin{align*}
	&\left( 	\mathcal{C}_-(-\bar{r}_ne^{-2\theta(\cdot;y)}\mathcal{C}_+(re^{2\theta(\cdot;y)}f))\right) ^\wedge(z)=X_-(z)\left( \mathcal{C}_+(re^{2\theta(\cdot;y)}f)\right) ^\wedge*(-\bar{r}_ne^{-2\theta(\cdot;y)})^\wedge(z)\\
	&=\sum_{\xi^{(n)}_j\in\mathbb{C}^+} X_-(z)e^{i(\xi^{(n)}_j-1/\xi^{(n)}_j)y/2}\bar{r}_{n,j}\left( \mathcal{C}_+(re^{2\theta(\cdot;y)}f)\right) ^\wedge*(X_-e^{-2\pi i(\cdot)\xi^{(n)}_j})(z),
\end{align*}
approaches zero  in $L^2$ as $y\to\infty$ uniformly for $\|f\|_{L^2}\leqslant1$.
This completes the proof of Proposition \ref {lemmaG}.
\end{proof}
 Proposition \ref {lemmaG} also gives that $I-\mathcal{K}$ is uniformly bounded for $y\in[0,+\infty),$ and it  transpires that there exists a constant $C(\|R\|_{Y})$ such that
\begin{align}
	\parallel\left( I-\mathcal{K}\right) ^{-1}\parallel\leqslant C( \|R\|_{Y}).
\end{align}
As a consequence of this proposition, the solution of \eqref{N} exsits with
\begin{align}
	N(z)=((I-\mathcal{K})^{-1}(N_0))(z),
\end{align}
which leads to the following proposition immediately.
\begin{Proposition}\label{pro3.1}
		If   $R=\left( r,\{z_j,c_j\}_{j=1}^{2{N_0}}\right)  \in Y$, then  there  exists a unique  solution $ M^{(1)}_l $ of the RH problem \ref{RHP2} with
		\begin{align}
			M^{(1)}_l(z)=I+\mathcal{C}(\mu w)(z)+\sum_{j=1}^{2{N_0}} \left(\frac{\bar{c}_je^{y\text{Im}z_j}}{z-\bar{z}_j}\mu_2(\bar{z}_j),\frac{c_je^{y\text{Im}z_j}}{z-z_j}\mu_1(z_j)  \right),
		\end{align}
		where $\mu$ comes from the definition of $N$ in \eqref{dN}.
\end{Proposition}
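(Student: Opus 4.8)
The plan is to read Proposition \ref{pro3.1} as the packaging of the Beals--Coifman construction, once the invertibility of $I-\mathcal{K}$ has been secured. First I would recall that, by the Beals--Coifman theorem quoted just before \eqref{intM}, the matrix $M^{(1)}_l(z)$ defined by the Cauchy-operator formula \eqref{intM} solves the RH problem \ref{RHP2} if and only if its density $\mu$ solves the Fredholm equation \eqref{mu}. Hence it suffices to produce a unique $\mu$ solving \eqref{mu}, and this I would do through the reduced vector $N=(N_{11},N_{12})^T$ of \eqref{dN}, which records the first row of $\mu$; the remaining entries are then recovered from the symmetry imposed in RH problem \ref{RHP2}, namely $M^{(1)}_l(z)=\sigma_1\overline{M^{(1)}_l(\bar z)}\sigma_1=\sigma_2 M^{(1)}_l(-z)\sigma_2$, which expresses the second row in terms of the first.

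Next I would confirm that the inhomogeneous term $N_0=(0,N_{0,2})^T$ lies in $L^2(\mathbb{R})$. For $N_{0,2}=\mathcal{C}_+(re^{2\theta})+\sum_{j=1}^{2N_0} c_j e^{y\,\mathrm{Im}z_j}/(z-z_j)$ this follows from the $L^2$-boundedness of the Cauchy projection $\mathcal{C}_+$ together with $r\in H^{1,2}\cap H^{2,1}\subset L^2(\mathbb{R})$, while each discrete term belongs to $L^2(\mathbb{R})$ because the poles $z_j\in\mathbb{C}^-$ sit strictly off the real axis. With $N_0\in L^2(\mathbb{R})$ in hand, Proposition \ref{lemmaG} supplies the invertibility of $I-\mathcal{K}$ on $L^2(\mathbb{R})$, with a bound $\|(I-\mathcal{K})^{-1}\|\le C(\|R\|_Y)$ uniform for $y\in[0,+\infty)$. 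Consequently $(I-\mathcal{K})N=N_0$ has the unique solution $N=(I-\mathcal{K})^{-1}N_0\in L^2(\mathbb{R})$, which is precisely the data that feeds \eqref{intM}.

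Finally I would verify that the $M^{(1)}_l$ reconstructed from this $\mu$ meets every clause of RH problem \ref{RHP2}: analyticity off $\mathbb{R}$ and the decay $M^{(1)}_l=I+\mathcal{O}(z^{-1})$ come from the mapping properties of the Cauchy integral; the jump $[M^{(1)}_l]_+=[M^{(1)}_l]_-V_l$ is the Plemelj relation encoded in $\mathcal{C}_w$; the residue conditions follow by computing the residues of the explicit poles at $z_j,\bar z_j$ in \eqref{intM} and matching them against \eqref{mu} and \eqref{Nzj}; and the symmetry is inherited from that of $w$ and of the norming data. The crucial structural point, and the step I would treat most carefully, is the absence of singularity at $z=\pm1$: since $r,\partial_z r,\partial_z^2 r$ are bounded near $z=\pm1$ by Proposition \ref{boundr}, the density $\mu w$ is integrable there, so its Cauchy transform together with the finite discrete sum is analytic off $\mathbb{R}$ and continuous up to $z=\pm1$, and no pole is created. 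Uniqueness is then immediate, either from the invertibility of $I-\mathcal{K}$ (the integral equation has a single solution) or from the Liouville argument already recorded after RH problem \ref{RHPr1}; thus Proposition \ref{pro3.1} follows.
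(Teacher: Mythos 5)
Your proposal is correct and follows essentially the same route as the paper: reduce via the Beals--Coifman representation \eqref{intM} to the integral equation \eqref{N} for the first-row vector $N$, check $N_0\in L^2(\mathbb{R})$, and invoke Proposition \ref{lemmaG} for the invertibility of $I-\mathcal{K}$, whence $N=(I-\mathcal{K})^{-1}N_0$ yields $\mu$ and hence $M^{(1)}_l$. The only cosmetic differences are that you recover the second row from the symmetry reduction (the paper simply treats it by the same argument as the first row) and that you spell out the verification of the jump, residue, and regularity-at-$z=\pm1$ clauses, which the paper leaves implicit in its citation of the Beals--Coifman theorem.
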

Furthermore, it is adduced from \eqref{Nzj} that
\begin{align*}
	|	N_{11}(z_j;y)|=|\mathcal{C}(	N_{11})(z_j;y)|\leq C(\text{Im}z_j)\|N_{11}\|_{L^2},\hspace*{0.3cm} |N_{12}(\bar{z}_j;y)|=|\mathcal{C}(	N_{12})(\bar{z}_j;y)|\leq C(\text{Im}z_j)\|N_{12}\|_{L^2}.
\end{align*}
	Together with  \eqref{N}, it  gives that
	\begin{align}
		&\parallel \mu-I\parallel_{L^2}
		\leqslant \parallel\left( I-\mathcal{K}\right) ^{-1}\parallel( \parallel C_w I\parallel_{L^2}+\sum_{j=1}^{2{N_0}}c_je^{y\text{Im}z_j}\Big\|\frac{1}{z-z_j}  \Big\|_{L^2} ),\label{esmu}\\
		&\sum_{j=1}^{2{N_0}}|\mu_2(\bar{z}_j)|+\sum_{j=1}^{2{N_0}}|\mu_1(z_j)-1 |\leqslant C(\text{Im}z_j)\parallel\left( I-\mathcal{K}\right) ^{-1}\parallel( \parallel C_w I\parallel_{L^2}+\sum_{j=1}^{2{N_0}}c_je^{y\text{Im}z_j}\Big\|\frac{1}{z-z_j}  \Big\|_{L^2} ).
	\end{align}
	Meanwhile, there exists a positive constant $C( \|R\|_{Y})$ such that
	\begin{align}
	\sum_{j=1}^{2{N_0}}|\mu_2(\bar{z}_j)|+\sum_{j=1}^{2{N_0}}|\mu_1(z_j)-1 |+	\parallel\mu-I\parallel_{L^2}\leqslant C(\|R\|_{Y}) .
	\end{align}
\begin{remark}
	Compared with derivative  NLS equation \cite{pelinovskey1},  where the  corresponding jump matrix    admits
     positive definite part
 with reflection coefficient $\|r\|_{L^\infty}<1$.
 So the norm of their corresponding  $(I-\mathcal{K})^{-1}$ can be directly controlled by   $(1-\|r\|_{L^\infty})^{-1}$.
  However, for the mCH equation, its corresponding reflection coefficient $r(z)$   satisfies  $|r(\pm1)|=1$.
	 So  	we  lose the Lipschitz continuity of the solution $M^{(1)}_l$ with respect to reflection coefficient $r(z)$.
\end{remark}
When $y\in(-\infty, 0)$, following the same process as $M_l^{(1)}(z)$, $M_r^{(1)}(z)$ admits similar property in the following proposition.
\begin{Proposition}\label{Pro3.2}
	If    $R   \in Y$, $y\in(-\infty, 0)$, then there exists a positive constant $C(\|R\|_{Y})$, such that
	\begin{align}
		&\parallel M_r^{(1)}-I\parallel_{L^2}+\sum_{j=1}^{2{N_0}}|M_{r,2}^{(1)}(\bar{z}_j)|+\sum_{j=1}^{2{N_0}}|M_{r,1}^{(1)}(z_j) |\leqslant C(\|R\|_{Y}),
	\end{align}
where $M_{r,2}^{(1)}$ denotes the $j$-column of $M_{r}^{(1)}$.
\end{Proposition}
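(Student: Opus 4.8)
The plan is to reproduce, for the half-line $y\in(-\infty,0)$, the Beals--Coifman construction and Fredholm analysis already carried out for $M_l^{(1)}$ on $y\in[0,+\infty)$, with $r$ replaced by $\tilde r$ and $c_j$ by $\tilde c_j$. First I would write the solution of the RH problem \ref{RHP2} associated with the data $(\tilde r,\{z_j,\tilde c_j\}_{j=1}^{2N_0})$ in the Cauchy-integral form analogous to \eqref{intM}, with $w=w_++w_-$ now read off from the jump matrix $V_r$. Because the nontrivial diagonal entry $1-|\tilde r|^2$ of $V_r$ sits in the $(2,2)$ position rather than the $(1,1)$ position, the roles of the two columns (and of $\mathcal{C}_\pm$) are interchanged relative to the $M_l$ case, but the algebraic structure is otherwise identical. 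This produces a Fredholm equation $N_r=N_{r,0}+\mathcal{K}_rN_r$ of exactly the same type as \eqref{N}, with operator $\mathcal{K}_r$ built from $\tilde r$ and the finite-rank pieces built from $\tilde c_j$.

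Next I would establish the invertibility of $I-\mathcal{K}_r$ on $L^2(\mathbb{R})$ by reproducing the proof of Proposition \ref{lemmaG}. The two key inputs transfer directly: the singular operators are Hilbert--Schmidt, so $\mathcal{K}_r^2$ is compact and $I-\mathcal{K}_r$ is Fredholm of index zero via \cite{simon}; and injectivity follows from the Cauchy--Goursat positivity identity, whose crucial inequality uses $1-|\tilde r(z)|\ge 0$ with equality only at $z=\pm1$. Since $\tilde r$ shares the symmetries \eqref{spr} and the bound $|\tilde r|=|r|\le 1$ with $r$ (as noted after RH problem \ref{RHPr1}), the homogeneous equation again forces the solution to vanish a.e., so $(I-\mathcal{K}_r)^{-1}$ exists and is bounded with norm controlled by $C(\|R\|_Y)$.

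The only genuinely new bookkeeping concerns the exponential residue factors. On $y\in(-\infty,0)$ the relevant factors are $e^{-2\theta(z_j)}=e^{-y\,\mathrm{Im}\,z_j}$ and $e^{2\theta(\bar z_j)}$, and since $\mathrm{Im}\,z_j<0$ and $y<0$ these remain bounded (and in fact decay as $y\to-\infty$), exactly as $e^{y\,\mathrm{Im}\,z_j}$ did on $[0,+\infty)$ for $M_l^{(1)}$. Consequently $N_{r,0}\in L^2(\mathbb{R})$ with norm bounded by $C(\|R\|_Y)$ uniformly in $y\in(-\infty,0)$, and $N_r=(I-\mathcal{K}_r)^{-1}N_{r,0}$ satisfies $\|N_r\|_{L^2}\le C(\|R\|_Y)$. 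Evaluating the Cauchy representation at the discrete points exactly as in \eqref{Nzj} gives $|M_{r,1}^{(1)}(z_j)|,\,|M_{r,2}^{(1)}(\bar z_j)|\le C(\mathrm{Im}\,z_j)\|N_r\|_{L^2}$, and summing over $j$ yields the asserted bound.

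I expect no essential obstacle: the analytic heart of the argument---Fredholmness with index zero and injectivity in the presence of $|\tilde r(\pm1)|=1$---is identical to Proposition \ref{lemmaG}, which is precisely where the difficulty caused by $|r(\pm1)|=1$ was already resolved. The only points demanding care are keeping track of the column/half-plane interchange induced by the placement of the $1-|\tilde r|^2$ factor in $V_r$, and verifying that the exponential factors decay on the correct half-line $(-\infty,0)$ rather than $[0,+\infty)$.
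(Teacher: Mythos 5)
Your proposal is correct and follows essentially the same route as the paper, which proves the left case in detail (Proposition \ref{lemmaG}, the bound on $(I-\mathcal{K})^{-1}$, and the estimates \eqref{esmu}) and then asserts Proposition \ref{Pro3.2} by "following the same process," exactly the transfer you carry out: same Beals--Coifman/Fredholm scheme with $(\tilde r,\tilde c_j)$ in place of $(r,c_j)$, injectivity from $|\tilde r|=|r|\leq 1$ with equality only at $z=\pm 1$, and the sign check that $e^{-y\,\mathrm{Im}z_j}$ stays bounded for $y<0$, $\mathrm{Im}z_j<0$. Your explicit attention to the column/half-plane interchange forced by the $(2,2)$ placement of $1-|\tilde r|^2$ in $V_r$ is precisely the bookkeeping the paper leaves implicit.
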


\subsection{Time evolution}\label{subsec3.3}

The crucial argument  of inverse scattering theory is  how to recover the solution $m(t,x)$  to  the initial-value problem  \eqref{mch1} from scattering data. It is necessary to give the time evolution of the RH problem   from  the time evolution of scattering data.
According to the time  spectral problem in the Lax pair  \eqref{lax0}  and  scattering relation \eqref{scattering}, it is found  that the   time evolution scattering data $a(z;t )$ and  $b(z;t )$ satisfy the equations
\begin{align}
	\partial_t a(z;t )=0,\ 	\partial_t b(z;t )=-\dfrac{2iz(z^2-1)}{z^2+1}b(z;t),\ z\in\mathbb{R},
\end{align}
which yields
\begin{align}
	a(z;t )=a(z;0 ),\ b(z;t )=e^{-\frac{2iz(z^2-1)t}{z^2+1}}b(z ;0),\ z\in\mathbb{R}.
\end{align}
Therefore, we define the time-dependent scattering data
in the following lemma.
\begin{lemma}\label{pror}
	Define time-dependent  scattering data  ${R}(t)=\left( r(\cdot;t),\{z_j(t),c_j(t)\}_{j=1}^{2{N_0}}\right)$ with
	\begin{align*}
		r(z;t )=\exp\left( {-\frac{2iz(z^2-1)t}{(z^2+1)^2}}\right) r( z ;0 ),\ z\in\mathbb{R},\ z_j(t)=z_j(0),\ c_j(t)=\exp{\left( \frac{t\text{Im}z_j}{\text{Re}^2z_j}\right) }c_j(0).\label{ct}
	\end{align*}
For any $ T > 0, $ if   $R( 0)\in Y$, then for every $t\in [0, T)$,
$R( t)\in Y$. Moreover,
	\begin{align}
		\|R( t)\|_{Y} \leqslant C(T)	\|R( 0)\|_{Y},
	\end{align}
	with a constant $C(T)>0.$
\end{lemma}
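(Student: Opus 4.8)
The plan is to treat the discrete scattering data and the reflection coefficient separately, exploiting that in each case the time flow enters only through an exponential factor of modulus one (for $r$) or a decaying real exponential (for the $c_j$). First I would dispose of the discrete part. Since the eigenvalues do not evolve, $\sum_{j}|z_j(t)|=\sum_j|z_j(0)|$. For the norming constants, recall from Proposition \ref{prop2.2} that each $z_j$ lies on the unit circle in $\mathbb{C}^-$, and from \eqref{ai} that $z_j\neq\pm i$; hence $\text{Re}\,z_j\neq0$ and $\text{Im}\,z_j<0$, so the exponent $\frac{t\,\text{Im}\,z_j}{\text{Re}^2 z_j}$ is nonpositive for $t\geq0$ and $|c_j(t)|=\exp\!\big(\tfrac{t\,\text{Im}\,z_j}{\text{Re}^2 z_j}\big)|c_j(0)|\leq |c_j(0)|$. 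Thus the discrete contribution to $\|R(t)\|_{Y}$ is controlled by $C(T)$ times that of $\|R(0)\|_{Y}$ (indeed with $C(T)=1$ here).

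Next I turn to the reflection coefficient, writing $r(z;t)=e^{\phi(z;t)}r(z;0)$ with $\phi(z;t)=-\frac{2iz(z^2-1)t}{(z^2+1)^2}$ and $r_0:=r(\cdot;0)$. For real $z$ the exponent is purely imaginary, so $|e^{\phi(z;t)}|=1$; consequently the undifferentiated weighted norms are preserved exactly, $\|\langle\cdot\rangle^s r(\cdot;t)\|_{L^2}=\|\langle\cdot\rangle^s r_0\|_{L^2}$ for $s=1,2$, and the flow acts nontrivially only through derivatives. By the Leibniz rule, writing $\phi'=\partial_z\phi$,
\[
\partial_z r(z;t)=e^{\phi}(\phi' r_0+r_0'),\qquad \partial_z^2 r(z;t)=e^{\phi}\big(((\phi')^2+\phi'')r_0+2\phi' r_0'+r_0''\big).
\]

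The one step that must be checked with care — and which I expect to be the only genuine obstacle — is the decay of the phase derivatives at infinity, since the weights $\langle z\rangle^2$ (for $H^{1,2}$) and $\langle z\rangle$ (for $H^{2,1}$) could otherwise destroy the estimate. Because $z^2+1\geq1$ on $\mathbb{R}$, the function $\phi(\cdot;t)$ is smooth with no real singularities, and a direct computation gives $\phi'(z;t)=-2it\,\frac{-z^4+6z^2-1}{(z^2+1)^3}=O(z^{-2})$ and $\phi''(z;t)=O(z^{-3})$ as $|z|\to\infty$, with implied constants of size $O(t)$. Hence the multipliers $\langle z\rangle^2\phi'$, $\langle z\rangle\phi'$ and $\langle z\rangle((\phi')^2+\phi'')$ are all bounded on $\mathbb{R}$, with $\|\cdot\|_{L^\infty}\leq C(T)$ for $t\in[0,T)$. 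It is precisely the balance between the quadratic weight and the $z^{-2}$ decay of $\phi'$ that makes the $H^{1,2}$ bound close; a slower decay would be fatal.

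Finally I would assemble the weighted Sobolev norm from the two Leibniz expansions, bounding each product by the $L^\infty$ norm of the relevant weighted phase-multiplier times an $L^2$ norm of $r_0$ or its derivatives. For instance $\|\langle\cdot\rangle^2\partial_z r(\cdot;t)\|_{L^2}\leq\|\langle\cdot\rangle^2\phi'\|_{L^\infty}\|r_0\|_{L^2}+\|\langle\cdot\rangle^2 r_0'\|_{L^2}$ for the $H^{1,2}$ part, and $\|\langle\cdot\rangle\partial_z^2 r(\cdot;t)\|_{L^2}\leq\|\langle\cdot\rangle((\phi')^2+\phi'')\|_{L^\infty}\|r_0\|_{L^2}+2\|\langle\cdot\rangle\phi'\|_{L^\infty}\|r_0'\|_{L^2}+\|\langle\cdot\rangle r_0''\|_{L^2}$ for the $H^{2,1}$ part, each right-hand side being $\leq C(T)\|r_0\|_{H^{1,2}\cap H^{2,1}}$. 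Combining these with the $s=0,1$ (undifferentiated) cases, which are preserved exactly, gives $\|r(\cdot;t)\|_{H^{1,2}\cap H^{2,1}}\leq C(T)\|r_0\|_{H^{1,2}\cap H^{2,1}}$; together with the discrete-data bound this yields $\|R(t)\|_{Y}\leq C(T)\|R(0)\|_{Y}$, and in particular $R(t)\in Y$ for every $t\in[0,T)$, completing the proof.
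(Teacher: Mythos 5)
Your proposal is correct and follows essentially the same route as the paper's proof: the unimodularity of the phase $e^{-\frac{2iz(z^2-1)t}{(z^2+1)^2}}$ on $\mathbb{R}$ preserves the undifferentiated weighted norms, the Leibniz rule reduces the derivative bounds to the boundedness of the weighted rational multipliers (the paper's $\sup_{z}|z^j(z^4-6z^2+1)|/(z^2+1)^3$ etc.\ are exactly your $\langle z\rangle^j\phi'$, $(\phi')^2+\phi''$ terms), and the discrete data are handled by $|z_j|=1$, $\mathrm{Im}\,z_j<0$, $z_j\neq -i$, so $\exp\bigl(t\,\mathrm{Im}\,z_j/\mathrm{Re}^2 z_j\bigr)\leq 1$. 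Your multiplier formulation is a slightly tidier packaging of the same computation, with no substantive difference.
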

\begin{proof}
	Since   $  \big| e^{-\frac{2iz(z^2-1)t}{(z^2+1)^2}}\big|=1 $,  for $z\in \mathbb{R}$, $j=0,1,2$, we have
	$$\|(\cdot)^jr(\cdot;t) \|_{L^2} =  \|(\cdot)^j e^{-\frac{2i(\cdot)((\cdot)^2-1)t}{((\cdot)^2+1)^2}} r(\cdot;0)  \|_{L^2} =\| r(\cdot;0)  \|_{L^{2,j}}. $$
Note that  for $j=0,1,2$,
	\begin{align*}
		z^j \partial_z r (z;t)=2it	\frac{z^j(z^4-6z^2+1)}{(z^2+1)^3}  e^{-\frac{2iz(z^2-1)t}{(z^2+1)^2}} r(z;0 )+ e^{-\frac{2iz(z^2-1)t}{(z^2+1)^2}}\partial_z r (z;0 ).
	\end{align*}
It   is thereby inferred that
	\begin{align*}
		&\|(\cdot)^j \partial_z r (\cdot;t)\|_{L^2}^2\leqslant 2t\sup_{z\in \mathbb{R}} \frac{| z^j(z^4-6z^2+1)| } { (z^2+1)^3} \|r(\cdot;0 )\|_{L^2}^2+ \|\partial_z r(\cdot;0 )\|_{L^2}^2.
	\end{align*}
	In the same way   for $j=0,1$,
	\begin{align*}
		z^j \partial_z^2 r (z;t)=& (it\frac{z^{j+1}(z^4+2z^2-7)}{(z^2+1)^4}- \frac{t^2z^j(z^4-6z^2+1)^2}{(z^2+1)^6} ) 4e^{-\frac{2iz(z^2-1)t}{(z^2+1)^2}} r(z;0 )\\
		&+4it	\frac{z^j(z^4-6z^2+1)}{(z^2+1)^3} e^{-\frac{2iz(z^2-1)t}{(z^2+1)^2}}\partial_z r (z;0 )+z^je^{-\frac{2iz(z^2-1)t}{(z^2+1)^2}}\partial_z^2 r (z;0 ).
	\end{align*}
	Consequently,
	\begin{align*}
		\|(\cdot)^j \partial_z^2 r (\cdot;t)\|_{L^2}^2\leqslant& 4 ( t \sup_{z\in \mathbb{R}} \frac{|z^{j+1}(-z^4+2z^2+7)|}{(z^2+1)^4} +t^2\sup_{z\in \mathbb{R}} \frac{|z|^j(z^4-6z^2+1)}{(z^2+1)^3})  \|r(\cdot;0 )\|_{L^2}^2\\
		& +4t\sup_{z\in \mathbb{R}}	\frac{|z^j(z^4-6z^2+1)|}{(z^2+1)^3} \|\partial_zr(\cdot;0 )\|_{L^2}^2+\|(\cdot)^j\partial_z^2r(\cdot;0 )\|_{L^2}^2.
	\end{align*}
This in turn implies  that  $r(\cdot;t) \in H^{1,2}(\mathbb{R} )\cap H^{2,1}(\mathbb{R} )$.
 The estimate of others scattering data can be accomplished  in the same way from the fact that $|z_j|=1$ with Im$z_j<0$ and $\exp{\left( \frac{t\text{Im}z_j}{\text{Re}^2z_j}\right) }<1$, which implies the desired result as indicated above.
\end{proof}
Therefore, invoking of Propositions \ref{pro3.1} and \ref{Pro3.2},  under time-dependent  scattering data  the  RH problems \ref{RHP2} exists unique solution.
We denote $M^{(1)}(z;t,y)$ as the  solution for  the  RH problem \ref{RHP2} under time-dependent  scattering data  $R(t) $.
And  $M^{(1)}_r(z;0,y)$ also has  a similar  time evolution denoted by $M^{(1)}_r(z;t,y)$ , which is omitted here. Thus  the  time evolution of $M^{(1)}$ defined in \eqref{M1} is given by
\begin{align}
	M^{(1)}(z;t,y)=\left\{ \begin{array}{ll}
		M^{(1)}_l(z;t,y),   &\text{as } y\in [0,+\infty),\\[12pt]
		M^{(1)}_r(z;t,y) , &\text{as }y\in (-\infty,0],\\
	\end{array}\right.\label{M1t}
\end{align}
The crux of the matter is whether the function $m(t,y)$   reconstructed  from the time-depended RH problem in \eqref{rescm} is the solution of the mCH equation \eqref{mcho2}.
The approach  is  to  demonstrate that $	M^{(1)}(z;t,y)$ satisfies some Lax pair equations with similar structure
as \eqref{lax0}  whose compatibility condition is the mCH equation \eqref{mcho2}. To achieve this aim, we first give the following proposition.
\begin{Proposition}
	$M^{(1)}(z;t,y)$ defined in \eqref{M1t} is differentiable with respect to $t>0$.
\end{Proposition}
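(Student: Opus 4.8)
The plan is to differentiate the explicit solution furnished by Proposition \ref{pro3.1}, in which $M^{(1)}_l(z;t,y)$ is assembled from $\mu=I+N$ with $N=(I-\mathcal{K})^{-1}N_0$ solving \eqref{N}. All of the $t$-dependence is channelled through the time-dependent scattering data of Lemma \ref{pror}: the reflection coefficient $r(z;t)=e^{-2i\phi(z)t}r(z;0)$, with $\phi(z)=\dfrac{z(z^2-1)}{(z^2+1)^2}$, enters the multipliers $w_\pm$ (hence $\mathcal{K}^{(1)}_1,\mathcal{K}^{(2)}_1$) and the forcing term $N_{0,2}$, while the norming constants $c_j(t)=e^{t\,\text{Im}\,z_j/\text{Re}^2 z_j}c_j(0)$ enter the finite-rank operators $\mathcal{K}^{(1)}_2,\mathcal{K}^{(2)}_2$ and the residue sum. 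First I would record the elementary but decisive fact that $\phi\in L^\infty(\mathbb{R})$: it is smooth, vanishes at the singular points $z=\pm1$, and decays like $z^{-1}$ at infinity, so that $\partial_t r(z;t)=-2i\phi(z)r(z;t)$ remains in $H^{1,2}\cap H^{2,1}$ and the bound $|r(\cdot;t)|\le 1$ is preserved for all $t$.

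Next I would prove operator-norm differentiability of $\mathcal{K}(t)$. Since $e^{2\theta}$ is unimodular on $\mathbb{R}$ and $\mathcal{C}_\pm$ are bounded on $L^2$, the operator norm of $\mathcal{C}_\pm$ composed with multiplication by a function $g$ is controlled by $\|g\|_{L^\infty}$; thus it suffices to show that the multiplier difference quotients converge in $L^\infty$. Using the elementary bound $|e^{-2i\phi h}-1+2i\phi h|\le 2\phi^2 h^2$ together with $\|\phi\|_{L^\infty}<\infty$ and $|r|\le 1$, one obtains
\begin{align*}
\Big\|\frac{r(\cdot;t+h)-r(\cdot;t)}{h}+2i\phi\,r(\cdot;t)\Big\|_{L^\infty}\le 2\|\phi\|_{L^\infty}^2\,|h|\,\|r(\cdot;0)\|_{L^\infty}\longrightarrow 0,
\end{align*}
so that $\partial_t\mathcal{K}^{(1)}_1$ and $\partial_t\mathcal{K}^{(2)}_1$ exist in operator norm, with multipliers $2i\phi\,\bar r\,e^{-2\theta}$ and $-2i\phi\, r\,e^{2\theta}$ respectively; the finite-rank pieces are handled by the exponential smoothness of $c_j(t)$. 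The same computation, now measured in $L^2$ and followed by the bounded $\mathcal{C}_+$, shows that $t\mapsto N_0(\cdot;t)\in L^2$ is differentiable.

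Finally I would transfer differentiability to the resolvent via the second resolvent identity. Because $\|(I-\mathcal{K}(t))^{-1}\|$ is bounded uniformly for $t\in[0,T]$ — this follows from Proposition \ref{lemmaG} together with the uniform bound $\|R(t)\|_Y\le C(T)\|R(0)\|_Y$ of Lemma \ref{pror} — dividing
\begin{align*}
(I-\mathcal{K}(t+h))^{-1}-(I-\mathcal{K}(t))^{-1}=(I-\mathcal{K}(t+h))^{-1}\big(\mathcal{K}(t+h)-\mathcal{K}(t)\big)(I-\mathcal{K}(t))^{-1}
\end{align*}
by $h$ and letting $h\to 0$ yields $\partial_t(I-\mathcal{K})^{-1}=(I-\mathcal{K})^{-1}(\partial_t\mathcal{K})(I-\mathcal{K})^{-1}$. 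Hence $N=(I-\mathcal{K})^{-1}N_0$ is differentiable in $L^2$, and so is $M^{(1)}$: the Cauchy operator $\mathcal{C}$ is bounded and $t$-independent, so $\mathcal{C}(\mu w)$ is differentiable by the product rule, while the residue terms are differentiable because the point values $\mu_1(z_j)$ and $\mu_2(\bar z_j)$ are, by \eqref{Nzj}, bounded linear functionals of $N$. Taking $T>t$ arbitrary then gives differentiability at every $t>0$.

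The main obstacle is the second step: making the $t$-derivative pass through the Cauchy projections inside $\mathcal{K}$ in \emph{operator} norm rather than merely strongly, since operator-norm convergence is precisely what the second resolvent identity requires. This hinges entirely on the uniform $L^\infty$ control of the multiplier difference quotients, and the reason it survives the equality $|r|=1$ at $z=\pm1$ is exactly that the time-phase $\phi$ vanishes there and stays bounded on all of $\mathbb{R}$; the accompanying ingredient, uniform-in-$t$ boundedness of the resolvent, is what Lemma \ref{pror} secures.
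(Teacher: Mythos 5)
Your proposal is correct and follows essentially the same route as the paper: both channel all $t$-dependence through $r(z;t)$ and $c_j(t)$, rely on the boundedness on $\mathbb{R}$ of the phase $z(z^2-1)/(z^2+1)^2$ (the paper's estimate $|\partial_t r(z;t)|\lesssim |r(z;0)|$, $|\partial_t c_j(t)|\lesssim|c_j(0)|$), and arrive at the same formula $\partial_t N=(I-\mathcal{K})^{-1}\left(\partial_t N_0+(\partial_t\mathcal{K})N\right)$ via the invertibility from Proposition \ref{lemmaG}. Your write-up is in fact slightly more careful than the paper's: where the paper formally differentiates the operator equation \eqref{N}, you justify the limit passage by proving operator-norm convergence of the multiplier difference quotients and applying the second resolvent identity, which produces the identical derivative and closes a step the paper leaves implicit.
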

\begin{proof}
	We give the details of the proof of the case $y\in[0,+\infty)$. In this case,  under time-dependent  scattering data  $R(t) $,	$M^{(1)}(z;t,y)=M^{(1)}_l(z;t,y)$. From the definition of $M^{(1)}_l(z;t,y)$ in \eqref{intM}, the existence of $	\partial_t M^{(1)}_l(z;t,y)$ is equivalent to  the existence of $	\partial_t \mu(z;t,y)$. By taking $t$-derivative of the operator equation \eqref{N}  under time-dependent  scattering data  $R(t) $, it follows that
	 \begin{align}
	 	\partial_t N(z;t,y)=\left( 	\partial_tN_0(z;t,y)+	(\partial_t \mathcal{K})(N)(z;t,y)\right) +\mathcal{K}	(\partial_tN)(z;t,y),
	 \end{align}
 where
 \begin{align*}
 	&	\partial_t N(z;t,y)=\left(0,\mathcal{C}_+(	\partial_tr(\cdot;t)e^{2\theta})(z;t,y)+\sum_{j=1}^{2N_0}\frac{	\partial_tc_j(t)e^{\text{Im}z_jy}}{z-z_j} \right)^T,\\
 	&\partial_t\mathcal{K}=\left(\begin{array}{cc}
 		0  & \partial_t\mathcal{K}^{(1)}\\
 		\partial_t\mathcal{K}^{(2)}	& 0
 	\end{array}\right).
 \end{align*}
For brevity, we omit the concrete expression of operators  $\partial_t\mathcal{K}^{(1)}$ and
$\partial_t	\mathcal{K}^{(2)}$. From Lemma \ref{pror},
\begin{align*}
&\partial_tr(z;t)=-\frac{2iz(z^2-1)}{(z^2+1)^2}\exp\left( {-\frac{2iz(z^2-1)t}{(z^2+1)^2}}\right) r( z ;0 ),\ z\in\mathbb{R}\\ &\partial_tc_j(t)=\frac{\text{Im}z_j}{\text{Re}^2z_j}\exp{\left( \frac{t\text{Im}z_j}{\text{Re}^2z_j}\right) }c_j(0),
\end{align*}
with $|\partial_tr(z;t)|\lesssim|r( z ;0 )|$, $|\partial_tc_j(t)|\lesssim|c_j(0)|$. Therefore, $\partial_tN_0(\cdot;t,y)+	(\partial_t \mathcal{K})(N)(\cdot;t,y)\in L^2(\mathbb{R})$.
 Then it is inferred from Proposition \ref{lemmaG}  that $	\partial_t N(z;t,y)$ exists with  \begin{align*}
 	\partial_t N(z;t,y)=\left( I-\mathcal{K}\right) ^{-1}\left( 	\partial_tN_0(z;t,y)+	(\partial_t \mathcal{K})(N)(z;t,y)\right)(z;t,y) .
 \end{align*}
It also implies the   existence of $	\partial_t \mu(z;t,y)$,  from which we obtain the result of this proposition.
\end{proof}
Therefore, by standard processing (see for example \cite{Mon,RN9}), the following proposition  can be verified.
\begin{Proposition}
	Assume that $M^{(1)}(z;t,y)$ is the  solution for  the RH problem \ref{RHP2} under time-dependent  scattering data
$R(t) $.  The  matrix function $M(z;t,y)$ is defined  by
	\begin{align}
		\left( I-\frac{\sigma_1}{z}\right) M(z;t,y)=\left( I-\frac{\sigma_1M^{(1)}( t,y;0)^{-1}}{z} \right) M^{(1)}(z;t,y).\label{transM}
	\end{align}
	Define
	\begin{align}
		\breve{M}(z;t,y)=M(z;t,y)e^{-\breve{p}(z;t,y)\sigma_3},
	\end{align}
	where $\breve{p}(z;t,y)=-\frac{i(z^2-1)}{4z}(-y+\frac{8z^2}{(z^2+1)^2}t)$. Then $\breve{M}(z;t,y)$ satisfies the system of linear differential equations
	\begin{align}
		\breve{M}_y(z;t,y)=\breve{U}\breve{M},\ \ \  \breve{M}_t(z;t,y)=\breve{V}\breve{M}, \label{poe}
	\end{align}
	where
	\begin{align*}
		&\breve{U}=-\frac{i(z^2-1)}{4z}\sigma_3+ \frac{ih_1(t,y)}{z-1}\left(\begin{array}{cc}
			1 & -1 \\
			1 & -1
		\end{array}\right)+\frac{ih_1(t,y)}{z+1}\left(\begin{array}{cc}
			1 & 1 \\
			-1 & -1
		\end{array}\right)+ih_1(t,y)\left(\begin{array}{cc}
			0 & -1 \\
			1 & 0
		\end{array}\right),\\
		&\breve{V}=\frac{2i(z^2-1)z}{(z^2+1)^2}\sigma_3+\frac{ih_2(t,y)}{z-1}\left(\begin{array}{cc}
			1 & -1 \\
			1 & -1
		\end{array}\right)+\frac{ih_2(t,y)}{z+1}\left(\begin{array}{cc}
			1 & 1 \\
			-1 & -1
		\end{array}\right)\nonumber\\
		&+\frac{1}{z-1}\left(\begin{array}{cc}
			0 & h_3(t,y) \\
			h_4(t,y) & 0
		\end{array}\right)+\frac{1}{z+1}\left(\begin{array}{cc}
			0 & h_4(t,y) \\
			h_3(t,y) & 0
		\end{array}\right),
	\end{align*}
	\begin{align*}
		&h_1(t,y)=-\frac{1}{2}\lim_{z\to \infty}zM_{12}(z;t,y),\\
		&h_2(t,y)=-i\lim_{z\to 1}(z-1)(\breve{M}_t(z;t,y)^{-1}\breve{M}(z;t,y))_{11},\nonumber\\
		&h_3(t,y)=2M_{11}(i ;t,y)\lim_{z\to i}\frac{M_{12}(z;t,y)}{z-i},\nonumber\\
		& h_4(t,y)=-2M_{22}( i ;t,y)\lim_{z\to i}\frac{M_{21}(z;t,y)}{z-i},\nonumber
	\end{align*}
	and the subscript $ij$ used above means the $(i,j)$-entry element of matrix. The compatibility condition between two equations in the system  \eqref{poe}, namely,
	$$\breve{U}_t-\breve{V}_y+[\breve{U},\breve{V}]=0$$
	yields the mCH equation \eqref{mcho2} with the variables $(t,y)$.
\end{Proposition}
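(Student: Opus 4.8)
The plan is to run the standard dressing argument: show that the gauge-transformed solution $\breve M$ satisfies two linear systems whose coefficient matrices $\breve U,\breve V$ are \emph{rational} in $z$, read off their principal parts from the local behaviour of $M$, and then deduce the zero-curvature relation together with its reduction to \eqref{mcho2}. Recall that $M(z;t,y)$ defined by \eqref{transM} solves the RH problem \ref{RHP1} for $y\ge 0$ (resp. \ref{RHPr1} for $y\le 0$), so all of its $(y,t)$-dependence sits in the oscillatory exponentials $e^{\pm 2\theta}$ of the jump and residues together with the time factor carried by $r(\cdot;t)$. The scalar $\breve p$ is chosen precisely so that this dependence is removed: forming $\breve M=M e^{-\breve p\sigma_3}$ conjugates the jump $V_l$ into $e^{\breve p\sigma_3}V_l e^{-\breve p\sigma_3}$, and $\breve p$ is selected so that $2\breve p$ cancels the combined $(y,t)$-dependent exponent in the $(1,2)$-entry (and $-2\breve p$ that in the $(2,1)$-entry), making the resulting jump matrix \emph{independent} of $(y,t)$; the same cancellation makes the transformed residue and singularity relations $(y,t)$-independent. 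First I would record $\breve p_y$ and $\breve p_t$ and verify that they reproduce the $\sigma_3$ leading terms $-\breve p_y\sigma_3=-\tfrac{i(z^2-1)}{4z}\sigma_3$ and $-\breve p_t\sigma_3=\tfrac{2iz(z^2-1)}{(z^2+1)^2}\sigma_3$ appearing in $\breve U$ and $\breve V$.

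Next I would show the logarithmic derivatives are rational. Since the transformed jump $\tilde V$ is constant in $(y,t)$, differentiating $[\breve M]_+=[\breve M]_-\tilde V$ in $y$ and in $t$ and using $\tilde V_y=\tilde V_t=0$ gives $[\breve M_y\breve M^{-1}]_+=[\breve M_y\breve M^{-1}]_-$ and likewise for $t$; hence $\breve U:=\breve M_y\breve M^{-1}$ and $\breve V:=\breve M_t\breve M^{-1}$ have no jump across $\mathbb R$ and extend to meromorphic functions on the sphere. Their only possible singularities are: $z=\infty$ (from the linear growth of $\breve p$), $z=\pm 1$ (from the singular structure of $M$ in Proposition \ref{promu1}), $z=0$ (entering $\breve U$ through the pole of $\breve p_y$), $z=\pm i$ (entering $\breve V$ through the double poles of $\breve p_t$), and the discrete-spectrum points $z_j,\bar z_j$. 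At $z_j,\bar z_j$ the transformed residue relations are $(y,t)$-independent and of the standard rank-one form, so a short computation shows the poles of $\breve M_y\breve M^{-1}$ and $\breve M_t\breve M^{-1}$ there are removable.

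Then I would compute the principal parts at the remaining points and assemble the rational functions. Near $z=\pm 1$ the expansion of Proposition \ref{promu1} gives simple poles whose residues, after the gauge, are the rank-one matrices displayed in $\breve U,\breve V$; the coefficient $h_1$ is identified through the reconstruction limit $h_1=-\tfrac12\lim_{z\to\infty}zM_{12}$ of Proposition \ref{lemmaM}. For $\breve V$ the contributions at $z=\pm i$ are controlled by the special expansion \eqref{asyM} (equivalently the $z=i$ expansion of Proposition \ref{lemmaM}), which produces the off-diagonal terms carrying $h_3,h_4$. The spurious pole of $\breve U$ at $z=0$ is handled by the symmetry $M(z)=\sigma_1 M(z^{-1})\sigma_1$, which ties the local data at $z=0$ to that at $z=\infty$ and shows this point contributes nothing new. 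Having matched all principal parts, a Liouville argument (a bounded entire function is constant, the constant fixed by $\breve M\to e^{-\breve p\sigma_3}$ as $z\to\infty$) identifies $\breve U$ and $\breve V$ with the stated expressions, so \eqref{poe} holds.

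Finally, once both $\breve M_y=\breve U\breve M$ and $\breve M_t=\breve V\breve M$ hold with $\breve M$ invertible, cross-differentiation gives the zero-curvature relation $\breve U_t-\breve V_y+[\breve U,\breve V]=0$ automatically. Writing this out entrywise yields evolution relations among $h_1,\dots,h_4$; inserting the reconstruction formulas \eqref{rescm} and \eqref{u} (in particular $\eta=1-1/m$ and $x=y+2\log M_{11}(i)$) and eliminating the auxiliary coefficients converts the zero-curvature identity into the mCH equation \eqref{mcho2} in the reciprocal variables $(t,y)$. The hard part will be the bookkeeping in the two middle steps: verifying that the apparent singularities at $z=0$ and $z=\pm i$ collapse to exactly the claimed principal parts (this is where the symmetry and the $z=i$ expansion are essential), and then checking that the resulting component equations, after the change of variables $x\leftrightarrow y$, really do reduce to \eqref{mcho2} rather than to a merely equivalent system.
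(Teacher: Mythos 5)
Your proposal is correct and is essentially the paper's own route: the paper does not write this verification out, instead establishing $t$-differentiability of $M^{(1)}$ in the preceding proposition and then appealing to ``standard processing'' (citing Boutet de Monvel--Karpenko--Shepelsky and Deift--Zhou), which is precisely the dressing argument you describe — constant transformed jump, hence jump-free logarithmic derivatives $\breve M_y\breve M^{-1}$, $\breve M_t\breve M^{-1}$, matching of principal parts at $z=\infty,\pm1,0,\pm i$ and the discrete spectrum using the symmetries and the $z=i$ expansion, a Liouville argument, and finally the zero-curvature reduction to \eqref{mcho2} in the variables $(t,y)$. Your sketch fills in exactly the steps the paper delegates to the references, with the singularity bookkeeping at $z=0$ (via $M(z)=\sigma_1M(z^{-1})\sigma_1$) and $z=\pm i$ handled as in those sources.
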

Consequently, under the time evolution of the scattering data in Lemma \ref{pror}, it can be  verified that the reconstruction formula
remain accessible for $M^{(1)}(z;t,y)$.
\subsection{Estimates on solutions of  the RH problem } \label{sec5}
In this subsection,  to estimate the  solution of the Cauchy problem \eqref{mch1} through reconstruction formula, a key step is to give the proof of  estimates of $M_l^{(1)}(z)$ for $y\in[0,+\infty)$ which is the solution of the RH problem \ref{RHP2} under scattering data   $R(t) $.   \eqref{intM} implies that
as $z\to i$, $M_l^{(1)}(z)$  has the following expansion.
\begin{align}
	M_l^{(1)}(z)=M_l^{(1)}(i)+M_l^{(1),i}(z-i)+\mathcal{O}\left( (z-i)^2\right),
\end{align}
with
\begin{align*}
		&M^{(1)}_l(i)=I+\mathcal{C}(\mu w)(i)+\sum_{j=1}^{2{N_0}} \left(\frac{\bar{c}_je^{y\text{Im}z_j}}{i-\bar{z}_j}\mu_2(\bar{z}_j),\frac{c_je^{y\text{Im}z_j}}{i-z_j}\mu_1(z_j)  \right) ,\\
		&M_l^{(1),i}= \frac{1}{2\pi i} \int_{\mathbb{R}}\dfrac{\mu(s)w(s)}{(s-i)^2}ds-\sum_{j=1}^{2{N_0}} \left(\frac{\bar{c}_je^{y\text{Im}z_j}}{(i-\bar{z}_j)^2}\mu_2(\bar{z}_j),\frac{c_je^{y\text{Im}z_j}}{(i-z_j)^2}\mu_1(z_j)  \right).
\end{align*}
\begin{Proposition}\label{lemmai}
	  If
$R \in Y,$  $y\in[0,+\infty)$, there exists a positive constant $C(\|R\|_{Y})$ such that $M_l^{(1)}(i)$,  $M_l^{(1),i}$ and $M_l^{(1)}(0)$ are estimated by
	\begin{align}
		|M_l^{(1)}(i)-1|,\ |M_l^{(1),i}|,\ |M_l^{(1)}(0)|\leqslant  C( \|R\|_{Y}) .\label{377}
	\end{align}
\end{Proposition}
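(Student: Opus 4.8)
The plan is to obtain all three bounds directly from the explicit solution formula of Proposition~\ref{pro3.1}, $M^{(1)}_l(z)=I+\mathcal{C}(\mu w)(z)+\sum_{j=1}^{2N_0}\big(\tfrac{\bar c_j e^{y\operatorname{Im}z_j}}{z-\bar z_j}\mu_2(\bar z_j),\,\tfrac{c_j e^{y\operatorname{Im}z_j}}{z-z_j}\mu_1(z_j)\big)$, by estimating the Cauchy-integral part and the finite soliton sum separately. The only analytic inputs I would use are the a priori bounds $\|\mu-I\|_{L^2}+\sum_{j}|\mu_2(\bar z_j)|+\sum_{j}|\mu_1(z_j)-1|\le C(\|R\|_{Y})$ recorded just after Proposition~\ref{pro3.1}, the fact that $\theta$ is purely imaginary on $\mathbb{R}$ so that $|e^{\pm 2\theta}|=1$ there, and $\|w\|_{L^\infty}\le\|r\|_{L^\infty}=1$. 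For the soliton sums I would use throughout that $|e^{y\operatorname{Im}z_j}|\le1$ for $y\ge0$ since $\operatorname{Im}z_j<0$, that $|z_j|=1$ with $z_j\neq-i$ (the latter shown in the proof of Proposition~\ref{prop2.2} from $a(-i)\neq0$) so that every denominator $i-z_j$, $i-\bar z_j$, $z_j$, $\bar z_j$ stays bounded away from zero, and that $|c_j|\le\|R\|_{Y}$ by \eqref{normD}.

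For $M^{(1)}_l(i)$ and $M^{(1),i}_l$ the base point lies off $\mathbb{R}$, so the Cauchy kernels $1/(s-i)$ and $1/(s-i)^2$ belong to $L^1(\mathbb{R})\cap L^2(\mathbb{R})$. Splitting $\mu w=(\mu-I)w+w$, I would bound the first piece by Cauchy--Schwarz, $\|(\mu-I)w\|_{L^2}\,\|(\cdot-i)^{-1}\|_{L^2}\le C\|\mu-I\|_{L^2}\|r\|_{L^\infty}$, and the second piece likewise, using that the entries of $w$ are $r$ times unimodular factors, hence $w\in L^2$ with $\|w\|_{L^2}\le\|r\|_{L^2}$. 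Together with the termwise bound on the soliton sum this gives $|M^{(1)}_l(i)-I|+|M^{(1),i}_l|\le C(\|R\|_{Y})$.

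The delicate case, and the principal obstacle, is $M^{(1)}_l(0)$, since $z=0$ lies on the contour and the kernel $1/s$ is singular exactly at the integration point $s=0$. The observation that removes the difficulty is $r(0)=0$: by the symmetry $r(z)=\overline{r(z^{-1})}$ in \eqref{syr} and the decay at infinity of $r\in H^{2,1}(\mathbb{R})\hookrightarrow C^1(\mathbb{R})$, the function $r$ vanishes at the origin and is $C^1$ there, so $r(s)/s$ is bounded near $s=0$ and lies in $L^1(\mathbb{R})\cap L^2(\mathbb{R})$, with norms controlled by $\|r\|_{H^{2,1}}$. Since every entry of $\mu w$ equals $r$ times a unimodular exponential times an entry of $\mu$, writing once more $\mu=I+(\mu-I)$ I would estimate $\int r e^{2\theta}/s\,ds$ by $\|r/s\|_{L^1}$ and $\int(\mu-I)\,re^{2\theta}/s\,ds$ by $\|\mu-I\|_{L^2}\|r/s\|_{L^2}$; in particular the integrand is absolutely integrable, so no principal value is needed. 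The soliton sum at $z=0$ is bounded termwise as before, now using $|1/z_j|=|1/\bar z_j|=1$. Collecting the three estimates yields \eqref{377}.
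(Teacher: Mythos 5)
Your proposal is correct, and its skeleton is the same as the paper's: both start from the Beals--Coifman representation of Proposition \ref{pro3.1}, split $\mu w=(\mu-I)w+w$, estimate the two pieces by Cauchy--Schwarz against the a priori bounds recorded after Proposition \ref{pro3.1}, and bound the discrete sums termwise using $|z_j|=1$, $\operatorname{Im}z_j<0$, $y\ge 0$. The one step where you genuinely depart from the paper is the singular point $s=0$ in the estimate of $M_l^{(1)}(0)$. The paper never invokes pointwise vanishing of $r$: it changes variables $s\mapsto 1/s$ under the symmetry $r(z)=\overline{r(z^{-1})}$, which converts $\int_0^1|s^{-1}r(s)|\,ds$ and $\int_0^1|s^{-1}r(s)|^2\,ds$ into tail integrals over $(1,\infty)$ controlled by $\|r\|_{L^2}$ alone. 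You instead deduce $r(0)=0$ from the symmetry plus decay at infinity and use $H^{2,1}\subset H^{2}(\mathbb{R})\hookrightarrow C^{1}(\mathbb{R})$ to get $|r(s)|\lesssim \|r\|_{H^{2,1}}|s|$ near the origin; this is legitimate here, since the symmetries \eqref{spr} are assumed of the data in the RH problem and $r\in H^{2}$ forces $r\to 0$ at infinity, and it buys a pointwise rate at $0$ (your explicit remark that the integral converges absolutely, so no principal value is needed, is what the paper achieves implicitly through $\|w/z\|_{L^1}<\infty$), at the cost of using more regularity than the paper's purely $L^2$ argument. Two minor points: your assertion that $1/(s-i)\in L^1(\mathbb{R})$ is false (it decays only like $|s|^{-1}$), but harmless, since your actual estimates at $z=i$ use only its $L^2$ norm; and, exactly as in the paper, your constant $C(\|R\|_{Y})$ silently also depends on quantities such as $\min_j|i-\bar z_j|$ and $\min_j|\operatorname{Im}z_j|$, which the norm \eqref{normD} does not control --- a looseness you share with, rather than add to, the paper's own proof.
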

\begin{proof}
	From \eqref{esmu}, for $z=0$,
$\sum_{j=1}^{2{N_0}}| \frac{\bar{c}_je^{y\text{Im}z_j}}{\bar{z}_j}\mu_2(\bar{z}_j)| \; {\rm and} \;  \sum_{j=1}^{2{N_0}}|\frac{c_je^{y\text{Im}z_j}}{z_j}\mu_1(z_j)  |
$ are bounded. And the integral term becomes
	\begin{align}
		|\mathcal{C}(\mu w)(0)|&= \Big| \mathcal{C}((\mu-I)w)+\mathcal{C}w(0)\Big|\leqslant \frac{1}{2\pi}\left(\|\mu-I\|_{L^2}\|w/z\|_{L^2}+\|w/z\|_{L^1} \right) .
	\end{align}
	It remains to analyze $\int_{\mathbb{R}} | s^{-1} w(s)  |ds$ and $\int_{\mathbb{R}}| s^{-1} w(s)  |^2ds$. In view of  the definition of $w$, $|r|\leqslant1$ and   the symmetry of $r$,  it is sufficient to estimate $\int_{0}^{+\infty}|s^{-1}r(s)| ds $ and $\int_{0}^{+\infty}|s^{-1}r(s)|^2ds $.  Note that  $r(z) \in H^{1,2}(\mathbb{R})$.  In order to estimate these two  integrations  near $s=0$, namely, $\int_{0}^{1}|s^{-1}r(s)| ds $ and $\int_{0}^{1}|s^{-1}r(s)|^2ds $, we use  the symmetry of $r$, that is, $ r(z) = \overline {r(1/z)}, $ and  change of the variable $ z \to 1/z $ to obtain
	\begin{align*}
		&\int_{0}^{1}|s^{-1}r(s)|ds=\int_{1}^{+\infty} s^{-1} |r(s)| ds\lesssim \parallel r \parallel_{L^{2}},\\
 &\int_{0}^{1}|s^{-1}r(s)|^2ds=\int_{1}^{+\infty}|r(s)|^2ds\leqslant \parallel r \parallel_{L^{2}}^2.
	\end{align*}
Hence the estimate of $M_l^{(1)}(0)$ is obtained immediately. For $M_l^{(1)}(i)$ and $M_l^{(1),i}$, applying the H\"older inequality and the estimates in  \eqref{esmu} gives  the result in \eqref{377}.
\end{proof}
Now take derivative of the inhomogeneous equation \eqref{N} in $y$ to obtain
\begin{align}
	\partial_y N(z)=&\partial_yN_0(z)+\partial_y(\mathcal{K})N(z) +\mathcal{K} (\partial_yN)(z)\label{Nx1},\\
	\partial_y^2 N(z)=&\partial_y^2N_0(z)+2\partial_y(\mathcal{K})(\partial_yN)(z)+\partial_y^2(\mathcal{K}) N(z) +\mathcal{K} (\partial_y^2N)(z).
\end{align}
From the definition of $\mathcal{K}$ and $N_0$, it  transpires  that
\begin{align}
	&	\partial_yN_0(z)=\left(0,\mathcal{C}_+(-\frac{i}{2}(s-1/s)re^{2\theta})(z)+\sum_{j =0}^{2N_0}\frac{\text{Im}z_jc_je^{\text{Im}z_jy}}{z-z_j} \right)^T,\nonumber\\
	&\partial_y^2N_0(z)=\left(0,\mathcal{C}_+(-\frac{1}{4}(s-1/s)^2re^{2\theta})(z)+\sum_{j =0}^{2N_0}\frac{\text{Im}^2z_jc_je^{\text{Im}z_jy}}{z-z_j} \right)^T, \nonumber\\
		&	\partial_y\mathcal{K}=\left(\begin{array}{cc}
		0  & \partial_y\mathcal{K}^{(1)}\\
	\partial_y	\mathcal{K}^{(2)}	& 0
	\end{array}\right),\ \partial_y^2\mathcal{K}=\left(\begin{array}{cc}
	0  & \partial_y^2\mathcal{K}^{(1)}\\
	\partial_y^2		\mathcal{K}^{(2)}		& 0
\end{array}\right).\nonumber
\end{align}
For brevity, we omit the concrete expression of operators  $\partial_y\mathcal{K}^{(1)}$,
$\partial_y	\mathcal{K}^{(2)}$, $\partial_y^2\mathcal{K}^{(1)}$ and
$\partial_y^2	\mathcal{K}^{(2)}$. From $r(z) \in H^{1,2}(\mathbb{R} )\cap H^{2,1}(\mathbb{R} )$, $\partial_yK$ and $\partial_y^2K$ are also the bounded operators on $ L^2(\mathbb{R})$.  Thus,  $\partial_yN_0+(\partial_yK)N$  belongs to $ L^2(\mathbb{R})$, which implies that
\begin{align*}
		&\partial_yN(z)=\left( I-\mathcal{K}\right) ^{-1}( \partial_yN_0+(\partial_y\mathcal{K})N)(z)\in L^2(\mathbb{R}).
\end{align*}
Namely there exists  a positive constant $C(\|R\|_{Y})$ such that
\begin{align}
	\|\partial_y\mu\|_{L^2} \leqslant   C( \|R\|_{Y})\label{muy}.
\end{align}
So  $\partial_y^2N_0+2(\partial_y\mathcal{K})( \partial_yN)+(\partial_y^2\mathcal{K}) N$ also  belongs to $ L^2(\mathbb{R})$ such that
\begin{align}
 &\partial_y^2N=\left( I-\mathcal{K}\right) ^{-1}(\partial_y^2N_0+2(\partial_y\mathcal{K})( \partial_yN)+(\partial_y^2\mathcal{K}) N)\in L^2(\mathbb{R}).
\end{align}
Therefore there exists  a positive constant $C(\|R\|_{Y})$ such that
\begin{align}
	\|\partial_y^2\mu\|_{L^2}\leqslant  C( \|R\|_{Y})\label{muyy}.
\end{align}
We now consider the function $M_l^{(1)}$. A direct computation reveals that
 \begin{align*}
 	&\partial_yM^{(1)}_l(z)=\mathcal{C}(\partial_y\mu\  w)(z)+\mathcal{C}(\mu \partial_yw)(z)\\
 	&+\sum_{j=1}^{2{N_0}} \left(\frac{\bar{c}_je^{y\text{Im}z_j}}{z-\bar{z}_j}(\text{Im}z_j\mu_2(\bar{z}_j)+\partial_y\mu_2(\bar{z}_j)),\frac{c_j\text{Im}z_je^{y\text{Im}z_j}}{z-z_j}(\text{Im}z_j\mu_1(z_j)+\partial_y\mu_1(z_j) ) \right) ,\\
 	&\partial_y^2M^{(1)}_l(z)=\mathcal{C}(\partial_y^2\mu\  w)(z)+\mathcal{C} (\partial_y\mu \partial_yw)(z)+\mathcal{C}(\mu \partial_y^2w)(z)\\
 	&+\sum_{j=1}^{2{N_0}} \left(\frac{\bar{c}_je^{y\text{Im}z_j}}{z-\bar{z}_j}(\text{Im}z_j^2\mu_2(\bar{z}_j)+2\partial_y\text{Im}z_j\mu_2(\bar{z}_j)+\partial_y^2\mu_2(\bar{z}_j)),0\right) \\
 	&+\sum_{j=1}^{2{N_0}} \left(0,\frac{c_j\text{Im}z_je^{y\text{Im}z_j}}{z-z_j}(\text{Im}^2z_j\mu_1(z_j)+2\text{Im}z_j\partial_y\mu_1(z_j)+\partial_y^2\mu_1(z_j) ) \right) .
 \end{align*}
 As $z\to i$, they have the asymptotic expansions  in the following,
\begin{align}
	&\partial_yM^{(1)}_l(z)=\partial_yM^{(1)}_l(i)+\partial_yM^{(1),i}_l(z-i)+\mathcal{O}\left( (z-i)^2\right) ,\\
	&\partial_y^2M^{(1)}_l(z)=\partial_y^2M^{(1)}_l(i)+\partial_y^2M^{(1),i}_l(z-i)+\mathcal{O}\left( (z-i)^2\right) ,
\end{align}
with
\begin{align*}
	&\partial_yM^{(1),i}_l= \frac{1}{2\pi i} \int_{\mathbb{R}}\dfrac{\partial_y\mu(s) w(s)}{(s-i)^2}ds+\frac{1}{2\pi i}\int_{\mathbb{R}}\dfrac{\left( \mu(s)-I\right)  \partial_yw(s)}{(s-i)^2}ds+\frac{1}{2\pi i}\int_{\mathbb{R}}\dfrac{ \partial_yw(s)}{(s-i)^2}ds\nonumber\\
	&-\sum_{j=1}^{2{N_0}} \left(\frac{\bar{c}_je^{y\text{Im}z_j}}{(i-\bar{z}_j)^2}(\text{Im}z_j\mu_2(\bar{z}_j)+\partial_y\mu_2(\bar{z}_j)),\frac{c_j\text{Im}z_je^{y\text{Im}z_j}}{(i-z_j)^2}(\text{Im}z_j\mu_1(z_j)+\partial_y\mu_1(z_j) ) \right) ,\\
	&\partial_y^2M^{(1),i}_l=\frac{1}{2\pi i} \int_{\mathbb{R}}\dfrac{\partial_y^2\mu(s) w(s)}{(s-i)^2}ds+\frac{1}{2\pi i} \int_{\mathbb{R}}\dfrac{\partial_y\mu(s) \partial_yw(s)}{(s-i)^2}ds+\frac{1}{2\pi i} \int_{\mathbb{R}}\dfrac{\left( \mu(s)-I\right)  \partial_y^2w(s)}{(s-i)^2}ds\\
	&+\frac{1}{2\pi i} \int_{\mathbb{R}}\dfrac{ \partial_y^2w(s)}{(s-i)^2}ds-\sum_{j=1}^{2{N_0}} \left(\frac{\bar{c}_je^{y\text{Im}z_j}}{(i-\bar{z}_j)^2}(\text{Im}z_j^2\mu_2(\bar{z}_j)+2\partial_y\text{Im}z_j\mu_2(\bar{z}_j)+\partial_y^2\mu_2(\bar{z}_j)),0\right) \\
	&-\sum_{j=1}^{2{N_0}} \left(0,\frac{c_j\text{Im}z_je^{y\text{Im}z_j}}{(i-z_j)^2}(\text{Im}^2z_j\mu_1(z_j)+2\text{Im}z_j\partial_y\mu_1(z_j)+\partial_y^2\mu_1(z_j) ) \right) .
\end{align*}
Otherwise, the equation \eqref{intM} gives that for $|z|\to\infty$,
\begin{align}
	&\lim_{z\to \infty}z( 	M^{(1)}_l(z)-I) =-\frac{1}{2\pi i} \int_{\mathbb{R}}\left( \mu(s)-I\right)  w(s)ds-\frac{1}{2\pi i} \int_{\mathbb{R}} w(s)ds\nonumber\\
	&+\sum_{j=1}^{2{N_0}} \left(\bar{c}_je^{y\text{Im}z_j}\mu_2(\bar{z}_j),c_je^{y\text{Im}z_j}\mu_1(z_j)  \right)\\
	&\lim_{z\to \infty}z\partial_yM^{(1)}_l(z)=-\frac{1}{2\pi i}\int_{\mathbb{R}}\partial_y\mu(s) w(s)ds-\frac{1}{2\pi i}\int_{\mathbb{R}}\left( \mu(s)-I\right)  \partial_yw(s)ds-\frac{1}{2\pi i}\int_{\mathbb{R}} \partial_yw(s)ds\nonumber\\
	&+\sum_{j=1}^{2{N_0}} \left(\bar{c}_je^{y\text{Im}z_j}(\text{Im}z_j\mu_2(\bar{z}_j)+\partial_y\mu_2(\bar{z}_j)),c_j\text{Im}z_je^{y\text{Im}z_j}(\text{Im}z_j\mu_1(z_j)+\partial_y\mu_1(z_j) ) \right).
\end{align}
Similar to the proof in Lemma \ref{lemm3.2}, combining the H\"older inequality, 	\eqref{esmu}, \eqref{muy} and \eqref{muyy}, it transpires that for $j=1,2,$
	\begin{align}
		|\partial_y^jM^{(1)}_l(i)|,\ |\partial_y^jM^{(1),i}_l|,\ |\partial_y^jM^{(1)}_l(0)|,\ 	|\lim_{z\to \infty}z 	M^{(1)}_{l,12}(z)|,\ |\lim_{z\to \infty} z\partial_yM^{(1)}_{l,12}(z)|\leqslant C\left(\|R\|_{Y}\right),\label{LemmaN2}
	\end{align}
where the subscript $12$ means the $(1,2)$-entry element of matrix.

When $y\in(-\infty, 0)$, $M_r^{(1)}(z)$ has a similar property, which we omit its proof.
\begin{Proposition}\label{Pro1}
	Supposed that
$R  \in Y,$ $y\in(-\infty, 0)$.
Then as $z\to i$, $M^{(1)}_r$ has the following asymptotic expansion,
\begin{align*}
	M^{(1)}_r(z)=M^{(1)}_r(i)+M^{(1),i}_r(z-i)+\mathcal{O}\left( (z-i)^2\right) ,
\end{align*}
and there exists a positive constant $C(\|R\|_{Y})$  such that  for $j=0,1,2$, $\partial_y^jM^{(1)}_r(i)$,  $\partial_y^jM^{(1),i}_r$,  $\partial_y^jM^{(1)}_r(0)$,  $ \lim_{z\to \infty}z	M^{(1)}_{r,12}(z)$ and  $\lim_{z\to \infty} z\partial_yM^{(1)}_{r,12}(z)$ are bounded, that is,
	\begin{align*}
		&|\partial_y^jM^{(1)}_r(i)|,\ |\partial_y^jM^{(1),i}_r|,\ |\partial_y^jM^{(1)}_r(0)|,|\lim_{z\to \infty}z	M^{(1)}_{r,12}(z)|,
		|\lim_{z\to \infty} z\partial_yM^{(1)}_{r,12}(z)|\leqslant C( \|R\|_{Y}).
	\end{align*}
\end{Proposition}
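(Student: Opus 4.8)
The plan is to mirror, essentially line for line, the analysis already carried out for $M_l^{(1)}$ on $y\in[0,+\infty)$, exploiting the fact recorded after the RH problem \ref{RHPr1} that $\tilde r(z)=b(z)/a^*(z)$ enjoys exactly the same symmetries, boundedness and $H^{1,2}\cap H^{2,1}$ regularity as $r(z)$, and that $\tilde c_j$ obeys the same bounds as $c_j$. First I would invoke the Beals--Coifman representation for $M_r^{(1)}$, the strict analogue of \eqref{intM}--\eqref{mu} with $(r,c_j)$ replaced by $(\tilde r,\tilde c_j)$ and with the discrete exponential factors oriented for $y\le 0$. The solvability of the associated Fredholm equation and the basic bound $\|\mu-I\|_{L^2}\le C(\|R\|_Y)$, together with the discrete-spectrum bounds $\sum_j|\mu_1(z_j)|+\sum_j|\mu_2(\bar z_j)|\le C(\|R\|_Y)$, are furnished directly by Proposition \ref{Pro3.2}.

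Next I would establish the $y$-derivative estimates on $\mu$. Differentiating the integral equation \eqref{N} once and twice in $y$ produces $\partial_y N=(I-\mathcal K)^{-1}\bigl(\partial_yN_0+(\partial_y\mathcal K)N\bigr)$ and the corresponding second-order identity, exactly as in the derivation of \eqref{muy} and \eqref{muyy}. Because $\tilde r\in H^{1,2}\cap H^{2,1}$, the operators $\partial_y\mathcal K$ and $\partial_y^2\mathcal K$ are bounded on $L^2(\mathbb{R})$ and $\partial_yN_0,\partial_y^2N_0\in L^2(\mathbb{R})$, so the uniform invertibility of $I-\mathcal K$ yields $\|\partial_y\mu\|_{L^2},\|\partial_y^2\mu\|_{L^2}\le C(\|R\|_Y)$.

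Then I would expand $M_r^{(1)}$ near $z=i$ and as $z\to\infty$. As in the $M_l^{(1)}$ computation, the quantities $M_r^{(1)}(i)$, $M_r^{(1),i}$, $M_r^{(1)}(0)$, the coefficient $\lim_{z\to\infty}z\bigl(M_r^{(1)}-I\bigr)$ and all their $y$-derivatives up to order two are expressed as Cauchy integrals of $\mu w$, $\partial_y\mu\,w$, $\mu\,\partial_y w$, and so on, plus finite sums over the discrete spectrum weighted by powers of $(i-z_j)^{-1}$. Applying the H\"older inequality together with the $L^2$ bounds on $\mu-I$, $\partial_y\mu$, $\partial_y^2\mu$ and the discrete bounds from Proposition \ref{Pro3.2} controls every term, reproducing the estimates of the form \eqref{LemmaN2}. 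The only genuinely delicate point is the value at $z=0$: here I would repeat the argument of Proposition \ref{lemmai}, using the symmetry $\tilde r(z)=\overline{\tilde r(1/z)}$ and the change of variable $z\mapsto 1/z$ to convert the singular integrals near $s=0$ into $\int_1^{+\infty}s^{-1}|\tilde r(s)|\,ds\lesssim\|\tilde r\|_{L^2}$ and $\int_1^{+\infty}|\tilde r(s)|^2\,ds\le\|\tilde r\|_{L^2}^2$, which are finite since $\tilde r\in H^{1,2}(\mathbb{R})$.

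The main obstacle I anticipate is not any single estimate but verifying that the whole apparatus transfers to the reversed region $y\in(-\infty,0]$. Concretely, one must check that in the $M_r^{(1)}$ representation the discrete exponentials are exactly the ones bounded by $1$ on $(-\infty,0]$, so that $I-\mathcal K$ remains uniformly bounded and invertible there and the discrete-spectrum sums stay bounded (and in fact vanish as $y\to-\infty$); this is the exact dual of Proposition \ref{lemmaG}, and it follows from $|z_j|=1$ and $\text{Im}\,z_j<0$, which force $e^{-y\,\text{Im}\,z_j}\le 1$ for $y\le0$. Once this orientation is confirmed, the remaining estimates are identical to those already established for $M_l^{(1)}$, which is precisely why the detailed computation may be omitted.
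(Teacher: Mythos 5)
Your proposal is correct and follows exactly the route the paper intends: the paper omits the proof of Proposition \ref{Pro1} precisely because it is the mirror of the $M_l^{(1)}$ analysis (Propositions \ref{lemmaG}, \ref{pro3.1}, \ref{lemmai} and the estimates \eqref{esmu}, \eqref{muy}, \eqref{muyy}, \eqref{LemmaN2}) with $(r,c_j)$ replaced by $(\tilde r,\tilde c_j)$, and your orientation check that $e^{-y\,\mathrm{Im}\,z_j}\leqslant 1$ for $y\leqslant 0$ (since $|z_j|=1$, $\mathrm{Im}\,z_j<0$) is exactly the point that makes the transfer to $y\in(-\infty,0]$ legitimate. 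Your handling of the $z=0$ value via the symmetry $\tilde r(z)=\overline{\tilde r(1/z)}$ likewise matches the argument of Proposition \ref{lemmai}, so nothing further is needed.
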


\section {Existence of global solutions}\label{sec6}
In view of the result in Section \ref{sec3}, we  use $M^{(1)}_l(z;t,y)$ to recover the solution $m(t,x(y,t))$ for  $y\in[0,+\infty)$, and   $M^{(1)}_r(z;t,y)$ to recover the solution $m(t,x(y,t))$ for $y\in(-\infty, 0)$ through equations \eqref{res1}-\eqref{res2}. The main aim of this section is to give the proof of the following lemma.
\begin{lemma}\label{the1}
	Assume that  the initial data  $m_0>0 $ and $ m_0-1 \in H^{2,1}(\mathbb{R} )\cap H^{1, 2} (\mathbb{R})$.
	Then  there exists a unique  solution $m(t)  \in C([0, +\infty);  W^{1,\infty}(\mathbb{R}))$  for  the Cauchy  problem \eqref{mch1}.
\end{lemma}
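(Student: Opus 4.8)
The plan is to realize $m(t,\cdot)$ as the potential reconstructed from the time-dependent Riemann--Hilbert data, and then to read off the $W^{1,\infty}$ bound directly from the estimates already established for that problem. First I would fix an arbitrary $T>0$ and $t\in[0,T)$. By Lemma~\ref{pror} the scattering data $R(t)=\left(r(\cdot;t),\{z_j,c_j(t)\}\right)$ lies in $Y$ with $\|R(t)\|_{Y}\le C(T)\|R(0)\|_{Y}$, so Propositions~\ref{pro3.1} and~\ref{Pro3.2} furnish a unique solution $M^{(1)}(z;t,y)$ of the RH problem~\ref{RHP2} on $y\in[0,+\infty)$ and on $y\in(-\infty,0]$. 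Passing to $M(z;t,y)$ through \eqref{transM} and applying the reconstruction formula \eqref{rescm} together with \eqref{res1}--\eqref{res2} and Proposition~\ref{lemmaM} produces $\tilde m(t,y)=m(t,x(t,y))$, $\tilde u(t,y)$ and the change of variable $x=x(t,y)=y+2\log M_{11}(i;t,y)$. That this $m$ solves \eqref{mch1} is guaranteed by the Lax-pair proposition of Section~\ref{subsec3.3}, whose compatibility condition is the mCH equation; the consistency of the direct and inverse transforms (Lemma~\ref{lemm3.2} inverting the construction of Section~\ref{sec2}) gives $m(0,\cdot)=m_0$; and since $R(t)\in Y$ for every $t\ge0$, the reconstruction is available for all time, which is the source of globality.

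Next I would turn the RH estimates into uniform bounds in $y$. Propositions~\ref{lemmai} and~\ref{Pro1} control $M^{(1)}(i)$, $M^{(1),i}$, $M^{(1)}(0)$, $\lim_{z\to\infty}zM^{(1)}_{12}(z)$ and their first $y$-derivatives by $C(\|R(t)\|_{Y})\le C(T)\|R(0)\|_{Y}$; through \eqref{transM} the same follows for $\eta(t,y)=\lim_{z\to\infty}zM_{12}(z;t,y)$ and for $\partial_y\eta$. Writing $\tilde m=(1-\eta)^{-1}$, the boundedness of $\eta$ makes $1/\tilde m=1-\eta$ bounded, which pins the lower bound $\inf_y\tilde m(t,y)\ge c_0(T)>0$ as soon as the positivity $\tilde m>0$ is known. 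Using $\partial_x=m\,\partial_y$ (from $dy/dx=m$), one gets $\tilde m_y=(1-\eta)^{-2}\partial_y\eta$ and $m_x=\tilde m\,\tilde m_y=\tilde m^{3}\,\partial_y\eta$, so both $\|m-1\|_{L^\infty}$ and $\|m_x\|_{L^\infty}$ are under control the moment $\tilde m$ is bounded from above.

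The step I expect to be the main obstacle is exactly this uniform upper bound $\sup_y\tilde m(t,y)<\infty$, equivalently $1-\eta(t,y)\ge\delta(T)>0$: the generic RH estimates yield only the lower bound on $\tilde m$ and therefore cannot, by themselves, prevent $1-\eta$ from degenerating to zero. Here I would bring in the positivity and non-blow-up theory of the Cauchy problem: since $m_0>0$ forces $\inf_x m(t,x)>0$ on $[0,T)$ (the remark after Theorem~\ref{last}, from \cite{qu7}), and since the reconstructed $m$ coincides with the local solution by uniqueness, the blow-up criterion of \cite{qu7} together with the $y$-bounds excludes $\|m(t)\|_{L^\infty}\to\infty$ and gives $\tilde m(t,\cdot)\in[c_0,M_0]$ with $c_0,M_0$ depending only on $T$. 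With $\tilde m$ bounded on two sides, $x(t,\cdot)$ is a strictly increasing bijection of $\mathbb{R}$ (its Jacobian $x_y=1/\tilde m$ lies in $[1/M_0,1/c_0]$ and $x-y=2\log M_{11}(i)\to0$ as $y\to\pm\infty$), hence bi-Lipschitz; it transports the $L^\infty_y$ bounds to $L^\infty_x$ bounds and delivers $\|m(t,\cdot)-1\|_{W^{1,\infty}}\le C(T)$.

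Finally I would establish the time regularity. The differentiability of $M^{(1)}(z;t,y)$ in $t$ (the proposition in Section~\ref{subsec3.3}, resting on the explicit smooth $t$-dependence of $R(t)$ in Lemma~\ref{pror} and the uniform invertibility of $I-\mathcal{K}$) transfers to $\tilde m$ and hence to $m$; combined with the uniform bounds this yields $m-1\in C([0,T);W^{1,\infty})$, and letting $T\to+\infty$ gives $m-1\in C([0,+\infty);W^{1,\infty})$. I would stress that this route deliberately avoids Lipschitz continuity of $r\mapsto M^{(1)}$, which fails because $|r(\pm1)|=1$ (the Remark in Section~\ref{sec4}); the time continuity is extracted from the smooth evolution of the scattering data rather than from continuous dependence on $r$. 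Uniqueness of $m$ then follows from the $L^2$-bijectivity of the scattering map together with the local uniqueness in \cite{qu7}.
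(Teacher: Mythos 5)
Your overall scaffolding coincides with the paper's: time evolution of the data inside $Y$ (Lemma \ref{pror}), solvability of the RH problem \ref{RHP2} (Propositions \ref{pro3.1}, \ref{Pro3.2}), reconstruction via \eqref{transM} and \eqref{rescm}, the Lax-pair compatibility argument of Section \ref{subsec3.3}, and extraction of time continuity from the smooth $t$-dependence of $R(t)$ rather than from Lipschitz dependence on $r$ (which indeed fails since $|r(\pm1)|=1$). But there is a genuine gap at exactly the step you flag as the main obstacle, and your proposed repair does not close it. You try to get the uniform upper bound $\sup_y m(t,y)<\infty$ by identifying the reconstructed $m$ with the local solution of \cite{qu7} and invoking the positivity result and the blow-up criterion. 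This is circular as sketched: the blow-up criterion allows continuation only if one controls the blow-up quantity, which involves $\|m(t)\|_{L^\infty}$ (equivalently $mu_x$) --- precisely the bound you are missing. The RH-side estimates you have assembled ($u$, $\eta=1-1/m$ and $\partial_y\eta$ bounded) cannot feed the criterion, because boundedness of $1-\eta=1/m$ only prevents $m$ from approaching $0$, never $\infty$. Moreover, the identification with the \cite{qu7} solution, and the positivity statement $\inf_x m(t,x)>0$ on $[0,T)$, are only available on the maximal existence interval of the local solution; you cannot use properties of that solution on all of $[0,T)$ before having proved it extends there. (Note also that the paper invokes Theorem 4.2 of \cite{qu7} only \emph{after} Theorem \ref{last} is established, to boost regularity --- consistent with the fact that the criterion consumes, rather than produces, the $L^\infty$ control of $m$.)

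The paper closes this gap entirely within the RH framework, and you in fact listed the needed ingredients without using them. Since the formula $m=(1-\eta)^{-1}$ is (as the paper says explicitly) unusable for an upper bound, the paper instead writes $m=u-u_{xx}$ and bounds $u_x$ and $u_{xx}$ pointwise through the explicit formulas in terms of $\alpha_1,\alpha_2,\alpha_3$ and their first and second $y$-derivatives, using $\partial_x=\left(1+2(\partial_y\alpha_1)/\alpha_1\right)\partial_y$ together with the bound $\|\alpha_1^{-1}\|_{L^\infty}\leqslant C$; the required $W^{2,\infty}$ control of the $\alpha_j$ comes from Lemma \ref{lemma4.2}, the estimates \eqref{LemmaN2} and Proposition \ref{Pro1} --- this is the reason those estimates include \emph{second} $y$-derivatives of $M^{(1)}$, which your second paragraph stops short of exploiting. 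This yields $|m(t,y)-1|\leqslant |u-1|+|u_{xx}|\leqslant C(T,\|R(0)\|_{Y})$ as in \eqref{lemmam} with no PDE input whatsoever, after which your (correct) computations $1/m=1-\eta$ and $m_x=m^3\partial_y\eta$ give the full $W^{1,\infty}$ bound. Your concluding step on time regularity is sound in spirit, though the paper proceeds through continuity of $\mathcal{K}(t)$ and $(I-\mathcal{K}(t))^{-1}$ (Lemma \ref{contG}, Proposition \ref{contm}) rather than through $t$-differentiability; either route works once the uniform bounds are in place.
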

The proof of Lemma \ref{the1} is approached  via a series of  propositions and lemmas.  The estimates of the functions established in the following lemma will be  used  to reconstruct solution in \eqref{res1}.
\begin{lemma}\label{lemma4.2}
 Let $ R(0) \in Y. $ For  every $t\in [0, T), \ T> 0$,  the functions $\eta(t,y)$ and $\alpha_j(t,y), j = 1, 2, 3$ defined in \eqref{rescm} with the scattering data $R(t)$ in Lemma \ref{pror},  \eqref{al1}-\eqref{al2} respectively satisfy
	\begin{align*}
		&\|\alpha_1^{-1}(t,\cdot)\|_{L^\infty},\ \|\alpha_1(t,\cdot)-1\|_{W^{2,\infty}},\ \|\alpha_j(t,\cdot)\|_{W^{2,\infty}}\leqslant C(T,\|R(0)\|_{Y}),\ j=2,3,   \quad {\rm and}\\
		&\|\eta(t,\cdot) \|_{W^{1,\infty}}\leqslant C(T,\|R(0)\|_{Y}).
	\end{align*}
Moreover, $$	|u(t,y)-1|\leqslant C(T,\|R(0)\|_{Y}).$$

\end{lemma}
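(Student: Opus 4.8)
The plan is to express every quantity in the statement as an algebraic combination of the entries of $M^{(1)}(0;t,y)$, $M^{(1)}(i;t,y)$ and of the coefficient $M^{(1),i}(t,y)$ of $(z-i)$, and then to feed in the uniform bounds already at our disposal. By Lemma \ref{pror} the time-dependent data satisfy $\|R(t)\|_{Y}\le C(T)\|R(0)\|_{Y}$ for $t\in[0,T)$, so it suffices to bound everything by $C(\|R(t)\|_{Y})$. Applying Proposition \ref{lemmai} together with \eqref{LemmaN2} on $y\ge0$ (where $M^{(1)}=M^{(1)}_l$) and Proposition \ref{Pro1} on $y<0$ (where $M^{(1)}=M^{(1)}_r$) gives, uniformly in $y$ and in $t\in[0,T)$, bounds on $\partial_y^{j}M^{(1)}(0;t,y)$, $\partial_y^{j}M^{(1)}(i;t,y)$, $\partial_y^{j}M^{(1),i}(t,y)$ for $j=0,1,2$, as well as on $\lim_{z\to\infty}zM^{(1)}_{12}$ and $\lim_{z\to\infty}z\partial_yM^{(1)}_{12}$. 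Reading off the expansions in Lemma \ref{lemm3.2} identifies $\alpha=M^{(1)}_{11}(0)$, $i\beta=M^{(1)}_{12}(0)$, $f_1=M^{(1)}_{11}(i)$, $f_2=M^{(1)}_{22}(i)$ and $g_1,g_2$ as entries of $M^{(1),i}$; hence $\alpha,\beta,f_1,f_2,g_1,g_2$ and their $y$-derivatives up to order two are bounded by $C(T,\|R(0)\|_{Y})$.

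The crux is to control the denominators in \eqref{al1}--\eqref{al2} and in $\alpha_1^{-1}$. First I would show $\alpha(t,y)\ge1$: since $\alpha^2-\beta^2=1$ with $\alpha$ real we have $|\alpha|\ge1$, so $\alpha$ never takes a value in $(-1,1)$; because the residue terms in the representation \eqref{intM} of $M^{(1)}$ carry factors $e^{y\,\mathrm{Im}\,z_j}$ that decay and the Cauchy term involves the oscillatory factor $e^{\pm2\theta}$, one has $M^{(1)}\to I$, hence $\alpha\to1$, as $y\to+\infty$ (resp.\ $y\to-\infty$) on the half-line where $M^{(1)}_l$ (resp.\ $M^{(1)}_r$) is used. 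As $\alpha(t,\cdot)$ is continuous in $y$ (by the $\partial_y$-bounds), the intermediate value theorem forces $\alpha\ge1$ throughout, so $\alpha+1\ge2$; moreover $(\alpha-\beta)(\alpha+\beta)=1$ with $\alpha>0$ gives $\alpha-\beta>0$, whence $\alpha+1-\beta>1$. Next, evaluating $\det M^{(1)}(i)=1$ on the expansion of Lemma \ref{lemm3.2} yields $f_1f_2\bigl(1-\tfrac{\beta^2}{(\alpha+1)^2}\bigr)=f_1f_2\cdot\tfrac{2}{\alpha+1}=1$, i.e.\ $f_1f_2=\tfrac{\alpha+1}{2}$. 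Substituting into \eqref{al1} gives the two forms $\alpha_1=\bigl(1-\tfrac{\beta}{\alpha+1}\bigr)f_1=\tfrac{\alpha+1-\beta}{2f_2}$, so that $\alpha_1^{-1}=\tfrac{2f_2}{\alpha+1-\beta}$. Since $\alpha+1-\beta>1$ and $f_2$ is bounded, $\|\alpha_1^{-1}\|_{L^\infty}\le2\|f_2\|_{L^\infty}\le C(T,\|R(0)\|_{Y})$, while $1-\tfrac{\beta}{\alpha+1}\in(0,2)$ and $f_1$ bounded give $\|\alpha_1\|_{L^\infty}\le C$.

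For the $W^{2,\infty}$ estimates on $\alpha_1,\alpha_2,\alpha_3$ I would differentiate the closed form $\alpha_1=\bigl(1-\tfrac{\beta}{\alpha+1}\bigr)f_1$ and the expressions \eqref{al1}--\eqref{al2} twice in $y$: every denominator is $\alpha+1\ge2$, so with the order-$\le2$ derivative bounds on $\alpha,\beta,f_1,f_2,g_1,g_2$ the quotient and Leibniz rules yield $\|\alpha_1-1\|_{W^{2,\infty}},\|\alpha_2\|_{W^{2,\infty}},\|\alpha_3\|_{W^{2,\infty}}\le C(T,\|R(0)\|_{Y})$. For $\eta$ I would pass from $M^{(1)}$ to $M$ through \eqref{transM}: matching the $z^{-1}$-coefficients as $z\to\infty$ and using $(M^{(1)}(0)^{-1})_{22}=\alpha$ gives $\eta=\lim_{z\to\infty}zM_{12}=\lim_{z\to\infty}zM^{(1)}_{12}+(1-\alpha)$, so the bounds on $\lim_{z\to\infty}zM^{(1)}_{12}$ and its $y$-derivative from \eqref{LemmaN2}/Proposition \ref{Pro1}, together with the bounds on $\alpha,\partial_y\alpha$, give $\|\eta\|_{W^{1,\infty}}\le C(T,\|R(0)\|_{Y})$. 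Finally \eqref{res1} reads $u-1=-\alpha_2\alpha_1-\alpha_3\alpha_1^{-1}$, so the $L^\infty$ bounds on $\alpha_1,\alpha_1^{-1},\alpha_2,\alpha_3$ give $|u-1|\le C(T,\|R(0)\|_{Y})$.

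The main obstacle is the uniform lower bound $\alpha\ge1$, i.e.\ keeping $\alpha+1$ and $\alpha+1-\beta$ bounded away from $0$ simultaneously for all $y$ and all $t\in[0,T)$; this is exactly what prevents the reconstruction from degenerating and what ultimately controls $\alpha_1^{-1}$ and hence $u$. It rests on the continuity of $\alpha(t,\cdot)$ (from the $\partial_y$-estimates), on the decay $M^{(1)}\to I$ at the appropriate spatial infinity, and on the constraint $\alpha^2-\beta^2=1$; in particular it rules out the degenerate case $\beta=0$, $\alpha=-1$ isolated in the Remark following Lemma \ref{lemm3.2}. The remaining steps — the relation $f_1f_2=(\alpha+1)/2$ from $\det M^{(1)}(i)=1$ and the Leibniz bookkeeping for the two $y$-derivatives — are routine once the uniform bounds of Propositions \ref{lemmai} and \ref{Pro1} and \eqref{LemmaN2} are in hand.
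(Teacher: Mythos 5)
Your proposal is correct, and it takes a genuinely different route at the one point where the proof is delicate, namely the uniform bound on $\alpha_1^{-1}$. The paper proceeds by a case split: in the generic case $\alpha\neq-1$ it uses $\det M^{(1)}\equiv 1$ to write $1/\alpha_1=f_2\bigl(1+\tfrac{\beta}{\alpha+1}\bigr)$ and asserts boundedness of $\tfrac{\beta}{\alpha+1}$ from the boundedness of the entries $f_j$ and $\tfrac{\beta}{\alpha+1}f_j$ of $M^{(1)}(i)$ (an inference that is loose as stated, since bounded $f_j$ and $\tfrac{\beta}{\alpha+1}f_j$ do not bound the ratio when $f_j$ is small), and it treats the degenerate case $\alpha=-1$ separately through the modified formulas in the Remark after Lemma \ref{lemm3.2}, where $1/\alpha_1=f_1$. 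You instead prove the uniform lower bound $\alpha\ge1$ by a connectedness argument --- $\alpha^2-\beta^2=1$ confines the continuous function $\alpha(t,\cdot)$ to $(-\infty,-1]\cup[1,\infty)$, and $M^{(1)}\to I$ at the relevant spatial infinity pins it to the component containing $1$ --- which makes $\alpha+1\ge2$ and $\alpha+1-\beta>1$ quantitative, eliminates the degenerate branch altogether, and yields $\|\alpha_1^{-1}\|_{L^\infty}\le 2\|f_2\|_{L^\infty}$ via the same determinant identity (your $f_1f_2=\tfrac{\alpha+1}{2}$ is algebraically equivalent to the paper's $\alpha_1f_2\bigl(1+\tfrac{\beta}{\alpha+1}\bigr)=1$). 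The remaining items match the paper's use of Propositions \ref{lemmai}, \ref{Pro1} and \eqref{LemmaN2}, and your computations at $z=0$ and $z=i$, including $\eta=\lim_{z\to\infty}zM^{(1)}_{12}+(1-\alpha)$ from \eqref{transM} and $u-1=-\alpha_2\alpha_1-\alpha_3\alpha_1^{-1}$ from \eqref{res1}, check out.

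The one step you assert rather than prove is the decay $M^{(1)}(0;t,y)\to I$ as $y\to+\infty$ (resp.\ $y\to-\infty$ for $M^{(1)}_r$). The residue terms decay exponentially since $\mathrm{Im}\,z_j<0$, but the vanishing of the Cauchy contribution needs an oscillatory-integral estimate: the phase $z-z^{-1}$ has no real stationary points, and the required $\langle y\rangle^{-1}$ decay of $\mathcal{C}_\pm$ applied to oscillatory data is exactly the kind of estimate the paper proves in Lemma \ref{lemma13} on the $k$-plane (transferable to the $z$-plane by the even/odd decomposition of Proposition \ref{lemmaeo}) and uses implicitly in showing $\|\mathcal{K}^2\|\to0$ as $y\to\infty$ in Proposition \ref{lemmaG}. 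So this is a gap in exposition rather than in substance; once filled, your argument is, if anything, tighter than the paper's, since it supplies the uniform control of $\tfrac{\beta}{\alpha+1}$ that the paper's two-case treatment leaves implicit.
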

\begin{proof}
	 As  shown in Lemma \ref{pror}, for any $t\in [0, T),$	
	 $R( t)\in Y$ with $ \|R( t)\|_{Y} \leqslant C(T,	\|R( 0)\|_{Y}).$
In view of  Propositions  \ref{lemmaG} and  \ref{Pro3.2}, under the scattering data ${R}(t)$, the RH problem \ref {RHP2}  has a unique solution $M^{(1)}_l(z;t,y)$ for every  $t\in [0, T)$. Except for $\|1/\alpha_1(t,\cdot)\|_{L^\infty}$, the other estimates can be obtained from  Propositions \ref{lemmai} and \ref{Pro1} and \eqref{LemmaN2} immediately. Now  consider the boundedness of $\|1/\alpha_1(t,\cdot)\|_{L^\infty}$.
	in fact, when $\alpha\neq-1$, apply Proposition \ref{lemmai} to give the boundedness of 	$f_j(t,y)$ and $\frac{ \beta}{\alpha+1}f_j(t,y)$, $j=1,2$.  This implies that $\frac{ \beta}{\alpha+1}$ is also bounded. On the other hand, det$M^{(1)}\equiv1$, namely, $\alpha_1 f_2(1+\frac{ \beta}{\alpha+1})=1$. Therefore, $1/\alpha_1=f_2(1+\frac{ \beta}{\alpha+1})$ is bounded. And when  $\alpha=1$, the boundedness of  $1/\alpha_1$ is a directly result from the boundedness of $f_1$. This completes the proof of Lemma \ref{lemma4.2}.
\end{proof}
It then follows from  Lemma \ref{pror},  Lemma \ref{lemma4.2}, and the reconstruction formulas \eqref{rescm},
 \eqref{res1} and \eqref{res2} that there exist   $u(t,y)$ and  $\eta(t,y)$ as the functions of $y$ in  $W^{1,\infty}(\mathbb{R})$ for  $t\in [0, T), $ with any $ T> 0.$
However,  the reconstruction formula of $	m$ in \eqref{rescm} is
\begin{align}
m(t,y)=(1-\eta(t,y) )^{-1}.
\end{align}
So it is hard to obtain the boundedness of $m$ from the reconstruction formula directly. Therefore, our attention is focused on  the definition of $	m$ in \eqref{mcho2} with $m=u-u_{xx}$. Note that $\partial_x=(1+2(\partial_y\alpha_1)/\alpha_1)\partial_y$, so
\begin{align*}
 u_x=&(1+2(\partial_y\alpha_1)/\alpha_1)\left(  -\partial_y(\alpha_2)\alpha_1-\alpha_1\partial_y\alpha_2-\partial_y(\alpha_3)\alpha_1^{-1}+\partial_y(\alpha_1)\alpha_3\alpha_1^{-2}\right)  ,\\
 u_{xx}=&2(\alpha_1\partial_y^2\alpha_1-\partial_y\alpha_1^2)\alpha_1^{-2}u_x+( 1+2\partial_y(\alpha_1)\alpha_1^{-1}) ^2\left(  -\partial_y^2(\alpha_2)\alpha_1-2\partial_y\alpha_1\partial_y\alpha_2-\partial_y^2(\alpha_1)\alpha_2\right) \\
&+\alpha_1^{-4}( \alpha_1+2\partial_y\alpha_1)  ^2
\left(  -\alpha_1\partial_y^2\alpha_3+2\partial_y\alpha_1\partial_y\alpha_3+\partial_y^2(\alpha_1)\alpha_3-2\partial_y(\alpha_1)^2\alpha_3\alpha_1^{-1}\right).
\end{align*}
Similarly as the above analysis, applying  \eqref{LemmaN2} and  Proposition  \ref{Pro1}  gives the boundedness of $u_{xx}$. In fact, for every $T\in\mathbb{R}^+$, $t\in[0,T)$,  under the scattering data $R( t)$ in Lemma \ref{pror},
 $u_x(t,y),$ $u_{xx}(t,y)$ defined above are bounded respectively. In addition,  their $L^\infty$-norm can be controlled by  $  C(T,\|R(0)\|_{Y})$ with
\begin{align}
	&|u_x(t,y)|,\ |u_{xx}(t,y)|\leqslant C(\|R(t)\|_{Y})\leqslant C(T,\|R(0)\|_{Y}),
\end{align}
Therefore, it is inferred that  for every  $T >0,$ $t\in[0,T)$,  $m(t,y)$ exists as a bounded function of $y$ with
\begin{align}
	&|m(t,y)-1| \leqslant C(\|R(t)\|_{Y})\leqslant C(T,\|R(0)\|_{Y}).\label{lemmam}
\end{align}
Together with Lemma \ref{lemma4.2}, it appears that for every $t\in [0, T)$, $m(t,\cdot)\in W^{1,\infty}$.

The final step of the proof of Lemma \ref{the1} is to verify the continuity of $\eta(t,y)$ with respective to $t$. Analogously, we take the case $y\in[0,+\infty)$ as an example.  The first step is to analyze the continuity about $t$ of the RH problem.
\begin{lemma}\label{contG}
Both of the operator $\mathcal{K}(t)$ defined in \eqref{K} corresponding to scattering data $ R( t  ) $ and $\left( I-\mathcal{K}(t)\right) ^{-1}$ are continuous at $t \in [0, T),$ for any $T >0. $
\end{lemma}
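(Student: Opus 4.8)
The plan is to prove the statement in two stages: first I would establish that $t\mapsto\mathcal{K}(t)$ is continuous, indeed Lipschitz, in the operator norm of $\mathcal{B}(L^2(\mathbb{R}))$ on $[0,T)$, and then deduce continuity of the resolvent $(I-\mathcal{K}(t))^{-1}$ from the resolvent identity together with the uniform bound already furnished by Proposition \ref{lemmaG}.

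For the first stage, recall from \eqref{A11} and \eqref{B22} that $\mathcal{K}$ splits into the continuous parts $\mathcal{K}^{(1)}_{1},\mathcal{K}^{(2)}_{1}$, built from the Cauchy projections composed with multiplication by $r(\cdot;t)e^{\pm2\theta}$, and the finite-rank parts $\mathcal{K}^{(1)}_{2},\mathcal{K}^{(2)}_{2}$, whose $t$-dependence enters only through the norming constants $c_j(t)$. Since $\theta(z)=-\tfrac14 i(z-z^{-1})y$ is purely imaginary on $\mathbb{R}$, we have $|e^{\pm2\theta(z)}|=1$, so multiplication by $r(\cdot;t)e^{\pm2\theta}$ on $L^2$ has operator norm $\|r(\cdot;t)\|_{L^\infty}$; using the boundedness of $\mathcal{C}_\pm$ on $L^2$, this gives
\begin{align*}
\|\mathcal{K}^{(2)}_{1}(t)-\mathcal{K}^{(2)}_{1}(s)\|\le C_p\,\|r(\cdot;t)-r(\cdot;s)\|_{L^\infty},
\end{align*}
and likewise for $\mathcal{K}^{(1)}_{1}$. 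From the explicit evolution in Lemma \ref{pror} and the elementary bound $|e^{ia}-e^{ib}|\le|a-b|$,
\begin{align*}
|r(z;t)-r(z;s)|\le \frac{2|z||z^2-1|}{(z^2+1)^2}\,|t-s|\,|r(z;0)|,
\end{align*}
and since $\sup_{z\in\mathbb{R}}\tfrac{2|z||z^2-1|}{(z^2+1)^2}<\infty$ while $r(\cdot;0)\in H^{1,2}\cap H^{2,1}\hookrightarrow L^\infty$, one obtains $\|r(\cdot;t)-r(\cdot;s)\|_{L^\infty}\lesssim|t-s|$. For the finite-rank parts, each summand is a rank-one operator $h\mapsto \tfrac{c_j(t)e^{y\,\text{Im}z_j}}{z-z_j}\mathcal{C}(h)(z_j)$ whose norm is controlled by $|c_j(t)|$, because $z\mapsto(z-z_j)^{-1}\in L^2(\mathbb{R})$ and $h\mapsto\mathcal{C}(h)(z_j)$ is a bounded functional for $z_j\notin\mathbb{R}$; as $c_j(t)=e^{t\,\text{Im}z_j/\text{Re}^2 z_j}c_j(0)$ is smooth in $t$, these parts are Lipschitz in $t$ as well. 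Summing the contributions yields $\|\mathcal{K}(t)-\mathcal{K}(s)\|\le C(T,\|R(0)\|_{Y})\,|t-s|$.

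For the second stage, Proposition \ref{lemmaG} guarantees that $(I-\mathcal{K}(t))^{-1}$ exists for every $t\in[0,T)$, and Lemma \ref{pror} gives $\|R(t)\|_{Y}\le C(T)\|R(0)\|_{Y}$, whence $\|(I-\mathcal{K}(t))^{-1}\|\le C(T,\|R(0)\|_{Y})$ uniformly on $[0,T)$. The resolvent identity
\begin{align*}
(I-\mathcal{K}(t))^{-1}-(I-\mathcal{K}(s))^{-1}=(I-\mathcal{K}(t))^{-1}\bigl(\mathcal{K}(t)-\mathcal{K}(s)\bigr)(I-\mathcal{K}(s))^{-1}
\end{align*}
then gives $\|(I-\mathcal{K}(t))^{-1}-(I-\mathcal{K}(s))^{-1}\|\le C(T,\|R(0)\|_{Y})^2\,\|\mathcal{K}(t)-\mathcal{K}(s)\|\to0$ as $t\to s$, completing the argument.

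The step I expect to carry the most weight is the operator-norm (rather than merely strong) continuity of $\mathcal{K}(t)$: it is precisely because multiplication by $r(\cdot;t)e^{\pm2\theta}$ reduces to an $L^\infty$ estimate on $r$, and because the oscillatory multiplier $z(z^2-1)/(z^2+1)^2$ is uniformly bounded on $\mathbb{R}$, that $\mathcal{K}(t)-\mathcal{K}(s)$ is small in operator norm; this is what lets the resolvent identity transfer continuity directly to $(I-\mathcal{K}(t))^{-1}$. The one point demanding care is that the resolvent bound be uniform over the whole interval $[0,T)$, which is supplied by combining Proposition \ref{lemmaG} with the uniform control of $\|R(t)\|_{Y}$ from Lemma \ref{pror}.
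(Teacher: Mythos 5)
Your proposal is correct and takes essentially the same route as the paper: the paper likewise proves operator-norm Lipschitz continuity of $\mathcal{K}(t)$ by extracting the factor $1-e^{-\frac{2iz(z^2-1)(t-t_0)}{(z^2+1)^2}}$, bounding it in $L^\infty$ by $C|t-t_0|$ via the uniformly bounded phase $\frac{2z(z^2-1)}{(z^2+1)^2}$, treating the Cauchy-projector and finite-rank pieces separately just as you do. Your resolvent-identity argument with the uniform bound from Proposition \ref{lemmaG} and Lemma \ref{pror} is merely an explicit spelling-out of the paper's appeal to the fact that inversion is a homeomorphism in the Banach algebra, so the two proofs coincide in substance.
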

\begin{proof}
From the definition of $\mathcal{K}(t)$ in \eqref{K},  it is sufficient to give the details of the proof of $\mathcal{K}^{(1)}_{11}(t)$ and $\mathcal{K}^{(1)}_{12}(t)$ when $y\in[0,\infty)$. The others can be obtained analogously. For any   $t_0\in\mathbb{R}^+$ and  any $f\in L^2(\mathbb{R} )$,
\begin{align*}
	\|\mathcal{K}^{(1)}_{11}(t)f-\mathcal{K}^{(1)}_{11}(t_0)f\|_{L^2 }
	=&\Big\| \mathcal{C}_-( e^{-2\theta}f\bar{r}(z;0)e^{-\frac{2iz(z^2-1)t_0}{(z^2+1)^2}}( 1-e^{-\frac{2iz(z^2-1)(t-t_0)}{(z^2+1)^2}})  ) \Big\|_{L^2 }\\
	&\leqslant\big\|f\big\|_{L^2 } \big\|e^{\frac{2iz(z^2-1)(t-t_0)}{(z^2+1)^2}}-1\big\|_{L^\infty }	\leqslant C\big\|f\big\|_{L^2 }|t-t_0|.
\end{align*}
So $\mathcal{K}^{(1)}_{11}(t)$ is continuous on  $t \in [0, T)$. And for vector $(f_1,...,f_{2{N_0}})^T\in\mathbb{C}^{2{N_0}}$,
\begin{align}
	&\|	\mathcal{K}^{(1)}_{12}(t)(f_1,...,f_{2{N_0}})^T-\mathcal{K}^{(1)}_{12}(t_0)(f_1,...,f_{2{N_0}})^T	\|_{L^2}\nonumber\\
	&= \Big\|\sum_{j=1}^{2{N_0}}\frac{f_j\bar{c}_je^{y\text{Im}z_j}}{z-\bar{z}_j}e^{-\frac{2iz(z^2-1)t_0}{(z^2+1)^2}}( 1-e^{-\frac{2iz(z^2-1)(t-t_0)}{(z^2+1)^2}}) \Big\|_{L^2}\nonumber\\
	&\leqslant\sum_{j=1}^{2{N_0}}\Big\|\frac{f_j\bar{c}_je^{y\text{Im}z_j}}{z-\bar{z}_j}\Big\|_{L^2}\Big\| 1-e^{-\frac{2iz(z^2-1)(t-t_0)}{(z^2+1)^2}}\Big\|_{L^\infty }\leqslant C({z_j}_1^{2{N_0}})|t-t_0|\sum_{j=1}^{2{N_0}}|f_j|.
\end{align}
Since  the map: $\mathcal{K}^{(1)}\to(\mathcal{K}^{(1)})^{-1}$ is a homeomorphism in the Banach algebra,  $\left( I-\mathcal{K}(t)\right) ^{-1}$ is also continuous on $t \in [0, T)$.
\end{proof}

\begin{Proposition}\label{contm}
Suppose that  $M^{(1)}(z;t,y)$ is the solution of the  RH problem \ref{RHP2} with scattering
data $R( t  ) \in   Y$. Then  $M^{(1)}(z;t,y)$  is continuous on $t \in [0, T), $ for any $ T> 0$.
\end{Proposition}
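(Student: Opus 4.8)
The plan is to read off the $t$-dependence of $M^{(1)}$ directly from its Beals–Coifman representation \eqref{intM} and to reduce the claim to the $L^2$-continuity of the density together with the continuity of the discrete data, both of which follow from the resolvent continuity already recorded in Lemma \ref{contG}. I treat the case $y\in[0,+\infty)$, where $M^{(1)}=M_l^{(1)}$; the case $y<0$ is identical with $M_r^{(1)}$ in place of $M_l^{(1)}$. Fix $t_0\in[0,T)$. By \eqref{intM}, $M_l^{(1)}(z;t,y)$ is built from $\mu(\cdot;t,y)$, which by \eqref{dN} is recovered from the solution of \eqref{N}, namely $N(\cdot;t,y)=(I-\mathcal{K}(t))^{-1}N_0(\cdot;t,y)$. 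Hence it suffices to show that $t\mapsto N(\cdot;t,y)$ is continuous in $L^2(\mathbb{R})$, that the values $\mu_1(z_j;t,y)$ and $\mu_2(\bar z_j;t,y)$ depend continuously on $t$, and then to push these through the Cauchy operator and the finite residue sum in \eqref{intM}.

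First I would establish that the inhomogeneous term is continuous, i.e. $N_0(\cdot;t,y)\to N_0(\cdot;t_0,y)$ in $L^2$ as $t\to t_0$. By Lemma \ref{pror} the entire $t$-dependence sits in the factors $r(z;t)=e^{-2iz(z^2-1)t/(z^2+1)^2}r(z;0)$ and $c_j(t)$, so the difference is governed, exactly as in the proof of Lemma \ref{contG}, by the uniform bound $\|e^{-2iz(z^2-1)(t-t_0)/(z^2+1)^2}-1\|_{L^\infty}\le C|t-t_0|$, which uses $|e^{i\alpha}-1|\le|\alpha|$ and the boundedness of $z(z^2-1)/(z^2+1)^2$ on $\mathbb{R}$, together with the elementary continuity of $c_j(t)$. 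Combining this with the operator-norm continuity of $(I-\mathcal{K}(t))^{-1}$ and its uniform bound from Proposition \ref{lemmaG} and Lemma \ref{contG}, the product $N(\cdot;t,y)=(I-\mathcal{K}(t))^{-1}N_0(\cdot;t,y)$ is continuous in $L^2$, and therefore so is $t\mapsto\mu(\cdot;t,y)-I$. Continuity of the discrete values then follows from \eqref{Nzj}: since evaluation of a Cauchy integral at a fixed point off $\mathbb{R}$ is a bounded linear functional on $L^2$, with $|\mathcal{C}(h)(z_j)|\le C(\mathrm{Im}z_j)\|h\|_{L^2}$, the $L^2$-continuity of $N$ yields continuity of $\mu_1(z_j;t,y)$ and $\mu_2(\bar z_j;t,y)$ in $t$.

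Finally I would assemble the pieces in \eqref{intM}. The residue sum is a finite linear combination of the continuous quantities $c_j(t)$, $\mu_1(z_j;t,y)$ and $\mu_2(\bar z_j;t,y)$, hence continuous in $t$; the Cauchy term $\mathcal{C}(\mu w)(z;t,y)$ is continuous because, writing $\mu w-\mu' w'=(\mu-\mu')w+\mu'(w-w')$, it is controlled by the $L^2$-continuity of $\mu-I$ and of $w(\cdot;t,y)$ (the latter again through $r(z;t)$), using the boundedness of the Cauchy operator. This gives continuity of $M^{(1)}(z;t,y)$ on $[0,T)$. The main obstacle is the uniform-in-$z$ control of the oscillatory time factor: the Lipschitz estimate must hold in $L^\infty$ over all of $\mathbb{R}$, and it is precisely the boundedness of the spectral phase $z(z^2-1)/(z^2+1)^2$ that makes this available. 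Since that boundedness is also what underlies Lemma \ref{contG}, the genuine remaining work is to verify that both the inhomogeneity $N_0$ and the weight $w$ inherit this continuity and to combine it, via the already-established resolvent continuity, into the full representation \eqref{intM}.
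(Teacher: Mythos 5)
Your proposal is correct and follows essentially the same route as the paper's proof: the paper likewise combines the continuity of $\mathcal{K}(t)$ and $(I-\mathcal{K}(t))^{-1}$ from Lemma \ref{contG} with the continuity of $N_0(t)$ to get $t\mapsto N(\cdot;t,y)=(I-\mathcal{K}(t))^{-1}(N_0(t))$ continuous in $L^2$, and then reads off continuity of $M^{(1)}_l$ from the representation \eqref{intM}. The only difference is that you spell out steps the paper calls ``immediately accomplished''---the Lipschitz bound on the oscillatory factor for $N_0$ and $w$, and the continuity of the evaluated values $\mu_1(z_j;t,y)$, $\mu_2(\bar z_j;t,y)$ via \eqref{Nzj}---which is a faithful elaboration rather than a new argument.
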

\begin{proof}
Note that $\left( I-\mathcal{K}(t)\right) ^{-1}$ and $N_0(t)$ are both continuous on $t \in [0, T)$. Thus continuity of the function $N(z;t,y)$ on $t \in [0, T)$ follows from the expression
\begin{align*}
	N(z;t)=\left( I-\mathcal{K}(t)\right) ^{-1}(N_0(t))(z).
\end{align*}
From the definition of $N(z;t,y)$ in \eqref{dN} and
\begin{align*}
	M^{(1)}_l(z)=I+\mathcal{C}(\mu w)(z)+\sum_{j=1}^{2{N_0}} \left(\frac{\bar{c}_je^{y\text{Im}z_j}}{z-\bar{z}_j}\mu_2(\bar{z}_j),\frac{c_je^{y\text{Im}z_j}}{z-z_j}\mu_1(z_j)  \right),
\end{align*}
continuity of $	M^{(1)}_l(z;t,y)$ on $t \in [0, T)$ is immediately accomplished. This completes the proof of Proposition \ref{contm}.
\end{proof}
We are now in the position to prove Lemma \ref{the1}.
\begin {proof} [Proof of Lemma  \ref{the1}]
Since the initial data  $m_0>0\ $ with $ m_0 -1 \in H^{2,1}(\mathbb{R} )\cap H^{1, 2} (\mathbb{R})$,  applying Lemma \ref{r1} gives that the scattering data $R( 0  ) \in   Y$. Thus Lemma \ref{pror} also reveals that after time evolution,  the scattering data $R(t) \in Y$. It is then adduced from Lemma \ref{lemma4.2} and
 the estimate in \eqref{lemmam}  that   $m(t,\cdot)\in W^{1,\infty},$ for any $t\in [0, T),  \,  T > 0$.
Moreover,  it is deduced from Proposition \ref{contm} that  $M^{(1)}(z;t,y)$  is   continuous on $t \in [0, T)$. In consequence,
it follows  from the reconstruction formula in \eqref{rescm} that $\eta(t) $ and $ m(t)\in W^{1,\infty}(\mathbb{R})$ are  continuous on $t \in [0, T)$. This completes the proof of  Lemma  \ref{the1}.
\end {proof}

\section{Regularity}\label{sec7}
\quad The aim of  this section is to derive  the regularity of the   solution $m$ and show that
$m -1  \in C ([0, +\infty);  H^{2,1}(\mathbb{R} )\cap H^{1,2} (\mathbb{R} ) )$.

\subsection{Transitions  of the spectral problem}\label{subseclax2}
The first step is to establish  a new RH problem by using  the Lax pair \eqref{lax0}.  When $t=0$, consider two column function vectors
\begin{align}
	& \tilde{\Phi}_1=\left(\begin{array}{cc}
		1 & 0   \\[4pt]
		-\frac{m_0-1}{m_0} & -2\lambda
	\end{array}\right)\Psi_{1},\hspace*{0.8cm}	 \tilde{\Phi}_2=\left(\begin{array}{cc}
		-2\lambda  &  	-\frac{m_0-1}{m_0}   \\[4pt]
		0 & 1
	\end{array}\right)\Psi_{2}.
\end{align}
Then a straightforward computation
shows that they satisfy  two new spectral problems
\begin{align}
	 \tilde{\Phi}_{j,x}=- \frac{1}{4} ik \, m_0 \, \sigma_3 \tilde{\Phi}_j+P_j \tilde{\Phi}_j,\ j=1,2,
\end{align}
with
\begin{align*}
	&P_1=\frac{i(m_0-1)}{2k }\left(\begin{array}{cc}
		\eta-2 & 1   \\[4pt]
		-\eta^2-\frac{4}{m_0} & -\eta+2
	\end{array}\right) -\left(\begin{array}{cc}
		0 & 0   \\[4pt]
		\eta_x  & 0
	\end{array}\right),\\[5pt]
	&P_2=\frac{i(m_0-1)}{2k}\left(\begin{array}{cc}
		\eta -2 &  \eta^2+\frac{4}{m_0}  \\[4pt]
		-1 & -\eta+2
	\end{array}\right) -\left(\begin{array}{cc}
		0 & \eta_x   \\[4pt]
		0  & 0
	\end{array}\right),
\end{align*}
where  $\eta(0,y) =	\frac{m_0-1}{m_0} $.
Furthermore, define the functions $ \nu_j^\pm(x;k),  j = 1, 2 $ by the  transformations
\begin{align}
	\nu_j^\pm(x;k)=e^{(-1)^{j-1} p(x;k)} \tilde{\Phi}_j(x;k),
\end{align}
which  satisfy  the following Volterra  integral equations,
\begin{align}
	&	\nu_1^\pm(x;k)=e_1+\int_{\pm \infty}^{x}\left(\begin{array}{cc}
		1 & 0  \\[4pt]
		0  & e^{2 (p(x;k)-p(s;k))}
	\end{array}\right)P_1(s,k)\nu_1^\pm(s,k)ds,\\
	&\nu_2^\pm(x,k)=e_2+\int_{\pm \infty}^{x}\left(\begin{array}{cc}
		e^{-2 (p(x;k)-p(s;k))} & 0  \\[4pt]
		0  & 1
	\end{array}\right)P_2(s;k)\nu_2^\pm(s,k)ds.
\end{align}
 It then transpires from a direct calculation  that
\begin{Proposition}\label{lemma5.1}
	If  initial data  $m_0-1\in  H^{2,1}(\mathbb{R} )\cap H^{1, 2} (\mathbb{R}) $,     then  the  Jost functions $	 \nu_j^\pm(x;k), j = 1, 2 $
	admit  the following asymptotics
	\begin{align}
		&\nu_1^\pm(x;k)=e_1+ \mathcal{O}( k^{-1}),\quad |k|\to\infty,\\
		&\nu_2^\pm(x;k)=e_2+\left(\begin{array}{cc}
			2i\frac{\p_xm_0}{m_0^3}  \\[4pt]
			b_\pm(x)
		\end{array}\right)k^{-1} +\mathcal{O}( k^{-2}),\quad |k|\to\infty,
	\end{align}
	with
	$$b_\pm(x)=\frac{i}{2}\int_{\pm\infty}^x\frac{m_0(s)^2-1}{m_0(s)}ds,$$
	and they satisfy the following limits along a contour in the domains of their
	analyticity
	\begin{align}
		&	\nu_1^\pm(x,k)=-\frac{i\alpha_\pm}{k}\left(\begin{array}{cc}
			1 \\[4pt]
			2-\eta
		\end{array}\right)+\mathcal{O}(1),\quad k \to 0,\\
		&\nu_2^\pm(x,k)=\frac{i\alpha_\pm}{k}\left(\begin{array}{cc}
			2-\eta  \\[4pt]
			1
		\end{array}\right)+\mathcal{O}(1),\quad k  \to 0.
	\end{align}
	
\end{Proposition}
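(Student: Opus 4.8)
The plan is to treat Proposition \ref{lemma5.1} by the same scheme already used for the Jost functions $\mu^\pm$ in Propositions \ref{pro1.1}, \ref{promu1} and \ref{lemma1}, now applied to the renormalized spectral problems written in the variable $k$. First I would establish existence, uniqueness and boundedness of $\nu_j^\pm(x;k)$ by recasting each Volterra equation as $\nu_j^\pm = e_j + \mathcal{T}_j\nu_j^\pm$ and expanding in a Neumann series. Since the potentials $P_1,P_2$ are built from $m_0-1$, from $\eta = 1-1/m_0$ and from $\eta_x = \partial_x m_0/m_0^2$, all of which are controlled once $m_0-1\in H^{2,1}\cap H^{1,2}$ and $\inf_x m_0>0$, the operators $\mathcal{T}_j$ are bounded on $L^\infty$ for $k$ bounded away from $0$, and the series converges exactly as in Proposition \ref{pro1.1}; analyticity in the appropriate half $k$-plane follows from the uniform convergence of analytic summands.

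Next I would extract the large-$k$ expansions by iterating the Volterra equations once and integrating by parts against the oscillatory kernel $e^{\pm 2(p(x;k)-p(s;k))} = e^{\pm\frac{i}{2}k\int_s^x m_0(l)\,dl}$. Splitting each $P_j$ into its explicit $k^{-1}$ part (carrying the prefactor $\frac{i(m_0-1)}{2k}$) and its $O(1)$ part (the single $\eta_x$ entry), I would argue that in the off-diagonal oscillatory components the explicit $k^{-1}$ integrand is pushed to $O(k^{-2})$ after one integration by parts, while the $O(1)$ term $\eta_x$ produces a boundary contribution at $s=x$ of order $k^{-1}$, the boundary term at $\pm\infty$ vanishing because $\eta_x\to 0$. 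A direct evaluation then gives $\frac{2i\eta_x}{k m_0} = \frac{2i\partial_x m_0}{k m_0^3}$ for the first component of $\nu_2^\pm$. In the non-oscillatory (diagonal) component of $\nu_2^\pm$ the explicit $k^{-1}$ integrand survives and, using $2-\eta = (m_0+1)/m_0$, integrates to $\frac{1}{k}\cdot\frac{i}{2}\int_{\pm\infty}^x\frac{m_0^2-1}{m_0}\,ds = b_\pm(x)/k$. For $\nu_1^\pm$ the same bookkeeping shows the correction is merely $O(k^{-1})$ with no surviving explicit coefficient, which is all that is claimed.

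For the behaviour as $k\to 0$ I would exploit that $P_1,P_2$ carry a simple pole $\frac{i(m_0-1)}{2k}(\cdots)$ at $k=0$, which corresponds precisely to the singularities of the original Jost functions at $z=\pm1$, since $k=z-z^{-1}$ and $k\approx 2(z-1)$ near $z=1$. Multiplying each Volterra equation by $k$ renormalizes it into a regular Volterra equation near $k=0$ (the exponentials tend to $1$ and $k\cdot\frac{i(m_0-1)}{2k}$ is regular), so $k\,\nu_j^\pm$ admits a finite limit as $k\to 0$; this yields the simple poles $\nu_1^\pm = -\frac{i\alpha_\pm}{k}(1,\,2-\eta)^T+O(1)$ and $\nu_2^\pm = \frac{i\alpha_\pm}{k}(2-\eta,\,1)^T+O(1)$. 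The scalar $\alpha_\pm(x)$ is then identified with the one in Proposition \ref{promu1} by transporting the residue of $\mu^\pm(z;x)\sim\frac{i\alpha_\pm}{2(z-1)}(\cdots)$ through the explicit algebraic maps $\mu = D(z)\Phi\, e^{p\sigma_3}$ and $\tilde\Phi_j = (\cdots)\Psi_j$, using $1/(z-1)\approx 2/k$ and the values of $D$ and $\lambda$ at $z=1$.

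The hard part will be the identification of the residue coefficient $\alpha_\pm$ in the $k\to0$ limit: at $z=\pm1$ the transformation matrices (namely $D$ and the $-2\lambda$ entries) degenerate at the same rate that $\mu^\pm$ blows up, so each factor must be expanded to the correct order to confirm simultaneously the exact coefficient $\mp i\alpha_\pm/k$ \emph{and} the vector structures $(1,\,2-\eta)^T$ and $(2-\eta,\,1)^T$. A secondary technical point will be justifying the integration-by-parts reductions in the large-$k$ expansion, i.e. that the boundary terms at $x=\pm\infty$ vanish and that the remaining oscillatory integrals are genuinely $O(k^{-2})$; this rests on the decay and regularity of $m_0-1$ and $\eta_x$ guaranteed by $m_0-1\in H^{2,1}\cap H^{1,2}$ together with $\inf_x m_0>0$.
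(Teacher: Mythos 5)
The paper never actually proves this proposition---it is asserted as the outcome of ``a direct calculation'' immediately after the Volterra equations for $\nu_j^\pm$ are written down---so your outline is in effect supplying the missing argument, and it uses precisely the toolkit the paper deploys elsewhere: Neumann series for the Volterra equations as in Proposition \ref{pro1.1}, integration by parts against the oscillation $e^{\pm\frac{i}{2}k\int_s^x m_0(l)dl}$ as in Proposition \ref{prob}, and the $z=\pm1$ singularity analysis of Proposition \ref{promu1}. Your large-$k$ bookkeeping checks out in detail: the boundary term at $s=x$ produced by the $\eta_x$ entry is $\frac{2}{ik}\cdot\frac{-\eta_x}{m_0}=\frac{2i\,\partial_x m_0}{k\,m_0^3}$, the boundary terms at $\pm\infty$ vanish and the remainders are genuinely $O(k^{-2})$ because $\partial_s(\eta_x/m_0)\in L^1(\mathbb{R})$ under $m_0-1\in H^{2,1}\cap H^{1,2}$ with $\inf_x m_0>0$, and in the non-oscillatory component the identity $(m_0-1)(2-\eta)=\frac{m_0^2-1}{m_0}$ yields exactly $b_\pm(x)/k$.

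The step that fails as written is the small-$k$ renormalization. Multiplying the Volterra equation by $k$ does not produce a regular equation: the factor $k$ attaches to the unknown, not to the kernel, so $w=k\nu_j^\pm$ satisfies $w=ke_j+\int \mathrm{diag}(\cdot)\,P_j\,w\,ds$ with the same $O(k^{-1})$ kernel $P_j$; the regularity of $k\cdot\frac{i(m_0-1)}{2k}$ is irrelevant to the equation $w$ actually solves. The paper's own device at $z=\pm1$ (the substitution $\upsilon^\pm=(z^2-1)D^{-1}\mu^\pm$ in the proof of Proposition \ref{promu1}) shows what is really needed: a conjugation under which the pole of the kernel pairs with the degenerating oscillation through the bounded quotient $(1-\gamma(z,x,s))/(z^2-1)$, and an analogous cancellation must be exhibited for the $\nu_j$ systems. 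The structural fact that makes this work---and which also gives the claimed residue directions for free---is that $A_j:=\lim_{k\to0}kP_j$ is \emph{nilpotent} (its determinant vanishes identically because $\eta=1-1/m_0$), with $\ker A_1$ spanned by $(1,\,2-\eta)^T$ and $\ker A_2$ by $(2-\eta,\,1)^T$; matching the $k^{-2}$ order in the differential equation forces the pole vector of $\nu_j^\pm$ into $\ker A_j$. Your alternative algebraic-transport route for identifying $\alpha_\pm$ is sound in principle, but note it requires the Laurent data of $\mu^\pm$ one order beyond the pole: since $\det D=\frac{2(z^2-1)}{(z-i)^2}\sim ik$ near $z=1$ (the paper's in-line formula for $\det D$ is a typo), $D^{-1}$ blows up like $k^{-1}$ while $D^{-1}(1,1)^T=(1+d)^{-1}(1,1)^T$ stays bounded, so the regular part $\mu^{\pm,(0)}$ of Proposition \ref{promu1} contributes at pole order and cannot be discarded---this is exactly the degeneracy you flag as the hard part, and until that expansion is carried out in full the coefficient $\mp i\alpha_\pm/k$ is not yet confirmed by your argument.
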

\subsection{A new RH problem}
We now define
\begin{align}
	&\frac{\nu^-_1}{a^*(z)}-\nu^+_1=r_2e^{ik y/2}p^l_+ ,\hspace{0.3cm}\frac{p^l_-}{a}-p^l_+=r_1e^{-ik y/2}\nu^+_2,\\
	&\nu^-_1-\frac{\nu^+_1}{a^*(z)}=\tilde{r}_2e^{ik y/2}p^r_-,\hspace{0.3cm}\frac{p^r_+}{a}=\frac{\nu^+_1}{a^*(z)}r_1e^{-ik y/2}\nu^+_2+(1-r_1r_2)p^r_-,
\end{align}
where
	\begin{align}
			&r_1(k)=\frac{z}{z^2+1}r(z),\ 	r_2(k)=\frac{z^2+1}{z}\overline{r(z)},\ r_1(k)r_2(k)=|r(z)|^2,\\
			&\tilde{r}_1(k)=\frac{z}{z^2+1}\tilde{r}(z),\ 	\tilde{r}_2(k)=\frac{z^2+1}{z}\overline{\tilde{r}(z)},\ \tilde{r}_1(k)\tilde{r}_2(k)=|r(z)|^2.
	\end{align}
Obviously, $p_\pm(k;y)$ are both well defined under variable $k=z-1/z$.
By using  these two column vectors, we construct  two functions by
\begin{align}
	&M^{(2)}_l(k;y)=\left\{ \begin{array}{ll}
		\left(\begin{array}{cc}
			\eta-2, & 1
		\end{array}\right)\left(\begin{array}{cc}\frac{ \nu_{1}^1 (k) } {a^*(k)}, & p^l_{+}(k)\end{array}\right),   &\text{as } k\in \mathbb{C}^+,\\[12pt]
		\left(\begin{array}{cc}
			\eta-2, & 1
		\end{array}\right)\left(\begin{array}{cc}  \nu_{1}^+(k),&\frac{ p^l_{-}(k)}{a(k)}\end{array}\right) , &\text{as }k\in \mathbb{C}^-,\\
	\end{array}\right. \ y\in\mathbb{R}^+,\label{M21}\\[12pt]
&M^{(2)}_r(k;y)=\left\{ \begin{array}{ll}
	\left(\begin{array}{cc}
		\eta-2, & 1
	\end{array}\right)\left(\begin{array}{cc}\nu_{1}^1 (k), & \frac{ p_{+}^r(k) } {a^*(k)}\end{array}\right),   &\text{as } k\in \mathbb{C}^+,\\[12pt]
	\left(\begin{array}{cc}
		\eta-2, & 1
	\end{array}\right)\left(\begin{array}{cc} \frac{ \nu_{1}^+(k)}{a(k)},&p_{-}^r(k)\end{array}\right)  , &\text{as }k\in \mathbb{C}^-,\\
\end{array}\right. \ y\in\mathbb{R}^-.\label{M22}
\end{align}
It is easy to see that these two functions satisfy the following  two RH problems, respectively.
\begin{RHP}\label{RHP7}
	Find a vector-valued function $M_l^{(2)}(k)=M^{(2)}_l(k;y)$ which satisfies
	
	$\bullet$ Analyticity: $M_l^{(2)}(k)$ is meromorphic in $\mathbb{C}\setminus \mathbb{R}$;

	$\bullet$ Jump condition: $M_l^{(2)}(k)$ has continuous boundary values $[M_l^{(2)}]_\pm(k)$ on $\mathbb{R}$ and
	\begin{equation}
		[M^{(2)}_l]_+(k)=[M^{(2)}_l]_-(k)\widetilde{V}_l(k),\hspace{0.5cm}	\widetilde{V}_l(k)=\left(\begin{array}{cc}
			1-|r|^2 & r_1e^{-2\theta}\\
			-r_2e^{2\theta} & 1
		\end{array}\right);  \nonumber
	\end{equation}

	$\bullet$ Asymptotic behavior:
	\begin{align}
		&M^{(2)}_l(k) =  (	\eta-2, 1 )+\mathcal{O}(k^{-1}),\hspace{0.5cm}|k| \rightarrow \infty\ in\ \mathbb{C};
	\end{align}

	$\bullet$ Residue condition: $M^{(2)}_l(k)$ has simple poles at each point in $ \{k_j,\bar{k}_j\}_{j=1}^{{N_0}} $ with
	\begin{align}
		&\res_{k=k_j}M^{(2)}_l(k)=\lim_{k\to k_j}M^{(2)}_l(k)\left(\begin{array}{cc}
			0 & 	d_je^{2y\text{Im}z_j}\\
		0 & 0
		\end{array}\right),\\
		&\res_{k=\bar{k}_j}M^{(2)}_l(k)=\lim_{k\to \bar{k}_j}M^{(2)}_l(k)\left(\begin{array}{cc}
			0 & 0\\
			\tilde{d}_je^{2y\text{Im}z_j} & 0
		\end{array}\right).
	\end{align}
\end{RHP}
 Here, $k_j=z_j-1/z_j$, $d_j=-\bar{z}_jc_j$ and $\tilde{d}_j=(2\text{Re}z_j)^2\bar{d}_j$ for $j=1,...,{N_0}$. For $j=1,...,{N_0}$, the  zeros $z_j$ and $z_{j+{N_0}}$  of $a$ in the $z$-plane  condense to one point $k_j$ in the $k$-plane. 
\begin{RHP} \label{rhp5}
	Find a vector-valued function $M_r^{(2)}(k)=M^{(2)}_r(k;y)$ which satisfies
	
	$\bullet$ Analyticity: $M_r^{(2)}(k)$ is meromorphic  in $\mathbb{C}\setminus \mathbb{R}$;

	$\bullet$ Jump condition: $M_r^{(2)}(k)$ has continuous boundary values $[M_r^{(2)}]_\pm(k)$ on $\mathbb{R}$ and
	\begin{equation}
		[M^{(2)}_r]_+(k)=[M^{(2)}_r]_-(k)\widetilde{V}_r(k),\hspace{0.5cm} \widetilde{V}_r(k)=\left(\begin{array}{cc}
			1 & \tilde{r}_1e^{-2\theta}\\
			-\tilde{r}_2e^{2\theta} & 1-|r|^2
		\end{array}\right);
	\end{equation}

	$\bullet$ Asymptotic behavior:
	\begin{align}
		&M^{(2)}_r(k) =  (	\eta-2, 1 )+\mathcal{O}(k^{-1}),\hspace{0.5cm}|k| \rightarrow \infty\ in\ \mathbb{C};
	\end{align}

	$\bullet$ Residue condition: $M^{(2)}_r(k)$ has simple poles at each point in $ \{k_j,\bar{k}_j\}_{j=1}^{{N_0}} $ with
	\begin{align}
		&\res_{k=k_j}M^{(2)}_r(k)=\lim_{k\to k_j}M^{(2)}_r(k)\left(\begin{array}{cc}
			0 & 0	\\
	\bar{d}_je^{-2y\text{Im}z_j} 	& 0
		\end{array}\right),\\
		&\res_{k=\bar{k}_j}M^{(2)}_r(k)=\lim_{k\to \bar{k}_j}M^{(2)}_r(k)\left(\begin{array}{cc}
			0 & \tilde{\bar{d}}_je^{-2y\text{Im}z_j}\\
			0 & 0
		\end{array}\right).
	\end{align}
\end{RHP}

In view of  the asymptotic expression of the Jost function in Proposition \ref{lemma5.1}, construction of $M^{(2)}_l(k)$ and $M^{(2)}_r(k)$ in \eqref{M21}-\eqref{M22}, it is thereby inferred that
\begin{align}
	&\frac{2im_x}{m^3}=\lim_{k\to \infty} k (M^{(2)}_1-\eta+2) ,\hspace{0.3cm} b_+=\frac{i}{2}\int_{+\infty}^x\frac{m(s)^2-1}{m(s)}ds=\lim_{k\to \infty} k(M^{(2)}_2-1), \label{res3}
\end{align}
where $M^{(2)}=M_l^{(2)}$ for $y\in\mathbb{R}^+$ and $M^{(2)}=M_r^{(2)}$ for $y\in\mathbb{R}^-$ with $ M^{(2)}_j$ as the $j$-element of  $M^{(2)}$.
Similar to   Section \ref{sec3}, when $y=0$, it is found that
\begin{align}
	\lim_{k\to \infty}[M^{(2)}_l(k;0)-(\eta(0)-2,1)]=	\lim_{k\to \infty}[M^{(2)}_r(k;0)-(\eta(0)-2,1)].
\end{align}
It is our purpose here to analyze the property of $M^{(2)}$. We only give the proof of case $y\in\mathbb{R}^+$, since the estimate of $y\in\mathbb{R}^-$ could be obtained using the same method. To see this, we have the following lemma derived from $r\in H^{1,2}(\mathbb{R} )\cap H^{2,1}(\mathbb{R} )$ and the symmetry of $r$.
\begin{lemma}\label{lemmar1}
	If  $r\in H^{1,2}(\mathbb{R} )\cap H^{2,1}(\mathbb{R} )$,  then
	$r_1(k)\in H^{1,3}(\mathbb{R})\cap H^{2,2}(\mathbb{R} )$,  $r_2(k)\in W^{2,2}(\mathbb{R})\cap  H^{1,1}(\mathbb{R})$,   and  there exists a  positive constant $C$ such that
	\begin{align}
		\parallel r_1 \parallel_{H^{1,3}\cap H^{2,2}},\ \parallel r_2 \parallel_{W^{2,2}\cap  H^{1,1}} \leqslant C\parallel r \parallel_{H^{1,2}\cap H^{2,1}}.
	\end{align}
\end{lemma}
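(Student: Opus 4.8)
The plan is to push everything through the change of variables $k = z - 1/z$ and exploit the symmetries of $r$ to localise the whole analysis to the region $z \in (1,\infty)$, where the rational prefactors $\frac{z}{z^2+1}$ and $\frac{z^2+1}{z}$ collapse to clean powers of $z$. First I would record the elementary facts about the map. On the positive branch $z \in (0,\infty)$ the map $k = z - 1/z$ is an increasing bijection onto $\mathbb{R}$ with $z=1 \mapsto k=0$, and the involution $z \mapsto 1/z$ corresponds exactly to $k \mapsto -k$. Writing $s := z + 1/z \ge 2$, which is invariant under $z\mapsto 1/z$, and $s' = 1 - z^{-2}$, one has
\[
 z^2+1 = sz,\qquad \frac{z}{z^2+1}=\frac1s,\qquad \langle k\rangle^2 = 1+k^2 = s^2-3,\qquad dk = \frac{z^2+1}{z^2}\,dz = \frac{s}{z}\,dz .
\]
Using $r(1/z)=\overline{r(z)}$ together with the invariance of $s$ and of $\frac{z}{z^2+1}$ under $z\mapsto 1/z$, a direct computation gives $r_1(-k)=\overline{r_1(k)}$ and $r_2(-k)=\overline{r_2(k)}$. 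Differentiating these identities shows $|\langle k\rangle^m\partial_k^j r_i|$ is even in $k$ for all $i,j,m$, so each weighted $L^2$-norm over $\mathbb{R}$ equals twice the corresponding norm over $k>0$, i.e. over $z\in(1,\infty)$. On that half-line $z \le s \le 2z$, hence $s\sim z$ and $\langle k\rangle \sim z$.

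Next I would estimate $r_1 = s^{-1}r$. Substituting and changing variables, the representative term is
\[
 \|\langle\cdot\rangle^m r_1\|_{L^2(\mathbb{R})}^2 = 2\int_1^\infty \frac{(s^2-3)^m}{s\,z}\,|r(z)|^2\,dz \;\le\; C\int_1^\infty \langle z\rangle^{2m-2}|r(z)|^2\,dz = C\|r\|_{L^{2,m-1}}^2 ,
\]
so $m=3$ and $m=2$ bound the weightless parts of $r_1$ in $H^{1,3}$ and $H^{2,2}$ by $\|r\|_{L^{2,2}}$ and $\|r\|_{L^{2,1}}$. For derivatives I would use $\partial_k = \frac{z}{s}\partial_z$, giving $\partial_k r_1 = -\frac{z s'}{s^3}r + \frac{z}{s^2}r'$, with $\partial_k^2 r_1$ a similar combination of $r,r',r''$; on $(1,\infty)$ these behave like $z^{-2}r + z^{-1}r'$ and $z^{-3}r + z^{-2}r' + z^{-1}r''$. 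Inserting these together with the Jacobian reduces $\|\langle\cdot\rangle^3\partial_k r_1\|_{L^2}$, $\|\langle\cdot\rangle^2\partial_k r_1\|_{L^2}$ and $\|\langle\cdot\rangle^2\partial_k^2 r_1\|_{L^2}$ to finite combinations of $\|r\|_{L^{2,1}}$, $\|r'\|_{L^{2,2}}$, $\|r'\|_{L^{2,1}}$ and $\|r''\|_{L^{2,1}}$, each of which is controlled by $\|r\|_{H^{1,2}\cap H^{2,1}}$. This yields $r_1\in H^{1,3}\cap H^{2,2}$ with the asserted bound; note that only $r''\in L^{2,1}$ (never $L^{2,2}$) is required, which is exactly why the hypothesis $H^{1,2}\cap H^{2,1}$ suffices.

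The function $r_2 = s\,\overline r$ is handled identically, now with a growing prefactor, which is why only the lower weights $W^{2,2}$ and $H^{1,1}$ appear. From $\partial_k r_2 = \frac{z s'}{s}\overline r + z\overline{r'}$ and the analogous second-order expression, on $(1,\infty)$ one has $r_2\sim z\,r$, $\partial_k r_2\sim r + z r'$ and $\partial_k^2 r_2\sim z^{-1}r + r' + z r''$. The change of variables then gives $\|r_2\|_{L^2}^2 \lesssim \|r\|_{L^{2,1}}^2$ and $\|\partial_k^2 r_2\|_{L^2}^2 \lesssim \|r\|_{L^2}^2 + \|r'\|_{L^{2,1}}^2 + \|r''\|_{L^{2,1}}^2$, while the two $H^{1,1}$ norms cost one extra factor $z\sim\langle k\rangle$ and reduce to $\|r\|_{L^{2,2}}$ and $\|r'\|_{L^{2,2}}$. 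All these are again dominated by $\|r\|_{H^{1,2}\cap H^{2,1}}$, so $r_2\in W^{2,2}\cap H^{1,1}$.

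The one point demanding care is the behaviour as $k\to-\infty$, i.e. $z\to0^+$, where the prefactor $\frac{z^2+1}{z}$ in $r_2$ and the weight $\langle k\rangle$ both blow up; this is precisely where the symmetry reduction earns its keep, trading the singular region $z\in(0,1)$ for the polynomially controlled region $z\in(1,\infty)$ through $z\mapsto 1/z$ and $|r(1/z)|=|r(z)|$. On the complementary bounded set $k\in[-K,K]$, equivalently $z$ in a compact interval around $1$, the map $z(k)$ is smooth with $dk/dz = 1+z^{-2}\ge 1$, all prefactors are smooth and bounded, and the local $H^2$ regularity of $r$ (from $r\in H^{1,2}\cap H^{2,1}$) makes every contribution finite; in particular the boundary value $r(1)=-1$ produces no singularity since $\frac{z}{z^2+1}$ and $s$ are regular there. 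Collecting the estimates over the two regions produces the stated inequalities, which is the content of Lemma \ref{lemmar1}.
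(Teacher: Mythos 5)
Your proof is correct, and it is exactly the argument the paper leaves implicit: the paper states Lemma \ref{lemmar1} without a detailed proof, asserting only that it is ``derived from $r\in H^{1,2}(\mathbb{R})\cap H^{2,1}(\mathbb{R})$ and the symmetry of $r$,'' which is precisely your combination of the symmetry $r(z)=\overline{r(z^{-1})}$ (reducing everything to $z\in(1,\infty)$, where $\langle k\rangle\sim s\sim z$) with the change of variables $k=z-1/z$ and the chain rule $\partial_k=\frac{z}{s}\partial_z$. Your bookkeeping of the weights checks out --- in particular the observations that $r_1=s^{-1}r$ gains one power (hence $H^{1,3}\cap H^{2,2}$) while $r_2=s\,\overline{r}$ loses one (hence only $W^{2,2}\cap H^{1,1}$), and that the second derivative is needed only with weight $\langle\cdot\rangle$, matching the hypothesis $r''\in L^{2,1}$.
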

For $k\in\mathbb{R}$, denote
\begin{align}
	&\tilde{w}=\tilde{w}_++\tilde{w}_-,\  \tilde{w}_+(k)=\left(\begin{array}{cc}
		0  & 0\\
		-r_2(k)e^{2\theta}	& 0
	\end{array}\right),\ \tilde{w}_-(k)=\left(\begin{array}{cc}
		0  & r_1(k)e^{-2\theta}\\
		0	& 0
	\end{array}\right),
\end{align}
and  $\mathcal{C}^k$ is the Cauchy operator on the $k$-plane.
Then  the RH problem \ref{RHP7} has a solution given by
\begin{align}
	M^{(2)}_l(k)= (	\eta-2, 1 )+\mathcal{C}^k(\tilde{\mu} \tilde{w})(k)+\sum_{j=1}^{{N_0}} \left(\frac{\tilde{d}_je^{y\text{Im}z_j}}{k-\bar{k}_j}\tilde{\mu}_2(\bar{k}_j),\frac{d_je^{y\text{Im}z_j}}{k-k_j}\tilde{\mu}_1(k_j)  \right),
\end{align}
if and only if there is a solution   $\tilde{\mu}$ of  the Fredholm integral equation  in the $k$-plane
\begin{align}
	\tilde{\mu}(k)= (	\eta-2, 1 )+\mathcal{C}^k_{\tilde{w}}	\tilde{\mu}(k) +\sum_{j=1}^{{N_0}} \left(\frac{\tilde{d}_je^{y\text{Im}z_j}}{k-\bar{k}_j}\tilde{\mu}_2(\bar{k}_j),\frac{d_je^{y\text{Im}z_j}}{k-k_j}\tilde{\mu}_1(k_j)  \right),
\end{align}
with $$\mathcal{C}^k_{\tilde{w}}f(z)=\mathcal{C}^k_+(f\tilde{w}_-)(z)+\mathcal{C}_-^k(f\tilde{w}_+)(z).$$
Introduce a  column vector function
\begin{align*}
	\tilde{N}(k)=&\left( \tilde{\mu}_{1}(k)-(\eta-2),
 \tilde{\mu}_{2}(k)-1 \right)^T ,
\end{align*}
which  satisfies
 the following equation,
\begin{align}
	\tilde{N}(k)=& \tilde{N}_0(k)+\tilde{\mathcal{K}}\tilde{N}(k),\hspace{0.3cm} \tilde{\mathcal{K}}=\left(\begin{array}{cc}
		0  & \tilde{\mathcal{K}}^{(1)}\\
		\tilde{\mathcal{K}}^{(2)}	& 0
	\end{array}\right),\label{tidN}\\
	\tilde{N}_0(k)=&\left((\eta-2)\mathcal{C}_-^k(-r_2e^{-2\theta})(z)+\sum_{j=1}^{N_0}(\eta-2)\frac{\tilde{d}_je^{\text{Im}z_jy}}{k-\bar{k}_j}, \mathcal{C}_+^k(r_1e^{2\theta})(z)+\sum_{j=1}^{N_0}\frac{d_je^{\text{Im}z_jy}}{k-k_j} \right)^T.\label{N0}
\end{align}
where the definitions of $\tilde{\mathcal{K}^{(1)}}$ and $\tilde{\mathcal{K}^{(2)}}$ are  similar to \eqref{A11}-\eqref{B22}.  We now verify that $	\tilde{N}\in L^2(\mathbb{R})$. In fact,
 it is shown  in Proposition \ref{lemmaG} that   $I-\mathcal{K}$ and $(I-\mathcal{K})^{-1}$ are two bounded linear operators on: $L^2(\mathbb{R} )\to L^2(\mathbb{R} )$ of the $z$-plane.  It suffices to derive  the estimate of bound for the  integral operators on the $k$-plane.

 In the following proposition, the superscript for the $z$-plane and the $k$-plane is used on  two  Cauchy projection operators with $\mathcal{C}_\pm^k$, $\mathcal{C}^k$ and $\mathcal{C}_\pm^z$, $\mathcal{C}^z$.
\begin{Proposition}\label{lemmaeo}
	If $f\in L^p(\mathbb{R} )$, $1\leqslant p<\infty$ and $f(z)=f(-\frac{1}{z})$, then
	\begin{align}
		\mathcal{C}_\pm^{z,(e)}f(z)=\mathcal{C}_\pm^kf(k),
	\end{align}
	where
	\begin{align}
		\mathcal{C}_\pm^{z,(e)}f(z)=\frac{1}{2\pi i}\left(\int_{-\infty}^{-1}+\int_{-1}^{0}+\int_{0}^{1}+\int_{1}^{\infty} \right) \frac{f(s)ds}{s-(z\pm0i)}.
	\end{align}
	If $g\in L^p(\mathbb{R} )$, $1\leqslant p<\infty$ and $g(z)=-g(-\frac{1}{z})$, then
	\begin{align}
		\mathcal{C}_\pm^{z,(o)}g(z)=\mathcal{C}_\pm^kg(k),
	\end{align}
	where
	\begin{align}
		\mathcal{C}_\pm^{z,(o)}g(z)=\frac{1}{2\pi i}\left(\int_{-1}^{-\infty}+\int_{-1}^{0}+\int_{0}^{1}+\int_{\infty}^{1} \right) \frac{g(s)ds}{s-(z\pm0i)}.
	\end{align}
	In addition,
	\begin{align}
		&\left( 	z+\frac{1}{z}\right) \mathcal{C}_\pm^{z,(e)}f (z)=\mathcal{C}_\pm^{z,(e)} \left(( 	z+\frac{1}{z})f \right)(z),\\	&\left( 	z+\frac{1}{z}\right)\mathcal{C}_\pm^{z,(o)}g (z)=	\mathcal{C}_\pm^{z,(o)} \left(( 	z+\frac{1}{z})g\right)(z).
	\end{align}
\end{Proposition}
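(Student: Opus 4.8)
The plan is to prove all four identities by pushing the Cauchy integral on the $k$--line back to the $z$--line through the rational change of variables $k=z-z^{-1}$, and then exploiting the two--to--one structure of this map. The starting point is that for each real $\kappa$ the equation $\kappa=s-s^{-1}$ has exactly the two real roots $s$ and $-s^{-1}$, so that the involution $s\mapsto -s^{-1}$ interchanges the two sheets of the covering $s\mapsto\kappa$, while $\tfrac{d\kappa}{ds}=1+s^{-2}>0$ shows that each sheet maps monotonically onto $\mathbb{R}$. Writing $\kappa=s-s^{-1}$ and $k=z-z^{-1}$, the entire computation rests on the elementary partial--fraction identity
\begin{align}
\frac{1+s^{-2}}{(s-s^{-1})-(z-z^{-1})}=\frac{1}{s-z}+\frac{1}{s+z^{-1}}-\frac{1}{s},
\end{align}
which I would verify by clearing denominators. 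The three terms on the right are the pole at the chosen preimage $z$, the pole at the conjugate preimage $-z^{-1}$, and a spurious pole at $s=0$ (the image of $s=\infty$ under the involution).

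For the even case I would parametrize the $k$--integral by the sheet $s\in(0,\infty)$, on which the descended function equals $f(s)$, so that
\begin{align}
\mathcal{C}_\pm^{k}f(k)=\frac{1}{2\pi i}\int_{0}^{\infty}f(s)\left(\frac{1}{s-z}+\frac{1}{s+z^{-1}}-\frac{1}{s}\right)ds,
\end{align}
with $z\pm i0$ in place of $z$; the prescription is preserved because $\tfrac{d\kappa}{ds}>0$ and $-1/(z\pm i0)=-1/z\pm i0$. In the middle term I would substitute $s\mapsto -s^{-1}$ and use $f(s)=f(-s^{-1})$ to convert $\int_0^\infty \tfrac{f(s)}{s+z^{-1}}\,ds$ into an integral over $(-\infty,0)$ of $\tfrac{f(\sigma)}{\sigma-z}-\tfrac{f(\sigma)}{\sigma}$; the same substitution shows that the two residual $1/s$ contributions cancel, i.e. $\int_{\mathbb{R}}f(s)\,s^{-1}\,ds=0$. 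Collecting the surviving terms reassembles the full line integral over $(-\infty,-1)\cup(-1,0)\cup(0,1)\cup(1,\infty)$, which is exactly $\mathcal{C}_\pm^{z,(e)}f(z)$; by the same symmetry the result is insensitive to which of the two preimages is named $z$.

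The odd case is handled identically, the only difference being that $g(s)=-g(-s^{-1})$ introduces a sign each time the involution is applied. This sign is precisely what flips the orientation of the two intervals swapped by $s\mapsto -s^{-1}$, and reproduces the alternating limits $\int_{-1}^{-\infty}+\int_{-1}^{0}+\int_{0}^{1}+\int_{\infty}^{1}$ in the definition of $\mathcal{C}_\pm^{z,(o)}$; the residual $1/s$ terms again drop out, now because $\int_{\mathbb{R}}g(s)\,s^{-1}\,ds=0$ for odd $g$. I expect the careful bookkeeping of these signs and orientations across the four subintervals, together with the consistent treatment of the boundary values at the branch points $z=\pm1$ (where $k=0$) and $z=0$ (where $k=\infty$), to be the main technical obstacle: away from these points everything is a routine change of variables, but it is exactly the splitting of $\mathbb{R}$ at $0,\pm1$ and the $\pm i0$ prescription there that encode the singular structure $|r(\pm1)|=1$.

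Finally, the two commutation relations I would obtain from the same kernel computation rather than from the pullback. For the multiplier $m(z)=z+z^{-1}$ one has
\begin{align}
\frac{m(s)-m(z)}{s-z}=1-\frac{1}{sz},
\end{align}
so that $\mathcal{C}_\pm^{z,(\cdot)}(mf)-m\,\mathcal{C}_\pm^{z,(\cdot)}f$ reduces to the integral of $\bigl(1-\tfrac{1}{sz}\bigr)f$ against the signed measure defining the operator. The contribution of $1/(sz)$ vanishes by the same parity cancellation $\int f\,s^{-1}\,ds=0$ (respectively for $g$) used above, and the remaining contribution is independent of $z$. Checking that this residual, $z$--independent term is consistent with the stated identity in the weighted class in which $f$ and $g$ actually occur is the only point requiring care in this last step, which is otherwise purely algebraic once the even and odd pullback formulas are established.
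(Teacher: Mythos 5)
Your even-case argument is, at bottom, the paper's entire proof: the paper disposes of the proposition with the single partial-fraction identity
\begin{equation*}
\frac{1}{s-z}+\frac{1}{s^{2}}\cdot\frac{1}{-\frac{1}{s}-z}
=\frac{1+s^{-2}}{\bigl(s-\frac{1}{s}\bigr)-\bigl(z-\frac{1}{z}\bigr)},
\end{equation*}
and your identity $\frac{1+s^{-2}}{\kappa-k}=\frac{1}{s-z}+\frac{1}{s+z^{-1}}-\frac{1}{s}$ is the same identity with the last two terms split apart, since $\frac{1}{s+z^{-1}}-\frac{1}{s}=\frac{1}{s^{2}}\cdot\frac{1}{-\frac{1}{s}-z}$. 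The splitting costs you rigor: for general $f\in L^{p}$ the separated integral $\int_{0}^{\infty}f(s)s^{-1}ds$ need not converge, so your cancellation ``$\int_{\mathbb{R}}f(s)s^{-1}ds=0$'' is only a formal paired statement. Keeping the paper's grouping, the substitution $\sigma=-1/s$ carries $\frac{1}{s^{2}}\frac{1}{-1/s-z}\,ds$ exactly onto $\frac{d\sigma}{\sigma-z}$ with no spurious terms, and the even identity follows cleanly; in that case your argument coincides with the paper's.

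The odd case contains a genuine error. An odd $g$ does not descend to a single-valued function of $k$: the two descents differ by a sign on half of the $\kappa$-line, so the sheet you parametrize by matters. With your choice $s\in(0,\infty)$ the computation produces $\frac{1}{2\pi i}\bigl(\int_{0}^{\infty}-\int_{-\infty}^{0}\bigr)\frac{g(s)\,ds}{s-z}$, i.e.\ the weight $\mathrm{sgn}(s)$, which is \emph{not} the stated $\mathcal{C}^{z,(o)}_{\pm}$, whose weight is $+1$ on $|s|<1$ and $-1$ on $|s|>1$; the two operators genuinely disagree on $(-1,0)$ and $(1,\infty)$, even restricted to odd $g$. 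The convention matching the proposition is the inner sheet $|s|<1$, on which $(0,1)\to\kappa\in(-\infty,0)$ and $(-1,0)\to\kappa\in(0,\infty)$ monotonically; the involution maps these onto $(-\infty,-1)$ and $(1,\infty)$, and the sign from $g(-1/\sigma)=-g(\sigma)$ then yields exactly the reversed pieces $\int_{-1}^{-\infty}$ and $\int_{\infty}^{1}$. Finally, in the commutation step your kernel identity $\frac{m(s)-m(z)}{s-z}=1-\frac{1}{sz}$ is correct and the $\frac{1}{sz}$ part does cancel by parity, but the residual you flag as ``requiring care'' --- $\frac{1}{2\pi i}\int f\,ds$ in the even case, and $\frac{1}{2\pi i}\int_{|s|<1}(1+s^{-2})g\,ds$ in the odd case --- does \emph{not} vanish in general: for instance $f(s)=\chi_{[1,2]}(s)+\chi_{[1,2]}(-1/s)$ is even under the involution with $\int f(s)s^{-1}ds=0$ but $\int f\,ds=3/2$. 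So your argument cannot close as written; the two sides of the multiplier identity differ by this $z$-independent constant unless one additionally imposes $\int f\,ds=0$ (a point on which the paper's one-line proof is silent as well, so your computation has in fact isolated a real defect rather than a removable technicality).
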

This proposition is a directly consequence of
\begin{align*}
	\frac{1}{s-z}+\dfrac{1}{s^2}\cdot\frac{1}{-\frac{1}{s}-z}=\frac{1+\frac{1}{s^2}}{s-\frac{1}{s}-z+\frac{1}{z}}.
\end{align*}
The following proposition about $\tilde{\mathcal{K}}$ is deduced in the same way as Proposition \ref{lemmaG}.
\begin{Proposition}\label{pro5.3}
	The operator $I-\tilde{\mathcal{K}}$ is invertible on $L^2(\mathbb{R})$ and $(I-\tilde{\mathcal{K}})^{-1}$ is also a bounded operator.
	 Moreover, there exists a positive constant $C( \|R(0)\|_{Y})$  such that
	\begin{align}
		\|(I-\tilde{\mathcal{K}})^{-1}\|\leqslant C( \|R(0)\|_{Y}).
	\end{align}
 Therefore,  $\tilde{N}$ is solvable by
 \begin{align}
 	\tilde{N}(k)=(I-\tilde{\mathcal{K}})^{-1}(\tilde{N}_0)(k),
 \end{align}
with
\begin{align}
	\|\tilde{N}\|_{L^2}\leqslant C(\|R(0)\|_{Y}\|\tilde{N}_0\|_{L^2}.
\end{align}
\end{Proposition}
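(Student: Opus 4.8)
The plan is to reproduce, on the $k$-plane, the three-step scheme behind Proposition \ref{lemmaG}: Fredholmness of index zero, injectivity, and a uniform resolvent bound. Splitting $\tilde{\mathcal{K}}=\bigl(\begin{smallmatrix}0 & \tilde{\mathcal{K}}^{(1)}\\ \tilde{\mathcal{K}}^{(2)} & 0\end{smallmatrix}\bigr)$ with $\tilde{\mathcal{K}}^{(j)}=\tilde{\mathcal{K}}^{(j)}_1+\tilde{\mathcal{K}}^{(j)}_2$, the terms $\tilde{\mathcal{K}}^{(j)}_2$ are finite rank (the $\{k_j,\bar k_j\}$-pole contributions), while $\tilde{\mathcal{K}}^{(1)}_1h=\mathcal{C}^k_-(-r_2e^{2\theta}h)$ and $\tilde{\mathcal{K}}^{(2)}_1h=\mathcal{C}^k_+(r_1e^{-2\theta}h)$. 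By Lemma \ref{lemmar1} one has $r_1,r_2\in H^1(\mathbb{R})\hookrightarrow L^\infty(\mathbb{R})$ as functions of $k$, so, using the $L^2$-boundedness of $\mathcal{C}^k_\pm$ (itself inherited from the $z$-plane through the even/odd correspondence of Proposition \ref{lemmaeo}), I would bound $\|\tilde{\mathcal{K}}\|\le C(\|R(0)\|_Y)$ and show that the block product $\tilde{\mathcal{K}}^{(2)}_1\tilde{\mathcal{K}}^{(1)}_1$ is Hilbert--Schmidt, its Fourier kernel being controlled by $\|(r_1e^{-2\theta})^\wedge\|_{L^{2,1}}\|(r_2e^{2\theta})^\wedge\|_{L^{2,1}}$, finite because $r_1,r_2\in H^1$. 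Hence $\tilde{\mathcal{K}}^2$ is compact and $I-\tilde{\mathcal{K}}$ is Fredholm of index zero.

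The heart of the matter is injectivity. Given $(I-\tilde{\mathcal{K}})\vec f=0$, I would form $\tilde g_1=\mathcal{C}^k(-r_2e^{2\theta}f_2)+\tilde{\mathcal{K}}^{(1)}_2f_2$ (analytic in $\mathbb{C}^-$) and $\tilde g_2=\mathcal{C}^k(r_1e^{-2\theta}f_1)+\tilde{\mathcal{K}}^{(2)}_2f_1$ (analytic in $\mathbb{C}^+$), so that $[\tilde g_1]_-=f_1$, $[\tilde g_2]_+=f_2$, and apply the Cauchy--Goursat theorem to $(\tilde g_1^*,\tilde g_2)$ exactly as in Proposition \ref{lemmaG}. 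Using the factorization $\widetilde V_l=(I+\tilde w_-)(I+\tilde w_+)$ and the identity $r_1r_2=|r|^2$, this produces an energy identity of the form
\[ 0=\int_{\mathbb{R}}\Bigl(|f_1|^2+|f_2|^2+(\text{off-diagonal cross terms})\Bigr)\,dk. \]
The crucial point, and the entire payoff of passing to RH problems \ref{RHP7}--\ref{rhp5}, is that the off-diagonal coefficient is now $r_1=\tfrac{z}{z^2+1}r$ rather than $r$ itself; since $\bigl|\tfrac{z}{z^2+1}\bigr|\le\tfrac12$ and $|r|\le1$, one has $\|r_1\|_{L^\infty}\le\tfrac12<1$ uniformly, including at $k=0$ (i.e.\ $z=\pm1$). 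This strict bound forces the quadratic form to be coercive, $0\ge\tfrac12\|\vec f\|_{L^2}^2$, so $\vec f=0$ and $I-\tilde{\mathcal{K}}$ is injective.

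I expect this positivity to be the delicate step. In the $z$-plane (Proposition \ref{lemmaG}) the off-diagonal coefficient was $r$ with $1-|r(z)|>0$ only for $z\ne\pm1$ and $|r(\pm1)|=1$, which is precisely the degeneracy obstructing Lipschitz estimates flagged as difficulty 3) in the introduction; the content of the new RH problem is that the rescaled coefficient $r_1$ is bounded strictly below $1$, so the vanishing argument closes cleanly and with a constant insensitive to the endpoints. The bookkeeping that must be handled with care is the reconciliation of the asymmetry $r_2\ne\overline{r_1}$: unlike the $z$-plane, where the two off-diagonal weights were complex conjugates, here I would either carry the cross terms in the weighted $L^2(dk)$ pairing or recast them through the symmetry reduction of Proposition \ref{lemmaeo}, using $r_1r_2=|r|^2\le1$ to keep the quadratic form manifestly nonnegative. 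A second point requiring attention is that $r_2=\tfrac{z^2+1}{z}\overline r$ is pointwise unbounded in $z$, yet is a bounded $H^1$ function of $k$; this is guaranteed by Lemma \ref{lemmar1} and is what makes the operator-norm and Hilbert--Schmidt bounds above finite.

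Finally, Fredholmness of index zero together with injectivity yields invertibility of $I-\tilde{\mathcal{K}}$ on $L^2(\mathbb{R})$. Since $\|\tilde{\mathcal{K}}\|$ and the finite-rank data depend on $R(0)$ only through $\|R(0)\|_Y$, and the coercivity constant above is uniform, the bound $\|(I-\tilde{\mathcal{K}})^{-1}\|\le C(\|R(0)\|_Y)$ follows. Because $\tilde N_0\in L^2(\mathbb{R})$, its entries being $\mathcal{C}^k_\pm$ of $r_1,r_2\in L^2$ plus the pole terms, with the prefactor $\eta-2$ bounded by Lemma \ref{lemma4.2}, the equation $\tilde N=\tilde N_0+\tilde{\mathcal{K}}\tilde N$ is uniquely solvable by $\tilde N=(I-\tilde{\mathcal{K}})^{-1}\tilde N_0$, with $\|\tilde N\|_{L^2}\le C(\|R(0)\|_Y)\|\tilde N_0\|_{L^2}$, which is the assertion of the proposition.
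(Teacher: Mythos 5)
There is a genuine gap at the heart of your injectivity argument. You claim that after passing to the $k$-plane the vanishing-lemma quadratic form becomes coercive because the off-diagonal coefficient is $r_1=\frac{z}{z^2+1}r$ with $\|r_1\|_{L^\infty}\le\frac12$. But the jump matrix $\widetilde V_l$ is not conjugate-symmetric in its off-diagonal entries: the lower weight is $r_2=\frac{z^2+1}{z}\overline{r}=(z+1/z)^2\,\overline{r_1}\neq\overline{r_1}$, so both $r_1$ and $\overline{r_2}$ enter the cross terms and cannot be merged into a single term $2\,\mathrm{Re}\bigl(f_1\bar f_2\, r_1 e^{-2\theta}\bigr)$ as happened with $r$ in Proposition \ref{lemmaG}. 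Quantitatively, at $k=0$ (i.e.\ $z=\pm1$) one has $|r(\pm1)|=1$, hence $|r_1(0)|=\frac12$ but $|r_2(0)|=2$, while the diagonal entry $1-|r|^2$ of $\widetilde V_l$ vanishes there; the Hermitian part $\widetilde V_l+\widetilde V_l^{\dagger}$ at $k=0$ has the form $\begin{pmatrix} 0 & c\\ \bar c & 2\end{pmatrix}$ with $c\propto(r_1-\overline{r_2})e^{-2\theta}\neq0$, whose determinant is $-|c|^2<0$, so it is indefinite. Consequently the energy identity does not yield $0\ge\frac12\|\vec f\|_{L^2}^2$, nor even a pointwise nonnegative integrand near $k=0$, and the identity $r_1r_2=|r|^2\le1$ cannot restore positivity. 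The degeneracy $|r(\pm1)|=1$ has not been removed by the change of variable; it has merely been redistributed between $r_1$ and $r_2$, so a direct Zhou-type positivity argument on the $k$-plane fails.

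The paper closes injectivity by a different and simpler device that your proposal does not use: a pullback to the $z$-plane. If $\tilde f=(f_1,f_2)$ solves $\tilde{\mathcal{K}}\tilde f=\tilde f$, then by Proposition \ref{lemmaeo} the function $f(z)=\bigl((z+1/z)^{-1}f_1(z-1/z),\,f_2(z-1/z)\bigr)$ solves $\mathcal{K}f=f$ on the $z$-plane; the weight $(z+1/z)^{-1}$ in the first component exactly compensates the asymmetry, since $r_1=(z+1/z)^{-1}r$ and $r_2=(z+1/z)\overline{r}$. Triviality of the kernel then follows from the already-established Proposition \ref{lemmaG}, with no new positivity argument needed. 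Your Fredholm step (finite-rank pole contributions plus the Hilbert--Schmidt bound on $\tilde{\mathcal{K}}^{(2)}_1\tilde{\mathcal{K}}^{(1)}_1$ via Lemma \ref{lemmar1}) is sound and matches the paper, and the final resolvent deduction is routine once injectivity holds; but note that the paper additionally proves $\|\tilde{\mathcal{K}}^2\|\to0$ as $y\to\infty$, via the Fourier kernel estimate $\|\tilde K\|_{L^2\times L^2}\lesssim\langle y\rangle^{-1}\|\hat r_2\|_{L^2}\|\hat r_1\|_{L^{2,2}}$, which is what underwrites the bound $\|(I-\tilde{\mathcal{K}})^{-1}\|\le C(\|R(0)\|_Y)$ uniformly in $y$ --- a uniformity your sketch asserts but can no longer justify once the invalid coercivity constant is removed.
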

\begin{proof}
	Same as in Proposition \ref{lemmaG}, $\tilde{\mathcal{K}}^2$ is a compact operator, and $I-\tilde{\mathcal{K}}$ is injective.
	In fact, if a vector function	
$$\tilde{f}(k)=(f_1(k),f_2(k))$$
 is a solution of equation $\tilde{\mathcal{K}}f=f$, it then follows from Proposition \ref{lemmaeo}  that
 $$f(z)=((z+1/z)^{-1}f_1(z-1/z),f_2(z-1/z))$$
  is a solution of $\mathcal{K}f=f$. Thus the kernel of $\tilde{\mathcal{K}}$ must be trivial. So it is inferred that $(I-\tilde{\mathcal{K}})^{-1}$ exists as a  bounded operator.
	To obtain $\|\tilde{\mathcal{K}}^2\|\to0$ as $y\to\infty$, similarly it only needs to analyze the integral kernel of  $\mathcal{F}\tilde{K}^{(2)}_{1}\tilde{K}_{1}^{(1)}\mathcal{F}^{-1}$: for $f\in L^2(\mathbb{R})$,
	 \begin{align*}
	 	(\tilde{K}^{(2)}_{1}\tilde{K}_{1}^{(1)}\check{f})^\wedge (z) =\int_{\mathbb{R}}\tilde{K}(z,\lambda)f(\lambda)d\lambda,
	 \end{align*}
	 where
	 \begin{align*}
	 	\tilde{K}(z,\lambda)=X_+(z)\int_{-y}^{-\infty}	\hat{r}_2(z-s)\hat{r}_1(s-\lambda)ds.
	 \end{align*}
 Then it is concluded that as $y\to\infty$,
 \begin{align*}
 	\|	\tilde{K}\|_{L^2\times L^2}\lesssim \langle y\rangle ^{-1}\|	\hat{r}_2\|_{L^{2}}\|	\hat{r}_1\|_{L^{2,2}}\to 0.
 \end{align*}
	This completes the proof of Proposition \ref{pro5.3}.	
\end{proof}
In view of Proposition \ref{pro5.3}, it  also gives solvability of  $\tilde{\mu}$ for the RH problem \ref{RHP7}.
\begin{lemma}\label{corm3}
	For $R \in   Y$, $y\in\mathbb{R}^+$, there exists  a  unique  solution for the RH problem \ref{RHP7} with
	\begin{align}
		M^{(2)}_l(k)=(\eta-2,1)+\mathcal{C}^k(\tilde{\mu} \tilde{w})(k)+\sum_{j=1}^{{N_0}} \left(\frac{\tilde{d}_je^{y\text{Im}z_j}}{k-\bar{k}_j}\tilde{\mu}_2(\bar{k}_j),\frac{d_je^{y\text{Im}z_j}}{k-k_j}\tilde{\mu}_1(k_j)  \right) .
	\end{align}
	Moreover,
	\begin{align*}
		\parallel 	\tilde{\mu}-(\eta-2,1) \parallel_{L^2 }+\sum_{j=1}^{{N_0}}\big|\tilde{\mu}_1(k_j)-(\eta-2)\big|+\sum_{j=1}^{{N_0}}\big|\tilde{\mu}_2(\bar{k}_j)-1\big|\leqslant C(\|R\|_{Y}) \|\tilde{N}_0\|_{L^2}.
	\end{align*}

\end{lemma}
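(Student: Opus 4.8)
The plan is to obtain Lemma~\ref{corm3} as an essentially immediate consequence of Proposition~\ref{pro5.3} combined with the Beals--Coifman representation, so that almost all of the analytic work is already in place. First I would invoke the Beals--Coifman theorem, exactly as in the solvability argument preceding the RH problem~\ref{RHP7}: a solution of that vector RH problem is furnished by the stated Cauchy-operator formula if and only if the associated Fredholm integral equation for $\tilde{\mu}$ is solvable in $L^2(\mathbb{R})$, which under the substitution $\tilde{N}=(\tilde{\mu}_1-(\eta-2),\tilde{\mu}_2-1)^T$ is precisely the equation \eqref{tidN}. Since Proposition~\ref{pro5.3} already guarantees that $I-\tilde{\mathcal{K}}$ is invertible on $L^2(\mathbb{R})$ with $\|(I-\tilde{\mathcal{K}})^{-1}\|\leqslant C(\|R\|_{Y})$, the unique solution is $\tilde{N}=(I-\tilde{\mathcal{K}})^{-1}\tilde{N}_0$, whence $\|\tilde{N}\|_{L^2}\leqslant C(\|R\|_{Y})\|\tilde{N}_0\|_{L^2}$. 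Translating back through $\tilde{\mu}-(\eta-2,1)=\tilde{N}$ yields the $L^2$ part of the claimed bound, and inserting this $\tilde{\mu}$ into the Cauchy formula produces the desired $M^{(2)}_l(k)$.

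For uniqueness I would not argue through a matrix Liouville comparison, which is awkward for the row-vector normalization $(\eta-2,1)$, but instead use that the Beals--Coifman correspondence is a bijection between RH-problem solutions and solutions of the integral equation: since Proposition~\ref{pro5.3} produces a \emph{unique} $\tilde{\mu}$, the RH problem~\ref{RHP7} has a unique solution. Here it is worth recording that the normalizing row vector involves $\eta=\eta(0,y)=(m_0-1)/m_0$, which has already been shown to be bounded (see Lemma~\ref{lemma4.2} and the $W^{1,\infty}$ control of $\eta$), so that $\tilde{N}_0$ defined in \eqref{N0} is a genuine element of $L^2(\mathbb{R})$ and the right-hand side $C(\|R\|_{Y})\|\tilde{N}_0\|_{L^2}$ is finite.

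To control the discrete contributions $\tilde{\mu}_1(k_j)-(\eta-2)$ and $\tilde{\mu}_2(\bar{k}_j)-1$, I would evaluate the integral equation for $\tilde{N}$ at the poles $k=k_j$ and $k=\bar{k}_j$. Since $k_j=z_j-1/z_j$ is purely imaginary with $\operatorname{Im}k_j<0$ (because $z_j$ lies on the unit circle in the fourth quadrant) and $\bar{k}_j\in\mathbb{C}^+$, these points sit strictly off the real axis, and combining the equations gives an identity of the form $\tilde{N}(k_j)=\mathcal{C}^k(\cdots)(k_j)$ in direct analogy with \eqref{Nzj}. The functional $f\mapsto\mathcal{C}^k f(w)$ is bounded on $L^2(\mathbb{R})$ for $\operatorname{Im}w\neq0$, with norm controlled by $|\operatorname{Im}w|^{-1/2}$, so that $|\tilde{\mu}_1(k_j)-(\eta-2)|$ and $|\tilde{\mu}_2(\bar{k}_j)-1|$ are each bounded by $C(\operatorname{Im}k_j)\|\tilde{N}\|_{L^2}\leqslant C(\|R\|_{Y})\|\tilde{N}_0\|_{L^2}$, which finishes the estimate.

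The main obstacle is not located in this lemma at all but was already resolved in Proposition~\ref{pro5.3}: transporting the $z$-plane Fredholm theory of Proposition~\ref{lemmaG} to the $k$-plane through the symmetry identities of Proposition~\ref{lemmaeo}, and establishing compactness of $\tilde{\mathcal{K}}^2$ together with injectivity of $I-\tilde{\mathcal{K}}$. Granting those facts, the present statement is a bookkeeping corollary; the only genuine care needed is to track the row-vector normalization and the factor $\eta$ throughout, and to verify that the off-axis evaluation of the Cauchy operator at $k_j,\bar{k}_j$ indeed yields the stated bound in terms of $\|\tilde{N}_0\|_{L^2}$.
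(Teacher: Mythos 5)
Your proposal is correct and takes essentially the same route as the paper, which obtains Lemma \ref{corm3} as a direct corollary of Proposition \ref{pro5.3} via the Beals--Coifman correspondence: $\tilde{N}=(I-\tilde{\mathcal{K}})^{-1}\tilde{N}_0$ gives the $L^2$ bound, and the discrete-point estimates follow by evaluating the Cauchy integrals off the real axis at $k_j=2i\,\mathrm{Im}\,z_j\in\mathbb{C}^-$ and $\bar{k}_j\in\mathbb{C}^+$, exactly as in the $z$-plane estimates following \eqref{Nzj} and \eqref{esmu}. Your filling-in of the uniqueness argument through the bijectivity of the Beals--Coifman correspondence and the check that $\eta$ is bounded so that $\tilde{N}_0\in L^2(\mathbb{R})$ matches the paper's (largely implicit) reasoning.
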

The  estimates in the following lemma   are decisively  arriving at the $y$-integrability.
\begin{lemma}\label{lemma13}
	For any $f,g \in W^{1,2}(\mathbb{R})$, it appears that
	\begin{align}
		&\sup_{y\in\mathbb{R}^+} \parallel  \mathcal{C}_\pm(fe^{\mp i\cdot y/2})(\cdot)\parallel_{L^\infty}\leqslant C 	\parallel f  \parallel_{W^{1,2}},\\
		&\sup_{y\in\mathbb{R}^+} \parallel \langle y\rangle \mathcal{C}_\pm(fe^{\mp i\cdot y/2})(\cdot)\parallel_{L^2}\leqslant C	\parallel f   \parallel_{W^{1,2}},\\
		&\sup_{y\in\mathbb{R}^+} \langle y\rangle^2\Big\vert \int_{\mathbb{R}}f(s)e^{\pm is y/2}\mathcal{C}_\pm(-ge^{\mp ik y/2}) (s) ds\Big\vert\leqslant C 	\parallel g  \parallel_{W^{1,2}}\parallel f  \parallel_{W^{1,2}},\label{513}
		\end{align}
	where $\langle y\rangle=(1+y^2)^{1/2}$ and $C$ is a positive constant. Moreover, when $f,g\in W^{2,2}$
		\begin{align}
		&\sup_{y\in\mathbb{R}^+} \parallel \langle y\rangle^2 \mathcal{C}_\pm(fe^{i\cdot y/2})(\cdot)\parallel_{L^2}\leqslant C	\parallel f   \parallel_{W^{2,2}},\\
		&\sup_{y\in\mathbb{R}^+} \langle y\rangle^4\Big\vert \int_{\mathbb{R}^+}g(s)e^{-is y/2}\mathcal{C}_+(-fe^{ik y/2})(s) ds\Big\vert\leqslant C 	\parallel f  \parallel_{W^{2,2}}\parallel g \parallel_{W^{2,2}},
	\end{align}
\end{lemma}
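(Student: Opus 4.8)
The plan is to reduce every inequality to a tail estimate for the Fourier transform of $f$ (and $g$), exploiting the spectral description $(\mathcal{C}_\pm h)^\wedge(\zeta)=X_\pm(\zeta)\hat h(\zeta)$ together with Plancherel's identity. Throughout I would use that, with the normalization $\hat f(\zeta)=\int_{\mathbb{R}}f(s)e^{-2\pi is\zeta}ds$, multiplication by the oscillatory factor is a translation on the Fourier side, namely $(fe^{\mp i(\cdot)y/2})^\wedge(\zeta)=\hat f(\zeta\pm\tfrac{y}{4\pi})$, and that for integer $k$ one has the norm equivalence $\|f\|_{W^{k,2}}\simeq\|\langle\cdot\rangle^k\hat f\|_{L^2}$ (since $W^{k,2}=H^k$). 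By the reflection symmetry $s\mapsto -s$ and complex conjugation it suffices to treat one choice of signs; I would carry out the representative case of $\mathcal{C}_+$ paired with $e^{-i(\cdot)y/2}$, writing $\tau:=\tfrac{y}{4\pi}>0$.

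First I would dispatch the two single-factor bounds. From the shift identity, $(\mathcal{C}_+(fe^{-i(\cdot)y/2}))^\wedge(\zeta)=X_+(\zeta)\hat f(\zeta+\tau)$, so inverting the Fourier transform gives the pointwise bound $|\mathcal{C}_+(fe^{-i(\cdot)y/2})(z)|\le\int_0^\infty|\hat f(\zeta+\tau)|\,d\zeta\le\|\hat f\|_{L^1}$; since $\|\hat f\|_{L^1}\le\|\langle\cdot\rangle^{-1}\|_{L^2}\|\langle\cdot\rangle\hat f\|_{L^2}\le C\|f\|_{W^{1,2}}$, this yields the $L^\infty$ bound uniformly in $y$. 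For the weighted $L^2$ bound, Plancherel gives
\begin{align*}
\|\mathcal{C}_+(fe^{-i(\cdot)y/2})\|_{L^2}^2=\int_0^\infty|\hat f(\zeta+\tau)|^2\,d\zeta=\int_{\tau}^{\infty}|\hat f(u)|^2\,du,
\end{align*}
and the crucial point is that the projection $X_+$ confines the integral to the tail $\{u\ge\tau\}$. On this set $\langle u\rangle^{-2}\le C\langle y\rangle^{-2}$, so the tail is bounded by $C\langle y\rangle^{-2}\|\langle\cdot\rangle\hat f\|_{L^2}^2\le C\langle y\rangle^{-2}\|f\|_{W^{1,2}}^2$, which is the stated $\langle y\rangle^{-1}$ decay. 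The $W^{2,2}$ version is identical, using instead $\langle u\rangle^{-4}\le C\langle y\rangle^{-4}$ on $\{u\ge\tau\}$ to gain the factor $\langle y\rangle^{-2}$.

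For the bilinear estimates I would convert the pairing to the Fourier side via $\int uv\,ds=\int\hat u(-\zeta)\hat v(\zeta)\,d\zeta$. Taking $u=fe^{i(\cdot)y/2}$ and $v=\mathcal{C}_+(-ge^{-i(\cdot)y/2})$, the two shifts and the projection combine so that
\begin{align*}
\int_{\mathbb{R}}f(s)e^{isy/2}\,\mathcal{C}_+(-ge^{-i(\cdot)y/2})(s)\,ds=-\int_{\tau}^{\infty}\hat f(-u)\hat g(u)\,du,
\end{align*}
where the essential feature is that \emph{both} transforms are now evaluated over the same tail $\{u\ge\tau\}$; this alignment is exactly what the matched $\pm/\mp$ signs in the statement produce. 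Cauchy--Schwarz together with the tail bound applied to each factor gives $C\langle y\rangle^{-1}\|f\|_{W^{1,2}}\cdot\langle y\rangle^{-1}\|g\|_{W^{1,2}}$, i.e.\ the $\langle y\rangle^{-2}$ estimate; in the $W^{2,2}$ case each factor instead contributes $\langle y\rangle^{-2}$, producing the $\langle y\rangle^{-4}$ bound.

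The routine ingredients (Plancherel, Cauchy--Schwarz, boundedness of $\mathcal{C}_\pm$ on $L^p$) are harmless; the only real care is the bookkeeping of signs and shift directions. The main obstacle is to verify that, after the translation induced by the oscillation, the projection $X_\pm$ selects the decaying tail rather than a fixed neighbourhood of the origin --- and, in the bilinear case, that the two opposite translations align the supports of $\hat f$ and $\hat g$ on the \emph{same} half-line. Once this alignment is confirmed, all the claimed powers of $\langle y\rangle$ follow mechanically from the weighted $L^2$-tail decay encoded in the $W^{k,2}$ norms.
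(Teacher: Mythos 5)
Your proof is correct and takes essentially the same route as the paper's: both pass to the Fourier side where $\mathcal{C}_\pm$ becomes the cutoff $X_\pm$, use the translation by $y/(4\pi)$ induced by the oscillatory factor to confine the Plancherel integral to the tail $\{|u|\geq y/(4\pi)\}$, and extract the powers of $\langle y\rangle$ from the weighted bounds $\|\langle\cdot\rangle^k\hat f\|_{L^2}\lesssim \|f\|_{W^{k,2}}$, with Cauchy--Schwarz on the aligned tails yielding the bilinear estimates. The only difference is expository: you make the tail alignment in the bilinear pairing explicit, which the paper leaves implicit in its identity $-\int_{y/(4\pi)}^{+\infty}\check f(k)\hat g(k)\,dk$ and the remark that the rest follows ``in the same calculation.''
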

\begin{proof}
	Take $\mathcal{C}_-$ as an example. It is noted  that
	\begin{align*}
		\mathcal{C}_-(fe^{ik y/2})(k)=- (X_-(fe^{i\cdot y/2})^\wedge)^\vee (k).
	\end{align*}
So
	\begin{align*}
		\parallel  \mathcal{C}_\pm(fe^{\mp i\cdot y/2})(\cdot)\parallel_{L^\infty}\leqslant \parallel  X_\pm(fe^{i\cdot y/2})^\wedge\parallel _{L^1} \leqslant \parallel\hat{f}\parallel _{L^{2,1}}.
	\end{align*}
On the other hand,
\begin{align*}
		&\parallel  \mathcal{C}_\pm(fe^{\mp i\cdot y/2})(\cdot)\parallel_{L^2}=\| X_\pm(fe^{i\cdot y/2})^\wedge\parallel_{L^2}=\left( \int_{-\infty}^0\big|\hat{f}(z-\frac{y}{4\pi})\big|^2dz\right) ^{1/2}\nonumber\\
		&=\left( \int_{-\infty}^{-\frac{y}{4\pi}}\big|\hat{f}(z)\big|^2dz\right) ^{1/2}\lesssim \langle y\rangle^{-1}\left( \int_{-\infty}^{-\frac{y}{4\pi}}\big|\langle z\rangle\hat{f}(z)\big|^2dz\right) ^{1/2}\leqslant \langle y\rangle^{-1} \parallel f \parallel_{W^{1,2}} .
\end{align*}
	The last step  follows from that $\langle y\rangle\lesssim \langle s\rangle$ for $y\geq 0$. And for \eqref{513}, we have
	\begin{align*}
		 &\int_{\mathbb{R}}f(s)e^{is y/2}\mathcal{C}_+(-ge^{-ik y/2} )(s) ds=-\int_{\mathbb{R}}\int_{\mathbb{R}}f(s)e^{is y/2}  X_+(k)\hat{g}(k+\frac{y}{4\pi})e^{2\pi iks}dkds\\
		 &=-\int_{\frac{y}{4\pi}}^{+\infty}\check{f}(k)\hat{g}(k)ds.
	\end{align*}
Then \eqref{513} is accomplished in the same calculation.
	 And when $f\in W^{2,2}$, the above inequality becomes
	\begin{align*}
		&\parallel  \mathcal{C}_\pm(fe^{\mp i\cdot y/2})(\cdot)\parallel_{L^2}\lesssim \langle y\rangle^{-2}\left( \int_{-\infty}^{-\frac{y}{4\pi}}\big|\langle z\rangle^2\hat{f}(z)\big|^2dz\right) ^{1/2}\leqslant \langle y\rangle^{-2} \parallel f \parallel_{W^{2,2}},\\
		&\big|\int_{\mathbb{R}}f(s)e^{is y/2}\mathcal{C}_+(-ge^{-ik y/2})(s)ds\big|\leqslant \langle y\rangle^{-4} \parallel f \parallel_{W^{2,2}}\parallel g \parallel_{W^{2,2}},
	\end{align*}
	thereby concluding the proof of Lemma \ref{lemma13}.
\end{proof}
Applying Lemma \ref{lemma13} finally leads to for $j=0,1,2$,
\begin{align*}
	\langle y\rangle^{j}	\|\tilde{N}_0\|_{L^2}\leqslant C( \|R\|_{Y}).
\end{align*}
Combining with Lemma \ref{corm3}, it is deduced that
\begin{align}
		\langle y\rangle^{j}\left[ \parallel 	\tilde{\mu}-(\eta-2,1) \parallel_{L^2 }+\sum_{j=1}^{{N_0}}\big|\tilde{\mu}_1(k_j)-(\eta-2)\big|+\sum_{j=1}^{{N_0}}\big|\tilde{\mu}_2(\bar{k}_j)-1\big|\right] \leqslant C( \|R\|_{Y}).\label{lemma14}
\end{align}
Consider the $y$-derivative of $	M^{(2)}_l$. Invoking Lemma \ref{corm3}, it is adduced that
\begin{align*}
\partial_y	M^{(2)}_l(k)=&	(\partial_y\eta,0)+\mathcal{C}^k(\partial_y\tilde{\mu} \tilde{w})(k)+\mathcal{C}^k(\tilde{\mu} \partial_y\tilde{w})(k)+\sum_{j=1}^{{N_0}} \left(\frac{\tilde{d}_je^{y\text{Im}z_j}}{k-\bar{k}_j}\partial_y\tilde{\mu}_2(\bar{k}_j),\frac{d_je^{y\text{Im}z_j}}{k-k_j}\partial_y\tilde{\mu}_1(k_j)  \right)\nonumber\\
&+\sum_{j=1}^{{N_0}} \left(\frac{\text{Im}z_j\tilde{d}_je^{y\text{Im}z_j}}{k-\bar{k}_j}\tilde{\mu}_2(\bar{k}_j),\frac{\text{Im}z_jd_je^{y\text{Im}z_j}}{k-k_j}\tilde{\mu}_1(k_j)  \right),
\end{align*}
where  $\partial_y\tilde{\mu}$ satisfies  that
\begin{align*}
	\partial_y\tilde{N}(k)=&\left( \partial_y\tilde{\mu}_{1}(k)-\partial_y\eta,\partial_y\tilde{\mu}_{1}(k_1)-\partial_y\eta,...,\partial_y\tilde{\mu}_{1}(k_{{N_0}})-\partial_y\eta\right) ^T\nonumber\\
	&\otimes\left(  \partial_y\tilde{\mu}_{2}(k),\partial_y\tilde{\mu}_{2}(\bar{k}_1),...,\partial_y\tilde{\mu}_{2}(\bar{k}_{{N_0}}) \right)^T ,\\
	\partial_y\tilde{N}(k)=& \partial_y\tilde{N}_0(k)+(\partial_y\tilde{K})\tilde{N}(k)+\tilde{K}(\partial_y\tilde{N})(k),\hspace{0.3cm} \partial_y\tilde{K}=\left(\begin{array}{cc}
		0  & \partial_y\tilde{A}\\
	\partial_y	\tilde{B}	& 0
	\end{array}\right),\\
	\partial_y\tilde{N}_0(k)=&\mathcal{C}_-^k( -(\frac{ik(\eta-2)}{2}+\partial_y\eta)r_2e^{-2\theta}) (z)\\
	&\otimes\left(((\eta-2)\text{Im}z_1+\partial_y\eta)\frac{\tilde{d}_1e^{\text{Im}z_1y}}{k-\bar{k}_1},...,((\eta-2)\text{Im}z_{{N_0}}+\partial_y\eta)\frac{\tilde{d}_{{N_0}}e^{\text{Im}z_{{N_0}}y}}{k-\bar{k}_{{N_0}}} \right)^T\nonumber\\
	&\otimes\left(\mathcal{C}_+^k(-ikr_1e^{2\theta})(z),\frac{\text{Im}z_1d_1e^{\text{Im}z_1y}}{k-k_1},...,\frac{\text{Im}z_{{N_0}}d_{{N_0}}e^{\text{Im}z_{{N_0}}y}}{k-k_{{N_0}}} \right)^T.
\end{align*}
It in turn  implies  from $\|\partial_y\tilde{K}\|\lesssim  \|R(0)\|_{Y}$ that $\partial_y\tilde{N}_0+(\partial_y\tilde{K})\tilde{N} \in L^2 $ and
\begin{align*}
\partial_y\tilde{N}(k)=&\left( I-\tilde{K}\right)^{-1}(\partial_y\tilde{N}_0+(\partial_y\tilde{K})\tilde{N} )(k),
\end{align*}
where
\begin{align*}
		\langle y\rangle^{j}\|\partial_y\tilde{N}_0+(\partial_y\tilde{K})\tilde{N}\|_{L^2}\leqslant C(  \|R\|_{Y}),
\end{align*}
for $j=0,1$.
Therefore, same as the above estimates,   the following norm of the variable $k$ is bounded with
\begin{align}
		\langle y\rangle^{j} (\parallel 	\partial_y\tilde{\mu}-(\partial_y\eta,0) \parallel_{L^2 }+\sum_{j=1}^{{N_0}}\big|\partial_y\tilde{\mu}_1(k_j)\partial_y\eta\big|+\sum_{j=1}^{{N_0}}\big|\partial_y\tilde{\mu}_2(\bar{k}_j)\big|) \leqslant C( \|R\|_{Y}).
\end{align}
And for $y\in\mathbb{R}^-$, $M^{(2)}_r$ admits the  properties same as $M^{(2)}_l$.

  Because the estimate in the above section is under the scaling transformation of $y$, the  equivalency between the integral norm in $\mathbb{R}_x$ and  $\mathbb{R}_y$ is first given to arrive at the $L^2$-estimate with respect to  the variable $x$.
\begin{lemma}\label{lemmaytox}
	For any function $h(x(\cdot))\in L^p(\mathbb{R}_y)$, $1\leqslant p\leqslant \infty$, there appears that $h\in L^p(\mathbb{R}_x)$.
\end{lemma}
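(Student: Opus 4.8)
The plan is to prove this purely by the change of variables between the spatial coordinates $x$ and $y$, using the fact that the reparametrization $x\mapsto y(x)$ has Jacobian exactly $m$. Recall from the definition $y(x)=x+\int_{+\infty}^{x}(m(s)-1)\,ds$ that $\tfrac{dy}{dx}=1+(m(x)-1)=m(x)$. First I would record a two-sided bound on $m$: by the Remark following Theorem \ref{last} one has $\inf_{x\in\mathbb{R}}m(t,x)>0$, while the estimate \eqref{lemmam} gives $\sup_{x\in\mathbb{R}}|m(t,x)-1|\leqslant C$ (the supremum over $y$ and over $x$ coincide, being the supremum of the same set of values). Hence for each fixed $t\in[0,T)$ there exist constants $0<c\leqslant C'<\infty$, depending only on $T$ and $\|R(0)\|_Y$, with $c\leqslant m(x)\leqslant C'$ for all $x\in\mathbb{R}$.

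Next I would verify that $x\mapsto y(x)$ is an increasing homeomorphism of $\mathbb{R}$ onto $\mathbb{R}$. Since $y'=m\geqslant c>0$, the map is $C^1$ and strictly increasing; and for $x_2>x_1$ we have $y(x_2)-y(x_1)=\int_{x_1}^{x_2}m\,ds\geqslant c(x_2-x_1)$, which forces $y(x)\to\pm\infty$ as $x\to\pm\infty$. Therefore $y$ is a bijection with a well-defined inverse $x=x(y)$ and $dx=m(x(y))^{-1}\,dy$. With the substitution rule in hand, the case $1\leqslant p<\infty$ is immediate:
\begin{align*}
\int_{\mathbb{R}_x}|h(x)|^p\,dx=\int_{\mathbb{R}_y}|h(x(y))|^p\,\frac{dy}{m(x(y))}\leqslant\frac{1}{c}\int_{\mathbb{R}_y}|h(x(y))|^p\,dy<\infty,
\end{align*}
so that $h\in L^p(\mathbb{R}_x)$ with $\|h\|_{L^p(\mathbb{R}_x)}\leqslant c^{-1/p}\|h(x(\cdot))\|_{L^p(\mathbb{R}_y)}$. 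For $p=\infty$, since $x\mapsto y$ is a homeomorphism it carries null sets to null sets in both directions, so the essential suprema agree and $\|h\|_{L^\infty(\mathbb{R}_x)}=\|h(x(\cdot))\|_{L^\infty(\mathbb{R}_y)}$.

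The only point genuinely requiring care — and the step I would treat as the crux — is the two-sided control of the Jacobian $m$, ensuring it neither vanishes nor blows up. This is precisely what makes the measures $dx$ and $dy=m\,dx$ mutually absolutely continuous with bounded Radon--Nikodym derivatives, and it is furnished by the positivity $\inf_{x}m>0$ together with the uniform $L^\infty$ bound \eqref{lemmam}. Everything beyond this is the elementary substitution formula, so once the bounds on $m$ are invoked the proof is short.
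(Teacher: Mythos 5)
Your mechanics are fine, and in spirit this is the same Jacobian-bounded change of variables the paper uses, but the way you control the Jacobian has a genuine circularity problem. You bound $dx/dy=1/m$ from above by invoking $\inf_{x\in\mathbb{R}}m(t,x)>0$ from the Remark following Theorem \ref{last}. That Remark, however, is stated as a consequence of Theorem \ref{last} (it explicitly combines the blow-up criterion of \cite{qu7} with the global existence just proved), while Theorem \ref{last} is proved via Lemma \ref{lemma55}, whose proof uses precisely Lemma \ref{lemmaytox} --- the lemma at hand. Moreover the positivity result of \cite{qu7} lives in the $H^s$, $s>5/2$, local well-posedness theory and is not available for the $W^{1,\infty}$ solution produced by the inverse-scattering construction at this point of the argument. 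So the crucial lower bound on $m$, as you derive it, is not a legitimate input here.

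The repair is cheap and is exactly what the paper does: the needed bound on the Jacobian comes from the Riemann--Hilbert estimates, not from PDE positivity. By \eqref{res2} one has $x(y)=y+2\log\alpha_1(y)$, hence
\begin{align*}
\frac{dx}{dy}=1+\frac{2\,\partial_y\alpha_1}{\alpha_1}\ \Big(=\frac{1}{m}=1-\eta\Big),
\end{align*}
and Lemma \ref{lemma4.2} bounds $\|\alpha_1^{-1}\|_{L^\infty}$, $\|\alpha_1-1\|_{W^{2,\infty}}$ and $\|\eta\|_{W^{1,\infty}}$ by $C(T,\|R(0)\|_Y)$, so $|dx/dy|\leqslant 1+\|\eta\|_{L^\infty}$ with no sign information on $m$ whatsoever; the inequality $y+2-2\|1/\alpha_1\|_{L^\infty}\leqslant x\leqslant y+2\|\alpha_1\|_{L^\infty}$ then shows $x\to\pm\infty$ with $y$, which replaces your homeomorphism argument (your $p=\infty$ case and the substitution computation for $p<\infty$ then go through verbatim, with $1/c$ replaced by $1+\|\eta\|_{L^\infty}$). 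Note also that \eqref{lemmam} only bounds $|m-1|$ from above, so even apart from the circularity it never supplies the lower bound on $m$ that your estimate actually consumes.
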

\begin{proof}
	When $p=\infty$, the result is obvious. On the other hand, it follows from \eqref{res2} that $dx=\left( 1+ 2\frac{ \partial_ y\alpha_1}{\alpha_1}\right) dy.$
	For $1\leqslant p<\infty$, from Lemma \ref{lemma4.2} it is ascertained that
	\begin{align*}
		y+2-2\|1/\alpha_1\|_{L^\infty}\leqslant x\leqslant y+2\|\alpha_1\|_{L^\infty}.
	\end{align*}
	Therefore,
	\begin{align}
		\int_{\mathbb{R}} |h(x)	|dx\leqslant 	\int_{\mathbb{R}} |h(x(y))	|\left( 1+ \Big|2\frac{ \frac{\partial }{\partial y}\alpha_1}{\alpha_1}\Big|\right) dy<\infty.\label{eew}
	\end{align}
\end{proof}
We now establish the  $L^2$-estimate on  the solution  $m$   by using the reconstruction formula in \eqref{res3}.

\begin{lemma}\label{lemma55}
  Assume that $ m_0(x)  > 0, \forall  x\in \mathbb{R} $ such that $m_0 -1\in  H^{2,1}(\mathbb{R})\cap H^{1, 2} (\mathbb{R})$. Then for every $T > 0, t\in[0,T)$,
 we have the following  estimates
	\begin{align}
		&\parallel m_x \parallel_{L^{2,2}(\mathbb{R})},\ \parallel m_{xx} \parallel_{L^{2,1}(\mathbb{R})},\  \parallel m-1\parallel_{L^{2,2}(\mathbb{R})}  \leqslant
 C(T,\|R(0)\|_{Y}). \label{wwdj}
	\end{align}
\end{lemma}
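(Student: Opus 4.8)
The plan is to read off the solution from the large-$k$ behaviour of the second RH problem through the reconstruction formula \eqref{res3}, and then to transfer the resulting bounds from the scaled variable $y$ back to $x$. As a preliminary reduction, Lemma \ref{the1} together with the preservation of positivity $\inf_{x}m(t,\cdot)>0$ shows that $m(t,\cdot)$ is bounded above and below uniformly for $t\in[0,T)$; hence $m^{3}$ is bounded and it is harmless to pass between $\tfrac{2im_x}{m^3}$ and $m_x$. Moreover, by \eqref{res2} and Lemma \ref{lemma4.2} the change of variables $x=y+2\log\alpha_1$ has bounded and bounded-below Jacobian and satisfies $\langle x\rangle\sim\langle y\rangle$, so by Lemma \ref{lemmaytox} it suffices to bound $\tfrac{2im_x}{m^3}$, the density $\tfrac{m^2-1}{m}\sim m-1$, and $m_{xx}$ in the weighted spaces $L^{2,2}(\mathbb R_y)$, $L^{2,2}(\mathbb R_y)$ and $L^{2,1}(\mathbb R_y)$ respectively (treating $y\ge0$ via $M^{(2)}_l$; the case $y\le0$ is identical with $M^{(2)}_r$).

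First I would extract the leading coefficients. Letting $k\to\infty$ in the representation of $M^{(2)}_l$ from Lemma \ref{corm3} and using $e^{2\theta}=e^{-iky/2}$, the first component yields
\begin{align*}
\frac{2im_x}{m^3}=\frac{1}{2\pi i}\int_{\mathbb R}\tilde\mu_2(s)r_2(s)e^{-isy/2}\,ds+\sum_{j=1}^{N_0}\tilde d_je^{y\text{Im}z_j}\tilde\mu_2(\bar k_j),
\end{align*}
while the second component reproduces $b_+=-\tfrac{1}{2\pi i}\int_{\mathbb R}\tilde\mu_1 r_1 e^{isy/2}\,ds+\sum_j d_je^{y\text{Im}z_j}\tilde\mu_1(k_j)$, whence $m-1$ is recovered up to a bounded factor from $\partial_x b_+=\tfrac i2\tfrac{m^2-1}{m}$. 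In each case I would split the integrand into the discrete/soliton part, the leading ``free'' part obtained by replacing $\tilde\mu$ with its $k$-independent value $(\eta-2,1)$, and the genuinely interacting part carrying $\tilde N=\tilde\mu-(\eta-2,1)$. The soliton terms decay like $e^{y\text{Im}z_j}$ with $\text{Im}z_j<0$ and, by the weighted bounds \eqref{lemma14} and their $\partial_y$ analogues, are Schwartz-like in $y$ on $\mathbb R^+$; the free terms are plain oscillatory Fourier integrals of $r_1,r_2$ and, by Plancherel together with $r_1\in H^{2,2}$, $r_2\in W^{2,2}$ (Lemma \ref{lemmar1}), lie in the required weighted $L^2$ spaces (two $y$-weights being traded for two $k$-derivatives).

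The heart of the matter, and the step I expect to be the main obstacle, is the interacting part. Substituting the Fredholm equation \eqref{tidN} exhibits $\tilde N_2$ as $\mathcal{C}^k_+(\tilde\mu_1 r_1 e^{iky/2})$ plus finite-rank terms, so each interacting contribution becomes a bilinear oscillatory integral of the form $\int r_2 e^{-iky/2}\,\mathcal{C}^k_+(-f e^{iky/2})\,dk$ to which Lemma \ref{lemma13} applies. For the weight-one estimate needed for $m_{xx}\in L^{2,1}$ the $W^{1,2}$ version of Lemma \ref{lemma13}, giving $\langle y\rangle^{-2}$ pointwise decay, already suffices. The delicate cases are the weight-two estimates for $m_x$ and $m-1$: a crude Cauchy--Schwarz bound there only yields $\langle y\rangle^{-2}$ decay, which is not square-integrable against the weight $\langle y\rangle^{2}$, so one must retain the oscillation of \emph{both} exponentials. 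The leading interacting term is controlled by the $W^{2,2}$ bilinear estimate of Lemma \ref{lemma13}, which produces the decisive $\langle y\rangle^{-4}$ decay, while the remaining $\tilde N$-quadratic contributions are absorbed by re-substituting \eqref{tidN} once more and invoking the smallness $\|\tilde{\mathcal K}^2\|\to0$ from Proposition \ref{pro5.3} together with the weighted resolvent bounds. It is precisely here that the difficulty flagged in the introduction, namely $|r(\pm1)|=1$ and hence the loss of a contraction/Lipschitz gain, forces the use of these tailored oscillatory estimates rather than a soft resolvent argument.

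Finally, for $m-1$ and $m_{xx}$ I would differentiate the representations in $y$, using the $y$-derivative formula for $M^{(2)}_l$ and the weighted bounds on $\partial_y\tilde\mu$ established just before Lemma \ref{lemmaytox}; the extra factor $k$ produced by $\partial_y e^{\pm iky/2}$ is absorbed by the spare weight in $r_1\in H^{2,2}$, $r_2\in H^{1,1}$ (Lemma \ref{lemmar1}). Collecting the soliton, free and interacting contributions gives $\langle y\rangle^{2}\tfrac{2im_x}{m^3}$, $\langle y\rangle^{2}\partial_y b_+$ and $\langle y\rangle\,m_{xx}\in L^2(\mathbb R_y)$, and transferring back to $x$ via Lemma \ref{lemmaytox} and the boundedness of $m,\,1/m,\,\alpha_1,\,1/\alpha_1$ yields the three bounds in \eqref{wwdj}, with constants of the form $C(T,\|R(0)\|_Y)$ by Lemma \ref{pror}.
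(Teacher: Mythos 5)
Your treatment of $m_x$ and $m_{xx}$ follows the paper's proof in all essentials: the same reconstruction via \eqref{res3}, the same three-way split into discrete, free and interacting parts, the $W^{2,2}$ bilinear estimate of Lemma \ref{lemma13} for the weight-two bound on $m_x$, and the $W^{1,2}$ version (with $kr_2\in W^{1,2}$) plus the Jacobian identity $m_{xx}=\alpha_1^{-1}(\alpha_1+2\partial_y\alpha_1)\partial_y m_x$ for $m_{xx}\in L^{2,1}$. One remark: for the genuinely interacting term you do not need to re-substitute \eqref{tidN} or invoke the decay $\|\tilde{\mathcal K}^2\|\to0$ of Proposition \ref{pro5.3}; the paper handles it by a single H\"older estimate, pairing the weight-two resolvent bound \eqref{lemma14} on $\tilde\mu-(\eta-2,1)$ (which carries $\langle y\rangle^2$, since $\tilde{\mathcal K}$ acts only in $k$ and commutes with weights in $y$) against the weight-two Cauchy-projection bound $\|\langle y\rangle^2\mathcal C_\pm(r_2e^{-i\cdot y/2})\|_{L^2}\lesssim\|r_2\|_{W^{2,2}}$ of Lemma \ref{lemma13}, which already yields $\langle y\rangle^{-4}$ pointwise decay.

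The genuine gap is in your argument for $m-1\in L^{2,2}$. You propose to differentiate the representation of $b_+$ in $y$ and to prove $\langle y\rangle^2\partial_y b_+\in L^2(\mathbb R_y)$, identifying $\partial_x b_+=\frac{m^2-1}{m}$. This step fails with the estimates available: the weighted bounds on $\partial_y\tilde\mu$ established before Lemma \ref{lemmaytox} hold only for weights $\langle y\rangle^j$ with $j=0,1$, so H\"older on the term $\int\partial_y\tilde N_1\,r_1e^{iky/2}\,ds$ yields at best $\langle y\rangle^{-1}$ decay, which is not square-integrable against $\langle y\rangle^2$. Pushing the resolvent bound to weight two would require, via Lemma \ref{lemma13}, that $kr_2\in W^{2,2}$ — but Lemma \ref{lemmar1} gives only $r_2\in W^{2,2}\cap H^{1,1}$, hence $kr_2\in W^{1,2}$ and no control of $k\partial_k^2 r_2$ in $L^2$ (the asymmetry $r_2(k)=\frac{z^2+1}{z}\overline{r(z)}$ versus $r_1(k)=\frac{z}{z^2+1}r(z)$ is exactly the obstruction). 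The paper circumvents this by never differentiating the representation of $b_+$: it proves only $b_+\in L^{2,2}$ directly, and then extracts $m-1$ through the pointwise identities $\partial_x b_+=\frac{m^2-1}{m}$ and $\partial_x^2 b_+=\frac{m^2+1}{m^2}m_x$ combined with the Sobolev--Gagliardo--Nirenberg inequality, applied successively to $b_+$, $xb_+$ and $x^2b_+$: since $b_+\in L^2$ and $\partial_x^2 b_+\in L^2$ (from the already-proved bound on $m_x$), interpolation gives $\partial_x b_+\in L^2$, i.e.\ $m-1\in L^2$, and iterating with the weights $x$, $x^2$ bootstraps to $m-1\in L^{2,2}$. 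You should replace your differentiation step by this interpolation argument (or supply the missing weight-two bound on $\partial_y\tilde N$, which the stated regularity of $r_2$ does not permit).
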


\begin{proof}
It is known from  Lemma \ref{lemmaytox}  that the estimates in \eqref{wwdj} are equivalent to
  those of integral norms  on the variable $y$.
 Applying Lemma \ref{r1}  and  Lemma \ref{pror} gives  that    $R( t ) \in   Y$ for every $t \in [0, T)$.    Then
  from the RH problem \ref{RHP7} and RH problem \ref{rhp5},  their  solutions $M^{(2)}_l(z;t,y)$   on  $y\in\mathbb{R}^+$ and $M^{(2)}_r(z;t,y)$ on $y\in\mathbb{R}^-$
    exist respectively.
    Resembling the procedure in Section \ref{subsec3.3}, the reconstruction formula \eqref{res3} is also practicable for every $t\in [0, T)$.

    The first step is to prove $m_x,\ m_{xx},\ b_+\in L^{2,2}(\mathbb{R})$. We only give the details of the case $L^{2,2}(\mathbb{R}^+),$ since the case  $y\in\mathbb{R}^-$ can be shown in a similar way.
For  $y\in\mathbb{R}^+$,
by using Lemma \ref{corm3} and    \eqref{res3},  it holds  that
	\begin{align}
		m_x&=\lim_{k\to \infty}k (M_{l,1}^{(2)}-(\eta-2))\nonumber  \\
&= \frac{1}{2\pi i} \int_{\mathbb{R}} (\tilde{\mu}_2(s)-1 ) r_2(s)e^{2\theta}ds
+ \frac{1}{2\pi i} \int_{\mathbb{R}}   r_2(s)e^{2\theta}ds
 +\sum_{j=1}^{{N_0}}\tilde{d}_je^{y\text{Im}z_j}\tilde{\mu}_2(\bar{k}_j).\label{resc1}
	\end{align}
For the first integral, it is inferred from \eqref{tidN} that
	\begin{align*}
		&\int_{\mathbb{R}} (\tilde{\mu}_2(s)-1) r_2(s)e^{2\theta}ds  =  \int_{\mathbb{R}}r_2(s)e^{is y/2}\mathcal{C}_+(-r_1e^{ik y/2}) (s) ds\\
		&+\int_{\mathbb{R}}r_2(s)e^{is y/2}\mathcal{C}_+((\tilde{\mu}_2(s;y)-1)r_1e^{-ik y/2})(s)  ds +\int_{\mathbb{R}}\frac{d_je^{y\text{Im}z_j}}{s-k_j}\tilde{\mu}_1(k_j)r_2(k)e^{2\theta}ds \\
		&\triangleq I_1(y)+I_2(y)+I_3(y).
	\end{align*}
	 Note that
	 \begin{align}
	 	|I_3|\leqslant |d_j|e^{y\text{Im}z_j}|\tilde{\mu}_1(k_j)|\int_{\mathbb{R}}\frac{|r_2(k)|}{\sqrt{s^2+\text{Im}k_j^2}}ds \leqslant  C(T,\|R(0)\|_{Y})e^{y\text{Im}z_j},
	 \end{align}
 which leads to $I_3(\cdot)\in L^{2,2}(\mathbb{R}^+)$ immediately.
	   It also implies from  Lemma \ref{lemma13}  that  $I_1(\cdot)\in L^{2,2}(\mathbb{R}^+)$.
From  Lemma \ref{lemma13},  Lemma \ref{lemmar1} and  \eqref{lemma14},  applying the   H\"older inequality yields
	\begin{align*}
		\Big|\langle y\rangle^4 I_2(y)\Big|&=\Big|\langle y\rangle^2\int_{\mathbb{R}}(\tilde{\mu}_2(s;y)-1)r_1(s)e^{is y/2}\mathcal{C}_+(r_2e^{-ik y/2})(s) ds\Big|\\
		&\leqslant \parallel \langle y\rangle^2 (\tilde{\mu}_2(\cdot;y)-1)r_1(\cdot) \parallel_{L^2}  \Big{\Vert} \langle y\rangle^2 \mathcal{C}_+(r_2e^{-i\cdot y/2}) (\cdot)\Big{\Vert}_{L^2}\leqslant  C(T,\|R(0)\|_{Y}),
	\end{align*}
	which in turn implies that  $I_2(\cdot)\in L^{2,2}(\mathbb{R}^+)$.
The second  integral in (\ref{resc1}) can be controlled by $\| r \|_{H^{1,2}\cap H^{2,1}}$  with
 the Fourier transformation and the third   integral is  bounded.
 It is then concluded that  $m_x\in L^{2,2}(\mathbb{R}^+)$  and
	\begin{align*}
		 \| m_x\|_{L^{2,2}(\mathbb{R}^+)}
		\leq  C(T,\|R(0)\|_{Y}).
	\end{align*}

 Taking $y$-derivative of the first reconstruction formula \eqref{res3} leads to
	\begin{align*}
		&\partial_y	\lim_{k\to \infty}k (M_{l,1}^{(2)}-(\eta-2) )= -\frac{1}{4\pi }\int_{\mathbb{R}} s(\tilde{\mu}_2(s)-1) r_2(s)e^{2\theta}ds+\frac{1}{2\pi i}\int_{\mathbb{R}} \partial_y\tilde{\mu}_2(s) r_2(s)e^{2\theta}ds\\
		&-\frac{1}{4\pi }\int_{\mathbb{R}} sr_2(s)e^{2\theta}ds+\sum_{j=1}^{{N_0}}\tilde{d}_je^{y\text{Im}z_j}\left( \text{Im}z_j\tilde{\mu}_2(\bar{k}_j)+	\partial_y\tilde{\mu}_2(\bar{k}_j)\right).
	\end{align*}
	Similar to the above calculation, it appears from  $ kr_2\in W^{1,2}(\mathbb{R})$ that
	\begin{align}
		\Big{|}	\langle y\rangle^2 \partial_y	 \lim_{k\to \infty}k (M_{l,1}^{(2)}-(\eta-2)	)\Big{|}	\leqslant  C(T,\|R(0)\|_{Y}).
	\end{align}
Then from
\begin{align*}
	m_{xx}=\alpha_1^{-1}( \alpha_1+ 2 \partial_ y\alpha_1) \partial_ym_x,
\end{align*}
it is found  that  $m_{xx}\in L^{2,1}(\mathbb{R}^+)$ with
	\begin{align*}
		 \| m_{xx} \|_{L^{2,2}(\mathbb{R}^+)}
		\leq  C(T,\|R(0)\|_{Y}).
	\end{align*}
	
On account of   the  reconstruction formula \eqref{res3} and
	  Lemma \ref{corm3}, it is deduced  that
	\begin{align}
		b_+=&-\frac{1}{2\pi i}\int_{\mathbb{R}} (\tilde{\mu}_1(k)-(\eta-2)) r_1(k)e^{-2\theta}ds\nonumber\\
&-\frac{1}{2\pi i}\int_{\mathbb{R}}(\eta-2) r_1(k)e^{-2\theta}ds +\sum_{j=1}^{{N_0}} d_je^{y\text{Im}z_j}\tilde{\mu}_1(k_j).\label{resc2}
	\end{align}
	The estimate of  $	|b_+|\leqslant C(T, \|R(0)\|_{Y})$ is obviously a  result from Lemmas \ref{lemmar1} and \ref{corm3}.
	This thus  implies that $b_+$ is uniformly bounded with respect to $y$.
	For its $L^2$-integrability, similar to  the above step, combining the jump condition and the integral equation of $M^{(2)}$, it is deduced that
	\begin{align}
		\parallel b_+ \parallel_{L^{2,2}(\mathbb{R}^+)}\leqslant C(T,\|R(0)\|_{Y}).
	\end{align}
	The next step is to obtain  $m-1 \in L^{2,2}. $  Since
	\begin{align*}
		\partial_x b_+=\frac{m^2-1}{m},\  \quad {\rm and} \quad 	\partial_x^2 b_+=\frac{m^2+1}{m^2}m_x.
	\end{align*}
Together with the boundedness of $\eta=1-\frac{1}{m}$ shown in Lemma \ref{lemma4.2} and boundedness of $m$ shown in Lemma \ref{the1}, it  is accomplished that $\partial_x^2b_+\in L^{2}(\mathbb{R})$.
	Then applying the  Sobolev-Gagliardo-Nirenberg inequality yields   $	\frac{\partial}{\partial x}b_+\in L^{2}(\mathbb{R})$, which means $m-1\in L^{2}(\mathbb{R})$. To arrive at the weighted integrable property, one needs to consider  the $x$-derivative for the  function $xb_+$,
	\begin{align*}
		\partial_x (xb_+)=x\frac{m^2-1}{m}+b_+,\ 	\partial_x^2 (xb_+)=2\frac{m^2-1}{m}+x\frac{m^2+1}{m^2}m_x.
	\end{align*}
	Then $xb_+$, $\partial_x^2 (xb_+)\in L^{2}(\mathbb{R})$.
	Analogously, by the  Sobolev-Gagliardo-Nirenberg inequality, it is ascertained that $\partial_x (xb_+)\in L^{2}(\mathbb{R})$, which implies $x\frac{m^2-1}{m}\in L^{2}(\mathbb{R})$. Again we analyze  $x^2b_+$,
	\begin{align*}
		\partial_x (x^2b_+) =x^2\frac{m^2-1}{m}+2xb_+,\ 	\partial_x^2 (x^2b_+) =2b_++4x\frac{m^2-1}{m}+x^2\frac{m^2+1}{m^2}m_x,
	\end{align*}
	and obtain that $x\frac{m^2-1}{m}\in L^{2}(\mathbb{R})$. Therefore, $m(t,\cdot)-1\in L^{2,2}(\mathbb{R}) $ with
	\begin{align*}
		 \| m(t,\cdot)-1 \|_{L^{2,2}(\mathbb{R})}
		\leq  C(T,\|R(0)\|_{Y}).
	\end{align*}
 This completes the proof of Lemma \ref{lemma55}.
\end{proof}
\subsection{Proof of the main theorem }
We are now in the position to give a proof of  Theorem  \ref{last}.
\begin {proof} [Proof of Theorem \ref{last}]
 First, the estimate (\ref{wwdj})  in  Lemma \ref{lemma55} implies that
\begin{align}
m(t,\cdot)-1\in H^{2,1}(\mathbb{R})\cap H^{1,2}(\mathbb{R} ), \;   \forall t >0. \label{estmate5}
\end{align}

Next, the continuity of $m$ on $t > 0 $ follows from the same step in Lemma \ref{the1}.
In fact,  the operator $\tilde{\mathcal{K}}(t)$   defined by \eqref{tidN} and function $\tilde{N}_0$ defined by  \eqref{N0} corresponding to scattering data $R(t)  $
are  continuous on $t >0$. It also gives that $M^{(2)}_l $ and $M^{(2)}_r $ are   continuous on $ t > 0$.
This in turn implies that all terms on the right of \eqref{resc1} and \eqref{resc2} are continuous on $t > 0$.
This is  concluded that the  global solution $m$  in the sense of norm  $\| m(t,\cdot)-1\|_{H^{2,1} \cap H^{1,2} } $  is   continuous  on any $t > 0$ , which together    (\ref{estmate5})
implies that
$$m -1\in C([0, +\infty);  H^{2,1}(\mathbb{R})\cap H^{1, 2} (\mathbb{R})).$$
This  completes the proof of Theorem \ref{last}.
\end {proof}


\noindent\textbf{Acknowledgements}
The work of Fan and Yang is partially  supported by  the National Science
Foundation of China under grants 12271104, 51879045 and 12247182. The work of Liu is partially supported by the Simons Foundation under grant 499875.






\end{document}